\documentclass[11pt,reqno]{amsart}
\usepackage[utf8]{inputenc}
\usepackage{amsfonts}
\usepackage{amsthm}
\usepackage{amsmath}
\usepackage{amssymb}
\usepackage{stmaryrd}
\usepackage{yfonts}
\usepackage{MnSymbol}
\usepackage{mathtools}
\usepackage{fullpage}
\usepackage{color}
\usepackage[shortlabels]{enumitem}
\usepackage{bbm}
\usepackage{tikz}
\usepackage{capt-of}
\usepackage{tikzscale} 
\usepackage{pb-diagram}
\usepackage{comment}
\usepackage{doc}
\usepackage[scr=boondox,  
            ]   
           {mathalpha}
\usepackage[hidelinks]{hyperref}

\newcommand{\de}{\delta}
\newcommand{\vphi}{\varphi}
\newcommand{\mc}{\mathcal}
\newcommand{\mk}{\mathfrak}
\newcommand{\mr}{\mathrm}
\newcommand{\mb}{\mathbf}
\newcommand{\conj}[2]{\{{#1}\,:\,{#2}\}}
\newcommand{\conjbig}[2]{\left\{{#1}\,:\,{#2}\right\}}
\newcommand{\con}{\subseteq}
\newcommand{\nrm}[1]{\|#1\|}
\newcommand{\norm}[1]{\left\|#1\right\|}
\newcommand{\abs}[1]{\left|#1\right|}

\newcommand{\al}{\alpha}
\newcommand{\be}{\beta}
\newcommand{\sig}{\sigma}
\newcommand{\vep}{\varepsilon}
\newcommand{\ga}{\gamma}
\newcommand{\Ga}{\Gamma}
\DeclareMathOperator{\supp}{\mathrm{supp}}
\newcommand{\buit}{\emptyset}
\newcommand{\rest}{\upharpoonright}

\newcommand{\ccal}{\mathscr{c}}
\newcommand{\normIII}[1]{{\left\vert\kern-0.25ex\left\vert\kern-0.25ex\left\vert #1\right\vert\kern-0.25ex\right\vert\kern-0.25ex\right\vert}}

\date{\today}

\newtheorem{teo}{Theorem}[section]
\newtheorem{claim}{Claim}[teo]
\newtheorem{coro}[teo]{Corollary}
\newtheorem{lema}[teo]{Lemma}
\newtheorem{rem}[teo]{Remark}
\newtheorem{prop}[teo]{Proposition}
\newtheorem{defi}[teo]{Definition}
\newtheorem{ex}[teo]{Example}
\newtheorem{question}[teo]{Question}

\def\F{\mathcal{F}}
\DeclareMathOperator{\Sh}{\mathcal{S}}
\newcommand{\C}{\mathcal{C}}
\newcommand{\B}{\mathcal{B}}
\newcommand{\N}{\mathbb{N}}

\def\R{\mathbb{R}}

\DeclareMathOperator{\Q}{\mathbb{Q}}

\DeclareMathOperator{\h}{hom}

\DeclareMathOperator{\Hom}{Hom}
\DeclareMathOperator{\fin}{\mr FIN}

\newcommand{\cantor}{2^{\N}}
\newcommand{\ideal}{\mathcal{I}}
\newcommand{\idealj}{\mathcal{J}}
\newcommand{\binary}{2^{<\omega}}

\def\exh{\mbox{\sf Exh}}
\def\su{\subseteq}
\def\suma{\mbox{\sf Sum}}

\usepackage[left=2cm,right=2cm,bottom=2.5cm,top=2.5cm]{geometry}
\linespread{1.2}

\SetSymbolFont{stmry}{bold}{U}{stmry}{m}{n}
\title{$F_\sig$-ideals, colorings, and representation in Banach spaces}
\author{Jordi Lopez-Abad}
\address{Departamento de Matem\'{a}ticas Fundamentales,
Facultad de Ciencias, UNED, 28040 Madrid, Spain}
\email{abad@mat.uned.es}
\author{V\'{i}ctor Olmos-Prieto}
\address{Departamento de Matem\'{a}ticas Fundamentales,
Facultad de Ciencias, UNED, 28040 Madrid, Spain}
\email{volmos@mat.uned.es}
\thanks{The second author received an FPI predoctoral contract from the Universidad Nacional de Educaci\'on a Distancia (Spain).}
\author{Carlos Uzc\'{a}tegui-Aylwin}
\address{Escuela de Matem\'aticas, Universidad Industrial de Santander, Bucaramanga, Colombia.}
\email{cuzcatea@saber.uis.edu.co}
\thanks{The third author thanks the partial support from Universidad Nacional de Educaci\'on a Distancia (Spain)  and  the grant VIE \#4248 by Universidad Industrial de Santander (Colombia).}

\date{}
 
\begin{document}

\begin{abstract}

In recent works by L. Drewnowski and I. Labuda \cite{Drewnowski} and J. Martínez et al. \cite{MMU2022}, non-pathological analytic \( P \)-ideals and non-pathological \( F_\sigma \)-ideals have been characterized and studied in terms of their representations by a sequence \( (x_n)_n \) in a Banach space, as \( \mathcal{C}((x_n)_n) \) and \( \mathcal{B}((x_n)_n) \). The ideal \( \mathcal{C}((x_n)_n) \) consists of sets where the series \( \sum_{n \in A} x_n \) is unconditionally convergent, while \( \mathcal{B}((x_n)_n) \) involves weak unconditional convergence.

In this paper, we further study these representations and provide effective descriptions of \( \mathcal{B} \)- and \( \mathcal{C} \)-ideals in the universal spaces \( C([0,1]) \) and \( C(2^{\mathbb{N}}) \), addressing a question posed by Borodulin-Nadzieja et al. \cite{Borodulinetal2015}. A key aspect of our study is the central role of the space \( c_0 \) in these representations. We focus particularly on \( \mathcal{B} \)-representations in spaces containing many copies of \( c_0 \), such as \( c_0 \)-saturated spaces of continuous functions.

A central tool in our analysis is the concept of \( c \)-coloring ideals, which arise from homogeneous sets of continuous colorings. These ideals, generated by homogeneous sets of 2-colorings, exhibit a rich combinatorial structure. Among our results, we prove that for \( d \geq 3 \), the random \( d \)-homogeneous ideal is pathological,
 we construct hereditarily non-pathological universal \( c \)-coloring ideals, and we show that every \( \mathcal{B} \)-ideal represented in \( C(K) \), for \( K \) countable, contains a \( c \)-coloring ideal.
Furthermore, by leveraging \( c \)-coloring ideals, we provide examples of \( \mathcal{B} \)-ideals that are not \( \mathcal{B} \)-representable in \( c_0 \). These findings highlight the interplay between combinatorial properties of ideals and their representations in Banach spaces.

\end{abstract}


\subjclass[2020]{Primary 03E05, 40A05; Secondary 03E15, 05D10, 46B15}
\keywords{{$F_{\sigma}$ ideals, non-pathological ideals, perfectly bounded sequence, coloring ideals, Borel selectors.}}
\maketitle



\section{Introduction}

(Set-theoretic) Ideals of sets are a way to 
describe ``smallness" or ``negligibility" of subsets in mathematical analysis. 
 Classical examples include the {\em Fréchet ideal} of finite sets  \( \mathrm{FIN} \) and the ideal of measure-zero sets in a measure space. Definable, in particular, analytic ideals have been vastly investigated (see for example, \cite{fa2000}, \cite{Hrusak2011},  \cite{Solecki1999}, \cite{todorcevic-definable}).  In this paper we will focus on ideals over a countable set $X$, and of simple definition, that is,  $F_\sig$-ideals --as subsets of $\{0,1\}^X$ via characteristic functions--.  The first example of them are the summable ideals $\fin(\mu)$, collections of finitely $\mu$-measured subsets of $X$ for some total ($\sig$-additive) measure $\mu$ on $X$.   Although not all $F_\sig$-ideals are summable ideals, they are structurally similar, as it was proved by K. Mazur \cite{Mazur91} that every $F_\sig$-ideal is of the form $\fin(\vphi)$ for some total lower semicontinuous {\em submeasure} $\vphi$ on $X$. So, the simplest ones are those defined from measures,  then the ones that use lower semicontinuous submeasures that are supremum of measures, called {\em non-pathological}, and finally the pathological ones.  

The second sort of ideals we are interested in are the  $P$-ideals, again somehow extending the summable ones. Recall that a \( P \)-ideal is defined as an ideal \( \mc I \) on \( \mathbb{N} \) such that for every sequence \( (A_n)_{n \in \mathbb{N}} \subseteq \mc I \), there exists \( A \in\mc I \) such that \( A_n \setminus A \) is finite for all \( n \).  Again, the simplest $P$-ideals are the summable ones, and, in general, they are  a kind of summable:   S. Solecki \cite{Solecki1999} demonstrated that every analytic \( P \)-ideal can be expressed as \( \mathrm{Exh}(\vphi):=\conj{A \subseteq X}{  \lim_{n \to \infty} \vphi(A \setminus \{0, 1, \dots, n\}) = 0} \) for some lower semicontinuous submeasure \( \vphi \) on $X$. Once more, we have the same simple classification of analytic $P$-ideals: the summable ones $\mr{Exh}(\mu)= \fin(\mu)$ for a measure $\mu$, the non-pathological ones, $\mr{Exh}(\mu)$ for a non-pathological submeasure $\mu$, and finally the pathological ones. 

In recent years, some interesting ideals on $\mathbb{N}$ related to series in  Banach spaces have been studied \cite{Borodulinetal2015,Borodulin-Farkas2020,Drewnowski,Drewnowski2017}. Given a sequence ${\mathbf{x}} = (x_n)_n$ in a Banach space, L. Drewnowski and I. Labuda \cite{Drewnowski} defined an ideal on $\mathbb{N}$ as follows:
\[
\mathcal{C}({\mathbf{x}}) := \left \{ A \subseteq \mathbb{N} : \sum_{n \in A} x_n \text{ is unconditionally convergent} \right \},
\]
where the sum $\sum_{n \in A} x_n$ is \textit{unconditionally convergent} when it converges under every permutation of the index set. As a consequence of a classical Bessaga-Pełczyński's theorem \cite{BP}, Drewnowski and Labuda showed that a Banach space does not contain an isomorphic copy of $c_0$ if and only if $\mathcal{C}({\mathbf{x}})$ is $F_\sigma$ (as a subset of $\{0,1\}^\mathbb{N}$ via characteristic functions) for every ${\mathbf{x}}$. Later, P. Borodulin-Nadzieja, B. Farkas, and G. Plebanek conducted a thorough analysis of the ideals $\mathcal{C}({\mathbf{x}})$ in \cite{Borodulinetal2015} and established a tight connection with the well-known class of analytic P-ideals. The ideal $\mathcal{C}({\mathbf{x}})$ can be naturally interpreted as the generalization of summable ideals, a prototype being $\mathcal{I}_{1/n}$, which consists of all $A \subseteq \mathbb{N}$ such that $\sum_{n \in A} 1/n < \infty$; it plays a central role in the classification of ideals on countable sets.

In the aforementioned work \cite{Drewnowski}, the authors introduced a new family of ideals associated with a different notion of summability in Banach spaces:
\[
\mathcal{B}({\mathbf{x}}) := \left \{ A \subseteq \mathbb{N} : \sum_{n \in A} x_n \text{ is weakly unconditionally convergent} \right \}.
\]
We recall that $\sum_{n \in A} x_n$ is \textit{weakly unconditionally convergent} (or \textit{perfectly bounded}) when the numerical series $\sum_{n \in A} x^*(x_n)$ is unconditionally convergent for every $x^* \in X^*$. It is well-known that this is equivalent to saying that $\sup_{F \subseteq A \text{ finite}} \left\| \sum_{n \in F} x_n \right\|$ is finite. Observe that $\mc C(\mathbf x)\con \mc B(\mathbf x)$, but in general they are different. In fact,  in \cite{Drewnowski} it is shown that a Banach space does not contain an isomorphic copy of $c_0$ exactly when  $\mathcal{C}({\mathbf{x}}) = \mathcal{B}({\mathbf{x}})$ for every ${\mathbf{x}}$. Following the work developed in \cite{Borodulinetal2015}, Martínez et al. \cite{MMU2022} characterized the ideals $\mathcal{B}({\mathbf{x}})$ as the \textit{non-pathological} $F_\sigma$ ideals, that is to say, ideals of the form $\fin(\varphi) := \{ A \subseteq \mathbb{N} : \varphi(A) < \infty \}$ for $\varphi$ a non-pathological lower semicontinuous submeasure on $\mathbb{N}$ (i.e., a lscsm which is a supremum of measures).
However, this characterizations are somewhat inefficient as they are realized in the universal space \( \ell_\infty \). In the Section \ref{repre}, we provide effective representations of $\mathcal{B}$ and $\mc C$-ideals in the universal spaces \( C([0,1]) \) and \( C(2^{\mathbb{N}}) \), addressing a question posed in \cite[Question 7.1]{Borodulinetal2015}.
The representation of ideals on \( C[0,1] \) that we  present has the advantage of being done in a Polish space. This opens up the possibility of properly analyze classes of ideals from a descriptive set-theoretic perspective, thereby extending the study of the complexity of the codes of families of \( F_\sigma \) ideals (\cite{GrebikHrusak2020, GrebikUzca2018}) to classes of non-pathological ideals.

In general, it is natural to investigate the consequences on ideals that can be $\mathcal{B}$- or $\mathcal{C}$-represented in a particular class of Banach spaces. One example of them is the result by Drewnowski and Labuda of $\mc B$-representation in spaces without copies of $c_0$, and  conversely, P. Borodulin-Nadzieja, B. Farkas, and G. Plebanek showed in \cite{Borodulinetal2015} that the only $F_\sigma$ ideals that are $\mathcal{C}$-representable in $c_0$ are the summable ones. Similarly, ideals that are $\mathcal{B}$-representable in a finite-dimensional space are also summable.


Analogous to the representation of non-pathological \( F_\sigma \)-ideals as \(\mathrm{Fin}(\varphi)\), the \(\mathcal{B}\)-representation is also non-unique. It is straightforward to observe that any \(\mathcal{B}\)-ideal can be represented using a 1-unconditional basis (see Proposition \ref{lkmngtktjgkdlfgdf}). Notably, we will show that if this representation is carried out in a Hilbert space, the ideal must necessarily be summable. This result is a consequence of the \textit{generalized parallelogram identity}.
More generally, when the underlying space satisfies a slightly weaker version of this identity, specifically possessing \textit{non-trivial cotype} \(q \geq 2\), we can prove that the \(\mathcal{B}\)-ideal \(\mc B((x_n)_n)\) is included in the summable ideal \(\mathrm{Sum}((\|x_n\|^q)_n)\). However, it remains an open question whether one can choose the \(\mc B\)-representation such that this summable ideal is non-trivial. 

A classical reformulation of the property of having non-trivial cotype is that \(c_0\) cannot be \textit{finitely representable}, highlighting once again  the importance of the space $c_0$ in the theory of $\mathcal{B}$-representation.

As every ideal $\mathcal{B}$-representable in a space without isomorphic copies of $c_0$ is automatically a $P$-ideal, it is natural to closely study the relationship between $\mathcal{B}$-representation and the presence of $c_0$ in a space to understand, in particular, $\mathcal{B}$-representable non-$P$-ideals. Recall that an ideal $\mathcal{I}$ is \textit{tall} when every infinite subset of $\mathbb{N}$ has a further infinite subset belonging to $\mathcal{I}$. 
We prove in Theorem \ref{c0saturacion} that a $\mathcal{B}$-ideal is tall exactly when it can be $\mathcal{B}$-represented by a seminormalized unconditional basis $(x_n)_n$ that is sequentially saturated by equivalent copies of the unit basis of $c_0$; that is, every subsequence of $(x_n)_n$ has a further subsequence $(y_n)_n$ such that $\left\|\sum_n a_n y_n\right\| \approx \max_n |a_n|$. We do not know if this can be strengthened to assert that tall $\mathcal{B}$-ideals can be $\mathcal{B}$-represented in spaces that are saturated by isomorphic copies of $c_0$, i.e. such that every infinite-dimensional subspace has a subspace isomorphic to $c_0$.

A representative family of $c_0$-saturated spaces consists of the spaces of continuous functions on a countably infinite compact space, which will be the focus of Subsection \ref{lmklfmklsdfs}. In this subsection, we demonstrate that when a tall ideal is representable in one of these function spaces, there exists an effective procedure to extract a subset within the ideal from any given infinite set. This extraction process is carried out indirectly using what we call \textit{c-coloring ideals}. 
These ideals are generated by the homogeneous  sets of a given 2-coloring of $[\mathbb{N}]^2$, or more generally, by a \textit{front} on $\mathbb{N}$—a natural generalization introduced by C. St. J. A. Nash-Williams \cite{nash-williams-transfinite} for the families $[\mathbb{N}]^d$. Specifically, we prove in Theorem \ref{lkmkdlmfmskl4we4ret} that every ideal $\mathcal{B}$-represented in some $C(K)$, where $K$ is countable, contains a $c$-coloring ideal. Moreover, if $K$ has finite Cantor-Bendixson rank, it contains a $c$-coloring ideal generated by a coloring of $[\mathbb{N}]^2$. Consequently, in Theorem \ref{exG}, we provide an example of a $\mathcal{B}$-ideal that is not $\mathcal{B}$-representable in $c_0$.

The study of $c$-coloring ideals, presented in Section \ref{sect:colorings}, is noteworthy because these ideals possess effective tallness, making them a valuable tool in the analysis of tall ideals. Furthermore, $c$-coloring ideals are particularly interesting for two main reasons. 

First, they provide a natural source of examples of $F_\sigma$ ideals that can exhibit pathological behavior. For instance, based on the work in \cite{MMU2022} on Mazur's pathological ideal,  we demonstrate that the ideals of cliques and anticliques in the random $d$-uniform hypergraph are also pathological when $d \geq 3$, while the pathology of the random ideal $\mc R$ remains an open question.

Second, the techniques employed in their study are of significant interest. In particular, we draw on ideas from measure concentration to analyze certain hypergraphs and on the  work of F. Galvin \cite{Galvin1968} and D.~C.~Devlin \cite{devlin1979} in Ramsey theory for canonical colorings of $[\mathbb{Q}]^d$. These methods enable us to identify universal examples of $c$-coloring ideals that are locally  non-pathological.

Finally, we employ another combinatorial tool—Dilworth's theorem—to construct examples of $c$-coloring ideals that are non-pathological.


\section{Preliminaries}\label{klkl4tmerlkgrfg}

We use  \cite{Kechris94} and  \cite{Lind-Tza}  as a general reference for all notions and terminology  of descriptive set theory and of Banach space theory, respectively.  Recall that an ideal over a set $X$ is a collection of subsets of $X$ closed under taking subsets and finite unions of its elements. We will assume always that every ideal over $X$ contains the corresponding Fréchet ideal of finite subsets of $X$. Let $s,t$ be  subsets of $\N$ with $s$ finite, we write $s\sqsubseteq t$ if $s$ is an initial segment of $t$, i.e., there is $n\in \N$ such that $s=t\cap \{0,\ldots, n\}$.  The collection of binary sequences is denoted by $\binary$. For each $s\in \binary$,  $|s|$ denotes the length of $s$. A basis for the product topology on $\cantor$ is given by the following sets:
\[
[s]:=\{\alpha\in\cantor: \; s\prec \alpha\}
\]
where $s\prec\alpha$ means that $s$ is an initial segment of $\alpha$, i.e. $s(i)=\alpha(i)$ for all $i<|s|$. 
For any set $X$, we denote by $[X]^\omega$ (resp. $[X]^{<\omega}$) the collection of all infinite (resp. finite) subsets of $X$. For $X$ countable, $[X]^\omega$ is a Polish space with the product topology it inherits from   $\{0,1\}^X$ via characteristic functions. This allows us to discuss Borel and analytic ideals.  If $\mc A$ and $\mc B$ are two collections of subsets of some set, we denote by $\mc A\sqcup \mc B$ the collection of sets of the form $A\cup B$ with $A\in\mc A$ and $B\in \mc B$. 

A function  $\varphi:\mathcal{P}(\N)\to[0,\infty]$ is a {\em lower semicontinuous submeasure (lscsm)} if $\varphi(\emptyset)=0$,
$\varphi(A)\leq\varphi(A\cup B)\leq\varphi(A)+\varphi(B)$, $\varphi(\{n\})<\infty$ for all $n$ and $\varphi(A)=\lim_{n\to\infty}\varphi(A\cap\{0,1,\dots, n\})$. Associated to a lscsm there are the following ideals:
\[
\begin{array}{lcl}
\mathrm{Sum}(\varphi) & := & \{ A\con \N :  \sum_{n\in A}\varphi(\{n\})<\infty\}\\
\fin(\varphi)& :=& \{A\subseteq\N:\varphi(A)<\infty\}
\\
\exh(\varphi)& := &\{A\subseteq\N: \lim_{n\to\infty}\varphi(A\setminus\{0,1,\dots, n\})=0\}.
\end{array}
\]
Notice that $\mr{Sum}(\varphi)\con \exh(\varphi)\subseteq \fin(\varphi)$, and that $\mathrm{Sum}(\varphi)=\fin(\varphi)$ when $\varphi$ is indeed a measure. 
An ideal $\ideal$ is a {\em $P$-ideal} if for every sequence $(A_n)_n$ of sets in $\ideal$ there is $A\in \ideal$ such that 
$A_n\setminus A$ is finite for all $n$. A fundamental result due to S. Solecki \cite{Solecki1999} says that any  analytic $P$-ideal is of the form $\exh(\varphi)$ for some lscsm $\varphi$. 

We say that a measure  $\mu $ is {\em dominated by a lscsm $\varphi$}, if $\mu(A)\leq \varphi(A)$ for all $A$. 
A lscsm $\varphi$ is {\em non-pathological} if it is the supremum of all  ($\sigma$-additive) measures dominated by $\varphi$. An $F_{\sigma}$ ideal $\ideal$ is \emph{non-pathological} if there is a non-pathological lscsm $\varphi$ such that $\ideal=\fin(\varphi)$.

Let $\ideal$ and $\idealj$ be two ideals on $\N$. The Kat\v{e}tov pre-order is defined as follows. We say that $\ideal\leq_K \idealj$ when there is $f:\N\to \N$ such that $f^{-1}(A)\in \idealj$ for all $A\in \ideal$.
Kat\v{e}tov pre-order is  a very useful tool to study ideals on countable sets (see, for instance, \cite{Hrusak2017}).  
An ideal that plays a pivotal role is  $\mathcal{R}$, which is generated by the cliques and independent sets of the random graph. From  the universal property of the random graph, it is known that $\mathcal{R}\leq_K \ideal$ exactly when there is a coloring $c:[\N]^2\to \{0,1\}$ such that  $\ideal$ contains the collection  $\hom(c)$,  the family of all $c$-homogeneous sets (i.e. sets $A$ where   $c$ is constant on $[A]^2$). When this happens the ideal $\ideal$ is {\em tall}, meaning that every infinite subset of $\N$ contains an infinite subset belonging to $\ideal$. To illustrate this fact, let $\varphi$ be a lscsm such that $\exh(\varphi)$ is tall (that is, $\varphi(\{n\})\to_{n\to \infty} 0$).  We show a coloring $c$ such that $\hom(c)\subseteq \exh(\varphi)$. Let  $A_{k+1}=\{n\in\N:\; 1/2^{k+1}\leq \varphi\{n\}<1/2^{k}\}$ for $k\in\N$ and $A_0=\{n\in\N:\; 1\leq \varphi\{n\}\}$. Observe that each $A_k$ is finite. Let $c\{n,m\}=0$ iff $\{n,m\}\subseteq A_k$ for some $k$.  Notice that if $H$ is an infinite homogeneous set, then it  is $1$-homogeneous and  $\sum_{n\in H}\varphi\{n\}<\infty$.
Thus $H\in \exh(\varphi)$.

As we said if $\mathcal{R}\leq_K \ideal$, then $\ideal$ is tall. For a while it was conjecture that the converse also holds, i.e., that  every analytic tall ideal is $\leq_K$-above $\mathcal{R}$. This  turned out to be false \cite{GrebikHrusak2020,grebik2020tall}, as there are $F_\sigma$ tall ideals which are not Kat\v{e}tov above $\mathcal{R}$ {(see section \ref{nonc0} for a concrete example of a non-pathological ideal satisfying this condition).}

A tall family  $\mathcal{A}$  of subsets of  a countable set $X$  admits a {\em Borel selector}, when there is a Borel function $S: [X]^\omega\to  [X]^\omega$  that assigns to each $E\subseteq X$ a subset $S(E)\su E$  that belongs to $\mathcal A$.  A typical example of a family admitting a Borel selector is given by the following result. 

\begin{teo}
\label{selector}
\cite{GrebikUzca2018}
 The tall family $\hom(c)$ of homogeneous sets of a coloring $c: [\N]^2\to \{0,1\}$  admits a Borel selector. \qed 
 \end{teo}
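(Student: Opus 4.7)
The plan is to implement Ramsey's classical proof of the infinite Ramsey theorem in a Borel way. Fix the coloring $c$. Given $E \in [\N]^\omega$, I build by induction on $k$ a descending chain of infinite sets $E = E_0 \supseteq E_1 \supseteq \cdots$, points $n_0 < n_1 < \cdots$ in $E$, and colors $\epsilon_0, \epsilon_1, \ldots \in \{0,1\}$ satisfying the canonicity property $c\{n_k, n_j\} = \epsilon_k$ whenever $k < j$. At stage $k$: set $n_k := \min E_k$; split $E_k \setminus \{n_k\}$ into $A^i_k := \{m \in E_k : m > n_k,\ c\{n_k, m\} = i\}$ for $i \in \{0,1\}$; let $\epsilon_k$ be the smallest $i$ for which $A^i_k$ is infinite (at least one such $i$ exists, since $E_k$ is infinite); and put $E_{k+1} := A^{\epsilon_k}_k$.

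Once the canonical sequence is in place, let $\epsilon \in \{0,1\}$ be the smallest color such that $K := \{k \in \N : \epsilon_k = \epsilon\}$ is infinite, and declare $S(E) := \{n_k : k \in K\}$. For any indices $k < j$ in $K$ one has $c\{n_k, n_j\} = \epsilon_k = \epsilon$, so $S(E) \in \hom(c)$; and plainly $S(E) \subseteq E$ and $S(E)$ is infinite.

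The bulk of the work is to verify that $S : [\N]^\omega \to [\N]^\omega$ is Borel. The crucial observation is that ``$X \subseteq \N$ is infinite'' is a $\Pi^0_2$ condition on $2^\N$. Thus, assuming inductively that $E \mapsto E_k$ is Borel, the point $n_k$, the sets $A^0_k$ and $A^1_k$, the color $\epsilon_k$, and hence $E_{k+1}$ are all Borel functions of $E$: each is obtained by evaluating the fixed coloring $c$ on pairs involving $n_k$ and by testing which of $A^0_k, A^1_k$ is infinite. By induction, every coordinate map $E \mapsto (n_k, \epsilon_k)$ is Borel, so $E \mapsto (n_k, \epsilon_k)_{k \in \N}$ is Borel into $\N^\N \times 2^\N$. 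The final pigeonhole selection of $\epsilon$ and extraction of $\{n_k : \epsilon_k = \epsilon\}$ are additional Borel operations on $(n_k, \epsilon_k)_{k}$, giving that $S$ is Borel. The only delicate point is the infinitary bookkeeping: the Borel class of the recursion $E \mapsto E_k$ grows with $k$, but it stays at a countable ordinal uniformly in $E$, which is all that is needed for the overall map $S$ to be Borel.
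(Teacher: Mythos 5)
The paper states this result without proof, citing \cite{GrebikUzca2018} and closing with \qed, so there is no proof of record to compare against. Your argument is correct: the recursive construction is the classical Ramsey argument, each stage depends on the previous one through operations (taking minima, splitting by $c$, testing membership, testing infinitude) that are Borel in $E$, and since the infinitude test is $\Pi^0_2$ the coordinate maps $E\mapsto (n_k,\epsilon_k)$ are Borel; the final pigeonhole extraction is again a Borel operation on $(n_k,\epsilon_k)_{k}$, so the selector $S$ is Borel. The bookkeeping claim about the Borel rank of $E\mapsto E_k$ is slightly loosely phrased (the rank does not depend on $E$; what matters is that it is at a fixed finite level $\approx 3k$ for each $k$, so the countable product and the subsequent countable Boolean operations remain Borel), but the conclusion stands. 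This is the standard ``effectivization of Ramsey'' argument and matches what Grebík--Uzcátegui do.
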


We recall $\mc B$ and $\mc C$ representability.
\begin{defi}[Representability]
We say that ideal $\mc I$ on $\N$ is {\em $\mathcal B$-representable} in a Banach space $X$ when there is a sequence $\mb x=(x_n)_n$ in $X$ such that $\mc I= \mc B(\mb x)= \conj{M\con \N}{\sup_{F\con M \text{ finite}} \nrm{\sum_{n\in F} x_n} < \infty}$. The ideal $\mc I$ is a {\em $\mc B$-ideal} if it is $\mc B$-representable in some $X$.  Similarly, we say that $\mc I$ is {\em $\mathcal C$-representable} in $X$ when there is a sequence $\mb x=(x_n)_n$ in $X$ such that $\mc I= \mc C(\mb x)= \conj{M\con \N}{\sum_{n\in M} x_n \text{ is unconditionally convergent}}$. Such ideals are called {\em $\mc C$-ideals}.
\end{defi}

{As mentioned in the introduction, $\mathcal{B}$-ideals coincide with non-pathological $F_{\sigma}$ ideals (see \cite{MMU2022}) while $\mathcal{C}$-ideals correspond to non-pathological analytic $P$-ideals:
given a sequence $\mb x$ the function $\vphi_{\mb x}: F\in \mc P(\N)\mapsto \sup_{F\in [A]^{<\infty}} \nrm{\sum_{n\in F}x_n}$ is a lower semicontinuous submeasure such that $\mc B(\mb x)= \fin(\vphi_{\mb x})$. In addition it is non-pathological: for each $F$ finite choose a norm-one functional $x^*_F\in S_{X^*}$ such that $x^*_F(\sum_{n\in F} x_n)= \nrm{\sum_{n\in F}x_n}$. Now define $\mu_F$  as 
the measure with support $F^+=\conj{n\in F}{ x_F^*(x_n)\ge 0}$ and such that $\mu_F(\{n\})=x_F^*(x_n)$ for $n\in \supp \mu_F$.
Then $\sup_{F\in [\N]^{<\infty}}\mu_F =\vphi_{\mb x}$. To see this, given $A\con \N$ and $F\con A$ finite, then $\nrm{\sum_{n\in F}x_n}\le \mu_F(F)$. Hence, $\vphi_{\mb x}(A)\le \sup_{F\in [\N]^{<\infty}}\mu_F(A)$. 
On the other hand, given $F$ finite, we have that $\mu_F(A)=\mu_{F} (A\cap F^+)\le \nrm{\sum_{n\in A\cap F^+} x_n}\le \vphi_{\mb x}(A)$, and consequently $\sup_{F\in [\N]^{<\infty}} \mu_F(A)\le \vphi_{\mb x}(A)$.  
Moreover, $\mc C(\mb x) = \mr{Exh}(\vphi_{\mb x})$, because $A\in \mc C(\mb x)$ exactly when for every $\vep>0$ there is some $n$ such that $\sup_{F\in [A\cap [n,\infty[]^{<\infty}} \nrm{\sum_{n\in F} x_n}= \vphi_{\mb x}([n,\infty[)\le \vep$. 

Conversely, suppose that $\vphi$ is a non-pathological lscsm, and let $(\mu_k)_k$ be a sequence of measures such that $\sup_k \mu_k = \vphi$. For each $n$ define the sequence $x_n := (\mu_k(\{n\}))_k\in \ell_{\infty}$, which is bounded because $\|x_n\|_{\infty} = \vphi(\{n\}) < \infty$. Then $\vphi=\vphi_{\mb x}$ for $\mb x := (x_n)_n$, because for $A\subseteq\N$
\[
\vphi_{\mb x}(A) = \sup_{F\in [A]^{<\infty}} \norm{\sum_{n\in F} x_n}_{\infty} = \sup_{k\in\N} \sup_{F\in [A]^{<\infty}} \mu_k(F) = \sup_{k\in\N} \mu_k(A) = \vphi(A).
\]
}

$\mc B$-representability (or $\mc C$-representability) on finite dimensional spaces characterize the summable ideals. To see this, observe that in those spaces unconditional convergence of a series is equivalent to its absolute convergence, that is $\mc C( (x_n)_n)= \mr{Sum}((\nrm{x_n})_n)$, and  in finite dimensional spaces  the weak and the norm topology coincide in those spaces $X$, so we obtain that $\mc B((x_n)_n)=\mc C((x_n)_n)$. This last equality is not only true in finite dimensional spaces but in spaces not containing isomorphic copies of $c_0$.   On the opposite direction, there are several examples of non-$P$-ideals that are $\mc B$-representable in $c_0$. One of them is the following.

{
\begin{ex} There is a tall non $P$-ideal that is $\mc B$-representable in $c_0$:
    Let $\{A_n : n\in\N\}$ be an infinite partition of $\N$ in infinite sets, and write each $A_n = \{a^n_{m}\}_{m\in \N}$ by its increasing enumeration. Let $(u_k)_k$ be the canonical basis of $c_0$ and for each $m,n\in \N$, let 
    $(I_m^n)_{m\in \N}$ be a partition of $A_n$ in consecutive intervals each $I_m^n$ of cardinality $(n+1)(n+m+1)$. Finally we define the sequence $(x_k)_{k\in \N}$ of vectors in $c_0$ as follows. For each $k\in \N$, we set
    $x_k:= (1/(n(k)+m(k)+1))u_{m(k)}$ where $m(k),n(k)\in \N$ are the unique integers such that $k\in I_{m(k)}^{n(k)}$.  We have that $\bigcup_{m,n\le l} I_{m}^n$ is finite, so if $k\ge \max \bigcup_{m,n\le l} I_{m}^n$, then $\nrm{x_k}_\infty \le 1/(l+1)$. This shows that the sequence $(x_k)_k$ is norm-null, and consequently $\mc B((x_k)_k)$ is tall (see Theorem \ref{c0saturacion}). We see that $\mc B((x_k)_k)$ is not a $P$-ideal. To see this, observe that each $A_n\in \mc B((x_k)_k)$ because for each $F\con A_n$ we have that $\nrm{\sum_{k\in F} x_k}_\infty = 1/(n+m+1)\le 1/(n+1)$ where $m$ is the minimal integer such that $F\cap I_{m}^n\neq \buit$. On the other hand, suppose that $A$ is such that $A_n\setminus A$ is finite for every $n$. Given $n$, let $m(n)$ be large enough so that $I_{m(n)}^n\con A$. Since each $\nrm{\sum_{k\in I_{m(n)}^n } x_k}_\infty= \# (I_{m(n)}^n)/(m(n)+n+1)=n+1$, it follows that $\sup_{F\in [A]^{<\infty}} \nrm{\sum_{k\in F} x_k}_\infty=\infty$, and consequently $A\notin \mc B((x_k)_k)$.  

    
\end{ex}
In \cite{MMU2022} there is another simpler example of a $\B$-ideal representable in $c_0$ that is not a $P$-ideal, but it is not tall. This leads to the following question.

\begin{question}
Is every  $F_\sig$ $\mc C$-ideal  automatically a $\mc B$-ideal?  
\end{question}
The answer is Yes for $c_0$, by Theorem 5.7 \cite{Borodulinetal2015} which says that a $F_\sigma$ ideal $\C$-representable in $c_0$ is summable (and thus $\B$-representable). 
S. Solecki proved in \cite[Thm 3.4]{Solecki1999} that if $\ideal=\exh(\varphi)$ is $F_\sigma$, then there is another lscsm $\psi$ such that $\ideal=\exh(\psi)=\fin(\psi)$. We do not know if $\psi$ can be found non-pathological assuming $\varphi$ is.




}

The representation of a $\mc B$ or $\mc C$-ideal is not unique. One can take advantage of this and represent the ideal in a way that the representative sequence is, for example, a Schauder basis, even 1-unconditional.   Recall that a {\em Schauder basis} is a sequence $(x_n)_n$ of a Banach space $X$ such that any vector $x\in X$ has a unique representation as the sum of a series $\sum_{n} a_n x_n$, and the basis $(x_n)_n$ is $1$-unconditional when any such a convergent series $\sum_n a_n x_n$ is unconditionally convergent and norm preserving (that is, $\sum_n \theta_n a_n x_n$ converges for every sequence of signs $(\theta_n)_n$ and $\nrm{\sum_n \theta_n a_n x_n}=\nrm{\sum_n a_n x_n}$). The basis is called {\em seminormalized} if $0 < \inf_n \|x_n\| \leq \sup_n \|x_n\| < \infty$.

\begin{prop}\label{lkmngtktjgkdlfgdf}
Let $\mc I$ be an ideal.
\begin{enumerate}[(i),wide=0pt]
    \item   $\mc I$ is $\mc B$-representable iff it is $\mc B$-representable by a 1-unconditional basis $\bf u$ such that $\mc C(\bf u) = \fin$.
    \item\label{item:lkmngtktjgkdlfgdf:C} $\mc I$ is $\mc C$-representable iff it is $\mc C$-representable by a 1-unconditional basis $\bf u$.
\end{enumerate}
\end{prop}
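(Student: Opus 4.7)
Both $\Leftarrow$ directions are immediate since a 1-unconditional basis is in particular a sequence. For the forward directions, my plan is to invoke the characterizations recalled in the preliminaries (non-pathological $F_\sig$-ideals for $\mc B$, non-pathological analytic $P$-ideals for $\mc C$) to write $\mc I=\fin(\vphi)$ in case (i) and $\mc I=\exh(\vphi)$ in case (ii), with $\vphi=\sup_k\mu_k$ a non-pathological lscsm. By replacing $\vphi$ with $\vphi+\sum_n 2^{-n}\delta_n$ if necessary (which remains non-pathological as a supremum of measures, has finite total mass, and whose tails vanish, hence leaves both $\fin(\vphi)$ and $\exh(\vphi)$ unchanged), I may assume $\vphi(\{n\})>0$ for every $n$.

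The main construction proceeds as follows. Let $Z$ denote the space of real sequences $(a_n)_n\in\R^\N$ with $\nrm{(a_n)}:=\sup_k\sum_n\abs{a_n}\mu_k(\{n\})<\infty$, and let $Y_0$ be the closed linear span of the canonical unit vectors $(e_n)_n$ in $Z$. Since this norm depends only on $\abs{a_n}$, the sequence $(e_n)_n$ is 1-unconditional; the assumption $\vphi(\{n\})>0$ makes the coefficient functionals bounded, and finitely supported vectors are dense in $Y_0$ by construction, so $(e_n)_n$ is an honest 1-unconditional Schauder basis of $Y_0$. A direct computation gives $\nrm{\sum_{n\in F}e_n}=\sup_k\mu_k(F)=\vphi(F)$ for each finite $F$, and then lower semicontinuity of $\vphi$ yields $\mc B((e_n)_n)=\fin(\vphi)$, while the standard equivalence in a 1-unconditional basis between unconditional convergence of $\sum_{n\in A}e_n$ and the tail decay $\vphi(A\setminus[0,N])\to 0$ yields $\mc C((e_n)_n)=\exh(\vphi)$. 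This already proves (ii) with $\mathbf u:=(e_n)_n$.

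For (i), the construction above gives $\mc B((e_n)_n)=\mc I$ but $\mc C((e_n)_n)=\exh(\vphi)$, which can strictly contain $\fin$. To force $\mc C(\mathbf u)=\fin$ without disturbing $\mc B$, my plan is to replace $(e_n)_n$ by $\mathbf u:=(e_n,f_n)\in Y_0\oplus_\infty c_0$, where $(f_n)_n$ is the canonical basis of $c_0$. As a direct sum of 1-unconditional bases under the max-norm, this is again a 1-unconditional Schauder basis of its closed linear span. Since $\nrm{\sum_{n\in F}f_n}_\infty=1$ for every nonempty finite $F$ and the partial sums $\chi_{A\cap[0,N]}$ are Cauchy in $c_0$ only when $A$ is finite, we have $\mc B((f_n)_n)=\mc P(\N)$ and $\mc C((f_n)_n)=\fin$. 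The max-norm then gives $\mc B(\mathbf u)=\mc B((e_n)_n)\cap\mc B((f_n)_n)=\mc I$ and $\mc C(\mathbf u)=\mc C((e_n)_n)\cap\mc C((f_n)_n)=\fin$, closing (i). The conceptual heart of the argument is this $c_0$-direct-sum trick, which selectively kills unconditional convergence while leaving the perfect-boundedness ideal intact; the remaining steps, namely verifying the Schauder-basis property and the two computations of $\mc B$ and $\mc C$, are routine once the norms are unpacked.
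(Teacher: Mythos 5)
Your proof is correct, but it takes a genuinely different route from the paper's. The paper works directly from an arbitrary $\mc B$-representing sequence $\mathbf x$: it defines a new norm on $c_{00}$ by taking the maximum over all sign-flips of $\|\sum_n\theta_n a_n x_n\|$ (which unconditionalizes the original norm) and then superposing $\max_n|a_n|$ (a built-in $c_0$-component). The completion of $c_{00}$ under that norm has $(u_n)_n$ as a $1$-unconditional basis with $\mc B(\mathbf u)=\mc B(\mathbf x)$, and $\|u_n\|\ge 1$ immediately forces $\mc C(\mathbf u)=\fin$; for (ii) the weight $\frac{1}{n+1}|a_n|$ is used instead so that $\mc C$ is preserved. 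You instead pass through the preliminaries' characterization of $\mc B$- (resp.\ $\mc C$-) ideals as non-pathological $\fin(\vphi)$ (resp.\ $\exh(\vphi)$), realize the ideal in the Banach lattice sequence space built from the representing measures $\mu_k$, and then adjoin the unit vector basis of $c_0$ via an $\ell_\infty$-direct sum to kill $\mc C$. The two approaches hinge on the same key idea — unconditionalize, then graft on a $c_0$-summand to annihilate unconditional convergence — but the paper's route is self-contained and makes no appeal to the Solecki/Mazur-type representation theorems, while yours is more structural: it explicitly identifies $\mc C((e_n)_n)=\exh(\vphi)$ before the $c_0$-graft and naturally anticipates the $X(\boldsymbol\mu)$ lattice-space machinery that the paper develops later. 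Your normalization step $\vphi\mapsto\vphi+\sum_n 2^{-n}\delta_n$ to keep the coefficient functionals bounded plays exactly the role of the paper's $\max_n\frac{1}{n+1}|a_n|$ term, so the two proofs are structurally parallel once translated. One small trade-off worth noting: the paper's argument produces a $\mathbf u$ with $\mc B(\mathbf u)=\mc B(\mathbf x)$ for the \emph{given} sequence $\mathbf x$ (which the paper remarks on), whereas yours yields $\mc B((e_n)_n)=\fin(\vphi)$ for whichever non-pathological $\vphi$ you chose to represent the ideal; these coincide if you take $\vphi=\vphi_{\mathbf x}$, but you should say so explicitly if you want to recover the paper's additional observation.
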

\begin{proof}
{\it (i)}: Suppose that $\mc I = \mc B(\bf x)$ for a sequence ${\bf x} = (x_n)_n$ in a Banach space $X$. On $c_{00}$ we define the norm
\[
\normIII{(a_n)_n} := \max\left\{\max_{n} |a_n|, \max_{(\theta_n)_n\in \{-1,1\}^\N} \norm{\sum_n \theta_n a_n x_n}\right\},
\]
and we let $\widetilde{X}$ to be completion of $c_{00}$ with respect to this norm. It is easy to see that the canonical basis $(u_n)_n$ of $c_{00}$ is a 1-unconditional, not necessarily normalized, basis of $\widetilde{X}$. It is clear that $\mc B(\bf u)\subseteq\mc B(\bf x)$, while if $A\in \mc B(\bf x)$, then for every finite set $F\con A$ we have that 
\[
\normIII{\sum_{n\in F}u_n} = \max\left\{1, \max_{(\theta_n)_n\in \{-1,1\}^F}\norm{\sum_{n\in F}\theta_n x_n}\right\}\le \max\left\{1,2\max_{G\con F}\norm{\sum_{n\in G}x_n}\right\}\le 2\sup_{G\con A}\norm{\sum_{n\in G}x_n}.
\]
{Thus $\mc B (\bf x) \subseteq \mc B (\bf u)$.} On the other hand, since $\nrm{u_n}\ge 1$ for every $n$, 
no infinite $A$ belongs to $\mc C(\bf u)$.  

{\it (ii)}: 
As before, given $\mc I = \mc C(\bf x)$ we define a norm on $c_{00}$ through
\[
\normIII{(a_n)_n} := \max\left\{\max_{n} \frac{1}{n+1}|a_n|, \max_{(\theta_n)_n\in \{-1,1\}^\N} \norm{\sum_n \theta_n a_n x_n}\right\}.
\]
A similar argument as above shows that the standard basis $(u_n)_n$ of $c_{00}$ is $1$-unconditional in the completion $\widetilde{X}$, and that $\mc C(\bf u) = \mc C(\bf x)$.
\end{proof}

We do not know if a $\mc C$-representable ideal $\mc I$ can be rewritten as $\mc I = \mc C(\bf e)$ for some sequence $\bf e$ with $\mc B(\bf e) = \mc P(\N)$. Note that in the proof of \ref{item:lkmngtktjgkdlfgdf:C} above the resulting sequence $\bf u$ satisfies $\mc B(\bf u) = \mc B(\bf x)$.


A basis $(x_k)_k$ is called \emph{1-subsymmetric} (or 1-spreading) if 
\[
\left\|\sum_{k=1}^n a_k x_k\right\| = \left\|\sum_{k=1}^n a_{k} x_{l_k}\right\|
\]
for every sequence of scalars $(a_k)_{k=1}^n$ and every increasing sequence of indices $l_1 < \cdots < l_n$. If, in addition, $(x_k)_k$ is weakly null, then it is automatically 1-unconditional.

\begin{question}
Study the properties of $\mathcal{B}((x_k)_k)$ when $(x_k)_k$ is a 1-subsymmetric basis.
\end{question}

\begin{question}
Let us say that two sequences ${\bf x}=(x_n)$ and  ${\bf y}=(y_n)$ in a Banach space $X$ are $\B$-equivalent if
$\B({\bf x})= \B({\bf y})$. How complex is this equivalence relation? 
\end{question}
\section{\texorpdfstring{Tallness, $c_0$-saturation and summability}{Tallness and c0-saturation }}

In this section we explore the relationship between the tallness of a $\mathcal{B}$-ideal and the sequential $c_0$-saturation of the sequence that represents it. This relationship is established in Theorem \ref{c0saturacion}, whose proof employs standard techniques from Banach space theory. The theorem highlights the central role of the space $c_0$ in the $\mathcal{B}$-representation of ideals. We demonstrate that whenever an ideal $\mc B((x_n)_n)$ is tall and represented by a seminormalized sequence $(x_n)_n$, then this sequence must be saturated by sequences equivalent to the unit basis of $c_0$. It is natural to then ask if the space spanned by $(x_n)_n$ is saturated by isomorphic copies of $c_0$. In Proposition \ref{mlkdmflkdsfds}, we show that a natural class of spaces $\mc I(X,\varphi)$, with $X$ not containing copies of $c_0$, contradicts this. Specifically, these examples can be $\mc B$-represented in a space that does not contain copies of $c_0$ and consequently are simultaneously $\mc B$ and $\mc C$-represented by the same sequence.

Interestingly, if we strengthen the hypothesis from not containing “full” isomorphic copies of $c_0$ to not containing “asymptotic” finite-dimensional copies of $c_0$, we find that the corresponding ideal $\mc B((x_n)_n)$ satisfies the inclusion
\[
\mathrm{Sum}((\|x_n\|)_n) \subseteq \mc B((x_n)_n) \subseteq \mathrm{Sum}((\|x_n\|^q)_n),
\]
for some $2 \leq q < \infty$. 
The proof of this extension theorem relies on the local theory of Banach spaces, more precisely on the notion of cotype of a Banach space, which was primarily developed by B. Maurey and G. Pisier \cite{MauPis}. A particular case of independent interest is the $\mc B$-representability in a Hilbert space.

The following is a classical characterization that uses the uniform boundedness principle. 
 
\begin{prop}
\label{wuc}
A series $\sum_{n\in \N}x_n$ in a Banach space  $X$ is weakly unconditionally convergent exactly when the numerical series $\sum_n x^*(x_n)$ converge unconditionally for every   $x^*\in X^*$.  Consequently, if the ideal $\B({\mb x})$ is tall, then the sequence ${\mb x}$ is weakly null. \qed
\end{prop}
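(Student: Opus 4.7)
My plan is to prove the two parts separately: first the equivalence, then the consequence about weak nullity.

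For the nontrivial direction of the equivalence (that unconditional convergence of every $\sum_n x^*(x_n)$ implies $\sup_{F\in [\N]^{<\omega}}\norm{\sum_{n\in F} x_n}<\infty$), I would invoke the uniform boundedness principle as suggested. Specifically, for each finite $F\con \N$ consider the evaluation functional $T_F\in X^{**}$ defined by $T_F(x^*) := x^*(\sum_{n\in F}x_n) = \sum_{n\in F} x^*(x_n)$, whose norm equals $\nrm{\sum_{n\in F}x_n}$ by Hahn--Banach. The hypothesis says that $\sum_n x^*(x_n)$ is unconditionally convergent, hence absolutely convergent as a numerical series, so $\sup_F |T_F(x^*)|\le \sum_n |x^*(x_n)|<\infty$ for every $x^*\in X^*$. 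Banach--Steinhaus then gives $\sup_F \nrm{T_F}<\infty$, which is exactly the desired uniform bound. The converse direction is elementary: if $\sup_F\nrm{\sum_{n\in F}x_n}\le M$ and $x^*\in X^*$, splitting a finite initial segment into the indices where $x^*(x_n)$ is positive and negative yields $\sum_{n\le N}|x^*(x_n)|\le 2\nrm{x^*} M$, so $\sum x^*(x_n)$ is absolutely, hence unconditionally, convergent.

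For the consequence, I would argue by contraposition. Suppose $\mb x=(x_n)_n$ is not weakly null. Then there exist $x^*\in X^*$, $\de>0$, and an infinite set $M\con \N$ such that $|x^*(x_n)|\ge \de$ for all $n\in M$. Since $\B(\mb x)$ is tall, pick an infinite $A\con M$ with $A\in \B(\mb x)$, meaning $\sum_{n\in A} x_n$ is weakly unconditionally convergent. By the equivalence just proved, $\sum_{n\in A} x^*(x_n)$ is unconditionally, and therefore absolutely, convergent. This contradicts $|x^*(x_n)|\ge \de$ on the infinite set $A$, so $\mb x$ must be weakly null.

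The main (minor) obstacle is just keeping the bookkeeping straight in the uniform boundedness step, in particular verifying that $\nrm{T_F}$ indeed equals $\nrm{\sum_{n\in F}x_n}$ via the canonical embedding $X\hookrightarrow X^{**}$; everything else is a direct unpacking of definitions and the standard equivalence between unconditional and absolute convergence for numerical series.
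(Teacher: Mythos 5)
Your proof is correct and follows exactly the route the paper signals: the paper gives no proof of its own (it marks the statement with \qed and calls it ``a classical characterization that uses the uniform boundedness principle''), and your argument is precisely the standard one via the canonical isometric embedding $X\hookrightarrow X^{**}$, Banach--Steinhaus, and the fact that for numerical series unconditional convergence is equivalent to absolute convergence. The contrapositive deduction of weak nullity from tallness is also the expected argument.
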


Recall that an ideal of the form $\mc C(\mb x)$ is tall if and only if $\mb x$ is norm-null (i.e. norm-converging to zero). Moreover, if $c_0$ does not embed in $X$, then $\mc B(\mb x) = \mc C(\mb x)$ for every sequence, hence we have the same characterization of tallness. However, using the well-known fact that weakly unconditional bases are precisely the sequences equivalent to the unit basis of $c_0$, we can show a general characterization which explicitly takes $c_0$ into account.


{
\begin{teo}
\label{c0saturacion}
Let ${\mb x}=(x_n)_n$ be a  sequence in a Banach space $X$.  The following are equivalent.
\begin{enumerate}[(i), wide=0pt]
     \item $\B({\mb x})$ is tall.
    \item  Every subsequence of ${\mb x}$ has a further subsequence that is either norm-null or equivalent to the unit basis of $c_0$. 
\end{enumerate}
Consequently, 
\begin{enumerate}[(i), wide=0pt] \addtocounter{enumi}{2}
    \item if   $(x_n)_n$ does not have subsequences equivalent to the unit basis of  $c_0$, then $\mc B(\mb x)$ is tall exactly when $\mb x$ is a norm-null sequence, and 
\item if $X$ is isomorphic to a subspace of $c_0$, then $\mc B(\mb x)$ is tall exactly when $\mb x$ is weakly-null.
\end{enumerate}
\end{teo}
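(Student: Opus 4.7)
The plan is to establish the core equivalence (i) $\Leftrightarrow$ (ii) via the classical Bessaga-Pełczyński selection principle, and then derive (iii) and (iv) as straightforward consequences of (ii) combined with Proposition \ref{wuc}.

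For (ii) $\Rightarrow$ (i), I would take any infinite $A \subseteq \N$ and apply (ii) to the subsequence indexed by $A$, obtaining an infinite $B \subseteq A$ such that $(x_n)_{n \in B}$ is either norm-null or equivalent to the unit basis of $c_0$. In the $c_0$-equivalent case, evaluating the equivalence at constant-one coefficients yields $\sup_{F \in [B]^{<\omega}} \nrm{\sum_{n \in F} x_n} \leq C$, so $B \in \mc B(\mb x)$. In the norm-null case, a further subsequence $B' \subseteq B$ can be chosen with $\sum_{n \in B'} \nrm{x_n} < \infty$, giving $B' \in \mc B(\mb x)$. Either way, tallness is established.

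For the harder implication (i) $\Rightarrow$ (ii), I would take any infinite $A$ and use tallness to find $C \subseteq A$ with $C \in \mc B(\mb x)$; by Proposition \ref{wuc}, the subsequence $(x_n)_{n \in C}$ is weakly null. Either $(x_n)_{n \in C}$ admits a norm-null subsequence and we are done, or after thinning we may assume $\inf_{n \in C} \nrm{x_n} \geq \delta > 0$. I would then apply the Bessaga-Pełczyński selection principle to this seminormalized weakly-null sequence to extract a basic subsequence $(x_n)_{n \in B}$. The WUC bound $K := \sup_{F \in [C]^{<\omega}} \nrm{\sum_{n \in F} x_n}$ combined with a sign-choice argument produces the upper $c_0$-estimate $\nrm{\sum_{n \in F} a_n x_n} \leq 2K \max_n |a_n|$, while the basis constant together with $\inf \nrm{x_n} \geq \delta > 0$ yields the corresponding lower $c_0$-estimate; hence $(x_n)_{n \in B}$ is equivalent to the unit basis of $c_0$.

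Consequences (iii) and (iv) should follow quickly from (ii). For (iii), I would use that $\mb x$ is norm-null iff every subsequence has a norm-null sub-subsequence: when no subsequence is $c_0$-equivalent, (ii) collapses exactly to this condition, and conversely a summable thinning of a norm-null sequence gives tallness directly. For (iv), when $X$ embeds in $c_0$, a standard gliding-hump argument shows that every seminormalized weakly-null sequence in $X$ contains a subsequence equivalent to the unit basis of $c_0$ (after passing to a small perturbation of a block basic sequence), so any weakly-null $\mb x$ automatically satisfies (ii); the converse implication is exactly Proposition \ref{wuc}. The main obstacle is the $c_0$-extraction step inside (i) $\Rightarrow$ (ii): one must carefully reconcile the upper WUC bound (obtained via sign-averaging) with the lower bound derived from the basis constant and the norm separation, producing both sides of the $c_0$-equivalence simultaneously.
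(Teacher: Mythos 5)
Your proposal is correct and follows essentially the same route as the paper's proof: (ii)$\Rightarrow$(i) is the same dichotomy, (i)$\Rightarrow$(ii) uses the same combination of weak nullity, Bessaga--Pe\l czy\'nski basic-subsequence extraction (giving the lower $c_0$-estimate via the basis constant and $\inf\nrm{x_n}>0$), and the sign-splitting argument on the WUC bound (giving the upper $c_0$-estimate), and (iii)--(iv) are read off from (ii) exactly as in the paper. One very minor slip: splitting $F$ into the four sign-classes determined by the signs of $x^*(x_n)$ and of $a_n$ yields the upper bound $4K\max_n|a_n|$ rather than $2K\max_n|a_n|$, but this constant is immaterial for the equivalence.
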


\proof
$(ii) \Rightarrow (i)$. Fix $M\in [\N]^\omega$. Then one of the two following options occur: there is  an infinite subset $N$ of $M$ such that $\sum_{n\in N} \|x_n\|\le 1$, and consequently $N\in \B({\mb x})$, or there is an infinite subset $N$ of $M$ and  $C\geq 1$ such that for all $(a_n)_n$ in $c_{00}$
\[
\frac{1}{C}\sup_{n\in N}|a_n| \leq \norm{\sum_{n\in N} a_n x_n}\leq C\sup_{n\in N} |a_n|,
\]
and, in particular, this implies that $N\in\B({\mb x})$. 

\bigskip 

$(i) \Rightarrow (ii)$. This is a direct consequence of the Bessaga-Pelczynski theorem \cite{BP} and the fact that non-trivial weakly-null sequences have basic subsequences. For the sake of completeness we give a detailed proof. First of all, it follows from the Proposition \ref{wuc} that ${\mb x}$ is weakly null, and in particular, $K:=\sup_n \|x_n\|<\infty$. Fix $M\in [\N]^\omega$. If $\inf_{n\in M}\|x_n\|=0$, then  we can easily find $N\subseteq M$ infinite such that $(x_n)_{n\in N}$ is norm null. Otherwise, i.e., if $\delta:=\inf_{n\in M} \|x_n\|>0$, then by a classical result (see for example \cite[Proposition 1.5.4.]{AK}) there is an infinite subset $M_0$ of $M$ such that $(x_n)_{n\in M_0}$ is a basic sequence with basic constant at most $2$, that is, 
$\|\sum_{n\in I} a_n x_n\|\le 2\|\sum_{n\in J} a_n x_n\|$ for every $I\sqsubseteq J \sqsubset M_0$ and every sequence of scalars $(a_n)_{n\in J}$. In particular, it follows easily from here and the triangle inequality that  for each $n\in s\subset M_0$, 
\[
|a_n|= \frac{\|a_n x_n\|}{ \|x_n\|} \le \frac1{\|x_n\|} \left(\norm{\sum_{m\in s, \, m\le n} a_m x_m} + \norm{\sum_{m\in s, \, m< n} a_m x_m}\right) \le \frac{4}{\|x_n\|} \norm{\sum_{m\in s} a_m x_m},
\]
and consequently,
\begin{equation}
\label{ioh43th34rt}
\max_{n\in s} |a_n| \le \frac{4}{\delta} \norm{\sum_{n\in s} a_n x_n}
\end{equation}
for every $s\subseteq M_0$ finite. We use now that $\B(\mb x)$ is tall to find $N\subseteq M_0$ infinite and $K>0$ such that $\sup_{s\in [N]^{<\omega}} \|\sum_{n\in s} x_n\|\le K$. 
Now, given $s\subseteq N$ finite, $x^*$ in the unit ball of the dual space $X^*$ of $X$ and $i,j\in \{0,1\}$, we define
$S_{i,j}:=\{n\in s\, :\,(-1)^i x^*(x_n), (-1)^j a_n\ge 0\}$.  
We have that
  \begin{align}\label{kmetrkomwer}
      \abs{x^*\!\left(\sum_{n\in s}a_n x_n\right)} & \le \sum_{i,j\in \{0,1\}}\left| x^*\!\left(\sum_{n\in s_{i,j}} a_n x_n\right)\right| = \sum_{i,j\in \{0,1\}}(-1)^{i+j}\sum_{n\in s_{i,j}} a_n x^*(x_n) \le \nonumber \\
      & \le \sum_{i,j\in \{0,1\}}\max_{n\in s_{i,j}}|a_n| x^*\!\left(\sum_{n\in s_{i,j}}x_n\right) \le \sum_{i,j\in \{0,1\}}\max_{n\in s_{i,j}}|a_n|\cdot \norm{\sum_{n\in s_{i,j}} x_n} \le 4 K \max_{n\in s} |a_n|
  \end{align}
It follows from this and \eqref{ioh43th34rt} that
\[
\frac{\delta}{4}\max_{n\in s} |a_n|\le \norm{\sum_{n\in s} a_n x_n}\le 4K \max_{n\in s}|a_n|,
\]
and consequently, $(x_n)_{n\in N}$ is equivalent to the unit basis of $c_0$.

{\it (iii)} is trivial from the equivalence between {\it (i)} and {\it (ii)}, while {\it (iv)} holds because if $(x_n)_n$ is a weakly-null sequence of $c_0$, then every subsequence has a further subsequence that is either norm-null or equivalent to  the unit basis of $c_0$, so {\it (ii)} holds. 
\endproof
}

\begin{coro}
A $\mc B$-ideal is tall exactly when it can be $\mc B$-represented by an unconditional basis that is  saturated by  copies of the unit basis of $c_0$.    
\end{coro}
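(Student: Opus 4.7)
The plan is to obtain the corollary as a direct combination of Theorem \ref{c0saturacion} and Proposition \ref{lkmngtktjgkdlfgdf}~(i), once we read off a mild quantitative consequence of the construction used in the latter. I interpret ``saturated by copies of the unit basis of $c_0$'' in the sequential sense: every subsequence of the basis has a further subsequence equivalent to the unit basis of $c_0$.

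The easy direction ($\Leftarrow$) is immediate: if $\mc I = \mc B(\mb u)$ and every subsequence of $\mb u$ has a further subsequence equivalent to the unit basis of $c_0$, then condition (ii) of Theorem \ref{c0saturacion} is trivially satisfied (the ``norm-null'' alternative is not even needed), so $\mc B(\mb u) = \mc I$ is tall.

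For the forward direction ($\Rightarrow$), let $\mc I$ be a tall $\mc B$-ideal. Since $\mc I$ is $\mc B$-representable, Proposition \ref{lkmngtktjgkdlfgdf}~(i) produces a $1$-unconditional basis $\mb u = (u_n)_n$ of some Banach space $\widetilde X$ with $\mc B(\mb u) = \mc I$. The key observation is that by the very definition of the norm $\normIII{\cdot}$ used in that construction, one has
\[
\|u_n\| = \normIII{e_n} = \max\bigl\{1,\|x_n\|\bigr\}\ge 1
\]
for every $n\in \N$, where $(e_n)_n$ denotes the canonical basis of $c_{00}$. In particular no subsequence of $\mb u$ can be norm-null. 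Applying now Theorem \ref{c0saturacion} to the tall ideal $\mc B(\mb u) = \mc I$, every subsequence of $\mb u$ has a further subsequence that is either norm-null or equivalent to the unit basis of $c_0$; the first alternative is ruled out by $\|u_n\|\ge 1$, hence every subsequence of $\mb u$ admits a further subsequence equivalent to the unit basis of $c_0$. This is exactly the desired saturation property.

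The only substantive point is the inequality $\|u_n\|\ge 1$, which is not stated explicitly in Proposition \ref{lkmngtktjgkdlfgdf} but falls out of its proof; I would simply record it as a parenthetical remark. No further work is required, so there is no real obstacle beyond making this observation and citing the two results.
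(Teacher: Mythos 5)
Your proof is correct and follows essentially the same route as the paper: apply Proposition \ref{lkmngtktjgkdlfgdf} to get a $1$-unconditional representing basis, then invoke Theorem \ref{c0saturacion}. The only cosmetic difference is that the paper invokes the basis being \emph{seminormalized} (which requires also observing that tallness forces weak nullity, hence boundedness above), whereas you isolate the lower bound $\|u_n\|\ge 1$ from the explicit norm formula, which by itself suffices to rule out the norm-null alternative in Theorem \ref{c0saturacion}~(ii) --- a slightly leaner observation.
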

\begin{proof}
Let $\mc I$ be a $\mc B$-ideal. Suppose that $\mc I$ is tall.  We use Proposition \ref{lkmngtktjgkdlfgdf} to represent $\mc I=\mc B((x_n)_n)$ where $(x_n)_n$ is a 1-unconditional seminormalized basis of some Banach space $X$. It follows from Theorem \ref{c0saturacion} that $(x_n)_n$ has to be saturated by copies of the unit basis of $c_0$, that is every subsequence of $(x_n)_n$ has a further subsequence equivalent to the unit basis of $c_0$. 

Suppose that $\mc I=\mc B((x_n)_n)$ where $(x_n)_n$ is as in the statement of the Corollary. Then given $A\con \N$ infinite, we can choose a subsequence $(x_n)_{n\in B}$ of $(x_n)_{n\in A}$ that is $C$-equivalent to the unit basis of $c_0$, and in particular $\sup_{F\in [B]^{<\infty}}\nrm{\sum_{n\in F}x_n}\le C$, that is, $B\in \mc B((x_n)_n)$. 
\end{proof}


From this result, it is natural to ask whether every tall $\mathcal{B}$-ideal can be $\mathcal{B}$-represented in $c_0$. In Section \ref{nonc0}, we will show that there exist tall $\mathcal{B}$-ideals that are not $\mathcal{B}$-represented in $c_0$. More generally, we aim to understand $\mathcal{B}$-representations in spaces with many copies of $c_0$. The most natural examples, the spaces of continuous functions on a compact and countable space, will be analyzed in Subsection \ref{lmklfmklsdfs}.

On the opposite direction, we may have tall $\mc B$-ideals that can be $\mc B$-represented by a sequence in some space not containing isomorphic copies of $c_0$. This is one of the examples.    
 Given a {\em Banach lattice sequence space} $X$ (i.e. $X\con \R^\N$ is such that $X$ is a Banach lattice  with the lattice structure endowed as subspace of $\R^\N$) and a sequence of measures 
$\boldsymbol{\varphi}:=(\varphi_n)_n$ we define 
\[
\mc I({X,\boldsymbol{\varphi}}):=\conj{A\con \N}{(\varphi_n(A))_n\in X}.
\]
These are ideals: if $A\con B\in \mc I$, then $\varphi_n(B)\le \varphi_n(A)$ for every $n$, so $\nrm{(\varphi_n(B))_n}_X\le \nrm{(\varphi_n(A))_n}_X$, because $X$ is a Banach lattice, while if $A,B\in \mc I$, then each $\varphi_n(A\cup B)\le \varphi_n(A)+\varphi_n(B)$, hence, by unconditionality and the triangle inequality, 
$$\nrm{(\varphi_n(A\cup B))_n}_X\le \nrm{(\varphi_n(A))_n +(\varphi_n(B))_n}_X\le \nrm{(\varphi_n(A))_n}_X +\nrm{(\varphi_n(B))_n}_X.$$
Since we are assuming that all ideals contain $\fin$, we suppose that $(\varphi_n(\{k\}))_n\in X$ for every $k\in \N$, so $\fin \con \mc I(X,\boldsymbol\vphi)$. 
Observe that any $\mc B$-ideal can be represented this way, given that if $\mc I= \fin(\vphi)$, $\vphi=\sup_n \vphi_n$, then we have $\mc I= \mc I(\ell_\infty, (\vphi_n)_n)$ clearly.  Natural Banach lattices sequence spaces are spaces $E$ with a 1-unconditional Schauder basis $(e_n)_n$, because $E$ is obviously isometric to $X:=\conj{(a_n)_n\in \R^\N}{\nrm{\sum_n a_n x_n}_E<\infty}$ endowed with the norm $\nrm{(a_n)_n}_X:= \nrm{\sum_n a_n e_n}_E$. When  $X=c_0$, the corresponding ideal 
 $\mc I(X,\boldsymbol{\vphi})$ is $F_{\sigma\delta}$.  On the opposite we have the following.
\begin{prop}\label{mlkdmflkdsfds}
If $X$ does not have isomorphic copies of $c_0$, the ideal $\mc I(X,\boldsymbol{\vphi})$ is a  $\mc B$ and $\mc C$-ideal.   
\end{prop}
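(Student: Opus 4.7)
The plan is to produce a single sequence in $X$ that witnesses both the $\mc B$- and $\mc C$-representability of $\mc I(X,\boldsymbol\vphi)$. I would take $y_k := (\vphi_n(\{k\}))_n$, which lies in $X$ by the standing hypothesis $\fin \con \mc I(X,\boldsymbol\vphi)$. The key identity, immediate from additivity of each measure $\vphi_n$ applied coordinatewise in $X\con \R^\N$, is
\[
\sum_{k \in F} y_k = (\vphi_n(F))_n \quad \text{for every finite } F \con \N.
\]

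For the inclusion $\mc I(X,\boldsymbol\vphi) \con \mc B((y_k)_k)$ the argument is direct: if $(\vphi_n(A))_n \in X$, monotonicity of each $\vphi_n$ gives $0\le \sum_{k\in F}y_k \le (\vphi_n(A))_n$ in the lattice order for every finite $F\con A$, and monotonicity of the lattice norm produces the uniform bound required for $A \in \mc B((y_k)_k)$. The equality $\mc B((y_k)_k) = \mc C((y_k)_k)$ is then free under the hypothesis that $X$ contains no copy of $c_0$: it is exactly the Drewnowski--Labuda consequence of Bessaga--Pelczynski recalled in the introduction.

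The remaining inclusion $\mc C((y_k)_k) \con \mc I(X,\boldsymbol\vphi)$ is the one to watch. If $z \in X$ is the unconditional sum of $\sum_{k\in A} y_k$, then since $(y_k)_k$ is positive and the positive cone of $X$ is norm-closed, the identity $z - \sum_{k\in F} y_k = \sum_{k\in A\setminus F} y_k \ge 0$ shows that $z$ dominates every partial sum $(\vphi_n(F))_n$ in the lattice order of $X$, and a standard cone-closedness argument shows that $z$ is in fact the lattice supremum of these partial sums in $X$. The task is to identify this supremum with the pointwise supremum $(\vphi_n(A))_n\in \R^\N$. This is the step I expect to be the main obstacle: it requires the topology of $X$ to refine pointwise convergence on $\R^\N$ --- equivalently, continuity of the coordinate evaluations $x\mapsto x(m)$ on $X$ --- which is the Fatou-type feature implicit in the notion of ``Banach lattice sequence space'' (and can be derived from solidity together with $(\vphi_n(A))_n$ being pointwise dominated by $z\in X$). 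Once granted, $z=(\vphi_n(A))_n \in X$, and hence $A\in \mc I(X,\boldsymbol\vphi)$, concluding the proof.
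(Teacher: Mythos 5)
Your proof is correct, and it takes a genuinely different route from the paper's. The paper works with the more elaborate sequence $\mathbf y$ with $y_k := (\vphi_n(\{k\})u_k)_n$ in the auxiliary space $Y = (\oplus_n\ell_1)_X$, and then proves the hard inclusion $\mc B(\mathbf y)\subseteq \mc I(X,\boldsymbol\vphi)$ directly by invoking the fact that a Banach lattice without a copy of $c_0$ is monotonically complete (KB), so that the norm-bounded increasing net $((\vphi_n(F))_n)_{F\in[A]^{<\omega}}$ has a supremum in $X$, which is then identified with $(\vphi_n(A))_n$; the identity $\mc B(\mathbf y)=\mc C(\mathbf y)$ is obtained afterwards because $Y$ (having no copy of $c_0$, since neither $\ell_1$ nor $X$ does) also forbids $c_0$. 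You instead work with the natural sequence $y_k=(\vphi_n(\{k\}))_n$ directly in $X$ — which produces the same partial-sum norms $\nrm{\sum_{k\in F}y_k}_X = \nrm{(\vphi_n(F))_n}_X$, so the $Y$-detour buys nothing — and close the loop by proving $\mc C\subseteq\mc I$ rather than $\mc B\subseteq\mc I$. This shift of target lets you bypass the KB-space characterization entirely: you only use the Bessaga--Pe{\l}czy\'nski/Drewnowski--Labuda fact once, to get $\mc B=\mc C$, whereas the paper effectively invokes two deep consequences of ``no $c_0$'' (Lozanovskii/KB for $\mc B\subseteq\mc I$, then Bessaga--Pe{\l}czy\'nski for $\mc B=\mc C$). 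In that sense your argument is leaner.

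One small improvement to your write-up: the step you flag as ``the main obstacle'' — that the $X$-norm topology refines pointwise convergence — is not really an extra hypothesis and needs neither solidity nor a Fatou property. Each coordinate functional $x\mapsto x(m)$ is linear and positive on the Banach lattice $X$ (positivity in $X$ is coordinatewise by assumption), and every positive linear functional on a Banach lattice is automatically norm-bounded. So from $z=\lim_N\sum_{k\in A\cap[0,N]}y_k$ in norm you get $z(m)=\lim_N\vphi_m(A\cap[0,N])=\vphi_m(A)$ directly, by $\sigma$-additivity of $\vphi_m$, and you do not even need to phrase things via lattice suprema. With that observation made explicit, your proof is complete and, in my view, cleaner than the one in the paper.
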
 
\begin{proof}
 Let $Y:= (\oplus_n\ell_1)_X$, that is, the space of sequences $(y_n)_n\in \ell_1^\N$ such that $(\nrm{y_n}_1)_n\in X$, with the norm $\nrm{(y_n)_n}_Y:= \norm{(\nrm{y_n}_1)_n}_X$. For each $k$, let $y_k:= (\vphi_n(\{k\})u_k)_n\in Y$. Then $\mc I(X,\boldsymbol{\vphi})=\mc B(\bf y)$.  
 Observe that for a finite set $F\con \N$ we have that
$$\norm{\sum_{k\in F} y_k}_Y= \norm{\left(\sum_{k\in F} \vphi_n(\{k\})u_k\right)_n}_Y=\norm{\left(\norm{\sum_{k\in F} \vphi_n(\{k\})u_k}_{\ell_1}\right)_n}_X=
\norm{( \vphi_n(F))_n}_X.$$
suppose that $A\in \mc I(X,\vphi)$, that is, $(\vphi_n(A))_n\in X$. Since $(u_n)_n$ is a unconditional basis, it follows that $\sup_{F\in [A]^{<\infty}}\nrm{(\vphi_n(F))_n}_X\le \nrm{(\vphi_n(A))_n}_X<\infty$, hence $A\in \mc B(\mb x)$. Suppose that $A\in \mc B(\bf x)$, or in other words, $\sup_{F\in [A]^{<\infty}} \nrm{(\vphi_n(F))_n}_X =C<\infty$.  
The sequence $ ((\vphi_n(F))_n)_{F\in [A]^{<\infty}}$ of elements of $X$ is bounded in norm by $C$, and since $X$ does not contain $c_0$, it is order-complete, that is, $\bigvee_{F\in [A]^{<\infty}} (\vphi_n(F))_n$ belongs to $X$, but its supremum is equal to $(\vphi_n(A))_n$, proving that $A\in \mc I(X,\boldsymbol{\vphi})$.

Next, since neither $\ell_1$ nor $X$ have copies of $c_0$, it is a well known fact that $Y$ does not have isomorphic copies of $c_0$. It follows that $\mc B(\bf y)=\mc C(\bf y)$. 
\end{proof}
Now, by choosing  $X$ and $(\vphi_n)_n$ so that $\lim_{k\to \infty} \nrm{(\vphi_n(\{k\}))_n}_X=0$, it follows that the corresponding ideal $\mc B(\bf y)$ is a tall ideal that is represented in a space $Y$ without copies of $c_0$.  For example, let $X:=\ell_2$ and for each $n$, take the uniform probability 
$$\vphi_n(A):= \frac{ \#(A\cap [2^n-1, 2^{n+1}-1[)}{ 2^n}$$
and observe that $\nrm{y_k}_Y=\nrm{(\vphi_n(\{k\}))_n}_{\ell_2}= 1/2^n $, where $n$ is such that $2^n-1 \le k<2^{n+1}-1$.

\medskip 

A key feature of considering the uniform probability on each of the dyadic intervals $I_n$ is that the collection of possible values of probability is dense in the unit interval.

\subsection{\texorpdfstring{$\mc B$-representability vs summability}{B-representability vs summability}}


R. Filip\'ow and J. Tryba recently proved in \cite[Corollary 11.3]{FilipowTryba2024} that every non-trivial $\mc B$-ideal can be extended to a non-trivial summable ideal. Their proof is rather straightforward, as described in the paragraph preceding Remark \ref{i4jirj4ir4e}, where it is adapted to the $\mc B$-representation in spaces of continuous functions.
The extension of ideals to summable ones has led to intriguing characterizations, including   connections to Riemann summability (see \cite{FilipowSzuca2010}). Notably, several classical ideals, such as the Mazur ideal $\mc M$ and the ideal $\mathcal{Z}$, cannot be extended in this way, and consequently they are pathological. For additional examples, we refer the reader to \cite{FilipowSzuca2010}, \cite{FilipowTryba2024}.

Furthermore, it has been established in \cite{Borodulinetal2015} that every $F_{\sigma}$ ideal that is $\mathcal{C}$-representable in $c_0$ is necessarily summable. This result leads us to the following.

\begin{prop}
An ideal is $\mc B$-representable in a finite dimensional space exactly when it is summable.    
\end{prop}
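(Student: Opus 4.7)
The plan is to verify the biconditional by assembling two observations that are explicitly recorded in the preamble to the proposition, so the proof is essentially a bookkeeping exercise.

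For the direction \emph{summable $\Rightarrow$ $\mc B$-representable in a finite-dimensional space}, suppose $\mc I = \mr{Sum}((\alpha_n)_n)$ for a sequence $(\alpha_n)_n$ of non-negative reals with $\alpha_n<\infty$ for every $n$. I would take $X := \R$ (one-dimensional) and set $x_n := \alpha_n$. Since the $\alpha_n$ are non-negative, for every $A \con \N$ one has
\[
\sup_{F \in [A]^{<\omega}} \abs{\sum_{n \in F} x_n} \;=\; \sum_{n \in A} \alpha_n,
\]
so $\mc B((x_n)_n) = \{A \con \N : \sum_{n\in A}\alpha_n<\infty\} = \mc I$. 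In particular, the witnessing dimension can always be taken to be $1$.

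For the converse, assume $\mc I = \mc B((x_n)_n)$ for a sequence in a finite-dimensional Banach space $X$. The first ingredient, already noted in the text, is that on a finite-dimensional $X$ the weak and norm topologies agree, hence weak unconditional convergence and unconditional convergence coincide; this yields $\mc B((x_n)_n) = \mc C((x_n)_n)$. The second ingredient, also recalled in the preamble, is that in finite dimensions unconditional convergence of a series is equivalent to absolute convergence (Riemann's rearrangement theorem applied componentwise, after choosing an equivalent $\ell_\infty^{\dim X}$-norm), so $\mc C((x_n)_n) = \mr{Sum}((\nrm{x_n})_n)$. Chaining the two equalities gives $\mc I = \mr{Sum}((\nrm{x_n})_n)$, a summable ideal.

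There is no real obstacle: the proposition is a direct synthesis of facts already stated in the paper, and its function is to package them as a clean characterization, complementing the much more delicate characterizations of $\mc B$-representability in $c_0$ and in $C(K)$-spaces pursued elsewhere in the excerpt. The only mild subtlety to point out is that the ``easy'' direction collapses all the way to $X=\R$, confirming that the dimension of the representing space cannot be used to refine the class of summable ideals.
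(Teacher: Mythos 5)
Your easy direction (summable $\Rightarrow$ $\mc B$-representable in $\R$) is correct, and it actually supplies a direction the paper's printed proof omits: the paper only argues from a finite-dimensional representation to summability, leaving the converse to the earlier discussion in Section~\ref{klkl4tmerlkgrfg}. Your one-dimensional construction with $x_n=\alpha_n\ge 0$ is exactly what is needed there.

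For the forward direction, though, there is a gap in the reasoning you borrow from the preamble. You assert that because the weak and norm topologies on a finite-dimensional space coincide, weak unconditional convergence and unconditional convergence coincide, hence $\mc B(\mb x)=\mc C(\mb x)$. That is not a valid step on its own: weak unconditional convergence of $\sum_n x_n$ is the \emph{boundedness} statement $\sup_{F}\nrm{\sum_{n\in F}x_n}<\infty$ (equivalently, $\sum_n\abs{x^*(x_n)}<\infty$ for every $x^*$), not a statement that the partial sums converge in any topology; identifying the weak and norm topologies does nothing to turn bounded partial sums into convergent ones. The paper's own proof of the proposition is precisely where this gap is closed: it either invokes Drewnowski--Labuda (no $c_0$ in $X$ implies $\mc B=\mc C$), or gives the direct argument — from $M\notin\mc C(\mb x)$ extract successive blocks $G_0<G_1<\cdots$ with $\nrm{\sum_{n\in G_k}x_n}\ge\vep$, stabilize a coordinate $j$ and a sign, and take unions of the trimmed $G_k'$ to make $\nrm{\sum_{n\in F}x_n}$ arbitrarily large, so $M\notin\mc B(\mb x)$. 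A cleaner replacement for your informal claim, if you prefer not to reproduce the gliding-hump argument, is to test weak unconditional convergence directly against the $N$ coordinate functionals of $\ell_\infty^N$: this gives $\sum_n\abs{x_n(j)}<\infty$ for each $j$, hence $\sum_n\nrm{x_n}<\infty$, and absolute convergence already yields both $\mc C(\mb x)=\mr{Sum}((\nrm{x_n})_n)$ and $\mc B(\mb x)=\mc C(\mb x)$ in one stroke. Either way, you cannot leave the ``weak $=$ norm'' phrase as the justification; that is the one point where the proposition is not merely bookkeeping.
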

\begin{proof}
Suppose that $\mathbf x=(x_n)_n$ is a sequence in a finite dimensional normed space $X$. Since $X$ is finite dimensional, it does not contain isomorphic copies of $c_0$, hence, by Drewnowski-Labuda in \cite{Drewnowski} we obtain that  $\mc B(\bf x) = \mc C(\bf x)$. This can be easily seen directly: 
without loss of generality, we assume that $X=(\R^N, \nrm{\cdot}_\infty)$, $N\in \N$. 
Assume that $M\notin\mc C(\bf x)$ for some sequence $\bf x$ in $X$, does not converge unconditionally, i.e. there exists $\vep > 0$ such that for every $F\subseteq M$ finite there is $G$ finite and disjoint from $F$ such that $\norm{\sum_{n\in G} x_n}\geq\vep$. By recursion we can construct a sequence of finite subsets $G_0 < G_1 < \cdots$ such that $\norm{\sum_{n\in G_k} x_n}\geq\vep$ for all $k$. For each $k$ there is a coordinate $1\leq j\leq N$ such that $|\sum_{n\in G_k} x_n(j)|\geq \vep/N$, and passing to a subsequence we can assume that $j$ is the same for all $k$. For each $k$ we can now find $G_k'\subseteq G_k$ and a sign $\theta_k\in\{-1,1\}$ such that $\theta_k \sum_{n\in G_k'} x_n(j)\geq \vep/2N$. Again passing to a subsequence we can assume that $\theta_k = 1$ for all $k$, and therefore
\[
\sup\conjbig{\norm{\sum_{n\in F} x_n}}{\text{$F\subseteq M$ finite}} \geq \norm{\sum_{n\in\bigcup_{i=0}^k G_k'} x_n} 
\geq \sum_{i=0}^k \sum_{n\in G_k'} x_n(j) \geq \frac{k\vep}{2N}.
\]
Since $k$ here is arbitrary, $M\notin \mc B(\bf x)$.

Now, since  $X$ is finite dimensional, a series is unconditionally convergent exactly when it is absolutely convergent, that is, $\mc B(\mathbf x)= \mc C(\mathbf x)= \mr{Sum}((\nrm{x_n})_n)$.
\end{proof}

We have mentioned before that every $\mc B$-ideal can be represented by an unconditional Schauder basis (see Proposition \ref{lkmngtktjgkdlfgdf}). If this can be done in a Hilbert space, we have the following.

\begin{prop}
An ideal can be  $\mc B$-represented in a Hilbert space by an unconditional basic sequence exactly when it is summable.    
\end{prop}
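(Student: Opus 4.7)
The approach splits into two directions. The easy one --- summable implies Hilbert-representability --- is immediate: given $\mc I = \mr{Sum}((a_n)_n)$, I would first assume $a_n > 0$ for all $n$ (replacing $(a_n)_n$ by $(a_n+\vep_n)_n$ for some fixed summable positive sequence $(\vep_n)_n$ does not change the summable ideal) and then set $x_n := \sqrt{a_n}\,e_n$ in $\ell_2$, where $(e_n)_n$ is the canonical orthonormal basis. The sequence $(x_n)_n$ is orthogonal and nonzero, hence $1$-unconditional basic, and $\nrm{\sum_{n\in F}x_n}^2 = \sum_{n\in F}a_n$ immediately yields $\mc B((x_n)_n) = \mr{Sum}((a_n)_n) = \mc I$.

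For the harder direction I would suppose $\mc I = \mc B((x_n)_n)$ with $(x_n)_n$ an unconditional basic sequence in a Hilbert space $H$, with unconditional constant $C \geq 1$. The one Hilbert-specific ingredient is the parallelogram-type identity
\[
2^{-|F|}\sum_{\theta\in\{-1,1\}^F}\norm{\sum_{n\in F}\theta_n x_n}^2 = \sum_{n\in F}\nrm{x_n}^2,
\]
which follows from expanding the inner products and observing that for $i \neq j$ the cross-term coefficient $2^{-|F|}\sum_\theta \theta_i\theta_j$ vanishes. Unconditionality, on the other hand, gives the two-sided bound
\[
C^{-1}\norm{\sum_{n\in F}x_n} \le \norm{\sum_{n\in F}\theta_n x_n} \le C\norm{\sum_{n\in F}x_n}
\]
for every finite $F \con \N$ and every sign vector $\theta \in \{-1,1\}^F$.

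Squaring this and averaging over $\theta$, in combination with the Hilbertian identity, produces the two-sided comparison
\[
C^{-2}\sum_{n\in F}\nrm{x_n}^2 \le \norm{\sum_{n\in F}x_n}^2 \le C^2\sum_{n\in F}\nrm{x_n}^2,
\]
valid for every finite $F \con \N$. Taking the supremum over finite $F \con A$ then identifies membership $A \in \mc B((x_n)_n)$ with $\sum_{n\in A}\nrm{x_n}^2 < \infty$, so $\mc I = \mr{Sum}((\nrm{x_n}^2)_n)$, a summable ideal.

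There is no substantial obstacle here; the argument is a short direct computation. The only conceptual point worth emphasizing is that the parallelogram identity promotes the one-sided cotype-$2$ inclusion $\mc B((x_n)_n) \con \mr{Sum}((\nrm{x_n}^2)_n)$ --- already alluded to in the paragraphs preceding the proposition --- into an equality, thereby forcing $\mc I$ not merely to be contained in, but to actually coincide with, a summable ideal. In Banach spaces of non-trivial cotype without a parallelogram identity only the upper inclusion is available, and the argument for equality breaks down precisely there.
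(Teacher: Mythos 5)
Your proof is correct and follows essentially the same approach as the paper's: the hard direction is exactly the parallelogram-identity-plus-unconditionality averaging argument the paper explains in the paragraph immediately after the proposition, and your easy direction (taking $x_n = \sqrt{a_n}\,e_n$ in $\ell_2$, after perturbing $(a_n)_n$ to be strictly positive) is the obvious one the paper leaves implicit.
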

\begin{proof}
Suppose that $\mc I=\mc B(\bf x)$ with $\bf x$ an unconditional basic sequence in a Hilbert space $H$. It is well known --see after this proof--  that unconditional bases of Hilbert space are equivalent to an orthogonal basis. Hence, there is some constant $K\ge 1$ such that  $(1/K)\sum_{n\in A}\nrm{x_n}^2\le \nrm{\sum_{n\in A} x_n}^2\le K \sum_{n\in A} \nrm{x_n}^2$ for every $A\con \N$. This means that $\mc B({\bf x})= \mr{Sum}((\nrm{x_n}^2)_n)$. 
\end{proof}
The proof of the uniqueness of unconditional basic sequences in a Hilbert space follows directly from the {\em generalized parallelogram identity}: for every sequence $(y_k)_{k=1}^n$ in a Hilbert space we have that
\begin{equation}\label{jo4i5tjoijoitrg}
    \mathbb E_{(\theta_k)_k\in \{-1,1\}^n}\!\left(\norm{\sum_{k=1}^n \theta_k y_k}^2\right) := \frac{1}{2^n}\sum_{(\theta_k)_k\in \{-1,1\}^n} \norm{\sum_{k=1}^n \theta_k y_k}^2= \sum_{k=1}^n \nrm{y_k}^2,
\end{equation}
that is, the average $\mathbb E_\theta \nrm{\sum_{k}\theta_k y_k}^2$ of the square of the norm of perturbation by signs is the sum of the square of the norms.  Hence, if $(x_k)_k$ is a $C$-unconditional basic sequence, then for every sequence of scalars $(a_k)_{k=1}^n$ we have that 
$C^{-2}\nrm{\sum_{k=1}^n a_k x_k}^2\le \mathbb E_{(\theta_k)_k}\left( \nrm{\sum_{k=1}^n \theta_k a_k x_k}^2\right)\le {C}^2\nrm{\sum_{k=1}^n a_k x_k}^2$, and since $\mathbb E_{(\theta_k)_k}\left( \nrm{\sum_{k=1}^n \theta_k a_k x_k}^2\right)= \sum_{k=1}^n a_k^2 \nrm{x_k}^2$, we conclude that $(x_k)_k$ is $C$-equivalent to an orthogonal sequence. 

It is natural to ask for properties similar the generalized parallelogram identity in a given Banach space.  Recall that a Banach space $X$ has {\em cotype} $1\le q\le \infty$ if there is a constant $C$ such that  for every finite sequence $(x_k)_{k=1}^n$ one has that 
\begin{equation}\label{orjerewrfewr}
    \sum_{k=1}^n \nrm{x_k}^q \le C \mathbb E_\theta\!\left(\norm{\sum_{k=1}^n \theta_k x_k}^q\right),
\end{equation}
where when for $q=\infty$ the previous inequality has to be interpreted as $  \max_{k=1}^n\nrm{x_k}\le C \mathbb E_\theta \nrm{\sum_{k=1}^n \theta_k x_k}$. This means, by a simple use of the triangle inequality, that every space has cotype $\infty$, and by using constant sequences, that cotypes $q$ are always at least 2.   
A Banach space $X$ has {\em non trivial cotype} when it has cotype $q$ for some $q<\infty$.  By the Kahane-Khintchine inequality, it follows that $X$ has non trivial cotype $2\le q <\infty$ when there is a constant $C>0$ such that for every finite sequence $(x_k)_{k=1}^n$ in $X$ such that 
\begin{equation}\label{orjerewrfewr1}
    \left(\sum_{k=1}^n \nrm{x_k}^q \right)^\frac1q\le C \mathbb E_\theta\!\left(\norm{\sum_{k=1}^n \theta_k x_k}\right).
\end{equation}
We have the following. 

\begin{prop}
For every sequence $\mathbf x=(x_n)_n$ in a space $X$ with non-trivial cotype $q$ one has that 
$$\mr{Sum}((\nrm{x_n})_n)\con \mc B(\mathbf{x})\con \mathrm{Sum}((\nrm{x_n}^q)_n).$$
\end{prop}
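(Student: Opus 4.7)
The plan is to handle the two inclusions separately. The left inclusion $\mr{Sum}((\nrm{x_n})_n)\con \mc B(\mathbf x)$ is an immediate consequence of the triangle inequality: if $A\in\mr{Sum}((\nrm{x_n})_n)$, then for every finite $F\con A$ we have $\nrm{\sum_{n\in F}x_n}\le \sum_{n\in F}\nrm{x_n}\le \sum_{n\in A}\nrm{x_n}<\infty$, so $A\in\mc B(\mathbf x)$. This holds in any Banach space and does not use the cotype assumption at all.

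For the non-trivial inclusion $\mc B(\mathbf x)\con\mr{Sum}((\nrm{x_n}^q)_n)$, fix $A\in\mc B(\mathbf x)$ and let $M:=\sup_{G\in [A]^{<\omega}}\nrm{\sum_{n\in G}x_n}<\infty$. I want to bound $(\sum_{n\in F}\nrm{x_n}^q)^{1/q}$ uniformly over finite $F\con A$ and then let $F$ exhaust $A$. The only tool I would use is the cotype-$q$ inequality \eqref{orjerewrfewr1}, which says
\[
\left(\sum_{n\in F}\nrm{x_n}^q\right)^{1/q}\le C\,\mathbb E_\theta\!\left(\norm{\sum_{n\in F}\theta_n x_n}\right).
\]
So it suffices to bound the right-hand average by a constant multiple of $M$, independently of $F$.

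The key observation is the standard trick of splitting a sign choice into its positive and negative parts: for any $\theta\in\{-1,1\}^F$, writing $F_+:=\conj{n\in F}{\theta_n=1}$ and $F_-:=F\setminus F_+$, we have
\[
\norm{\sum_{n\in F}\theta_n x_n}=\norm{\sum_{n\in F_+}x_n-\sum_{n\in F_-}x_n}\le \norm{\sum_{n\in F_+}x_n}+\norm{\sum_{n\in F_-}x_n}\le 2M,
\]
because both $F_+$ and $F_-$ are finite subsets of $A$. Taking expectation over $\theta$ gives $\mathbb E_\theta\nrm{\sum_{n\in F}\theta_n x_n}\le 2M$, and plugging this into the cotype inequality yields $(\sum_{n\in F}\nrm{x_n}^q)^{1/q}\le 2CM$ for every finite $F\con A$. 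Consequently $\sum_{n\in A}\nrm{x_n}^q\le (2CM)^q<\infty$, i.e.\ $A\in\mr{Sum}((\nrm{x_n}^q)_n)$.

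I do not foresee a genuine obstacle: the entire argument is a two-line application of the cotype inequality together with the splitting trick, and the hypothesis of non-trivial cotype is used exactly once. The only thing worth flagging is the choice of form \eqref{orjerewrfewr1} versus \eqref{orjerewrfewr}: working with the $L^1$ average (legitimate by Kahane--Khintchine, as the authors already noted) keeps the estimate clean, since the bound $\nrm{\sum_{n\in F}\theta_n x_n}\le 2M$ is a pointwise-in-$\theta$ bound rather than an averaged one, so no further inequality is needed.
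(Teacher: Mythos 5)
Your proof is correct and follows essentially the same route as the paper: triangle inequality for the easy inclusion, then the cotype inequality combined with the observation that splitting a sign vector into its positive and negative parts bounds $\nrm{\sum_{n\in F}\theta_n x_n}$ by $2M$. The only cosmetic difference is that you use this splitting as a pointwise-in-$\theta$ bound, whereas the paper packages it as the averaged inequality \eqref{oi43tio34tj5t45} comparing the Rademacher average with the average over subsets; both yield the same constant $2CM$.
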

\begin{proof}
The first inclusion follows trivially from the triangle inequality. 
Suppose now  that $A\in \mc B(\bf x)$, that is $\sup_{F\in [A]^{<\infty}} \nrm{\sum_{k\in F}x_k}=K<\infty$. Then, by  the $q$-cotype of $X$, for every finite set $F\con A$ one 
that 
\[
\left(\sum_{k\in F}\nrm{x_k}^q\right)^\frac1q \le C \mathbb E_{\theta\in \{-1,1\}^F }\norm{\sum_{k\in F} \theta_k x_k} \le 2C\mathbb E_{G\con F}\norm{\sum_{k\in G} x_k}\le 2 C K,
\]
where we have used that  it follows from the triangle inequality that  
\begin{equation}\label{oi43tio34tj5t45}
    \mathbb E_{(\theta_k)_k\in \{-1,1\}^n} \left(\norm{\sum_{k=1}^n \theta_k y_k}\right) \le 2 \mathbb E_{A\con \{1,\dots,n\}} \left(\norm{\sum_{k\in A} y_k}\right).  
\end{equation}
This implies that $\sum_{k\in A} \nrm{x_k}^q<\infty$, that is $A\in \mr{Sum}((\nrm{x_n}^q)_n)$.
\end{proof}
A remarkable result by B. Maurey and G. Pisier \cite{MauPis} states that a space $X$ has non-trivial cotype exactly when $c_0$ is not {\em finitely representable}, that, in the case of $c_0$, means that there is no constant $C$ such that every $\ell_\infty^n$ has  $C$-isomorphic subspace of $X$.\footnote{By the James distortion theorem for $c_0$, a space has an isomorphic copy of $c_0$ iff it has an almost isometrical copy of $c_0$; from here it follows that for every $\vep>0$, $C>0$ and $m\in \N$ there is $n\in \N$ such that if $F$ is $C$-isomorphic to $\ell_\infty^n$ then $F$ has a subspace $1+\vep$-isomorphic to $\ell_\infty^m$.} Combining this with the  Drewnowski-Labuda Theorem \cite{Drewnowski} we obtain the following. 

\begin{coro}
If $c_0$ is not finitely representable in $X$, then there is some $2\le q<\infty$ such that     
$$
\mr{Sum}((\nrm{x_n})_n)\con \mc B(\mathbf x)=\mc C(\mathbf x)\con \mathrm{Sum}((\nrm{x_n}^q)_n)
$$ for every sequence $\mathbf x=(x_n)_n$ in $X$. \qed
\end{coro}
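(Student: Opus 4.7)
The plan is to assemble this corollary as a direct consequence of the three ingredients that have just been set up: the Maurey–Pisier dichotomy, the preceding proposition on cotype sandwiching, and the Drewnowski–Labuda equivalence between $\mathcal{B}$ and $\mathcal{C}$ in the absence of $c_0$.

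First, I would invoke the Maurey–Pisier theorem in the form cited just above: since $c_0$ is not finitely representable in $X$, there exists some $2\le q<\infty$ such that $X$ has cotype $q$. This is the whole content of that classical result in the direction we need, and it is precisely what turns the hypothesis (a local/finite-dimensional statement about $\ell_\infty^n$) into a usable quantitative inequality of the form \eqref{orjerewrfewr1}.

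Next, with that value of $q$ in hand, the preceding proposition applies verbatim to any sequence $\mathbf x=(x_n)_n$ in $X$, yielding
\[
\mathrm{Sum}((\nrm{x_n})_n)\subseteq \mc B(\mathbf x)\subseteq \mathrm{Sum}((\nrm{x_n}^q)_n).
\]
The left inclusion is the triangle inequality, and the right is the cotype estimate applied to each finite sub-sum of $\mathbf x$ restricted to $A\in\mc B(\mathbf x)$, bounded by the supremum $\sup_{F\in[A]^{<\infty}}\|\sum_{k\in F}x_k\|<\infty$, together with the randomization step \eqref{oi43tio34tj5t45}.

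Finally, I would invoke the Drewnowski–Labuda theorem \cite{Drewnowski}: if $X$ does not contain an isomorphic copy of $c_0$, then $\mc B(\mathbf x)=\mc C(\mathbf x)$ for every sequence $\mathbf x$ in $X$. The hypothesis that $c_0$ is not finitely representable in $X$ is strictly stronger than the non-embeddability of $c_0$ in $X$ (an isomorphic copy of $c_0$ would in particular provide uniform copies of every $\ell_\infty^n$), so the Drewnowski–Labuda hypothesis is satisfied and we may insert the equality $\mc B(\mathbf x)=\mc C(\mathbf x)$ in the middle of the chain, completing the claimed string of inclusions. There is essentially no obstacle here: the corollary is a clean conjunction of already-stated results, and the only thing worth flagging carefully is the logical passage from ``$c_0$ not finitely representable'' to ``$c_0$ not isomorphically embedded,'' which is immediate but is the reason both ingredients can be applied simultaneously.
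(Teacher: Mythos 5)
Your proof is correct and follows exactly the route the paper intends: Maurey--Pisier converts the hypothesis into non-trivial cotype $q$, the preceding proposition gives the two $\mathrm{Sum}$-inclusions around $\mathcal{B}(\mathbf{x})$, and Drewnowski--Labuda supplies $\mathcal{B}(\mathbf{x})=\mathcal{C}(\mathbf{x})$ since non-finite-representability of $c_0$ is in particular non-embeddability. The paper leaves this assembly implicit (``Combining this with the Drewnowski--Labuda Theorem \dots''), so your write-up simply makes the same argument explicit.
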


\begin{question}
\label{question-B-Sumable}
Suppose that an ideal is $\mc B$-representable on a space with non trivial cotype $q$. Is it possible to do it in such a way that the corresponding sequence $(x_n)_n$ satisfies that $(\nrm{x_n}^q)_n$ is not summable? 
\end{question}

\section{\texorpdfstring{Representing non-pathological $F_\sig$ ideals on spaces of continuous functions}{Representing non-pathological Fsigma ideals on spaces of continuous functions}}
\label{repre}

In this section we see how to represent effectively $\mc B$ and $\mc C$-ideals  in a space of continuous functions. The effectiveness will come from the fact that we will use  natural Schauder bases of certain spaces of continuous functions.

\begin{defi}[Evaluation sequence]\label{j4iotorejgidf}
For $K\con [0,1]^\N$ compact we define the corresponding {\em evaluation sequence} $\mb p^K=(p_n^K)_n$ in $C(K)$ by $p_n^K(x):=x(n)$ for $x\in K$. 
\end{defi}  
In particular, when $K$ is a closed subset of the Cantor space $2^\N$ then, identifying it with a family of subsets of $\N$, for $F\subseteq\N$ finite we get the expression
\begin{equation}
\label{eq:simpler_expression_p^K}
    \norm{\sum_{n\in F} p_n^K}_{C(K)} = \sup_{A\in K} |F\cap A|.
\end{equation}
Thus, $M\in\mc B(\mb p^K)$ if and only if there is some $n\in\N$ such that $|M\cap A|\leq n$ for all $A\in K$.

Let $\varphi$ be non-pathological lscsm, $\varphi=\sup_{k}\mu_k$, each $\mu_k$ a finite measure. Define now a measure $\nu_k$ on $\N$ by $\nu_{2k}(\{n\}):=\min \{\mu_k(\{n\}), 1\}$, and $\nu_{2k+1}:= \delta_{\{k\}}/(k+1)$. It is easy to see that $\fin(\varphi)=\fin(\sup_k \nu_k)$ and $\mr{Exh}(\varphi) = \mr{Exh}(\sup_k \nu_k)$. Consequently, without loss of generality, we assume that $\sup_{k,n}\mu_k(\{n\})\le 1$ and that for every $n$ there is some $k_n$ such that $\mu_{k_n} = \delta_{\{n\}}/(n+1)$. 
Define $f_k:\N\to [0,1]$, $f_k(n):=\mu_k(\{n\})$ for every $n$. Now let $\boldsymbol \mu:=(\mu_n)_n$, $K:=K(\boldsymbol\mu)$,  be the pointwise closure of $\{f_k\}_k$ in $[0,1]^\N$.  Then $\norm{\sum_{n\in F} p_n^K}_{C(K)} = \varphi(F)$ for all $F\subseteq\N$ finite, so again $\mc B(\mb p^K) = \fin(\varphi)$ and $\mc C(\mb p^K) = \mr{Exh}(\varphi)$.

There is another Banach space associated canonically to the submeasure $\varphi=\sup_k \mu_k$: on $c_{00}(\N)$, define for each $x\in c_{00}$
\[
\nrm{x} := \sup_{k\in\N} |\langle f_k,x\rangle| =\sup_k \abs{\sum_n f_k(n) x(n)}.
 \]
This is a norm because a multiple of each coordinate function 
\[
u_n:=(0,0,\dots, 0, \overset{(n)}1,0,0,\dots) =(n+1)f_{k_n}.
\] 
Let $X:=X(\boldsymbol \mu)$ be the completion of $(c_{00}, \nrm{\cdot})$. Since $\nrm{\sum_{n\in F} u_n}=\varphi(F)$, we obtain that $    \fin(\varphi)=\mc B((u_n)_n)$ and $\mr{Exh}(\varphi)=\mc C((u_n)_n)$.
More precisely, observe that the sequences $(u_n)_n$ in $X$ and $(p_n^K)_n$ in $C(K)$ are equivalent, that is, $\nrm{\sum_{n\in F} a_n u_n}_X = \nrm{\sum_{n\in F} a_n p_n^K}_{C(K)}$ for all sequences of coefficients $(a_n)_n\in c_{00}$.


For the next result, let $\boldsymbol \mu=(\mu_n)_n$ be a sequence of finite measures on $\N$, $\varphi=\sup_{k}\mu_k$ be the corresponding non-pathological lscsm,  and  $X$ and $K$ be, respectively, the space and the compact set introduced in the previous paragraph. 

\begin{prop}
\label{SpaceXmu}
The following three conditions are equivalent, and they hold if $\fin(\varphi)$ is tall:
\begin{enumerate}[(i), wide=0pt]

\item $(u_k)_k$ is weakly-null in $X(\boldsymbol{\mu})$.
\item $\mb p^K$ is weakly-null in $C(K(\boldsymbol{\mu}))$.
\item $K(\boldsymbol{\mu})$ is a compact subset of the unit ball $B_{c_0}$ of $c_0$ with its weak topology. 

\end{enumerate}

\end{prop}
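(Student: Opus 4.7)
The plan is to derive (i) $\Leftrightarrow$ (ii) from the isometric equivalence of the two sequences recorded just before the proposition, to derive (ii) $\Leftrightarrow$ (iii) from a standard characterization of weakly null sequences in $C(K)$, and to deduce the ``tallness implies these conditions'' statement from Proposition \ref{wuc}.

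First I would address (i) $\Leftrightarrow$ (ii). The linear map $u_n \mapsto p_n^K$ extends to a surjective isometry $T$ from $X(\boldsymbol{\mu})$ onto $\overline{\mathrm{span}}\{p_n^K : n \in \N\} \con C(K)$. Since bounded linear operators preserve weak convergence and weak-nullness of a sequence in a closed subspace coincides with weak-nullness in the ambient Banach space (by Hahn--Banach extension of functionals), $(u_n)_n$ is weakly null in $X(\boldsymbol{\mu})$ exactly when $(p_n^K)_n$ is weakly null in $C(K)$.

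For (ii) $\Leftrightarrow$ (iii) I would invoke the standard fact that a bounded sequence in $C(K)$ is weakly null iff it converges pointwise to zero on $K$; this follows from dominated convergence applied to each finite signed regular Borel measure, i.e.\ to each element of $C(K)^*$ via Riesz--Markov. Since $\|p_n^K\|_{C(K)} \le 1$ (as $K \con [0,1]^\N$), boundedness is automatic, and pointwise nullness $p_n^K(f) = f(n) \to 0$ for every $f \in K$ says exactly that $K \con c_0$, hence $K \con B_{c_0}$. On $B_{c_0}$ the product topology coincides with the weak topology inherited from $c_0$ (dominated convergence against $\ell_1$-functionals), so compactness of $K$ in the product topology---automatic since $K$ is a closed subset of the compact $[0,1]^\N$---coincides with compactness of $K$ in the weak topology of $c_0$. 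Therefore (iii) is equivalent to $K \con c_0$, and hence to (ii).

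Finally, under the hypothesis that $\fin(\vphi)$ is tall, since $\B((u_n)_n) = \fin(\vphi)$, Proposition \ref{wuc} immediately yields that $(u_n)_n$ is weakly null in $X(\boldsymbol{\mu})$, i.e.\ (i) holds, and by the already established equivalences, (ii) and (iii) hold as well. I do not expect a serious obstacle in this proof: the only potentially delicate step is the identification of the product and weak topologies on $B_{c_0}$, which is entirely routine via dominated convergence; everything else is a transfer of weak convergence between isometric spaces or a direct citation of Proposition \ref{wuc}.
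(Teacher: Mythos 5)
Your proof is correct and follows the same overall structure as the paper's: (i) $\Leftrightarrow$ (ii) via the isometric equivalence of $(u_n)_n$ and $(p_n^K)_n$, (ii) $\Leftrightarrow$ (iii) via the characterization of weakly null sequences in $C(K)$ as bounded pointwise-null sequences, and the tallness implication via Proposition \ref{wuc} (the paper cites Theorem \ref{c0saturacion}, which rests on the same fact).

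The one genuine difference lies in how you justify the step ``$K\con c_0$ implies $(p_n^K)_n$ is weakly null.'' The paper invokes Rainwater's theorem (weak convergence of a bounded sequence is tested against extreme points of the dual ball, which in $C(K)^*$ are the $\pm\delta_x$), whereas you use the Riesz representation of $C(K)^*$ together with dominated convergence against finite signed Borel measures. Both are standard, but your route is more elementary: for $C(K)$ the dominated-convergence argument \emph{is} the direct proof, while Rainwater's theorem is a general Banach-space statement relying on Choquet theory. You also spell out, more carefully than the paper's ``trivially,'' that the product topology and the $\sigma(c_0,\ell_1)$ topology coincide on $B_{c_0}$, which is the reason (iii) reduces to the inclusion $K\con c_0$; this is a worthwhile clarification, since compactness of $K$ in the product topology is automatic from its definition as a pointwise closure inside $[0,1]^\N$.
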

\begin{proof}
We see the equivalence between those three conditions. {\it (i)} and {\it (ii)} are equivalent because we have observed that the sequences $(u_n)_n$ and $(p_n^K)_n$ are equivalent. {\it (ii)} implies {\it (iii)} trivially, and {\it (iii)} implies {\it (i)} because, by the Rainwater Theorem, it suffices to show that $p_n^K(x)\to_n 0$ for every $x\in K$, and this is the case when $K\con c_0$. The fact that the tallness of $\fin(\varphi)=\mc B((u_n)_n)$ implies that $(u_n)_n$ is weakly null was done in Theorem \ref{c0saturacion}.
\end{proof}

\bigskip

\subsection{\texorpdfstring{An explicit representation in $C[0,1]$ and $C(2^\N)$}{An explicit representation in C[0,1] and C2N}}

By the general results in \cite{Borodulinetal2015,MMU2022}, $\B({\mb g})$ and $\mathcal{C}({\mb g})$ are non-pathological for any $\mathbf g=(g_n)_n$ in $C(\cantor)$.  We present an explicit construction of a  non-pathological lscsm $\varphi$ such that $\B({\mb g})=\fin(\varphi)$ and $\mathcal{C}({\mb g})=\exh(\varphi)$. Conversely, we show how to {\em explicitly} represent non-pathological ideals on the universal spaces $C(\cantor)$ or $C[0,1]$, answering  Question 7.1 from \cite{Borodulinetal2015}. We start with the following fact  generalizing   \cite[Lemma 4.3]{Borodulinetal2015}. In the following statement $K$ denotes a separable compact space.

\begin{prop}\label{prop:positive_functions}
    For every sequence $\mathbf g=(g_n)_n$ of functions in $C(K)$ one has that $\mc C(\mb g)=\mc C(\mb{|g|})$ and $\mc B(\mb g) = \mc B(\mb{|g|})$.
\end{prop}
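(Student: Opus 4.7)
The plan is to show that the associated submeasures $\varphi_{\mathbf g}$ and $\varphi_{\mathbf{|g|}}$, defined by $\varphi_{\mathbf h}(A) := \sup_{F \in [A]^{<\omega}} \|\sum_{n\in F} h_n\|_{C(K)}$ for $\mathbf h = (h_n)_n$ (as recalled in Section~\ref{klkl4tmerlkgrfg}), are equivalent up to a multiplicative constant, specifically that
\[
\varphi_{\mathbf g}(A) \le \varphi_{\mathbf{|g|}}(A) \le 2\,\varphi_{\mathbf g}(A)
\]
for every $A\subseteq \N$. Once this is established, since $\mathcal B(\mathbf h) = \fin(\varphi_{\mathbf h})$ and $\mathcal C(\mathbf h) = \exh(\varphi_{\mathbf h})$ (as explained right after the definition of $\mathcal B$- and $\mathcal C$-representability in the Preliminaries), both equalities $\mathcal B(\mathbf g) = \mathcal B(\mathbf{|g|})$ and $\mathcal C(\mathbf g) = \mathcal C(\mathbf{|g|})$ follow immediately from the fact that equivalent submeasures generate the same $\fin$- and $\exh$-ideals (note that $\varphi_{\mathbf g}(A\setminus [0,n]) \to 0$ iff $\varphi_{\mathbf{|g|}}(A\setminus [0,n]) \to 0$).

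The left-hand inequality $\varphi_{\mathbf g} \le \varphi_{\mathbf{|g|}}$ is just the pointwise triangle inequality: for each finite $F$ and each $x\in K$, $|\sum_{n\in F} g_n(x)| \le \sum_{n\in F} |g_n(x)|$, so taking sup over $x$ gives $\|\sum_{n\in F} g_n\| \le \|\sum_{n\in F} |g_n|\|$. The right-hand inequality is the main point and uses a sign-splitting trick. Given finite $F\subseteq \N$ and $x\in K$, partition $F$ into $F^{+}(x) := \{n\in F : g_n(x) \ge 0\}$ and $F^{-}(x) := F\setminus F^{+}(x)$. Then
\[
\sum_{n\in F} |g_n(x)| = \Bigl|\sum_{n\in F^{+}(x)} g_n(x)\Bigr| + \Bigl|\sum_{n\in F^{-}(x)} g_n(x)\Bigr| \le 2\,\max_{G\subseteq F} \Bigl\|\sum_{n\in G} g_n\Bigr\|_{C(K)},
\]
since each $|\sum_{n\in H} g_n(x)| \le \|\sum_{n\in H} g_n\|$ for $H = F^{\pm}(x)$. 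Taking the sup over $x\in K$ on the left gives $\|\sum_{n\in F} |g_n|\| \le 2\,\max_{G\subseteq F}\|\sum_{n\in G} g_n\|$, and then taking the sup over finite $F\subseteq A$ gives $\varphi_{\mathbf{|g|}}(A) \le 2\,\varphi_{\mathbf g}(A)$, as required.

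I do not expect any real obstacle; the argument is essentially the standard observation that in a Banach lattice the $\sup$ of finite sums and the $\sup$ of finite absolute sums are comparable, and the split into $F^{\pm}(x)$ reduces the problem to a pointwise comparison that survives taking the sup over $K$. Note that the factor $2$ could be refined by using a uniform partition of $F$ into two pieces based on a single functional (e.g.\ a Dirac at a specific $x$), but the cleaner per-$x$ splitting is enough to obtain equality of the associated ideals, which is all that is required.
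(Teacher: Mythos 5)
Your proof is correct and rests on the same central idea as the paper's — split a finite set by the sign of $g_n$ at a point of $K$ and bound by two norm suprema — though you package it as the quantitative equivalence $\varphi_{\mathbf g}\le\varphi_{\mathbf{|g|}}\le 2\varphi_{\mathbf g}$ of the associated lower semicontinuous submeasures, from which both $\mathcal B(\mathbf g)=\mathcal B(\mathbf{|g|})$ and $\mathcal C(\mathbf g)=\mathcal C(\mathbf{|g|})$ follow at once. The paper instead runs the same sign-splitting directly through the Cauchy criterion for (weak) unconditional convergence without isolating the submeasure inequality, so the two arguments differ only in presentation.
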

\begin{proof}
 Clearly, $\mathcal{C}(|{\mb g}|)\subseteq \mathcal{C}({\mb g})$. Suppose $A\notin  \mathcal{C}(|{\mb g}|)$. Then there is $\varepsilon>0$ such that for all $F\subseteq A$ finite there is $G\subseteq A$ finite and disjoint from $F$ such that 
\(
\|\sum_{n\in G}|g_n| \|_\infty\geq \varepsilon.
\)
For a fixed finite subset $F$ of $\N$, let $G\subset \N$ be  disjoint from $F$ and such that $\|\sum_{n\in G} |g_n|\|_\infty\ge \varepsilon$. Let $p\in K$ and $G_0\subset G$  be such that  $|\sum_{n\in G_0} g_n(p)|=\sum_{n\in G_0} |g_n(p)|\ge \varepsilon/2$. Since $F\subset A$ was arbitrary, this proves that $A\notin \mathcal C({\mb g})$. The proof for $\mc B(\mb g)$ is analogous.
%
\end{proof}

The previous result does not hold for Banach lattices that are not function spaces, as demonstrated in the following example.

\begin{ex}\label{ex:ideal_not_positive_coords}
    In \cite[Section 6]{Borodulinetal2015} an example is shown of a $\mc C$-representable ideal in $\ell_1$ which cannot be represented with non-negative elements. The {\em Rademacher ideal} is constructed as follows: for each $n\in\N$ denote by $\Delta_n := \sum_{i=0}^n i$ the $n$-th triangular number. Let $x_0 := (1,0,0,\ldots)$ and, for each $k\geq 1$, define $x_k\in\ell_1$ as
    \[
    x_k(j) := \left\{\begin{array}{ll}
        \frac{1}{n 2^n} (-1)^{\lfloor j/2^{\Delta_{n+1} - k}\rfloor} & \text{if $\Delta_n \leq k < \Delta_{n+1}$ and $2^n - 1\leq j < 2^{n+1} - 1$,} \\
        0 & \text{otherwise.}
    \end{array}\right.
    \]
    Explicitly,
    \[
    x_1 = \frac{1}{2} (0, 1, 1, 0, \ldots),\ x_2 = \frac{1}{2} (0, 1, -1, 0, \ldots),
    \]
    \[
    x_3 = \frac{1}{8} (0,0,0,1,1,1,1,0,\ldots),\ x_4 = \frac{1}{8} (0,0,0,1,1,-1,-1,0,\ldots),\ x_5 = \frac{1}{8} (0,0,0,1,-1,1,-1,0,\ldots),\ \ldots
    \]
    Consider now the Rademacher ideal $\mc I_R := \mc C(\mb x) = \mc B(\mb x)$. It can be shown, through a Khintchine inequality that $\mc I_R$ is not summable, yet clearly every ideal representable in $\ell_1$ with non-negative vectors is summable.
\end{ex}

When $K$ is an uncountable compact metric space, the corresponding function space $C(K)$ is universal for separable Banach spaces (being isomorphic to $C[0,1]$), so     we can represent every non-pathological  ideal in $C(K)$.
We see now how to explicitly define from $\mb g$ in $C(K)$ a sequence of submeasures  defining the ideals $\mathcal B(\mb g)$ and $\mathcal C(\mb g)$.  Let $\mathbf g=(g_n)_n$ in $C(K)$ with  $g_n\geq 0$ for all $n$. 
Let $\Gamma$ be a countable dense subset of $K$ and  let $(\alpha_k)_k$ be an enumeration  of $\Gamma$ such that each element of $\Gamma$ appears infinitely often in the enumeration.  For each $k$ and $A\subseteq \N$, consider the following measure:
\begin{equation}
\label{codes1}
\mu_k(A):= \sum_{n\in A\cap k}g_n(\alpha_k).
\end{equation}
%


Let $\varphi=\sup_k \mu_k$. First we show that $\B({\mb g)}=\fin(\varphi)$. In fact,  suppose $A\in \B({\mb g})$. Since $\mu_k(A)\leq \|\sum_{n\in A\cap k}g_n\|$,  clearly  $\varphi(A)<\infty$. Reciprocally, suppose $\varphi(A)\leq M$. Let 
$\beta\in \cantor$ and $F\subset A$ finite. Pick $\alpha\in \Gamma$ such that 
\[
\abs{\sum_{n\in F} (g_n(\beta)-g_n(\alpha))}\leq 1.
\]
Pick $k$ such that $\max(F)< k$ and $\alpha_k=\alpha$. Then 
\[
\abs{\sum_{n\in F}g_n(\beta)} \leq
\abs{\sum_{n\in F} (g_n(\beta)-g_n(\alpha_k))} + \abs{\sum_{n\in F}g_n(\alpha_k)} \leq 1+ 
\sum_{n\in A\cap k}g_n(\alpha_k)\leq 1+M.
\]
Thus $\|\sum_{n\in s}g_n\|_\infty \leq 1+M$ and $A\in \B({\mb g})$.

\medskip 

Now we verify that  $\mathcal{C}({\mb g})=\exh(\varphi)$.  Let $A\in \exh(\varphi)$. Fix $\varepsilon>0$ and let $n_0$ be such that $\varphi(A\setminus\{0,\ldots,n\})\leq \varepsilon/2$ for all $n\geq n_0$. We will show that 
\[
\norm{\sum_{n\in G} g_n}_\infty\leq\varepsilon
\]
for all finite set $G\subseteq A$ with $\min(G)>n_0$. Fix a finite set $G\subseteq A$ with $\min(G)>n_0$. We need to show that 
\[
\sum_{n\in G}g_n (\beta) \leq\varepsilon
\]
for all $\beta\in K$. Fix $\beta\in K$ and pick $k$ such that $\max(G)<k$ and  
\[
\abs{\sum_{n\in G}(g_n(\beta)-g_n(\alpha_k))} \leq \varepsilon/2.
\]
Then
\begin{align*}
    \sum_{n\in G}g_n (\beta) & \leq \abs{\sum_{n\in G}(g_n(\beta)-g_n(\alpha_k))} + \sum_{n\in G}g_n(\alpha_k)\leq \varepsilon/2 + \sum_{n\in k\cap A\setminus\{0,\ldots,n_0\}}g_n(\alpha_k) \\
    & = \varepsilon/2 + \mu_k(A\setminus\{0,\ldots,n_0\}) \leq \varepsilon/2 + \varphi(A\setminus\{0,\ldots,n_0\}) \leq \varepsilon.
\end{align*}
Conversely, let $A\in \mathcal{C}({\mb g})$.  Fix $\varepsilon>0$ and let $n_0$ be such that
\[
\sum_{n\in G}g_n (\beta) \leq \varepsilon
\]
for all $\beta\in K$ and all $G\subseteq A$ with $\min(G)>n_0$. We will show that  $\varphi(A\setminus\{0,\ldots,n_0\})\leq \varepsilon$. Fix $k\in \N$ and let $G=(k\cap A)\setminus\{0,\ldots,n_0\}$. Then 
\[
\mu_k(A\setminus\{0,\ldots,n_0\})=\sum_{n\in G}g_n(\alpha_k)\leq \varepsilon.
\]


%


Now we treat the other face of the representation of  non-pathological ideals. Let $\varphi=\sup_k \mu_k$ be a lscsm, where $(\mu_k)_k$  is a sequence of measures on $\N$, we will define a sequence $(g_n)_n$ in $C([0,1])$ such that $\B({\mb g})=\fin(\varphi)$ and $\mathcal{C}({\mb g})=\exh(\varphi)$.    We start with the following useful simplification of the lscsm representing the ideals   $\exh(\varphi)$ and $\fin(\varphi)$.

\begin{lema}
\label{finitevalues}
Let $\varphi=\sup_k\mu_k$ be a lscsm where $(\mu_k)_k$ is a sequence of measures on $\N$.    There is another sequence of measures $(\lambda_k)_k$ on $\N$ such that 
\begin{enumerate}[(i), wide=0pt]

\item $\lambda_k\leq \mu_k$ for all $k$.

\item   $\fin(\varphi)=\fin(\psi)$  and $\exh(\varphi)=\exh(\psi)$
where $\psi=\sup_k \lambda_k$.

\item  $\{\lambda_k(\{n\}):\; k\in \N\}$ is finite for all $n$.
\end{enumerate}
\end{lema}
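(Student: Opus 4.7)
The plan is to discretize each value $\mu_k(\{n\})$ to a finite grid whose mesh is summable in $n$, so that the accumulated approximation error introduces only a bounded global perturbation (which preserves $\fin$) and a vanishing tail perturbation (which preserves $\exh$).

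First I would fix a summable sequence of positive reals $(\vep_n)_n$, say $\vep_n := 2^{-n}$, and for each $n$ choose a finite subset $D_n \con [0,\vphi(\{n\})]$ that is $\vep_n$-dense from below (for instance an arithmetic progression with step $\vep_n$; this is legitimate since $\vphi(\{n\})<\infty$ by the lscsm condition). I would then define $\lambda_k$ as the measure on $\N$ whose value on $\{n\}$ is the largest element of $D_n$ not exceeding $\mu_k(\{n\})$. By construction $\lambda_k\le \mu_k$ and $\lambda_k(\{n\})\in D_n$ for every $k$, which immediately gives (i) and (iii). Setting $\psi:=\sup_k \lambda_k$, the pointwise inequality $\mu_k(\{n\})-\lambda_k(\{n\})<\vep_n$ yields, for every $A\con \N$ and every $k$,
\[
\mu_k(A)\le \lambda_k(A)+\sum_{n\in A}\vep_n\le \psi(A)+C,
\]
where $C:=\sum_n\vep_n<\infty$, and hence the sandwich $\psi\le \vphi\le \psi+C$.

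From this sandwich (ii) would follow routinely. The equality $\fin(\vphi)=\fin(\psi)$ is immediate, since the two submeasures differ by the bounded constant $C$. For $\exh$, applying the same estimate to the tails $A\setminus\{0,\dots,N\}$ gives
\[
\vphi(A\setminus\{0,\dots,N\})\le \psi(A\setminus\{0,\dots,N\})+\sum_{n>N}\vep_n,
\]
whose right-hand side tends to $0$ as $N\to\infty$, so $\exh(\psi)\con \exh(\vphi)$; the reverse inclusion is automatic from $\psi\le \vphi$.

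There is no serious obstacle here; the only condition that really matters is the summability of $(\vep_n)_n$, which simultaneously delivers the uniform bound needed to control $\fin$ and the vanishing tail needed to control $\exh$.
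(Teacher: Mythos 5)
Your proposal is correct and is essentially the same argument as the paper's: the paper rounds each $\mu_k(\{n\})$ down to the grid $\{i/2^n : 0\le i<\lfloor 2^n\vphi(\{n\})\rfloor\}$, which is exactly your finite $\vep_n$-net $D_n$ with $\vep_n=2^{-n}$, and uses the identical pointwise error bound $|\mu_k(\{n\})-\lambda_k(\{n\})|\le 2^{-n}$ to transfer both $\fin$ and $\exh$. The summability of $(\vep_n)_n$ is indeed the only thing that matters, just as you say.
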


\proof
Notice that  $\mu_k(\{n\})\leq \varphi(\{n\})$ for each $k$ and $n$. Consider the following measure $\lambda_k$.
\[
\lambda_k (\{n\}) :=
\begin{cases}
    i/2^{n} & \mbox{if $i/2^{n} < \mu_k(\{n\})\leq (i+1)/2^{n}$ for some $0\leq i< \lfloor 2^n\varphi(\{n\})\rfloor$}.\\
    0 & \mbox{if $\mu_k(\{n\})=0$.}
\end{cases}
\]
Let $\psi=\sup_k\lambda_k$.  Clearly  $\lambda_k\leq \mu_k$. Thus $\fin(\varphi)\subseteq \fin(\psi)$ and $\exh(\varphi)\subseteq \exh(\psi)$. 
Notice that  $|\mu_k(\{n\})-\lambda_k(\{n\})|\leq 1/2^n$ for all $n$ and $k$. Let $F\subseteq \N$ be a finite set. Then 
\[
\mu_k(F)= (\mu_k(F)-\lambda_k(F))+\lambda_k(F)\leq \sum_{n\in F}1/2^n +\psi(F).
\]
Thus $\fin(\psi)\subseteq \fin(\varphi)$ and $\exh(\psi)\subseteq \exh(\varphi)$.
\endproof

We see now how to represent explicitly the ideals $\fin(\varphi)$ as $\mathcal B(\mb g)$ on $C([0,1])$. For each $n$, let $L_n=\{\mu_k(\{n\}):\; k\in\N\}$ which we assume to be finite   by Lemma \ref{finitevalues}. Consider the following  tree: 
\[
T:=\{\langle \mu_k(0),\ldots, \mu_k(n)\rangle:\; k,n\in \N\}.
\]
Since each $L_n$ is finite, $T$ is clearly finitely branching. Notice that  each measure $\mu_k$ is a branch of $T$. 
 Let $\rho:T\to 2^{<\omega}$ be an embedding, that is, $\rho(s)\prec \rho(s')$ iff $s\prec s'$, for all $s,s'\in T$.

%

We are going to find a sequence ${\mb g}= (g_n)_n$ of continuous functions in $[0,1]$ such that $\|\sum_{n\in F} g_n \|_\infty= \sup_k \mu_k(F)$ for every finite set $F\subset \N$, that easily gives that $\B({\mb g})=\fin(\varphi)$ and $\mathcal{C}({\mb g})=\exh(\varphi)$.

Let  $\theta: \cantor \to [0,1]$ be the binary representation map, $\theta((\alpha_n)_n):= \sum_{n=0}^\infty \alpha_n 2^{-(n+1)}$, and for each $s\in 2^{<\omega}$, let $I_s:= \theta([s])$, that is, $I_s= [\theta(s\frown \bar 0), \theta(s\frown \bar 0)+ 1/2^{|s|}] $, where we recall that $|s|$ is the length of $s$. We find now a sequence $(J_s)_{s\in 2^{<\omega}}$ of closed non-degenerate intervals and a sequence $(f_s)_{s\in 2^{<\omega}}$ of continuous functions in $[0,1]$  such that 

\begin{enumerate}[a), wide=0pt]
	\item  each $J_s\subset  I_{t_s}$ in a way that the mapping $s\in 2^{<\omega}\mapsto t_s\in 2^{<\omega}$  is an embedding,
	\item $f_s\ge 0$, $\|f_s\|_\infty=1$, $f_s\upharpoonright J_s=1$  and $\mathrm{supp}f_s\subseteq \overset{\circ}{I_{t_s}}$.    
\end{enumerate}
Let  $\gamma: \cantor \to \cantor$ be the function $\gamma(\beta):= \bigcup_{n\in \N} t_{\beta\upharpoonright n}$. Observe that  $\{\theta(\gamma(\beta))\}= \bigcap_{n\in \N} J_{\beta\upharpoonright n}$.  It can be easily checked that 
\begin{enumerate}[a), wide=0pt]\addtocounter{enumi}{2}
	\item $f_s(\theta(\alpha))=0$ if $t_s\not\prec \alpha$, and $f_s(\theta(\alpha))\le 1$ always. 
	\item $f_s(\theta(\gamma(\beta)))=1$ if $s\prec \beta$. 
\end{enumerate}  
We define now 
\begin{equation}\label{iohg5udogdgf1}
g_n :=\sum_{s\in T,|s|=n+1}\, s(n)f_{\rho(s)}.    
\end{equation}
Fix a finite set $F\subset \N$. Next, we prove that $\|\sum_{n\in F} g_n \|_\infty=\sup_k \mu_k(F)$. 
Given $\alpha\in \cantor$ and $n\in \N$, observe that if $g_n(\theta(\alpha))\neq 0$ then there must be a unique $s\in T$ of length $n+1$ such that $t_{\rho(s)}\prec\alpha$. It follows from this that 
\[
\sum_{n\in F} g_n(\theta(\alpha))= \sum_{n\in F_0} g_n(\theta(\alpha)) =\sum_{n\in F_0} \mu_k(\{n\})g_n(\theta(\alpha))    \le \sum_{n\in F_0} \mu_k(\{n\})=\mu_k(F).
\]
where as before $F_0:= \max \{ n\in F \, : \, g_m(\theta(\alpha))\neq 0\}$ and $k\in \N $ is such that $t_{\rho(\langle \mu_k(0),\dots, \mu_k(m)\rangle)} \prec \alpha$.  Consequently, $\|\sum_{n\in F} g_n\|_\infty \le \sup_k\mu_k(F)$. We see now the other inequality. For each $k$, let 
\[
\beta_k=: \bigcup_n \rho (\langle \mu_k(0),\ldots, \mu_k(n)\rangle)\in \cantor.
\]
Then    we have that 
\[
\sum_{n\in F}g_n (\theta(\gamma(\beta_k))) =\sum_{n\in F}\mu_k(\{n\})=\mu_k(F).
\]
Thus 
\[
\left\|\sum_{n\in F} g_n\right\|_\infty \ge \sup_k\mu_k(F)=\varphi(F).
\]
Now if we define each $h_n: =g_n \circ \theta\in C(\cantor)$, then for each finite $F\subset \N$ we have that 
\[
\norm{\sum_{n\in F} h_n}_\infty = \max_{\alpha \in \cantor} \abs{\sum_{n\in \N} h_n(\alpha)} = \max_{\alpha \in \cantor} \abs{\sum_{n\in \N} g_n(\theta(\alpha))} = \norm{\sum_{n\in F} g_n}_\infty = \sup_k \mu_k(F),
\]
and consequently, $\mathcal B((h_n)_n)=\fin(\varphi)$ and $\mathcal C((h_n)_n)=\mathrm{Exh}(\varphi)$.  

It is worth pointing out that for representing the ideal on $C(\cantor)$ the simpler, more explicit functions
\begin{equation}\label{iohg5udogdgf}
g_n :=\sum_{s\in T,|s|=n+1}\, s(n)\mathbbm 1_{[\rho(s)]}.    
\end{equation} 
will also work.

 We know from \cite{FilipowTryba2024}   that every  non-pathological, non-trivial $F_\sigma$ ideal is contained in a non-trivial summable ideal. We see now how to get this fact for ideals represented as $\B({\bf g})$   following an argument  similar to the one used in their proof. 


 Let $\mathbf g=(g_n)_n$ in $C(K)$ with $K$ a compact metric space, $g_n\geq 0$. Observe that for every sequence $\mb x:=(x_k)_k$ in $K$ we have a measure $\mu_{\mb x}$ on $\N$ defined by $\mu_{\mb x}(\{n\}):= \sum_{k\in \N} g_n(x_k)/2^k$ (in fact, $\mu_{\mb{x}}(A) \le 2\nrm{\sum_{n\in A} g_n}$), so if it is the case that  $\B({\bf g})$ is non trivial, then we can choose $\mb x:=(x_k)_k$ in $K$ and some block sequence $(B_k)_k$ of $\N$ such that $\sum_{n\in B_k} g_n(x_k)\ge 2^k$. This means that $\mu_{\mb x}(B_l)=\sum_{n\in B_l}\sum_{k\in \N} g_n(x_k)/2^k \ge \sum_{n\in B_l}g_n(x_l)/2^l\ge 1$, and since the sets $(B_l)_l$ are pairwise disjoint, $\mu_{\mb x}(\N)=\infty$, that is, $\mc B( \mb g)\con \mr{Sum}(\mu_{\mb x})\subsetneq \mc P(\N)$.

\begin{rem}[Representation on $C_p(K)$]\label{i4jirj4ir4e}
Another  well-known representation of analytic ideals is the following: given a Polish space $X$, a sequence $(f_n)_n$ in $C_p(X)$ and a function $f\in C_p(X)$ one defines $\mc I:=\mc I((f_n),f)$ as the collection of all $A\con \N$ such that $f\notin \overline{\conj{f_n}{n\in A}}$. In \cite[Lemma 6.53]{Todorcevic2010} it is shown that every analytic ideal is of that form. In the case of an $F_\sig$-ideal $\mc I$ this can be done quite directly as follows:   Let $K$ be an hereditary closed subset of $\cantor$ containing all singletons and such that $\ideal:=\ideal_{K}$ is the ideal generated by $K$ (see \cite{Mazur91} or the proof of Proposition \ref{rep-of-closed}).   We claim that  $\mc I=\mc I((p_n^K)_n,0)$, where the closure is in $C_p(K)$ with the pointwise topology. In fact, let $E\in \ideal$ and consider  $V(E)=\{f\in C_p(K): f(E)=0\}$.
Then $V(E)$ is an open set, $p_\infty\in V(E)$ and $p_n\not\in V(E)$ for all $n\in E$. Conversely, let $E\not\in \ideal$ and $V$ be an open set with $p_\infty\in V$. We can assume that $V=V(E_1)\cap\ldots\cap V(E_k)$ with each $E_i\in K$. As $E\not\in \ideal$, there is $n\in E\setminus (E_1\cup\ldots \cup E_k)$. Then, $p_n(E_i)=0$ for all $i\leq k$, that is,  $p_n\in V$.

In contrast,  $\mc B(\mb p^K)$ can be quite different from $\mc I$.
When $\ideal$ is tall, then $\B(\mb p^K)=\fin$ (otherwise, every infinite set contains an infinite element $A$ of $K$, in which case $\|\sum_{n\in F} p_n^K\| = \#F$ for all $F\subseteq A$), and if $\ideal$ is not tall, it might happen that $\mc B(\mb p^K)\not\con \mc I$ (for example, consider the ideal $\ideal$ on $\N$ generated by an infinite partition of $\N$ consisting of infinite pieces).
\end{rem}

The fact that we just proved that $\B$-representation can be performed in 
$C(2^{\N})$ allows us to inquire about the complexity of families of ideals. For instance, consider the collection of summable ones. Since each summable ideal is clearly  non-pathological (and thus a $\B$-ideal), the question arises:

\begin{question}
Is  
$\{{\bf g}=(g_n)_n\in (C(2^{\N}))^{\N}: \B({\bf g})\mbox{ is summable}\}$  a Borel set?
\end{question}

\section{Coloring ideals}\label{sect:colorings}
 Given an ideal $\mc I$ its characteristic function $\mathbbm 1_{\mc I}$ can be seen as a coloring  $\mathbbm 1_{\mc I}: [\N]^\omega\to 2$.  If in addition $\mc I$ is tall, then $\mc I\cap [\N]^\omega$ coincides with the collection of  $\mathbbm 1_{\mc I}$-homogeneous  sets. Recall that $M\in [\N]^\omega$ is $c$-homogeneous for a given finite coloring $c: [\N]^\omega\to r\in \N$, when $c$ is constant on the cube $[M]^\omega$. In this section we study what we call {\em coloring ideals}. The  {\em continuous} coloring ideals are especially relevant as they provide new examples of pathological and non-pathological $F_\sig$ ideals, and exhibit interesting connections with graph theory.

\begin{defi}[Homogeneous sets, coloring ideal]
Let $c:[\N]^\omega\to r\in \N$. For $i<r$ we define $\Hom_i(c)$ as the collection of $c$-homogeneous subsets of $\N$ with constant value $i$. Let $\langle \Hom(c)\rangle$ be the ideal generated by $\Hom(c):=\bigcup_{i<r}\Hom_i(c)$.   
A {\em coloring ideal} is an ideal  of the form $\langle \Hom(c)\rangle$ for some coloring $c$.    
\end{defi}

Observe that when $c$ is an analytic coloring (i.e. $c^{-1}(i)$ is an analytic set), then it follows from the Ellentuck-Silver Theorem that $\langle \Hom(c)\rangle$ is a tall ideal. Reciprocally, as we have observed above, if $\mc I$ is a tall ideal, then $\mc I=\langle \Hom(\mathbbm 1_{\mc I})\rangle$ is a coloring ideal, and its complexity obviously coincides with that of the coloring $\mathbbm 1_{\mc I}$.

One particular case is that of \emph{open} colorings, those $c: [\N]^\omega\to 2$ where $c^{-1}(1)$ is an open set. These colorings are always induced by families of finite subsets of $\N$. More precisely, let $\mc F$ be the family of all finite subsets $s$ of $\N$ such that $\langle s\rangle:=\conj{M\in [\N]^\omega}{s\sqsubset M}\subseteq c^{-1}(1)$. Then $c(M) = 1$ if and only if $M$ has an initial segment in $\mc F$, so the Ramsey property of the coloring $c$ is in this case equivalent to the following result. We denote by $K\rest A := K\cap \mc P(A)$ the \emph{restriction} of a set $K\subseteq 2^\N$ to a subset $A\subseteq\N$.

\begin{teo}[Galvin~\cite{Galvin1968}]\label{Galvin}
    Let $\mathcal{F}\subseteq [\mathbb{N}]^{<\omega}$. For every infinite $M\subseteq \N$,  there is an infinite $N\subseteq M$ such that one of the following holds:
    \begin{enumerate}[1), wide=0pt]\addtocounter{enumi}{-1}
        \item $\mc F\rest N = \emptyset$,
        \item For all $P\in [N]^\omega$ there is $s\in \mathcal{F}$ such that $s\sqsubset P$.
    \end{enumerate}
\end{teo}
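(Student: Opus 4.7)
The plan is to prove Galvin's theorem via the classical combinatorial forcing argument, using an accept/reject dichotomy for pairs (infinite set, finite set).

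\textbf{Reduction.} First I would replace $\mc F$ by its $\sqsubseteq$-minimal elements $\mc F'$, a thin family. Since any $s\in\mc F$ contains some $s'\in\mc F'$ with $s'\sqsubseteq s$ (hence $s'\subseteq s$), option~0 transfers: $\mc F'\rest N=\emptyset$ forces $\mc F\rest N=\emptyset$. Option~1 transfers trivially because $\mc F'\subseteq\mc F$. So WLOG $\mc F$ is thin.

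\textbf{Accept/reject.} For $s\in[\N]^{<\omega}$ and $M\in[\N]^\omega$ with $\max s<\min M$, I would say $M$ \emph{accepts} $s$ when every $P\in[M]^\omega$ makes $s\cup P$ have an initial segment in $\mc F$, and $M$ \emph{rejects} $s$ when no $M'\in[M]^\omega$ accepts $s$. The Key Lemma (Galvin's) asserts that every $(M,s)$ admits some $M_0\in[M]^\omega$ that decides $s$, i.e.\ either accepts or rejects it. The idea of the proof is combinatorial forcing: if $M$ fails to accept $s$, take a witness $P\in[M]^\omega$ such that no initial segment of $s\cup P$ is in $\mc F$; then, by an iterated $\sigma$-diagonalization inside $P$, extract $M_0\subseteq P$ in which the failure of acceptance is maintained along \emph{every} infinite subset, yielding a rejector.

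\textbf{Fusion and conclusion.} A Mathias-style diagonal construction iterates the Key Lemma to produce $M^*\in[M]^\omega$ deciding every $s\in[M^*]^{<\omega}$. Analyze $s=\emptyset$: if $M^*$ accepts $\emptyset$, then by definition every $P\in[M^*]^\omega$ has an initial segment in $\mc F$, giving option~1 with $N=M^*$. If $M^*$ rejects $\emptyset$, I would recursively build $N=\{n_0<n_1<\cdots\}\subseteq M^*$ with $\mc F\rest\{n_0,\dots,n_k\}=\emptyset$ at every stage: rejection propagates to the already-chosen initial segment $\{n_0,\dots,n_{k-1}\}$, and the thinness of $\mc F$ ensures only finitely many forbidden continuations at stage $k$, leaving infinitely many admissible candidates $n_k\in M^*$. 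The resulting $N$ satisfies option~0.

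\textbf{Main obstacle.} The hard part is the Key Lemma — specifically upgrading a single non-accepting witness $P$ to an $M_0$ that uniformly rejects $s$. This is delicate because acceptance carries a universal quantifier over all infinite continuations, so its negation produces only one bad $P$, whereas rejection demands excluding acceptance along every further infinite subset. The classical argument threads through a carefully bookkept combinatorial forcing, and this step is the technical heart of the proof; the rest (Sperner reduction, fusion, and the final greedy extraction of $N$) is then mostly a matter of packaging.
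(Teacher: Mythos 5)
The paper cites Galvin's theorem without giving a proof, so there is no in-paper argument to compare against; I am evaluating the sketch on its own terms. The overall architecture --- reduce to a thin family, set up accept/reject, fuse via a Mathias diagonalization to a set deciding all of its finite subsets, then extract greedily --- is the standard Nash-Williams/Galvin plan. However, you have mislocated the technical difficulty, and the step you flag as the heart of the proof is wrong. With your definitions ($M$ accepts $s$ iff every $P\in[M]^\omega$ makes $s\cup P$ have an initial segment in $\mc F$; $M$ rejects $s$ iff no $M'\in[M]^\omega$ accepts $s$), the dichotomy ``some $M_0\in[M]^\omega$ decides $s$'' is \emph{immediate}: either some $M'\in[M]^\omega$ accepts $s$ (take $M_0=M'$), or none does, in which case $M$ itself rejects $s$. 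No $\sigma$-diagonalization is involved, and the mechanism you describe --- from a single non-accepting witness $P\in[M]^\omega$, extract $M_0\subseteq P$ that \emph{rejects} $s$ --- is false in general. For $\mc F:=\{\{m,n\}:m<n,\ m\equiv n\pmod 2\}$ and $s=\emptyset$, $M=\N$ does not accept $\emptyset$ (no initial segment of $\N$ lies in $\mc F$, since $\{0,1\}$ has mixed parity), yet \emph{no} infinite set rejects $\emptyset$: every infinite $M_0$ contains an infinite monochromatic $N$, and every $P\in[N]^\omega$ has its two least elements of the same parity, so $N$ accepts $\emptyset$. The dichotomy here resolves on the accepting side, and there simply is no rejector for your procedure to find --- no matter which witness $P$ you start from.

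The actual combinatorial content, which your sketch omits, is the propagation lemma: if $M^*$ decides every member of $[M^*]^{<\omega}$ (fusion) and $M^*$ rejects $s$, then $M^*$ accepts $s\cup\{n\}$ for only finitely many $n\in M^*$ with $n>\max s$; otherwise, taking an infinite $A\subseteq M^*$ of such $n$, one checks that $A$ accepts $s$ (for $P\in[A]^\omega$ and $n:=\min P$, acceptance of $s\cup\{n\}$ puts an initial segment of $s\cup P$ into $\mc F$), contradicting rejection since $A\subseteq M^*$. Combined with fusion, $M^*$ therefore rejects $s\cup\{n\}$ for cofinitely many $n$. This --- not thinness --- is what drives the greedy construction; thinness of $\mc F$ plays no role in the body of the argument, so the reduction, while harmless, is dispensable. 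Finally, the greedy invariant should be that $M^*$ rejects \emph{every} subset of $\{n_0,\dots,n_k\}$, not merely the running initial segment: at stage $k$ you choose $n_k$ avoiding the finitely many bad continuations simultaneously for all $2^k$ subsets. Since rejection of $s$ already forces $s\notin\mc F$ (an initial segment of $s$ lying in $\mc F$ would make every set accept $s$), the limit $N$ then satisfies $\mc F\rest N=\emptyset$. As written, ``rejection propagates to the already-chosen initial segment'' plus an appeal to thinness do not achieve this, so the extraction step needs to be restated with the propagation lemma in hand.
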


For a family $\mc F\subseteq [\N]^{<\omega}$ let $c_{\mc F}: [\N]^\omega\to 2$ be the corresponding Galvin coloring, that is, given by $c_{\mc F}(M) = 1$ exactly when $M$ has an initial segment in $\mc F$. Thus every open coloring is of the form $c_{\mc F}$ for some family $\mc F\subseteq\fin$, and $\Hom_i(c_{\mc F})$, $i = 0,1$, is the set of all $N\in [\N]^\omega$ satisfying {\it i)} in Galvin's Theorem. It is easy to see that, if $\mc F$ is a compact family (as a subset of $2^\N$), each $\Hom_i(c_{\mc F})$ is closed in $[\N]^\omega$. This gives a characterization of tall $F_{\sigma}$ ideals.

\begin{prop}\cite{GrebikUzca2018}\label{rep-of-closed} Every tall $F_\sigma$ ideal is an open coloring ideal.  
\end{prop}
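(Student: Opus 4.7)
The plan is to write $\mc I=\fin(\vphi)$ for some lower semi-continuous submeasure $\vphi$ via Mazur's theorem, and then take as the open coloring the Galvin coloring $c_{\mc F}$ associated to the family of ``heavy'' finite sets
\[
\mc F := \{s \in [\N]^{<\omega} : \vphi(s) > 1\}.
\]
The target is the identity $\mc I = \langle \Hom(c_{\mc F}) \rangle$.

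First I would check the easy direction $\langle \Hom(c_{\mc F}) \rangle \subseteq \mc I$, which splits in two. By the lower semi-continuity of $\vphi$, I would verify that $\Hom_0(c_{\mc F}) = \{M \in [\N]^\omega : \vphi(M)\le 1\}$, which is contained in $\mc I$. To show $\Hom_1(c_{\mc F}) = \emptyset$, given any infinite $M$ I would use the tallness of $\mc I$ to pick an infinite $N \subseteq M$ with $\vphi(N)<\infty$, and then apply Galvin's Theorem~\ref{Galvin} to $\mc F$ inside $N$: alternative (1) would produce pairwise disjoint finite subsets of $N$ each of $\vphi$-measure $>1$, and iterating this with a subadditivity argument against the finiteness of $\vphi(N)$ forces the other alternative, giving an infinite $P \subseteq N$ with $\vphi(P) \le 1$; such a $P$ has no initial segment in $\mc F$, contradicting $M \in \Hom_1(c_{\mc F})$.

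The hard part is the reverse inclusion $\mc I \subseteq \langle \Hom_0(c_{\mc F}) \rangle$, which amounts to showing that every $A \in \mc I$ can be written as a finite union of sets of $\vphi$-measure at most $1$. I would attempt a greedy partition along the enumeration of $A$, cutting each block as soon as adding the next element would push $\vphi$ past $1$; this bin-packing-style argument terminates cleanly when $\vphi$ is approximately a measure, since disjoint blocks of $\vphi$-measure close to $1$ then force the number of blocks to be at most about $\vphi(A)$. The main obstacle is that submeasures are only subadditive and not superadditive, so for pathological $\vphi$ the greedy procedure may produce infinitely many blocks while still keeping $\vphi(A)$ finite. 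The delicate step is therefore a judicious choice of the representing submeasure, exploiting the tall $F_\sig$ structure of $\mc I$ to arrange that $\mc I$ is actually generated as an ideal by $\{A : \vphi(A) \le 1\}$; this secured, the finite decomposition---and hence the proposition---follows.
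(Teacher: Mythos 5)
Your reduction to $\mc F = \{s : \vphi(s) > 1\}$ relies twice on a property that Mazur's theorem does not supply, and both uses trace back to the same fact: for a general lscsm, the ideal $\fin(\vphi)$ is \emph{not} generated by $\{A : \vphi(A)\le 1\}$. First, your argument that $\Hom_1(c_{\mc F}) = \emptyset$: you produce pairwise disjoint finite $s_0, s_1, \ldots \subseteq N$ with $\vphi(s_i)>1$ and want this to contradict $\vphi(N)<\infty$; but subadditivity only gives the useless upper bound $\vphi(s_0\cup\cdots\cup s_k)\le\sum_i\vphi(s_i)$, and the \emph{super}additivity you would need to force $\vphi(N)=\infty$ is exactly what submeasures lack (take $\vphi$ a supremum of measures $\mu_i$ each giving mass $2$ to $s_i$: every $s_i$ has $\vphi>1$ yet $\vphi(\bigcup_i s_i)=2$). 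Second, the reverse inclusion $\mc I\subseteq\langle\Hom_0(c_{\mc F})\rangle$ is, as you flag, the claim that every $A$ with $\vphi(A)<\infty$ is a finite union of sets of $\vphi$-measure $\le 1$; this is false for arbitrary $\vphi$, and your ``judicious choice of the representing submeasure'' is precisely the missing step, not a patch you can defer.

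The paper avoids submeasures altogether. Writing $\mc I=\bigcup_n\mc F_n$ with $\mc F_n$ closed, hereditary and increasing, it sets $\mc K:=\bigcup_n\{A\setminus n : A\in\mc F_n\}$, a closed hereditary family generating $\mc I$, and takes $\mc F_{\mc K}:=\fin\setminus\mc K$. Hereditariness of $\mc K$ gives the exact identity $\Hom_0(c_{\mc F_{\mc K}})=\mc K\cap[\N]^\omega$ (an infinite $X$ lies outside $\mc K$ iff some initial segment does, iff that segment belongs to $\mc F_{\mc K}$), so there is no decomposition to perform; and $\Hom_1(c_{\mc F_{\mc K}})=\emptyset$ because any witness of Galvin's first alternative contains an infinite $Z\in\mc K$ (tallness), whose initial segment in $\mc F_{\mc K}$ contradicts $\mc K$ being hereditary. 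Hence $\langle\Hom(c_{\mc F_{\mc K}})\rangle=\langle\mc K\rangle=\mc I$. If you insist on a submeasure, the ``judicious'' one is the covering submeasure $\vphi_{\mc K}(A):=\min\{k : A\subseteq K_1\cup\cdots\cup K_k,\ K_i\in\mc K\}$, which is lower semicontinuous because $\mc K$ is compact and for which $\{A : \vphi_{\mc K}(A)\le 1\}=\mc K$; at that point your $\mc F$ coincides with $\mc F_{\mc K}$ and you have reproduced the paper's argument.
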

\begin{proof}
    Recall first that for any $F_\sigma$ ideal $\ideal$ (containing $\fin$) there is a closed hereditary set $\mc K$ such that $\ideal=\langle \mc K\rangle$. In fact, we assume that $\ideal=\bigcup_n \mc F_n$ where each $\mc F_n$ is closed and hereditary and $\mc F_n\subseteq \mc F_{n+1}$ for all $n$. Now consider 
    \[
    \mc K = \bigcup_{n\in\N} \{A\setminus n:\;A\in \mc F_n\}.
    \]
    Then $\mc K$ is closed and hereditary. Clearly $\mc K\subseteq \ideal$ and given $A\in \mc F_n$, 
    since $A = (A\setminus n)\cup (A\cap n)$ and $\langle \mc K\rangle$ contains all $\{n\}$, $A\in \langle \mc K\rangle$.

    Define $\mathcal{F}_{\mc K} = \fin\setminus \mc K = \{s\in [\mathbb{N}]^{<\omega} : s\not\in \mc K\}$. We claim that $\Hom_1(c_{\mc F_{\mc K}}) = \emptyset$.
    Indeed assume for a contradiction that $Y\in [\N]^\omega$ satisfies the second condition in the conclusion of Galvin's theorem. Since $\mc K$ is tall there is an infinite $Z\subseteq Y$ such that $Z\in \mc K$. As $Y$ satisfies the second condition with respect to $\mc F_{\mc K}$, there is $s\in \mc F_{\mc K}$ such that $s\sqsubset Z$, but since $\mc K$ is hereditary we have $s\in \mc K$ and this contradicts the definition of $\mc F_{\mc K}$. 
    
    It remains to check that $\mc K\cap [\N]^\omega = \Hom_0(c_{\mc F_{\mc K}})$.
    Clearly $\subseteq$ holds.
    Conversely, let $X\not\in \mc K$ be infinite. Since $\mc K$ is hereditary and closed there must be some $n\in\mathbb{N}$ such that $X\cap n \not\in \mc K$. Thus $X\cap n\in \mathcal{F}_{\mc K}$ and we get $X\not\in\Hom_0(c_{\mathcal{F}_{\mc K}})$.
\end{proof}

In the case of \emph{continuous} colorings \( c: [\N]^\omega \to r \), i.e., colorings where all colors are open, these are induced by colorings of \emph{fronts} on \( \N \), a notion introduced by Nash-Williams to characterize families of finite sets with the Ramsey property. Recall that a family \( \mc{F} \) of subsets of some infinite set \( M \subseteq \N \) is a front on $M$ if it is thin—no element of \( \mc{F} \) is a proper initial segment of another—and satisfies that every infinite subset of \( M \) has a necessarily unique initial segment that belongs to \( \mc{F} \).
Now, given such a coloring, define for each color \( i <r \), the family \( \mc{F}_i \) of all finite subsets \( s \subseteq \N \) such that \( \langle s \rangle \subseteq c^{-1}(i) \). Denote by \( \mc{F}_i^{\sqsubseteq-\min} \) the minimal elements of \( \mc{F}_i \) with respect to initial segments, and define 
\[
\mc{G} := \bigcup_{i<r}\mc{F}_i^{\sqsubseteq-\min},
\]
along with a coloring \( c^*: \mc{G} \to r \) defined by \( c^*(s) = i \) if \( s \in \mc{F}_i^{\sqsubseteq-\min} \). It is easily seen that \( \mc{G} \) is a front on \( \N \) and that \( c(M) = c^*(s_M) \), where \( s_M \) is the unique initial segment of \( M \) in \( \mc{G} \).

Conversely, every coloring \( \ccal: \mc{G} \to r \) of a front \( \mc{G} \) on \( \N \) induces a continuous coloring \( \ccal_* \) on \( [\N]^\omega \) in the same way. These are called \emph{front (or continuous) colorings}, and we define \( \hom_i(\ccal) \) as the set of all \( \ccal \)-homogeneous sets of color \( i \), i.e., those \( A \subseteq \N \) such that \( \ccal \) is constant with value \( i \) on 
\[
\mc{G} \restriction A := \mc{G} \cap [A]^{<\omega}.
\]
Those sets $A$ with $\mc G\restriction A = \emptyset$ are considered $\ccal$-homogeneous for all colors, so that $\hom_i(\ccal)$ is an hereditary and compact family. This is also logically consistent, since all elements of $\mc G\rest A$ have color $i$.

It is important to observe that, in contrast to \( \Hom_i(c) \), \( \hom_i(\ccal) \) may include both finite and infinite sets. It follows that each \( \hom_i(\ccal) \) is hereditary and a compact subset of \( 2^\N \). Therefore, \( \langle \hom(\ccal) \rangle \), the ideal generated by \(\bigcup_{i<r} \hom_i(\ccal) \), is \( F_\sigma \). Recall that $\langle\hom(\ccal)\rangle$ contains, by our definition of ideal, all finite subsets of $\N$.

\begin{defi}[c-coloring ideal]\label{lj4witejigerjrgf} 
An ideal   is a  {\em continuous coloring ideal (c-coloring ideal in short)}     when it is of the form  $\langle\Hom(c)\rangle$ for some  continuous coloring $c:[\N]^\omega\to r\in \N$ or, equivalently, of the form $\langle\hom(\ccal)\rangle$  for some $\ccal:\mc F\to r$ defined on a front $\mc F$.  
\end{defi} 


A coloring \( \ccal: \mathcal{F} \to r\) on some family $\mc F$ of subsets of a set $X$ induces \(r\) hypergraphs \( \mathcal{H}_0(\ccal),\dots,\mc H_{r-1}(\ccal) \) over \( X \), where the hyperedges of   \( \mathcal{H}_i(\ccal) \) are defined as the sets of elements in \( \mathcal{F} \) that are colored by \( \ccal \) with color \(i\). In particular, when the family is a subset of  \( [X]^d \), the associated hypergraphs \( \mathcal{H}_i \) are \(d\)-uniform. Furthermore, when \(\mc F= [\N]^d \) and $r=2$, the hypergraphs \( \mathcal{H}_0 \) and \( \mathcal{H}_1 \) are complements of one another.

Within this framework, the \(c\)-coloring ideal \( \langle \hom(\ccal) \rangle \) for a 2-coloring coincides with the ideal on the vertex set of the hypergraph generated by the cliques and anticliques of the hyperedges. 
More generally, for an arbitrary \(r\)-coloring, the corresponding \(c\)-coloring ideal is the ideal generated by the complete sub-hypergraphs of each hypergraph \( \mathcal{H}_i \).
This interpretation of \(c\)-coloring ideals will be useful for establishing connections between structural properties of these ideals and those of hypergraphs.

\subsection{Examples of pathological c-coloring ideals}

A positive aspect of c-coloring ideals is their effective tallness, as highlighted in \cite{GrebikUzca2018}. However, despite their utility, c-coloring ideals can exhibit pathological behavior. These ideals, rooted in the Mazur ideal, are intricately linked to Ramsey theory. Furthermore, their interplay with the concentration of measure phenomenon introduces an additional layer of complexity. As a significant consequence of these interactions, we establish that multidimensional random ideals inherently exhibit pathological behavior.

\begin{defi}[Super-pathology] 
We say that an ideal is  {\em super-pathological} if every non-trivial $F_\sig$ ideal that covers it is pathological. 
\end{defi} 
Mazur's ideal $\mathcal{M}$ \cite{Mazur91} was the first example of an $F_\sigma$-ideal not extended by a summable one. Later, J. Mart\'inez, D. Meza-Alcántara and C. Uzcátegui proved in \cite{MMU2022} that in fact $\mc M$ is the first example of a super-pathological $F_\sig$-ideal. The proof of this  uses a simple but fundamental property of measures in relation to coverings, explicitly defined by the {\em Kelley's covering number}. This number $\de(X,\mc S)$ is defined for a covering $\mc S\con \mc P(X)$ of a finite set $X$ as 
$$\de(X,\mc S):=\frac{1}{\#\mc S}\min_{x\in X}\#\conj{S\in \mc S}{x\in S}.$$
It is proved in \cite[Lemma 3.7]{MMU2022} (see also \cite[Corollary 6]{Kelley59}) that if $\mu$ is a measure on a finite set $X$ then every covering $\mc S$ of $X$  has some element $S\in \mc S$ such that $\mu(S)\ge \de(X,\mc S)\cdot \mu(X)$. In contrast, there are submeasures that do not satisfy this. The failure of Mazur's submeasure to follow this principle is what makes $\mc M$ pathological.   We shall refine Mazur's example to construct new super-pathological ideals, among them c-coloring ones.

\begin{defi}[Cardinal intervals of sets]\label{def:card_intervals}
Let $X$ be a finite set. For $0\le\al\le \be \le 1$ we define the {\em cardinal interval} $X^{[\al,\be]}:=\conj{A\con X}{ \al \# X \le \# A\le \be \#X}$. A   {\em symmetric cardinal interval} is a cardinal interval of the form $X^{[(1-\de)/p, (1+\de)/p]}$ for some $p\in \N$ and some $0\le \de\le 1$.  
\end{defi}

Roughly speaking, the symmetric cardinal interval $X^{[(1-\de)/p, (1+\de)/p]}$ is the $\de/p$-fattening of $[X]^{n/p}$ by the  {\em normalized Hamming distance} $d_X$. Recall that this metric counts the (normalized) cardinality of the symmetric difference of two subsets $A,B$ of $X$, that is, $d_X(A,B):= \# (A\triangle B)/\#X$.

For each \( x\in X \), define
\[
\widehat{x} :=\widehat{x}^X:= \{A \in  X^{[\al,\be]}: x \notin A\}.
\]
Observe that $\mc C_X:=\{\widehat{x}\}_{x\in X}$ is a covering of $X^{[\al,\be]}$ exactly when $\be<1$, which we assume from now on. Its {\em Kelley's covering number} $\delta(X^{[\al,\be]},\mc C_X) $ is
\begin{equation}\label{eq:Kelley_number_cov}
    \delta(X^{[\al,\be]},\mc C_X) = \frac{\min_{A\in X^{[\al,\be]}}\#\conj{x\in X}{A\in \widehat{x}} }{\lvert \mc C_X \rvert} = \frac{\min_{A\in X^{[\al,\be]}} \#(X\setminus A)}{\#X} \ge 1-\be.
\end{equation}
Define the covering submeasure for $\mc A\subseteq\mc P(X)$ as
\begin{equation}\label{eq:covering_subm}
    \psi_X(\mc A) := \min\left\{ \# F : F \subseteq X \text{ and }   \mc A \subseteq \bigcup_{x \in F} \widehat{x} \right\}.
\end{equation}
For example, if $\mc A$ covers $X$, then $\psi_X(\mc A)>1$. More generally, it is easy to see that 
\[
\psi_X(\mc A)= \min\conj{r\in \N}{\mc A \text{ is not an $r$-covering}},
\]
where $\mathcal{A}$ is an $r$-covering if for every $s \in [X]^r$ there exists some $A \in \mathcal{A}$ such that $s \subseteq A$.

Next, we freely amalgamate all these submeasures. Suppose now that $(X_n)_n$ is an unbounded sequence of disjoint finite sets,  $0\le \al\le \be < 1$, and let $\mk X:= \bigcup_{n\in \N} \mk X_n$, where each $\mk X_n:=X_n^{[\al,\be]}$. On $\mk X$ we define the submeasure
\[
\psi(\mc A):=\sup_{n\in \N} \psi_{X_n}(\mc A\cap \mk X_n).
\]
In this setting we can prove the super-pathology of $\fin(\psi)$ using the following result:

\begin{teo}[{\cite[Theorem 3.8]{MMU2022}}]\label{thm:super-path_ideals}
    Let $\psi$ be a lscsm on a countable set $\mk X$ such that there is $M > 0$ and a partition $\{\mk X_n : n\in\N\}$ of $\mk X$ in finite sets satisfying:
    \begin{itemize}[wide=0pt]
        \item The family $\mc C := \{\mc A\subseteq\mk X : \psi(\mc A)\leq M\}$ covers $\mk X$.
        \item $\sup_n \psi(\mk X_n) = \infty$.
        \item $\psi(\mc A) = \sup_n \psi(\mc A\cap\mk X_n)$ for all $\mc A\subseteq\mk X$.
    \end{itemize}
    Let $\mc C_n$ be a subfamily of $\mc P(\mk X_n)\cap \mc C$ such that $\mc C_n$ covers $\mk X_n$, $\de_n := \de(\mk X_n, \mc C_n)$ and $\de := \inf_n \de_n$. If $\de > 0$ then $\fin(\psi)$ is an $F_{\sigma}$ super-pathological ideal.
\end{teo}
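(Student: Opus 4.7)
The plan is to argue by contradiction. The ideal $\fin(\psi)$ is evidently $F_\sigma$ (it is $\bigcup_n \{A : \psi(A) \le n\}$, a union of closed sets by lower semicontinuity of $\psi$) and non-trivial (by the second hypothesis, $\psi(\mk X) = \sup_n \psi(\mk X_n) = \infty$, so $\mk X \notin \fin(\psi)$). Suppose toward contradiction that $\fin(\psi) \subseteq \mc J$ for some non-trivial $F_\sigma$ ideal $\mc J = \fin(\vphi)$ with $\vphi = \sup_k \mu_k$ non-pathological; then $\vphi(\mk X) = \sup_k \mu_k(\mk X) = \infty$. The goal is to exhibit a single set $A^*$ with $\psi(A^*) \le M < \infty$ and $\vphi(A^*) = \infty$, contradicting $\fin(\psi) \subseteq \fin(\vphi)$.

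The engine is Kelley's covering lemma recalled just before the theorem: for a measure $\nu$ on a finite set $X$ and a covering $\mc S$ of $X$, some $S \in \mc S$ satisfies $\nu(S) \ge \de(X,\mc S)\, \nu(X)$. Applying it to each $\mu_k$ on each $\mk X_n$ with the covering $\mc C_n$ yields a witness $C_n^{(k)} \in \mc C_n$ with $\mu_k(C_n^{(k)}) \ge \de\, \mu_k(\mk X_n)$; crucially each such $C_n^{(k)}$ belongs to $\mc C_n \subseteq \mc C$, hence has $\psi$-measure at most $M$. Now define $A^* := \bigcup_j \bigcup_{n \in F_j} C_n^{(k_j)}$, where indices $k_j$ and pairwise disjoint finite sets $F_j \subseteq \N$ are to be chosen inductively. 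By disjointness of the $F_j$, each $A^* \cap \mk X_n$ is either some $C_n^{(k_j)} \in \mc C_n$ or empty, so the third hypothesis yields $\psi(A^*) = \sup_n \psi(A^* \cap \mk X_n) \le M$ and thus $A^* \in \fin(\psi)$; the construction will arrange $\mu_{k_j}(A^*) \ge j$ for every $j$, forcing $\vphi(A^*) = \infty$.

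The recursive choice goes as follows. At stage $j$, set $U_j := F_1 \cup \cdots \cup F_{j-1}$, so $\bigcup_{n \in U_j}\mk X_n$ is a finite subset of $\mk X$ and $V_j := \vphi(\bigcup_{n \in U_j}\mk X_n) < \infty$. Since $\sup_k \mu_k(\mk X) = \infty$, pick $k_j$ with $\de\, \mu_{k_j}(\mk X) \ge V_j + j$; then summing the Kelley bound yields $\sum_n \mu_{k_j}(C_n^{(k_j)}) \ge \de\, \mu_{k_j}(\mk X)$, so
\[
\sum_{n \notin U_j} \mu_{k_j}(C_n^{(k_j)}) \ge \de\, \mu_{k_j}(\mk X) - \mu_{k_j}\Big(\bigcup_{n \in U_j} \mk X_n\Big) \ge \de\, \mu_{k_j}(\mk X) - V_j \ge j,
\]
and a finite $F_j \subseteq \N \setminus U_j$ with $\sum_{n \in F_j} \mu_{k_j}(C_n^{(k_j)}) \ge j$ can be chosen. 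The main obstacle is controlling the concentration of $\mu_{k_j}$ on the region $\bigcup_{n \in U_j}\mk X_n$ already committed in previous stages, which a priori could absorb most of $\mu_{k_j}$'s mass; this is precisely what the finiteness of $\vphi$ on finite sets (capping that mass by $V_j < \infty$) combined with the freedom from $\sup_k \mu_k(\mk X) = \infty$ (allowing $k_j$ with total mass far exceeding $V_j$) is designed to overcome.
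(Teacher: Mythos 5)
Your proof is correct, and it uses precisely the Kelley covering lemma that the paper sets up in the paragraph preceding the theorem, so it is evidently the intended argument (the paper itself only cites \cite[Theorem~3.8]{MMU2022} without reproducing the proof). The key points are all in place: every witness $C_n^{(k_j)}$ drawn from $\mc C_n$ has $\psi$-measure at most $M$, the disjointness of the blocks $F_j$ together with the third hypothesis bounds $\psi(A^*)$ by $M$, and the recursion uses the finiteness of $\vphi$ on the already-committed finite region $\bigcup_{n\in U_j}\mk X_n$ together with $\sup_k\mu_k(\mk X)=\infty$ to push $\mu_{k_j}$-mass onto fresh blocks, giving $\vphi(A^*)=\infty$ and the desired contradiction.
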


In our case we can take $M = 1$ and $\mc C_n := \mc C_{X_n} = \{\widehat{x} : x\in X_n\}$. Clearly $\psi_{X_n}(\widehat{x}) = 1$ for all $x$ and $\delta(\mk X_n, \mc C_n) \geq 1 - \be > 0$ by \eqref{eq:Kelley_number_cov}. Moreover, $\mk X_n$ is not an $r$-covering of $X_n$ if and only if there exists $s\in [X]^r$ such that no $A\in\mk X_n$ contains it, if and only if $r > (1-\be)\#X_n$. Thus $\psi_{X_n}(\mk X_n) > (1 - \be)\#X_n$. 
We hence conclude that the ideal $\fin(\psi)$ is super-pathological, that is, it is not covered by a non-trivial non-pathological $F_\sig$ ideal. 

We can now define associated colorings.

\begin{defi}[Mazur colorings]\label{ojrgijriogojr}
For $d\in\N$ we define a coloring $\ccal=\ccal_{\al,\be,d}: [\mk X]^d\to 2$ by, given $\mc A\in [\mk X]^d$,
\[
\ccal(\mc A):=\begin{cases}
1 & \text{ if $\psi(\mc A)=1$,}\\
0 & \text{ otherwise.}
\end{cases}
\]
For each $n$, let $\ccal_n = \ccal_{\alpha,\beta,\de}^{(n)} : [\mathfrak{X}_n]^d \to 2$ denote the restriction of $\ccal_{\al,\be,\de}$ to $[\mathfrak{X}_n]^d$.

\end{defi}
That is, $\ccal(\mc A)=1$ exactly when $\mc A\cap \mk X_n$ does not cover $X_n$ for every $n\in \N$. We now establish that zero-homogeneous subsets are not only finite but also bounded in cardinality. Furthermore, we show that Mazur coloring ideals contain the corresponding pathological Mazur-like ideals.

\begin{lema}
    \label{asdijnbosirubhog}
    For every $d\in\N$ and $0\leq\alpha\leq\beta < 1$ there exists $k = k(d,\alpha,\beta)\in\N$ such that $\hom_0(\ccal)\subseteq [\mk X]^{\leq k}$. Moreover, $\fin(\psi)\con \langle \hom(\ccal)\rangle$.  
\end{lema}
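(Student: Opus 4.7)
The plan is to exploit the simple observation that since $\beta<1$, a single element $B\in\mk X_n$ has $|B|\le\beta|X_n|<|X_n|$, so $\{B\}$ never covers $X_n$. Both parts of the lemma flow from this.

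For the size bound on $\hom_0(\ccal)$: I would first show that any $A\in\hom_0(\ccal)$ meets at most $d-1$ of the pieces $\mk X_n$. Indeed, if $A\cap\mk X_{n_i}\ne\emptyset$ for $i=1,\dots,d$, picking one $B_i$ from each yields a $d$-subset $\mc A=\{B_1,\dots,B_d\}$ whose intersection with every $\mk X_m$ is either empty or a singleton, hence never covers any $X_m$; so $\ccal(\mc A)=1$, contradicting $A\in\hom_0(\ccal)$. So assume $A$ meets exactly $\mk X_{n_1},\dots,\mk X_{n_\ell}$ with $\ell\le d-1$ and write $A_i:=A\cap\mk X_{n_i}$. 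Fix $i$ and pick witnesses $B_j\in A_j$ for each $j\ne i$; for every $(d-\ell+1)$-subset $\mc C\subseteq A_i$ the $d$-set $\mc A:=\mc C\cup\{B_j:j\ne i\}$ must be $0$-colored, and since each $\{B_j\}$ fails to cover $X_{n_j}$, the only remaining possibility is that $\mc C$ covers $X_{n_i}$. Passing to complements $C^c:=X_{n_i}\setminus C$ for $C\in A_i$, each $|C^c|\ge(1-\beta)|X_{n_i}|$ and every $(d-\ell+1)$-subfamily has empty intersection, so each point of $X_{n_i}$ is in at most $d-\ell$ of them. A double-counting then yields $|A_i|\le(d-\ell)/(1-\beta)$, and therefore $|A|\le\ell(d-\ell)/(1-\beta)\le(d-1)^2/(1-\beta)$; I take $k:=\lceil(d-1)^2/(1-\beta)\rceil$.

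For the inclusion $\fin(\psi)\subseteq\langle\hom(\ccal)\rangle$: take $A$ with $\psi(A)\le M$. For each $n$ pick an $F_n\subseteq X_n$ of size at most $M$ with $A\cap\mk X_n\subseteq\bigcup_{x\in F_n}\widehat{x}$, enumerate $F_n=\{x_n^1,\dots,x_n^M\}$ (padding with repetitions if necessary), and define
\[
A^i:=\bigcup_{n\in\N}\{B\in A\cap\mk X_n : x_n^i\notin B\},\qquad i=1,\dots,M.
\]
Then $A=A^1\cup\cdots\cup A^M$, and each $A^i$ lies in $\hom_1(\ccal)$: for any $d$-subset $\mc A\subseteq A^i$ and every $n$, all elements of $\mc A\cap\mk X_n$ miss the common point $x_n^i$, so $\mc A\cap\mk X_n$ does not cover $X_n$; hence $\ccal(\mc A)=1$. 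This exhibits $A$ as a finite union of $1$-homogeneous sets, so $A\in\langle\hom(\ccal)\rangle$.

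I expect the most delicate step to be the complement/double-counting argument bounding $|A_i|$, together with the case analysis that isolates the correct $X_{n_i}$ forced to be covered; everything else follows almost mechanically from the fact that a single element of $\mk X_n$ cannot cover $X_n$ when $\beta<1$.
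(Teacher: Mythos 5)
Your proof is correct, and the first half takes a genuinely different and in fact sharper route than the paper. The paper bounds $\#(A\cap\mk X_n)$ via a Jensen-inequality argument in the style of Gillis, obtaining a bound that grows like $(1-\beta)^{-d}$; you instead observe that after adjoining $\ell-1$ witnesses from the other pieces, \emph{every} $(d-\ell+1)$-subfamily of $A_i$ covers $X_{n_i}$, so the complements $C^c$ have empty $(d-\ell+1)$-wise intersection, and the elementary double count $\#A_i\cdot(1-\beta)\le\sum_C\mu(C^c)\le d-\ell$ gives $\#A_i\le (d-\ell)/(1-\beta)$, which is linear in $(1-\beta)^{-1}$. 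Two features make this possible: you exploit that the $d$-wise intersections of the complements are \emph{exactly} empty (so Jensen is overkill), and you decrease the covering threshold from $d$ to $d-\ell+1$ by borrowing one element from each of the other $\ell-1$ blocks, a refinement the paper does not use. (As a sanity check, for $d=3$, $\alpha=\beta=1/2$ you get $k\le 4$, whereas the paper's Remark \ref{rem:0-hom<=22} only reaches $k\le 22$ for nearby parameters.) Your second half — covering $A$ by the $M$ selector sets $A^i=\bigsqcup_n\{B\in A\cap\mk X_n:x^i_n\notin B\}$, each of which is $1$-homogeneous because every intersection with $\mk X_n$ misses $x^i_n$ — is the same idea the paper invokes in one line via the $1$-homogeneity of $\bigsqcup_n\widehat{x_n}$; you have merely written out the implicit covering argument.

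One small remark on bookkeeping: the clean uniform bound is $k=\big\lceil \max_{1\le\ell\le d-1}\ell(d-\ell)/(1-\beta)\big\rceil=\big\lceil\lfloor d^2/4\rfloor/(1-\beta)\big\rceil$; your stated $\lceil(d-1)^2/(1-\beta)\rceil$ is a valid but cruder upper bound (using $\ell\le d-1$ and $d-\ell\le d-1$), and it is worth noting explicitly that when $\ell=0$ the set $A$ is empty, so there is nothing to bound. Neither point affects correctness.
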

\begin{proof}
    Let $\mc A\subseteq\mk X$ be a $\ccal$-homogeneous set of color $0$. This set cannot intersect $d$ or more $\mk X_n$'s, because then taking one element in each we would have a set in $[\mc A]^d$ of color $1$. We now claim that there exists $m\in\N$ such that all intersections $\mc A\cap\mk X_n$ have cardinality at most $m$, from which the result follows with $k = m(d-1)$.

    Our proof is based on the strategy employed by J.~Gillis in his classical result \cite{Gillis1936} in measure theory. On $X_n$ consider the normalized counting measure $\mu$, given by $\mu(A) := \#A/\#X_n$. Assume that there are $m$ different sets $A_1,\ldots,A_m\in\mc A\cap\mk X_n$ and let us find a bound for $m$. Setting $E_j := X_n\setminus A_j\in X_n^{[1-\be,1-\al]}$, by Jensen's inequality we have
    \begin{align}\label{eq:Gillis}
        m^d (1-\be)^d & \leq \left(\sum_{j=1}^m \mu(E_j)\right)^d = \left(\int_{X_n} \sum_{j=1}^m \mathbbm 1_{E_j}\,d\mu\right)^d \leq \int_{X_n} \left(\sum_{j=1}^m \mathbbm 1_{E_j}\right)^d\,d\mu \nonumber \\
        & = \sum_{k_1 + \ldots + k_m = d} \binom{d}{k_1,\ldots,k_m} \,\mu\!\left(\bigcap_{k_j\neq 0} E_j\right),
    \end{align}
    where $\binom{d}{k_1,\ldots,k_m} = \frac{d!}{k_1!\cdots k_m!}$ is the corresponding multinomial coefficient. Note that all $m$-tuples $(k_1,\ldots,k_m)$ considered in the sum of the right hand side have at most $d$ non-zero coordinates.
    
    For each $1\leq l\leq d$ define $I(l,d)$ as the set of all $l$-tuples $(k_1,\ldots,k_l)$ of strictly positive integers adding up to $d$. Then, given $s\subseteq\{1,\ldots,m\}$ of cardinality $\#s = l\leq d$, the term $\mu(\bigcap_{j\in s} E_j)$ appears in the right hand side of \eqref{eq:Gillis} with coefficient
    \[
    a_l := \sum_{(k_1,\ldots,k_l)\in I(l,d)} \binom{d}{k_1,\ldots,k_l}.
    \]
    Since this coefficient depends only on the cardinality of $s$, we can rewrite
    \[
    m^d (1 - \be)^d \leq \sum_{l=1}^d a_l \sum_{s\in [\{1,\ldots,m\}]^l} \mu\!\left(\bigcap_{j\in s} E_j\right).
    \]
    However, if $\#s = d$ then $\bigcap_{k_j\neq 0} E_j = X_n\setminus\bigcup_{k_j\neq 0} A_j = 0$ because $\mc A$ is $0$-homogeneous for $\ccal$. For the remaining $s$ we use that $\mu(\bigcap_{j\in s} E_j) \leq 1 - \al$, which leads to
    \begin{equation}\label{eq:Gillis2}
        m^d (1 - \be)^d \leq \sum_{l=1}^{d-1} a_l \sum_{s\in [\{1,\ldots,m\}]^l} (1 - \al) = (1 - \al) \sum_{l=1}^{d-1} a_l \binom{m}{l}.
    \end{equation}
    Since $\binom{m}{l}$ is a polynomial on $m$ of degree $l$, the right hand side has degree $d-1$ whereas the left one has degree $d$. Thus there must exist $m_0 = m_0(d,\alpha,\beta)$ such that the inequality does not hold for $m > m_0$, and hence $\# (\mc A\cap\mk X_n)\leq m_0$.

    The inclusion $\fin(\psi)\con \langle \hom(\ccal)\rangle$ readily follows from the first part and the fact that for each $(x_n)_n\in \prod_{n\in \N}X_n$, the set $\bigsqcup_{n\in \N} \widehat{x_n}$ is 1-homogeneous for $\ccal$.
\end{proof}

\begin{rem}\label{rem:0-hom<=22}
    For example, when $\al = (1-\de)/2$ and $\be = (1+\de)/2$ with $\de > 0$ small enough, for $d = 3$ it can be shown that the corresponding $k$ can be taken $\le 22$. Indeed, for this $d$ the coefficients $a_l$ defined in the proof above are $a_1 = 1$ and $a_2 = 6$, so \eqref{eq:Gillis2} reduces to $m^2 - 12bm + 8b \leq 0$ with $b := (1+\de)/(1-\de)^3$. As $\de$ approaches $0$, the largest root of this polynomial approaches $(12 + \sqrt{112})/2\approx 11.29\ldots$ Thus, for $\de$ small enough, we must have $m\leq 11$, hence $k \leq m(d-1)\leq 22$.
\end{rem}

The next question to address is when $\langle \hom(\ccal)\rangle$ is non-trivial. It turns out that this has an interesting relation with graph theory. Recall that a hypergraph is a pair $\mc H = (V, E)$ where $V$ is the set of \emph{vertices} and $E\subseteq [V]^{<\omega}$ is the set of \emph{hyperedges}. The hypergraph is called \emph{$d$-uniform} if $E\subseteq [V]^d$, and \emph{uniform} if it is $d$-uniform for some $d\geq 1$. Given a $d$-uniform hypergraph $\mc H$, a subset $A\subseteq V$ is called \emph{complete} if $[A]^d\subseteq E$, and \emph{independent} if $[A]^d\cap E = \emptyset$. 
The \emph{chromatic number} of $\mc H$, denoted $\chi(\mc H)$, is the smallest cardinal $\kappa$ such that there exists a vertex coloring $\ccal: V\to\kappa$ satisfying that no hyperedge is monochromatic. If $\mc H$ is uniform, this is equivalent to saying that each color is an independent set.

We will focus here on hypergraphs induced by colorings of $[\N]^d$: for each $\ccal: [\N]^d\to r$ and each $i < r$ we denote by $\mc H_i(\ccal)$ the $d$-uniform hypergraph on $\N$ whose hyperedges are precisely $\ccal^{-1}(i)$. Note that if $r = 2$ then $\mc H_0(\ccal) = \overline{\mc H_1(\ccal)}$, where $\overline{\mc H}$ denotes the \emph{complement} hypergraph of $\mc H$. Moreover, in this case, a given set $A\subseteq\N$ 
is complete or independent for $\mc H_0(\ccal)$ if and only it is $0$ or $1$-homogeneous for $\ccal$, respectively. Note that subsets of $\N$ of cardinality $<d$ are, by definition, both complete and independent for every $d$-uniform hypergraph, just as they are both $0$ and $1$-homogeneous for every $2$-coloring of $[\N]^d$.


We start with the following straightforward observation, which will later be used to demonstrate that certain Mazur coloring ideals are non-trivial and, therefore, pathological.



\begin{prop}\label{ij6ioj45ytghf}
    Let $\ccal: [\N]^d \to 2$, and let $\mc H := \mathcal{H}_0(\ccal)$ be the $d$-uniform hypergraph on $\N$ whose hyperedges are $\ccal^{-1}(0)$. Then each of the following conditions implies the next:
    \begin{enumerate}[a), wide=0pt]
        \item $\chi(\mathcal{H}) = \omega$ and $\mc H$ does not contain infinite complete sets.
        \item The ideal $\langle \hom(\ccal) \rangle$ is non-trivial.
        \item $\chi(\mc H) = \chi(\overline{\mc H}) = \omega$.
    \end{enumerate}
\end{prop}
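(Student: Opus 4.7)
The plan is to prove both implications by contrapositive, translating directly between finite partitions of $\N$ into $\ccal$-homogeneous sets and finite colorings of the hypergraphs $\mc H$ and $\overline{\mc H}$. Since $\N$ is countable, $\chi(\mc H), \chi(\overline{\mc H}) \in \N \cup \{\omega\}$, so asserting that a chromatic number equals $\omega$ is the same as asserting it is not finite; this lets me freely move between the two formulations.

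For the implication $b) \Rightarrow c)$ I work contrapositively. Suppose $\chi(\mc H) = n < \omega$; then there is a partition $\N = A_1 \cup \cdots \cup A_n$ with each $A_i$ independent in $\mc H$, i.e.\ $[A_i]^d \cap \ccal^{-1}(0) = \emptyset$. Each $A_i$ is therefore $1$-homogeneous for $\ccal$, so $\N \in \langle \hom(\ccal) \rangle$, contradicting b). Symmetrically, $\chi(\overline{\mc H}) < \omega$ produces a finite partition into sets $A_i$ with $[A_i]^d \subseteq \ccal^{-1}(0)$, that is, into $0$-homogeneous pieces, again trivialising the ideal.

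For the implication $a) \Rightarrow b)$ I again argue by contrapositive. Suppose $\langle \hom(\ccal) \rangle$ is trivial, i.e.\ $\N = A_1 \cup \cdots \cup A_n$ with each $A_i \in \hom_0(\ccal) \cup \hom_1(\ccal)$. If some infinite $A_i$ lies in $\hom_0(\ccal)$, it is an infinite complete set in $\mc H$, so the second clause of a) fails. Otherwise every infinite $A_i$ is $1$-homogeneous and hence independent in $\mc H$, while the finite $A_i$'s together cover a finite set $F \subseteq \N$ which I split into singletons; these are vacuously independent in $\mc H$ since hyperedges have size $d \geq 2$. This produces a finite partition of $\N$ into independent sets of $\mc H$, so $\chi(\mc H) < \omega$, making the first clause of a) fail too.

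There is no real obstacle beyond carefully unravelling the definitions of $\hom_i(\ccal)$, completeness/independence in $\mc H$ and $\overline{\mc H}$, and the chromatic number. The only mildly subtle point is in $a) \Rightarrow b)$, where the finite pieces of the partition must be broken into singletons to upgrade a partition into $\ccal$-homogeneous sets into a genuine finite proper coloring of $\mc H$.
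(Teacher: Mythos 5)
Your proof is correct and follows essentially the same route as the paper's: both implications are proved by contrapositive, translating directly between finite partitions of $\N$ into $\ccal$-homogeneous pieces and finite proper colorings of $\mc H$ or $\overline{\mc H}$, with the same device of breaking the finitely many finite $0$-homogeneous pieces into singletons. The only cosmetic difference is that the paper phrases $a) \Rightarrow b)$ as ``assume $b)$ fails and the second clause of $a)$ holds, derive that the first clause fails,'' whereas you split into the two cases explicitly; these are logically the same argument.
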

\begin{proof}
    Suppose \textit{b)} fails. Then $\N$ can be partitioned into finitely many $0$-homogeneous and $1$-homogeneous subsets with respect to the coloring $\ccal$. Assuming further that $\mc H$ does not contain infinite complete sets, all $0$-homogeneous sets must be finite. Rewritting them as a union of singletons, we obtain a finite partition of $\N$ consisting Writing each of them as a finite union of singletons, the remaining partition of $\N$ consists of $1$-homogeneous sets, i.e. independent for $\mc H$. Thus $\chi(\mc H) < \omega$, contradicting condition \textit{a)}.
    
    Now suppose \textit{c)} fails, for instance, if $\chi(\overline{\mathcal{H}}) = r < \omega$. Let $d: \N\to r$ be a coloring witnessing it. Each color $d^{-1}(i)$ is an independent set for $\overline{\mc H}$, and therefore complete for $\mc H$. This means that $\N$ is partitioned in finitely many $0$-homogeneous sets for $\ccal$, thus $\langle\hom(\ccal)\rangle$ is trivial, contradicting \textit{b)}.
\end{proof}

Since Lemma \ref{asdijnbosirubhog} establishes that, for any Mazur coloring \( \ccal \), all \(0\)-homogeneous subsets are uniformly bounded in cardinality, it follows that proving that the corresponding \(c\)-coloring ideal is non-trivial reduces—by Proposition \ref{ij6ioj45ytghf}—to showing that the hypergraph \(\mc H_0(\ccal)\) has infinite chromatic number.

To achieve this, we examine the finite subhypergraphs \(  \mc H_0(\ccal_n)\) induced by the restriction \(\ccal_n\) of the Mazur coloring \(\ccal\) to \(X_n^{[\al,\be]}\). Specifically, the vertex set is \(X_n^{[\al,\be]}\) and the hyperedges are the non-covering families of \(X_n^{[\al,\be]}\) of size \(d\).

\begin{prop}\label{ij6ioj45ytghf11}
    \[
    \sup_n \chi(\mc H_0(\ccal_n)) \leq \chi(\mc H_0(\ccal)) \leq \sup_n \chi(\mc H_0(\ccal_n)) \cdot d.
    \]
    In particular, the c-coloring ideal $\langle \hom(\ccal)\rangle$ is non-trivial exactly when $\sup_n \chi(\mc H_0(\ccal_n)) =\infty$.
\end{prop}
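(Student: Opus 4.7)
The lower inequality $\sup_n \chi(\mc H_0(\ccal_n)) \le \chi(\mc H_0(\ccal))$ will follow immediately by restriction: the hyperedges of $\mc H_0(\ccal_n)$ are exactly the $d$-subsets of $\mk X_n$ that cover $X_n$, and these remain hyperedges of $\mc H_0(\ccal)$, so any proper coloring of $\mc H_0(\ccal)$ restricts to a proper coloring of each $\mc H_0(\ccal_n)$.

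For the upper bound set $\kappa := \sup_n \chi(\mc H_0(\ccal_n))$ and assume $\kappa < \infty$, otherwise there is nothing to prove. The key construction is an auxiliary hypergraph $\mc G_n$ on vertex set $\mk X_n$ whose hyperedges are all the subsets $\mc B \subseteq \mk X_n$ with $|\mc B| \le d$ that cover $X_n$ (these automatically have cardinality at least $\lceil 1/\be \rceil \ge 2$ since $\be < 1$). By definition of $\ccal$, every hyperedge $\mc A$ of $\mc H_0(\ccal)$ has $\mc A \cap \mk X_m$ covering $X_m$ for some $m$, hence contains a hyperedge of $\mc G_m$. Consequently, if we pick a proper coloring $c'_n \colon \mk X_n \to [d\kappa]$ of each $\mc G_n$ and define $C \colon \mk X \to [d\kappa]$ by $C|_{\mk X_n} := c'_n$, then $C$ will be a proper coloring of $\mc H_0(\ccal)$: any monochromatic $\mc A$ would force a monochromatic hyperedge of some $\mc G_m$.

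The technical core is the bound $\chi(\mc G_n) \le d \cdot \kappa_n$, where $\kappa_n := \chi(\mc H_0(\ccal_n))$. Starting from a proper $\kappa_n$-coloring $c_n$ of $\mc H_0(\ccal_n)$, I would analyze its color classes. For a class $C$ with $|C| \ge d$, the key claim is that $C$ contains no cover of $X_n$ of size $\le d$: if $\mc B \subseteq C$ were such a cover of size $j \le d$, then either $j = d$ (and $\mc B$ itself is a monochromatic hyperedge of $\mc H_0(\ccal_n)$) or $j < d$ and one could adjoin any $d - j$ further elements of $C \setminus \mc B$ to obtain a $d$-element cover contained in $C$, again producing a monochromatic hyperedge of $\mc H_0(\ccal_n)$; both contradict the properness of $c_n$. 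Hence such large classes are already valid classes for $\mc G_n$. For classes with $|C| < d$, refine into singletons, which cannot be covers because $\be < 1$. A direct count shows the total refinement uses at most $(d-1)\kappa_n \le d\kappa_n$ colors, as required.

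For the \emph{in particular} statement, combine the two-sided inequality with Lemma \ref{asdijnbosirubhog} and Proposition \ref{ij6ioj45ytghf}. Lemma \ref{asdijnbosirubhog} gives $\hom_0(\ccal) \subseteq [\mk X]^{\le k}$, so $\mc H_0(\ccal)$ admits no infinite complete sets; under this assumption all three conditions of Proposition \ref{ij6ioj45ytghf} become equivalent and reduce to $\chi(\mc H_0(\ccal)) = \omega$. Thus non-triviality of $\langle \hom(\ccal) \rangle$ is equivalent to $\chi(\mc H_0(\ccal)) = \omega$, and, by the main inequalities, equivalent to $\sup_n \chi(\mc H_0(\ccal_n)) = \infty$. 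The main obstacle is the blockwise refinement argument bounding $\chi(\mc G_n)$ by $d\kappa_n$: the dichotomy between color classes of size $\ge d$ and $< d$ is exactly what introduces the factor $d$ in the statement.
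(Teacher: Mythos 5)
Your proof is correct, and it takes a recognizably different route through the key combinatorial step, so it is worth spelling out the contrast.

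The paper works directly with a proper $r_n$-coloring $\mc P_0^{(n)},\dots,\mc P_{r_n-1}^{(n)}$ of $\mc H_0(\ccal_n)$ and refines each class $\mc P_j^{(n)}$ of size $\ge d$ into one "big" piece together with $d-1$ singletons peeled off; the spare $d-1$ singletons sitting \emph{outside} the big piece but still inside $\mc P_j^{(n)}$ are exactly what the paper later uses to pad a small cover $\mc A\cap\mk X_n$ up to a monochromatic $d$-set inside $\mc P_j^{(n)}$. This gives at most $d$ new classes per original class, hence the factor $d$.

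You instead introduce the intermediate hypergraph $\mc G_n$ whose hyperedges are all covers of $X_n$ of size $\le d$, observe that a proper blockwise coloring of the $\mc G_n$'s automatically gives a proper coloring of $\mc H_0(\ccal)$, and then show $\chi(\mc G_n)\le (d-1)\kappa_n$ by a different refinement: leave any color class $C$ of size $\ge d$ intact, and argue that such a $C$ contains no cover of size $\le d$ because a cover of size $j<d$ could be padded by $d-j$ further elements of $C\setminus\mc B$ (available since $|C|\ge d$) to a $d$-element cover monochromatic for the original coloring. Small classes you split into singletons. The padding elements come from \emph{inside} the class rather than from carved-off singletons outside it, which avoids splitting large classes at all and gives a factor of $d-1$ rather than $d$. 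Both arguments are correct; yours is cleaner and strictly sharper, and of course $d-1\le d$ so the stated bound follows.

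Your derivation of the \emph{in particular} clause from Lemma~\ref{asdijnbosirubhog} and Proposition~\ref{ij6ioj45ytghf} (noting that once infinite complete sets are excluded, non-triviality and $\chi(\mc H_0(\ccal))=\omega$ become equivalent via (a)$\Rightarrow$(b)$\Rightarrow$(c)) matches what the paper leaves implicit and is valid.
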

The final product is cardinal multiplication, which equals \(\omega\) when \(\sup_n\chi(\mc H_0(\ccal)) = \omega\), and \(\sup_n\chi(\mc H_0(\ccal)) \cdot d < \omega\) otherwise. 
\begin{proof}
    The only non-trivial inequality to verify is the second one, in the case where \(\sup_n \chi(\mathcal{H}_0(\ccal_n)) = r\) is finite. 
    For each \(n\), choose a partition 
    \[
    \mathcal H_0(\ccal_n) = \bigcup_{i < r_n} \mathcal{P}_i^{(n)}
    \] 
    consisting of independent subsets of \(\mathcal H_0(\ccal_n)\) and such that \(r_n \le r\). Now, refine  \((\mathcal{P}_i^{(n)})_{i < r_n}\) to   \((\mathcal{Q}_i^{(n)})_{i < rd}\) where each \(\mathcal{Q}_i^{(n)}\) satisfies one of the following properties:
    - it is empty, 
    - it is a singleton, or 
    - it is a subset of some unique \(\mathcal{P}_{j_i}^{(n)}\) such that \(\#\mathcal{P}_{j_i}^{(n)} \ge d\) and \(\#(\mathcal{P}_{j_i}^{(n)} \setminus \mathcal{Q}_i^{(n)}) = d-1\). 
    Define the sets 
    \[
    \mathcal{Q}_i := \bigcup_{n} \mathcal{Q}_i^{(n)}
    \] 
    for each \(i < rd\). 
    Since the families \((\mathcal{Q}_i^{(n)})_{i < rd}\) are pairwise disjoint and satisfy 
    \[
    \bigcup_{i < rd} \mathcal{Q}_i^{(n)} = \bigcup_{i < r_n} \mathcal{P}_i^{(n)} = X_n^{[\alpha, \beta]},
    \] 
    it follows that \(\{\mathcal{Q}_i\}_{i < rd}\) forms a partition of \(\mathfrak{X}\). 
    We claim that each \(\mathcal{Q}_i\) is an independent set in \(\mathcal H_0(\ccal)\). To see this, suppose there exists a hyperedge \(\mathcal{A}\) such that \(\mathcal{A} \subseteq \mathcal{Q}_i\). Then there exists some \(n\) such that \(\mathcal{A} \cap X_n^{[\alpha, \beta]}\) covers \(X_n\). By construction, \(\mathcal{A} \cap X_n^{[\alpha, \beta]} \subseteq \mathcal{Q}_i^{(n)}\), so \(\mathcal{Q}_i^{(n)}\) cannot be empty nor a singleton, and hence it must be contained in some \(\mathcal{P}_{j}^{(n)}\) with \(\#(\mathcal{P}_j^{(n)} \setminus \mathcal{Q}_i^{(n)}) = d-1\). Therefore, we can select a subset \(\mathcal{S} \subseteq \mathcal{P}_j^{(n)} \setminus \mathcal{Q}_i^{(n)}\) such that \((\mathcal{A} \cap X_n^{[\alpha, \beta]}) \cup \mathcal{S}\) has cardinality \(d\). Since this set also covers \(X_n\), it forms a hyperedge of $\mc H_0(\ccal_n)$ entirely contained in the independent set \(\mathcal{P}_j^{(n)}\), which is a contradiction.

    Thus, \(\mathcal{Q}_i\) must indeed be an independent set, completing the proof.
\end{proof}

We present the following simple example illustrating a trivial Mazur \(c\)-coloring ideal.

\begin{coro}
    The c-coloring ideal \( \langle \hom(\ccal_{\alpha, \beta, d}) \rangle \) is trivial for \(\alpha = \beta = {1}/{2}\), \(d = 2\), and each \(X_n = 2n\).
\end{coro}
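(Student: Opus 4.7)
My plan is to unpack the definitions at this particular choice of parameters and recognize the underlying hypergraph as a disjoint union of perfect matchings; having chromatic number $2$, this will force $\mk X$ itself to split as the union of two $1$-homogeneous sets, making $\langle\hom(\ccal)\rangle$ trivial.

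First, since $\al=\be=1/2$ and $\#X_n = 2n$, the set $\mk X_n = X_n^{[1/2,1/2]}$ is precisely $[X_n]^n$, and I would compute the value of $\ccal=\ccal_{1/2,1/2,2}$ on an arbitrary pair $\{A,B\}\in[\mk X]^2$ by cases. If $A\in\mk X_n$ and $B\in\mk X_m$ with $n\neq m$, then for every $k$ the intersection $\{A,B\}\cap\mk X_k$ has at most one element, which cannot cover $X_k$, so $\psi(\{A,B\})=1$ and hence $\ccal(\{A,B\})=1$. If instead $A,B\in\mk X_n$ with $A\neq B$, then $\{A,B\}$ covers $X_n$ exactly when $A\cup B=X_n$, and since $\#A=\#B=n=\#X_n/2$ this is equivalent to $B=X_n\setminus A$. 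In conclusion, $\ccal(\{A,B\})=0$ if and only if $A$ and $B$ are complementary members of some common $\mk X_n$.

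This identifies $\mc H_0(\ccal_n)$ with the perfect matching on $\mk X_n$ that pairs each $A$ with $X_n\setminus A$, and $\mc H_0(\ccal)$ is the disjoint union of these matchings. To produce an explicit $2$-coloring of the vertex set, I would fix, for every $n$, a distinguished point $x_n\in X_n$ and define
\[
\mc P := \{A\in\mk X : A\in\mk X_n\text{ and }x_n\in A\text{ for some }n\},\qquad \mc Q := \mk X\setminus\mc P.
\]
For any complementary pair $\{A,X_n\setminus A\}$ exactly one member contains $x_n$, so each $\mc P$ and $\mc Q$ meets every matched pair in exactly one vertex and is therefore independent in $\mc H_0(\ccal)$. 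Combined with the cross-level computation above, any two distinct elements of $\mc P$ (respectively of $\mc Q$) form a pair of $\ccal$-color $1$, so both $\mc P$ and $\mc Q$ lie in $\hom_1(\ccal)\subseteq\hom(\ccal)$.

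Finally, since $\mk X=\mc P\cup\mc Q$ is a union of two sets in $\hom(\ccal)$, we get $\mk X\in\langle\hom(\ccal)\rangle$, i.e. the ideal is trivial. I do not foresee any serious obstacle: the only moment requiring care is verifying the $\ccal$-value on cross-level pairs, which follows at once from the definition of $\psi$ as a supremum over the levels $\mk X_n$. The argument could equivalently be phrased through Proposition \ref{ij6ioj45ytghf11}, noting that each $\chi(\mc H_0(\ccal_n))=2$, but the direct construction of $\mc P$ and $\mc Q$ above seems the cleanest route.
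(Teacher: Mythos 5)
Your proof is correct. You take essentially the same route as the paper: identify $\mc H_0(\ccal_n)$ with the perfect matching on $\mk X_n$ pairing each $A$ with its complement, and properly $2$-colour it by membership of a distinguished element of $X_n$ (the paper uses $0 \in X_n$, you use a chosen $x_n$). The only difference is in the final step: the paper plugs $\chi(\mc H_0(\ccal_n))=2$ into Proposition~\ref{ij6ioj45ytghf11} to get $\chi(\mc H_0(\ccal)) \le 2d = 4$ and then applies the contrapositive of $b)\Rightarrow c)$ in Proposition~\ref{ij6ioj45ytghf}, whereas you glue the level-wise $2$-colourings directly into a global $2$-colouring of $\mc H_0(\ccal)$ and read off triviality of the ideal from the resulting partition $\mk X = \mc P \cup \mc Q$ into $1$-homogeneous sets. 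Your short-cut works because, for $d=2$ and $\be < 1$, a hyperedge $\{A,B\}$ can only cover some $X_n$ if both of $A,B$ lie in $\mk X_n$, so the hypergraph decomposes as a disjoint union of the level hypergraphs and the level-wise colourings align for free; this is exactly the obstruction that costs the extra factor $d$ in Proposition~\ref{ij6ioj45ytghf11} when $d\ge 3$, where hyperedges may straddle levels. So you obtain the sharper bound $\chi(\mc H_0(\ccal))=2$ at no extra cost, but the conclusion is the same.
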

\begin{proof}
    Write $\ccal := \ccal_{1/2,1/2,2}$, and for each $n$ let $\ccal_n$ be the restriction of $\ccal$ to $[2n]^n$. Choose a partition $\mathcal{P}_0^{(n)} \cup \mathcal{P}_1^{(n)}$ of $[2n]^n$ such that no set \(A\) and its complement \(2n \setminus A\) belong to the same partition class, for example by setting $\mc P_0^{(n)} := \widehat{0} = \{A\in [2n]^n : 0\notin A\}$. By construction, each \(\mathcal{P}_i^{(n)}\) is independent for $\mc H_0(\ccal_n)$, so $\chi(\mc H_0(\ccal_n)) = 2$. Consequently, $\chi(\mc H_0(\ccal))\leq 2\cdot\sup_n \chi(\mc H_0(\ccal_n))\leq 4$. By the implication \textit{b)} \(\Rightarrow\) \textit{c)} in Proposition \ref{ij6ioj45ytghf}, it follows that the corresponding c-coloring ideal is trivial.
\end{proof}

The case \(d = 2\) is, in a sense, special, as we will now demonstrate.
For each $\de>0$, $p\in \N^*$ and a finite set $X$, we write $\mc S_{\de,p}(X)$ to denote  the symmetric cardinal interval $X^{[(1-\de)/p, (1+\de)/p]}$, that is, 
\[
\mc S_{\de,p}(X):=\conjbig{Y\con X}{ \frac{\# X}p (1-\de)\le \# Y\le \frac{\#X}{p}(1+\de)}.
\]
We have the following.

\begin{teo}\label{oijt45ioj345itjretrtrtretert}
For every $p \ge 2$, $\delta > 0$, and $r \in \mathbb{N}$, there exists some $N = N(p,\delta,r) \in \mathbb{N}$ such that for every $n \ge N$, every $r$-coloring of $\mathcal{S}_{\delta,p}(n)$ contains a monochromatic set consisting of $p+1$ sets that covers $n$, or, in other words,  
$$\chi(\mathcal{H}_0(\ccal_n))\ge r+1.$$
\end{teo}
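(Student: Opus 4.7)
I would prove the contrapositive, showing that any $r$-coloring of $\mc S_{\delta,p}(n)$ admits $p+1$ monochromatic sets that together cover $n$, provided $n$ is large enough in terms of $p,\delta,r$. By the pigeonhole principle, some color class $C\subseteq\mc S_{\delta,p}(n)$ has density at least $1/r$, and by averaging over the slices $[n]^k$ with $k\in[(1-\delta)n/p,(1+\delta)n/p]$ we may assume that $C$ lies in a single slice $[n]^k$ with $|C|/|[n]^k|\ge 1/r$. The task then reduces to: for any such dense $C\subseteq[n]^k$, find $p+1$ distinct sets in $C$ whose union is $n$.

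The central tool is the concentration of measure on the Hamming slice $[n]^k$. Equipped with the uniform measure and the normalized Hamming distance $d(A,B)=|A\triangle B|/n$, every $1$-Lipschitz function $f$ satisfies $\Pr[|f-\mathrm{med}(f)|\ge t]\le 2 e^{-cnt^2}$ for some $c=c(k/n)>0$. Applied to the distance function $f(A)=d(A,C)$, this yields that the $\eta$-Hamming neighborhood of $C$ covers all but an $e^{-c'n\eta^2}$-fraction of $[n]^k$; in particular, every $A\in[n]^k$ lies within normalized Hamming distance $O(\sqrt{\log r/n})$ of some element of $C$. So $C$ is effectively ``dense everywhere'' in the slice in the sense of Hamming approximation.

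With this, one selects an explicit covering template $(A_0^*,\ldots,A_p^*)\in ([n]^k)^{p+1}$ of $n$, chosen so that each $i\in n$ is covered by as many $A_j^*$'s as the slack $(p+1)k-n\ge n/p$ permits. Perturbing each $A_j^*$ to a nearby $A_j\in C$ at normalized Hamming distance at most $\eta$ yields a monochromatic almost-covering $(A_0,\ldots,A_p)\in C^{p+1}$, whose uncovered residue $R\subseteq n$ has size at most $(p+1)\eta n$. For $\delta$ large enough that the template redundancy exceeds this loss, the perturbed tuple is already a genuine covering and we are done.

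The main obstacle is the small-$\delta$ regime, where the template's average element multiplicity $(p+1)/p<2$ prevents global double coverage. The remedy I would pursue is an iterative patching step: since $|R|=O(\sqrt{n\log r})$, one applies the concentration inequality once more to the sub-family $\{A\in C:R\subseteq A\}$ in the appropriate slice (whose relative density remains bounded below because $|R|$ grows only as $\sqrt n$, while the density of $\{A\in [n]^k:R\subseteq A\}$ in $[n]^k$ decays only polynomially in $|R|$ and the initial density of $C$ is a positive constant). This extracts a replacement $A_j''\in C$ with $R\subseteq A_j''$ that can be swapped in for one of the $A_j$, producing the desired genuine monochromatic $(p+1)$-covering inside $C$.
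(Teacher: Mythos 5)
Your overall instinct—appealing to concentration of measure on a Hamming-type slice—is exactly the engine in the paper's proof, but the argument as sketched has a quantitative gap that cannot be repaired without importing a further structural idea.

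The fatal step is the ``patching.'' You assert that the density of $\{A\in [n]^k : R\subseteq A\}$ in $[n]^k$ decays only polynomially in $|R|$. It does not: $\binom{n-|R|}{k-|R|}/\binom{n}{k}=\prod_{i<|R|}\tfrac{k-i}{n-i}\approx (k/n)^{|R|}\approx p^{-|R|}$, which is \emph{exponential} in $|R|$. With $\eta\asymp\sqrt{\log r/n}$ you get $|R|\lesssim(p+1)\eta n\asymp\sqrt{n\log r}$, so the density of $\{A : R\subseteq A\}$ is of order $p^{-\Theta(\sqrt{n})}$, which tends to zero far faster than the constant density $1/r$ of $C$, and far faster than the error $e^{-cn\eta^2}$ that the concentration inequality leaves uncovered. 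So you have no way to guarantee $C\cap\{A:R\subseteq A\}\neq\emptyset$, and the iteration you propose cannot get off the ground. There is also a secondary soft spot: you use only $\eta$-approximation to place each $A_j^*$ into $C$, but a nearby element of $\{A:R\subseteq A\}$ need not itself contain $R$, so even if the neighborhood argument gave you density you would still have to close this gap. And the ``large $\delta$'' escape route does not save you either, because the theorem is most interesting for arbitrarily small $\delta>0$ (the case needed in Corollary \ref{4ijtiojrejgifkgf3}), and the average template multiplicity $(p+1)/p$ is less than $2$ there—besides which, redundancy on average does not prevent the perturbations from conspiring to all drop the same element.

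What your approach is missing is the dichotomy the paper runs: either some color class $\mc P_{q_0}$ already has a $\delta/(2p)$-hole $H$, in which case $H\oplus\mc S_{\delta/2,p}(n\setminus H)$ lands inside $\mc S_{\delta,p}(n)$ and uses only the remaining $r-1$ colors, so one closes by \emph{induction on $r$}; or no color class has such a hole, in which case concentration on $\mathrm{Equi}_\delta(n,p)$ together with the $S_p$-action (the transpositions $\pi_q$ preserve the uniform measure, so the $p$ sets $\pi_q((\mc Q_{i_0})_\vep)$ each have measure $>(p-1)/p$ and therefore intersect) produces $p$ sets in the dense color covering all but a residue of size at most $n\delta/(2p)$—and now the \emph{absence of a hole} is exactly the hypothesis that hands you one more set of that color covering the residue. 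This is the structural substitute for your ``patching'': the last set is guaranteed not by density estimates on $\{A:R\subseteq A\}$ (which are hopelessly small) but because you have \emph{already disposed of the hole case by induction}. Without the induction and the hole/no-hole dichotomy, the residue has no reason to be coverable within a single color, and the quantitative concentration bounds are pointed in the wrong direction.
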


From Proposition \ref{ij6ioj45ytghf} and Proposition \ref{ij6ioj45ytghf11} we obtain the following.
\begin{coro}\label{4ijtiojrejgifkgf3}
 For every $p \ge 2$ and $0<\delta < 1$, the c-coloring ideal corresponding to the coloring $\ccal_{(1-\de)/p,(1+\de)/p,p+1}:[ \bigsqcup_n \mc S_{\de,p}(n)]^{p+1}\to 2 $ is super-pathological. Moreover the corresponding $(p+1)$-uniform hypergraphs $\mc M_{p,\de}:=\mc H_0(\ccal_{(1-\de)/p,(1+\de)/p,p+1})$ forbid some finite complete $d$-uniform hypergraph and generates a super-pathological $F_\sig$-ideal.       
\end{coro}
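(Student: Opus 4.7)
The strategy is to combine Theorem \ref{oijt45ioj345itjretrtrtretert} with the general framework of Mazur colorings developed in Lemma \ref{asdijnbosirubhog}, Propositions \ref{ij6ioj45ytghf}, \ref{ij6ioj45ytghf11}, and the pathology criterion Theorem \ref{thm:super-path_ideals}. Set $\alpha := (1-\de)/p$, $\beta := (1+\de)/p$, $d := p+1$, $\mathfrak{X}_n := \mc S_{\de,p}(n) = n^{[\al,\be]}$, and $\ccal := \ccal_{\al,\be,d}$ with $\ccal_n$ its restriction to $[\mathfrak{X}_n]^d$. Write $\mc M := \mc M_{p,\de} = \mc H_0(\ccal)$ and $\mc M_n := \mc H_0(\ccal_n)$.

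First, I would extract from Lemma \ref{asdijnbosirubhog} the bound $\hom_0(\ccal)\subseteq [\mathfrak{X}]^{\le k}$ for some $k = k(d,\al,\be)$. In hypergraph terms this says that $\mc M$ contains no complete $d$-uniform sub-hypergraph on $k+1$ vertices, which is precisely the claim that $\mc M$ forbids a finite complete $(p+1)$-uniform hypergraph. In particular, $\mc M$ has no infinite complete subset, so the first hypothesis of Proposition \ref{ij6ioj45ytghf}(a) is in hand.

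Next, I would apply Theorem \ref{oijt45ioj345itjretrtrtretert} to obtain $\chi(\mc M_n)\to\infty$: for each $r\in\N$, choosing $n\ge N(p,\de,r)$ gives $\chi(\mc M_n)\ge r+1$, so $\sup_n\chi(\mc M_n)=\omega$. Proposition \ref{ij6ioj45ytghf11} then upgrades this to $\chi(\mc M)=\omega$. Combined with the previous step, Proposition \ref{ij6ioj45ytghf}(a)$\Rightarrow$(b) yields that the coloring ideal $\langle\hom(\ccal)\rangle$ is non-trivial.

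Finally, for super-pathology I would invoke that $\fin(\psi)\subseteq\langle\hom(\ccal)\rangle$ by the second assertion of Lemma \ref{asdijnbosirubhog}. The submeasure $\psi$ associated to the partition $(\mathfrak{X}_n)_n$ and the coverings $\mc C_n = \{\widehat{x} : x\in n\}$ satisfies the three hypotheses of Theorem \ref{thm:super-path_ideals} with $M=1$, while the Kelley bound \eqref{eq:Kelley_number_cov} gives $\de_n\ge 1-\be = 1-(1+\de)/p > 0$ (using $p\ge 2$ and $\de<1$), so $\fin(\psi)$ is super-pathological. Since super-pathology is clearly inherited upwards (any non-trivial $F_\sigma$ ideal covering $\langle\hom(\ccal)\rangle$ also covers $\fin(\psi)$, hence is pathological), we conclude that $\langle\hom(\ccal)\rangle$ is super-pathological as well. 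The only subtle point is checking that $1-\be>0$ for the specified parameters, i.e., that the normalization $\alpha,\beta$ stays strictly below $1$; for $p\ge 2$ and $\de<1$ this is immediate, so no serious obstacle arises.
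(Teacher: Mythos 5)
Your proof is correct and follows essentially the same route as the paper's: Lemma \ref{asdijnbosirubhog} for the boundedness of $0$-homogeneous sets and the inclusion $\fin(\psi)\subseteq\langle\hom(\ccal)\rangle$, Theorem \ref{oijt45ioj345itjretrtrtretert} together with Proposition \ref{ij6ioj45ytghf11} to make $\chi(\mc H_0(\ccal))$ infinite, and Proposition \ref{ij6ioj45ytghf} to conclude non-triviality. The only difference is that you spell out the upward inheritance of super-pathology explicitly, a step the paper leaves implicit in the phrase ``hence super-pathological,'' which is a harmless (and arguably clearer) expansion rather than a different argument.
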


\begin{proof}
For simplicity, set \(\ccal := \ccal_{(1-\delta)/p,(1+\delta)/p,p+1}\).  
Since \(\delta < 1\), we have \((\delta + 1)/2 < 1\). Hence, it follows from Lemma \ref{asdijnbosirubhog} that $\fin(\psi)\con\langle \hom(\ccal)\rangle$ and that the complete sets of the $d$-uniform hypergraph \(\mc H_0(\ccal)\) are finite. On the other hand,  from Theorem \ref{oijt45ioj345itjretrtrtretert}, it follows that \(\sup_n \chi(\mathcal{H}_0(\ccal_n)) = \infty\). Consequently, by Proposition \ref{ij6ioj45ytghf11}, we conclude that the chromatic number of the   hypergraph \(\mathcal{H}_0(\ccal)\) is infinite as well. Finally, invoking   Proposition \ref{ij6ioj45ytghf}, we obtain that the corresponding \(c\)-coloring ideal for \(\ccal\) is non trivial, hence super-pathological.
\end{proof}

The proof of the Theorem \ref{oijt45ioj345itjretrtrtretert}   uses the concentration of measure of almost symmetric subfamilies of Hamming cubes, that we pass to explain. Fix $0<\de<1$, and given finite set $X$ and $p\in \N$, $p\ge 2$, let $\mr{Equi}_\de(X,p)$ the collection of {\em $\de$-equi-surjections}, that is, surjections $F: X\to p$ such that 
\[
\frac{\#X}{p}(1-\de) \le \#F^{-1}(q)\le \frac{\#X}{p}(1+\de) \text{ for every $q<p$}.
\]
 
On \(\mathrm{Equi}_\delta(X,p)\), we consider the normalized Hamming metric:
\[
d_{X,p}(F,G) := \frac{\#\{F \neq G\}}{\#X},
\]
along with its uniform probability measure
\[
\mu_{X,p}(\{F\}) := \frac{1}{\#\mathrm{Equi}_\delta(X,p)}
\]
for every singleton $\{F\}$.

It is proved in \cite[Corollary 5.34]{AmalgamationRamseyLPSpaces} that the sequence \((\mathrm{Equi}_\delta(n,p), d_{n,p}, \mu_{n,p})_{n \in \mathbb{N}}\) is a \emph{Lévy sequence}\footnote{In fact, it is an asymptotically normal Lévy sequence.}, exhibiting the concentration of measure phenomenon, which we now briefly recall.

An \emph{mm-space} is a triple \(\mathcal{X} := (X, d, \mu)\), where \(d\) is a metric on \(X\) and \(\mu\) is a Borel probability measure on \(X\). 
The \emph{extended concentration function} of \(\mathcal{X}\) is defined for \(\delta, \varepsilon > 0\) as:

\[
\alpha_{\mathcal{X}}(\delta, \varepsilon) := 1 - \inf \left\{ \mu(A_\varepsilon) : \mu(A) \ge \delta \right\},
\]
where \(A_\varepsilon\) denotes the open \(\varepsilon\)-neighborhood of \(A\).
A sequence \((X_n, d_n, \mu_n)_{n \in \mathbb{N}}\) of mm-spaces such that \(1 \leq \mathrm{diam}(X_n) < \infty\) for all \(n\) is called a \emph{Lévy sequence} when for every \(\delta, \varepsilon > 0\):
\begin{equation}\label{k43ijti4jtrre}
\lim_{n\to \infty}\alpha_{X_n}(\delta, \varepsilon\cdot\mr{diam}(X_n)) = 0.
\end{equation}

In particular, it satisfies the following property, which will play a crucial role in the proof of Theorem \ref{oijt45ioj345itjretrtrtretert}.

\begin{teo}\label{h4htu34htu4t4}
    For every $p\in \N$, $p\ge 2$, and $\eta,\vep>0$ there is some $N = N(p,\eta,\vep)\in \N$ such that for every $n\ge N$ and every $\mc S\con \mr{Equi}_\de(n,p)$ such that $\mu_{n,p}(\mc S)\ge \eta$ one has that $\mu_{n,p}(\mc S_\vep)\ge 1-\vep$, where $\mc S_\vep:=\conj{F\in \mr{Equi}_\de(n,p)}{d_{n,p}(F,\mc S)\le\vep}$ is the $\vep$-fattening of $\mc S$. \qed
\end{teo}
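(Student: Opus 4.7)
The statement is presented as a direct consequence (``In particular'') of the fact that the sequence $(\mr{Equi}_\de(n,p),d_{n,p},\mu_{n,p})_{n\in\N}$ is a Lévy sequence, so the plan is not to reprove any concentration inequality but to unwind the definition of the extended concentration function $\alpha_{\mc X}$ and feed it into the Lévy property \eqref{k43ijti4jtrre}.

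First I would record the tautological translation: for $X_n:=\mr{Equi}_\de(n,p)$ and any $\mc S\con X_n$ with $\mu_{n,p}(\mc S)\ge \eta$, the definition
\[
\alpha_{X_n}(\eta,\vep)=1-\inf\{\mu_{n,p}(A_\vep)\,:\,\mu_{n,p}(A)\ge \eta\}
\]
gives immediately $\mu_{n,p}(\mc S_\vep)\ge 1-\alpha_{X_n}(\eta,\vep)$. Thus it is enough to show that for every fixed $\eta,\vep>0$ there exists $N$ such that $\alpha_{X_n}(\eta,\vep)\le \vep$ for all $n\ge N$, i.e.\ that $\alpha_{X_n}(\eta,\vep)\to 0$ as $n\to\infty$.

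Second, I would extract this asymptotic statement from the Lévy property. The metric $d_{n,p}$ is the normalized Hamming distance, so $\mr{diam}(X_n)\le 1$ (and equals $1$ for $n\ge p$, realized by composing any $\de$-equi-surjection with a fixed-point-free permutation of $p$). Taking $\vep'':=\vep$ in \eqref{k43ijti4jtrre} yields $\alpha_{X_n}(\eta,\vep\cdot \mr{diam}(X_n))\to 0$. Since $\vep\cdot\mr{diam}(X_n)\le \vep$ and $\alpha_{X_n}(\eta,\cdot)$ is monotone non-increasing in its second argument (larger $\vep$-fattenings have larger measure, hence larger infimum, hence smaller $\alpha$), we conclude
\[
0\le \alpha_{X_n}(\eta,\vep)\le \alpha_{X_n}(\eta,\vep\cdot\mr{diam}(X_n))\xrightarrow[n\to\infty]{} 0.
\]

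Finally, pick $N=N(p,\eta,\vep)$ so that $\alpha_{X_n}(\eta,\vep)\le \vep$ for every $n\ge N$; combining with the first step gives $\mu_{n,p}(\mc S_\vep)\ge 1-\vep$ uniformly over all $\mc S$ with $\mu_{n,p}(\mc S)\ge \eta$. There is no real obstacle: the only subtlety is keeping track of the factor $\mr{diam}(X_n)$ in the definition of Lévy sequence, which is harmless here because the normalized Hamming metric has diameter bounded by $1$, so the rescaling $\vep\mapsto \vep\cdot\mr{diam}(X_n)$ only strengthens the bound we need.
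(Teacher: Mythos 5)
Your proposal is correct and matches the paper's approach: the paper gives no separate argument for this theorem, presenting it (with \qed) precisely as the unwinding of the Lévy property \eqref{k43ijti4jtrre} from \cite[Corollary 5.34]{AmalgamationRamseyLPSpaces}, which is exactly what you do. Your extra remarks — that the normalized Hamming diameter of $\mr{Equi}_\de(n,p)$ is $1$ for $p\ge 2$ and that $\alpha_{X_n}(\eta,\cdot)$ is non-increasing — correctly handle the only minor bookkeeping the paper leaves implicit.
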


\begin{proof}[Proof of Theorem \ref{oijt45ioj345itjretrtrtretert}]
    Fix $\delta > 0$. The proof proceeds by induction on the number of colors \( r \). For the base case \( r = 1 \), it is clear that with $p$ many sets of cardinality $\lceil n/p\rceil$  one can cover $n$, and since the ratio $\lceil n/p\rceil /(n/p)$ tends to $1$ when $n$ tends to infinity, for large enough $n$ those sets of cardinality $\lceil n/p\rceil$ belong to $\mc S_{\de,p}(n)$. Suppose now that the result is true for $r - 1$ and let $\vep:= \de/(2p^2)$ and $n_0 := N(p,\delta/2,r-1)$. Use Theorem \ref{h4htu34htu4t4} to find $n_1> \max\{4 n_0 /3, 2p^2/\de\}$ such that if $n\ge n_1$ then  
    \begin{equation}\label{ioiouti578y78434}
        \text{for every $\mc S\subseteq \mr{Equi}_\de(n,p)$ such that $\mu_{n,p}(\mc S)\ge 1/r$ one has that $\mu_{n,p}(\mc S_\vep)>\frac{p-1}p$.}
    \end{equation}
    We claim that $n_1$ works. For suppose that  $\mc S_{\de,p}(n) = \mc P_0\cup \cdots \cup \mc P_{r-1}$ for $n\ge n_1$. We introduce the useful concept of {\em hole}. For $\mc P\con \mc S_{\de,p}(n)$ we say that $H\con n$ is a {\em hole} of $\mc P$ if no $A\in \mc P$ covers $H$, and we say that $\mc P$ has {\em $\gamma$-holes}, if $\mc P$ has a hole of size $\lceil \gamma \cdot n\rceil$. There are two cases to consider.
    
    \noindent{\sc Case 1.} There is some $\mc P_{q_0}$ with a $\de/(2p)$-hole. Let $H\con n$ be such hole.
    \begin{claim}
        $H\oplus \mc S_{\de/2, p}(n\setminus H)\con \mc S_{\de,p}(n)$, where $H\oplus \mc S_{\de/2, p}(n\setminus H) := \{H\cup A : A\in\mc S_{\de/2,p}(n\setminus H)\}$.
    \end{claim}
    Suppose that this fact is established. By the choice of $n_1$ we have that $\#(n \setminus H) = \lfloor n\cdot (1-\de/(2p))\rfloor \ge n_0$, because $1-\de/(2p)\ge 3/4$. We have the $(r-1)$-coloring of $\mc S_{\de/2, p}(n\setminus H)$ induced by the inclusion $H\oplus \mc S_{\de/2, p}(n\setminus H)\con \bigcup_{q\neq q_0} \mc P_q$, hence by the choice of $n_0$ there is some $q_1\neq q_0$ and a covering $\{A_0,\dots, A_{p+1}\}$ of $n\setminus H$ such that $\{H\cup A_j\}_{j=0}^{p-1}\con \mc P_{q_1}$, and we are done since $\{H\cup A_j\}_{j=0}^{p-1}$ covers now $n$.
    \begin{proof}[Proof of Claim.]
        Let $A\in \mc S_{\de/2, p}(n\setminus H)$. Then, 
        \begin{align*}
            \#(H\cup A) & = \#H + \#A \leq \#H + (n - \#H)\frac{1 + \frac{\delta}{2}}{p} \leq \frac{n}{p} \left(1 + \frac{\delta}{2}\right) + \left\lceil \frac{\delta\cdot n}{2p}\right\rceil\left(1 - \frac{1}{p}\right) \leq_{(\star)} \frac{n}{p} (1 + \delta) \\
            \#(H\cup A) & = \#H + \#A \geq \#H + (n - \#H)\frac{1 - \frac{\delta}{2}}{p} \geq \frac{n}{p}\left(1 - \frac{\delta}{2}\right) + \frac{\delta\cdot n}{2p} \left(1 - \frac{1}{p}\right) \geq \frac{n}{p} (1 - \delta)
        \end{align*}
        where in $(\star)$ we have used that $n\ge n_1\ge 2p^2/\de$.
     \end{proof}

\noindent{\sc Case 2.}  No $\mc P_q$ has $\de/(2p)$-holes. Let $\tau: \mr{Equi}_{\de}(n, p)\to \mc S_{\de,p}(n)$, $\tau(F):= F^{-1}(0)$. For each $ i< r$, let $\mc Q_i:=\tau^{-1}(\mc P_i)$, and choose $ i_0 < r$ such that $\mu_{n,p}(\mc Q_{i_0})\ge 1/r$. By the property in \eqref{ioiouti578y78434} that defines $n_1$ we know that $\mu_{n,p}((\mc Q_{i_0})_\vep) > (p-1)/p$. Observe that for each permutation $\pi$ of $p$, the mapping $F\in \mr{Equi}_\de(n,p)\mapsto \pi \circ F$ is a measure preserving bijection, in particular  each $\mu_{n,p}(\pi_q ((\mc Q_{i_0})_\vep)) > (p-1)/p $ where $\pi_q$ is the transposition on $p$ of $0$ by $q$.  Consequently,
\[
\bigcap_{q<p} \pi_q((\mc Q_{i_0})_\vep) \neq \buit.
\]
Let $F\in\bigcap_{q<p} \pi_q((\mc Q_{i_0})_\vep)$, i.e., $\pi_q\circ F\in (\mc Q_{i_0})_\vep$ for every $q<p$, since each transposition is idempotent. Observe that each $\tau(\pi_q\circ F)= F^{-1}(q)$. For each $q<p$, let $G_q\in \mc Q_{i_0}$ be such that $d_{n,p}(G_q,\pi_q\circ F)\le \vep $. Then
\begin{align*}
\#\bigcup_{q<p} G_q^{-1}(0) & \ge \#\bigcup_{q<p} (G_q^{-1}(0)\cap F^{-1}(q)) =  \#\left(\bigcup_{q<p}F^{-1}(q)\right) - \#\left(\bigcup_{q<p} (F^{-1}(q)\setminus G_q^{-1}(0))\right) \\
& \ge n - \vep  n p = n\left( 1- \frac{\de}{2p}\right). 
\end{align*}
Hence $\{G_q^{-1}(0)\}_{q<p}\con \mc P_{i_0}$ covers a set of cardinality $\ge n(1-\de/(2p))$, and since $\mc P_{i_0}$ has no $\de/(2p)$-holes there must be $A\in \mc P_{i_0}$ covering $n \setminus \bigcup_{q<p}G^{-1}(0)$, and we are done. 
\end{proof}

There is the following interesting corollary for the frontier values.  
\begin{coro}
 \label{u84ut985tgretgrge}
	For every $p,r\geq 1$ there exists a multiple  $\widetilde{n} = \widetilde{n}(r,p)$  of $p$   such that any $r$-coloring of $[\widetilde{n}]^{\widetilde{n}/p}$     contains a monochromatic set of cardinality $p+r$ that covers $\widetilde{n}$.   
\end{coro}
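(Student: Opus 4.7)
The plan is to argue by induction on the number of colors $r$, simultaneously for all $p \geq 2$ (the case $p=1$ is vacuous, as $[\widetilde n]^{\widetilde n}$ contains a single element). For the base case $r=1$, I would take $\widetilde n(1,p) := 2p$: the $p+1$ pairs $\{2i, 2i+1\}$ for $i=0,\dots,p-1$ together with $\{0,2\}$ trivially cover $[2p]$, and any $1$-coloring is monochromatic.

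For the inductive step at $r\geq 2$, fix $\delta\in (0, 1/(2(p+1)))$, let $N_1 := N(p,\delta,r)$ be given by Theorem \ref{oijt45ioj345itjretrtrtretert}, and choose $\widetilde n := \widetilde n(r,p)$ to be a multiple of $2p$ large enough that $\widetilde n\geq N_1$ and $(2p-1)\widetilde n/(2p)\geq \widetilde n(r-1,2p-1)$. Given an $r$-coloring $c$ of $[\widetilde n]^{\widetilde n/p}$, I extend it to a coloring $\bar c$ on $\mc S_{\delta,p}(\widetilde n)$ via a projection map $\phi$: for each $A\in\mc S_{\delta,p}(\widetilde n)$ fix a size-$\widetilde n/p$ set $\phi(A)$ with $A\subseteq\phi(A)$ when $|A|\le\widetilde n/p$ and $\phi(A)\subseteq A$ otherwise, and set $\bar c(A):=c(\phi(A))$. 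Theorem \ref{oijt45ioj345itjretrtrtretert} produces $p+1$ sets $A_0,\dots,A_p\in\mc S_{\delta,p}(\widetilde n)$ of a common color $c_0$ covering $[\widetilde n]$; the projections $B_i:=\phi(A_i)$ are exact-size color-$c_0$ sets, and the leftover $L:=[\widetilde n]\setminus\bigcup_i B_i$ is contained in $\bigcup_{|A_i|>\widetilde n/p}(A_i\setminus B_i)$, so $|L|\le (p+1)\delta\widetilde n/p<\widetilde n/(2p)$. I would then split into two cases: if some size-$\widetilde n/p$ set of color $c_0$ contains $L$, adjoin it to obtain $p+2\le p+r$ monochromatic covering sets of color $c_0$. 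Otherwise, pad $L$ to $L^*\supseteq L$ with $|L^*|=\widetilde n/(2p)$ (feasible since $|L|<\widetilde n/(2p)\in\N$); every size-$\widetilde n/p$ superset of $L^*$ is a superset of $L$, so none has color $c_0$. The map $B\mapsto B\setminus L^*$ identifies the family of such supersets with $[\widetilde n\setminus L^*]^{\widetilde n/(2p)}$, equipped with the induced $(r-1)$-coloring. Setting $p':=2p-1$, we have $|\widetilde n\setminus L^*|=p'\cdot\widetilde n/(2p)$, so the inductive hypothesis applies and yields $p'+(r-1)=2p+r-2\ge p+r$ monochromatic size-$\widetilde n/(2p)$ sets of some color $c_1\neq c_0$ covering $[\widetilde n\setminus L^*]$; adjoining $L^*$ to each produces the desired $p+r$ monochromatic size-$\widetilde n/p$ sets of color $c_1$ covering $[\widetilde n]$.

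The main obstacle is arranging the divisibility in the reduction step so that the inductive hypothesis sees a problem of the exact form $[m]^{m/p'}$ with $p'$ integral. The specific choice $|L^*|=\widetilde n/(2p)$, and the resulting $p'=2p-1$, is what makes the recursion close: the reduced ambient cardinality $(2p-1)\widetilde n/(2p)$ is automatically a clean integer multiple of $p'$ with quotient $\widetilde n/(2p)$, matching the exact-size requirement. The bound $|L|<\widetilde n/(2p)$ needed for the padding is guaranteed by the choice $\delta<1/(2(p+1))$, while the threshold $\widetilde n\ge N_1$ ensures that Theorem \ref{oijt45ioj345itjretrtrtretert} applies to $\bar c$ in the first place; the remaining condition on $\widetilde n$ is a single inductive largeness constraint.
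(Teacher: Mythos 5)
Your overall strategy is a viable alternative to the paper's: project the exact-size coloring onto $\mathcal S_{\delta,p}(\widetilde n)$, invoke Theorem~\ref{oijt45ioj345itjretrtrtretert} to get $p+1$ near-covering monochromatic sets, project back, and recurse on the supersets of the padded leftover $L^*$. The paper instead splits first on whether some color has a small ``hole'' and then either recurses inside the complement of the hole or uses the fattened coloring $\widehat c$ directly. Both routes can be made to work, but your specific choices do not close the recursion to the bound $p+r$, and this is the main gap.

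The problem is the padding size. With $|L^*|=\widetilde n/(2p)$, the reduced problem on $\widetilde n\setminus L^*$ has covering parameter $p'=2p-1$, so the inductive hypothesis at $(r-1,2p-1)$ yields a monochromatic covering family of cardinality $p'+(r-1)=2p+r-2$. Adjoining $L^*$ to each of its members does not reduce the number of sets, so you end up with $2p+r-2$ covering sets, not $p+r$; you cannot discard the surplus $p-2$ sets and still cover. Since your IH is stated with the bound $p'+r'$, the recursion $p\mapsto 2p-1$ actually unwinds to $2^{r-1}(p-1)+2$ covering sets rather than $p+r$, so the cardinality you claim in the final line (``produces the desired $p+r$'') is simply not what your argument gives for $p>2$, and in fact your IH at $(r-1,2p-1)$ is never available with the claimed bound either. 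The repair is to take $|L^*|=\widetilde n/p^2$ and $\delta<\frac1{p(p+1)}$: then $\widetilde n/p-|L^*|=(\widetilde n-|L^*|)/(p+1)$, the reduced parameter is $p'=p+1$, the IH at $(r-1,p+1)$ gives $(p+1)+(r-1)=p+r$ sets, and the invariant $p+r$ is preserved. (This is essentially the same arithmetic the paper uses for its ``hole'' of size $n/p^2$.)

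There is a second, smaller gap: to apply the inductive hypothesis at the specific ambient $\widetilde n\setminus L^*$, you need $|\widetilde n\setminus L^*|$ to be a \emph{multiple} of $\widetilde n(r-1,p')$, not merely $\ge\widetilde n(r-1,p')$; the Corollary together with the preliminary ``multiples'' observation applies only at multiples of the base value, not at all sufficiently large sizes. So the condition ``$(2p-1)\widetilde n/(2p)\ge\widetilde n(r-1,2p-1)$'' should be replaced by a divisibility requirement on $\widetilde n$, arranged as in the paper's condition (ii).
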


\begin{proof}

First observe that if the result is true for $\widetilde{n}$, it is also true for any multiple of it. To see this,  any $c:[k\widetilde{n}]^{k\widetilde{n}/p}\to r $ induces a coloring $\bar{c}:[\widetilde{n}]^{\widetilde{n}/p}\to r$, by simply declaring $\bar{c}(A):= c(\bigcup_{j<k} (j\widetilde{n} + A))$. Then if $\{A_l\}_{l<p+r}$ is a $\bar c$-monochromatic covering of $\widetilde{n}$, it follows that $\{\bigcup_{j<k} (j\widetilde{n}+A_l)\}_{l<p+r}$ is a $c$-monochromatic covering of $k \widetilde{n}$.

The proof is by induction on $r$. For $r=1$ the result is trivial. Suppose the result is true for $r\geq 1$, set $\de := 1/(2p^3)$ and let $n$ be such that 
\begin{enumerate}[i),wide=0pt]
    \item $n\ge N(p,\de,r+1)$ of Theorem \ref{oijt45ioj345itjretrtrtretert}, and
    \item  $(p+2)|n$ and  $\widetilde{n}(r,p+1) | (n - n(p+1)\de)$.
\end{enumerate}
We claim that $n$ works. Fix a coloring $c: [n]^{n/p}\to r+1$. Suppose that there is some color $c^{-1}(r_0)$ with a hole $H$ of size $n(p+1)\de = n/(p+2)$, where holes are defined in the proof of Theorem \ref{oijt45ioj345itjretrtrtretert}.
\begin{claim}
$H\oplus [n\setminus H]^{\#(n\setminus H)/(p+1)}\con [n]^{n/p}$.    
\end{claim}
Suppose this statement is established. The coloring $c$ induces another coloring $A\in [n\setminus H]^{\#(n\setminus H)/(p+1)}\mapsto c(H\cup A)\in (r+1)\setminus \{r_0\}$. Since $\#(n \setminus H) = n - n(p+1)\de$ is a multiple of $\widetilde{n}(r,p+1)$, by inductive hypothesis there is $r_1 < r+1$, $r_1\neq r_0$, and $\{A_j\}_{j<(p+1)+r}$ of color $r_1$ that covers $n\setminus H$, and consequently $\{H\cup A_j\}_{j<p+r+1}$ covers $n$ and it is $c$-monochromatic. 
\begin{proof}[Proof of Claim.]
Suppose that $A\con n\setminus H$ satisfies that $\#A= (n - n(p+1)\de)/(p+1)$. Then
\[
\# (H\cup A) = n(p+1)\de + \frac{n(1 - (p+1)\de)}{p+1} = \frac{n(p+1)^2\de + n - n(p+1)\de}{p+1} = \frac{n}{p}.
\]
\end{proof}
Suppose now that no color has a hole of cardinality $n(p+1)\de$. Define $\widehat{c}: \mc S_{\de,p}(n)\to r+1$ by declaring that $\widehat{c}(A) = \min\conj{j<r+1}{A\in (c^{-1}(j))_\de}$.  By the Theorem \ref{oijt45ioj345itjretrtrtretert} there is some $r_0<r+1$ and $\{A_j\}_{j<p+1}\con \widehat{c}^{-1}(r_0)$ covering $n$. For each $A_j$ choose $B_j$ such that $\#(A_j\triangle B_j)\le \de n$, and such that $c(B_j) = r_0$. This implies that
\[
\#\left(n \setminus \bigcup_{j<p+1} B_j\right) \le \bigcup_{j<p+1} \# (A_j\setminus B_j)\le n(p+1)\de.
\]
Choose $B$ with $c$-color $r_0$ covering $n \setminus \bigcup_{j<p+1}B_j$. Then $\{B_j\}_{j<p+1}\cup \{B\}$ has $c$-color $r_0$ and covers $n$. 
\end{proof}

We finish this part of ``pathological'' colorings with a new one, now of the Schreier family. 
Define \( \mk{X} := \bigcup_{n \geq 1} [2n]^n \). Order each \( [2n]^n \) using its lexicographic ordering \( <_\mathrm{lex} \), and extend this to \( \mk X \) by declaring \( A \prec B \) if \( \#A < \#B \) or \( (\#A = \#B \text{ and } A <_\mathrm{lex} B) \). Clearly, the corresponding order type is \( \omega \). 
Using this ordering, define a \( \omega \)-uniform family \( \mathcal{B} \) on \( (\mk X, \prec) \) by declaring that
\[
s \in \mathcal{B}  \text{ exactly when }  \#s = \min_{A \in s} \#A.
\]
For each \( A \in \mk X \), note that the associated family is
\[
\mathcal{B}_{\{A\}} = \left\{ s \subseteq (A, \infty)_\prec : \{A\} \cup s \in \mathcal{B} \right\} = [(A, \infty)_\prec]^{\#A},
\]
where \( (A, \infty)_\prec \) denotes the open interval of elements of \( \mk X \) strictly greater than \( A \) under \( \prec \).

Now define $\psi: \mc P(\mk X)\to [0,\infty[$ as $\psi(s) := \sup_{n\geq 1} \psi_n(s\cap [2n]^n)$, where $\psi_n$ is the covering submeasure on $[2n]^n$ as defined in \eqref{eq:covering_subm}. This is just the submeasure defined in the paragraph before \ref{thm:super-path_ideals} for $X_n = 2n$ and $\alpha = \beta = 1/2$, thus the ideal $\fin(\psi)$ is super-pathological. Associated to this submeasure we have the coloring \( \ccal : \mathcal{B} \to \{0, 1\} \) defined by
\[
\ccal(s) := 
\begin{cases} 
1 & \text{if $\psi(s) = 1$,} \\
0 & \text{otherwise.}
\end{cases}
\]
Equivalently, $\ccal(s) = 1$ if and only if $s\cap [2n]^n$ does not cover $2n$ for every $n$.

Let us show that $\langle\hom(\ccal)\rangle$ is non-trivial and super-pathological by proving that \( \fin(\psi) \subseteq \langle \hom(\ccal) \rangle \subsetneq \mathcal{P}(\mk X) \). The first inclusion follows, as before, from the fact that for any sequence \( (i_n)_n \in \prod_n 2n\) the set \( \bigcup_n \widehat{i_n}^n \) is \( 1 \)-homogeneous. Next, we see that  $\langle \hom(\ccal) \rangle$ is non-trivial. The proof  here uses arguments similar to that of the last inequality in Proposition  \ref{ij6ioj45ytghf11}. Suppose for a contradiction that $\mk X$ is a finite union of $\ccal$-homogeneous subsets. Since $0$-homogeneous sets are finite, that means that for some $n_0$ one has that $\bigcup_{n\ge n_0} [2n]^n = \mc P_0\sqcup \cdots \sqcup \mc P_{r-1}$, where each $\mc P_i$ is $1$-homogeneous. We see that this impossible by induction on $r$. If $r=1$, it is obvious that the tails $\bigcup_{n \ge \bar n}[2n]^n$ are never $1$-homogeneous. Now suppose that $r>1$. We apply the Corollary \ref{u84ut985tgretgrge} to $p=2$, and $r$ to obtain the corresponding $\widetilde{n}$. Let $n\ge \max\{n_0/2, \widetilde{n}/2 ,2+r\}$, and let $\ccal$ be the coloring of $[2n]^n$ induced by the partition $\mc P_0,\dots, \mc P_{r-1}$. Then there are $r_0 < r$, and $A_0,\dots, A_{r+1}\in \mc P_{r_0}$ such that $\bigcup_{j<r+2} A_j = 2n$. Since $\mc P_i$ is $1$-homogeneous, we cannot find $s\in \mc B$ containing $\{A_j\}_{j<r+2}$ and included in $\mc P_i$, so $\mc P_i\cap (\bigcup_{m>\bar n} [2m]^m) = \buit$ for some $\bar n > n$, and we have that $\bigcup_{m>\bar n} [2m]^m$ is the union of $r-1$ many 1-homogeneous sets, that by inductive hypothesis is impossible.

Next, we deduce as a consequence that several random hypergraphs generate pathological $F_\sigma$-ideals. 
Recall that for \(d \geq 2\), the \emph{random \(d\)-uniform hypergraph} \(R^{(d)}\) is the unique countably infinite \(d\)-uniform hypergraph that is both universal for countable \(d\)-uniform hypergraphs and \emph{homogeneous}. The latter property means that every hypergraph embedding of a finite sub-hypergraph of \(R^{(d)}\) into itself can be extended to a hypergraph automorphism of \(R^{(d)}\).
The existence of \(R^{(d)}\) follows from the fact that the class \(\mathcal{F}^d\) of finite \(d\)-uniform hypergraphs has the amalgamation property. By the \emph{Fraïssé correspondence}, this guarantees the construction of \(R^{(d)}\) as the generic limit of the class \(\mathcal{F}^d\), known as the \emph{Fraïssé limit} of \(\mathcal{F}^d\).

Similarly, the class \(\mathcal{F}^d_{n\text{-Free}}\) of finite \(d\)-uniform hypergraphs that forbid the complete \(d\)-uniform hypergraph \(\mathcal{K}^{(d)}_n\) of size \(n\) also satisfies the amalgamation property. Consequently, there exists a Fraïssé limit of \(\mathcal{F}^d_{n\text{-Free}}\), denoted by \(R^d_{n\text{-free}}\). This hypergraph is uniquely determined by its universality for countable \(d\)-uniform hypergraphs forbidding \(\mathcal{K}^{(d)}_n\) and its homogeneity.

\begin{coro}
For every \(d \geq 3\), there exists an integer \(n(d)\) such that the ideal generated by the hypergraph \(R_{n(d)\text{-free}}^d\) is a super-pathological \(F_\sigma\)-ideal, and, consequently, the same holds for the ideal generated by \({R}^d\).
\end{coro}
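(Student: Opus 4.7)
My plan is to deduce the super-pathology of the ideal generated by $R^d_{n(d)\text{-free}}$ from Corollary \ref{4ijtiojrejgifkgf3} via a universal-embedding argument, and then obtain the statement for $R^d$ from a further application of universality.

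Fix $d\geq 3$ and set $p=d-1\geq 2$. By Corollary \ref{4ijtiojrejgifkgf3}, for every $\de\in (0,1)$ the $c$-coloring ideal $\mc I_1:=\langle \hom(\ccal_{(1-\de)/(d-1),(1+\de)/(d-1),d})\rangle$ generated by the Mazur-type hypergraph $\mc M_{d-1,\de}$ is super-pathological. By Lemma \ref{asdijnbosirubhog}, every complete subset of $\mc M_{d-1,\de}$ has cardinality bounded by some $k=k(d,\de)\in\N$, so setting $n(d):=k+1$ makes $\mc M_{d-1,\de}$ a $\mc K^{(d)}_{n(d)}$-free $d$-uniform hypergraph. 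By universality of the Fra\"{\i}ss\'e limit $R^d_{n(d)\text{-free}}$ in the class of countable $\mc K^{(d)}_{n(d)}$-free $d$-uniform hypergraphs, I obtain an induced hypergraph embedding $\phi\colon \mc M_{d-1,\de}\hookrightarrow R^d_{n(d)\text{-free}}$; letting $W:=\phi(V(\mc M_{d-1,\de}))$, the independent subsets of $\mc M_{d-1,\de}$ correspond bijectively with the independent subsets of $R^d_{n(d)\text{-free}}$ contained in $W$, so $\phi$ sends $\mc I_1$-sets into $\mc I_2$-sets, where $\mc I_2$ is the ideal generated by $R^d_{n(d)\text{-free}}$.

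The heart of the proof is the transfer of super-pathology. Suppose for contradiction that $\mc J=\fin(\psi)$ is a non-pathological $F_\sig$ ideal with $\mc I_2\con \mc J\subsetneq \mc P(\N)$ and $\psi=\sup_j\mu_j$. The pullback submeasure $\psi'(A):=\psi(\phi(A))$ on $V(\mc M_{d-1,\de})$ is non-pathological, because the injectivity of $\phi$ keeps each $\mu'_j(A):=\mu_j(\phi(A))$ a genuine measure; moreover, $\fin(\psi')\supseteq \mc I_1$, since $\phi$-images of $\mc I_1$-sets lie in $\mc I_2\con \mc J$. If $\fin(\psi')$ were non-trivial, this would contradict the super-pathology of $\mc I_1$, so it suffices to arrange $W\notin \mc J$. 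This non-triviality condition is the main obstacle of the proof. I plan to address it by exploiting the extension property of the Fra\"{\i}ss\'e limit: given a witness $M\con V(R^d_{n(d)\text{-free}})$ of the non-triviality of $\mc J$ (so $M\notin \mc J$), I intend to construct inside $M$ an induced copy $W$ of $\mc M_{d-1,\de}$ such that $M\setminus W$ is a finite union of independent sets of $R^d_{n(d)\text{-free}}$ (and therefore lies in $\mc I_2\con \mc J$), via a back-and-forth argument that simultaneously builds $W$ as an isomorphic copy and keeps the chromatic complexity of the leftover bounded. The identity $W=M\setminus(M\setminus W)$ together with $M\notin \mc J$ and $M\setminus W\in \mc J$ then forces $W\notin \mc J$, giving the required contradiction.

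Finally, the statement about $R^d$ follows by a further application of universality: since $R^d$ is universal for all countable $d$-uniform hypergraphs, $R^d_{n(d)\text{-free}}$ embeds into $R^d$ as an induced sub-hypergraph, and repeating the transfer argument of the previous paragraph yields the super-pathology of the ideal generated by $R^d$.
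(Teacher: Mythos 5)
You take the same high-level approach as the paper: use Corollary \ref{4ijtiojrejgifkgf3} and Lemma \ref{asdijnbosirubhog} to produce a super-pathological $\mathcal K^{(d)}_{n(d)}$-free hypergraph $\mathcal M_{d-1,\de}$, embed it into $R^d_{n(d)\text{-free}}$ via universality, and transfer super-pathology. Up to the observation that ``if $\fin(\psi')$ were non-trivial, this would contradict the super-pathology of $\mathcal I_1$, so it suffices to arrange $W\notin\mathcal J$,'' your argument is correct and essentially matches the paper's (which records the same fact as the identity $\gamma(\mathcal I)=\mathcal R\restriction\gamma(\mathbb N)$). You are also right that the non-triviality of the restricted cover is a genuine issue: it is \emph{not} a general fact that a non-trivial non-pathological $F_\sigma$ cover of $\mathcal I_2$ must restrict non-trivially to a prescribed $\mathcal I_2$-positive set $W$, and the paper's phrase ``which in turn implies that the entire ideal $\mathcal R$ is super-pathological'' does not by itself justify this. (For instance, if $\mathcal I$ is super-pathological on $X$ and $\mathcal I'=\conj{A\con X\sqcup Y}{A\cap X\in\mathcal I,\ A\cap Y\text{ finite}}$, then $\mathcal I'\restriction X=\mathcal I$ is super-pathological and $X\notin\mathcal I'$, yet $\mathcal I'\con\fin(|\cdot\cap Y|)$, a non-trivial non-pathological $F_\sigma$ ideal.) So recognizing this point is to your credit.

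The gap lies in your proposed remedy. You want, given $M\notin\mathcal J$, to find an induced copy $W\con M$ of $\mathcal M_{d-1,\de}$ with $M\setminus W\in\mathcal I_2$. Two things go wrong. First, the induced sub-hypergraph of $R^d_{n(d)\text{-free}}$ on an arbitrary $\mathcal J$-positive set $M$ need not contain \emph{any} induced copy of $\mathcal M_{d-1,\de}$: $M$ is just some infinite subset of vertices, and the $\mathcal K^{(d)}_{n(d)}$-free hypergraph induced on it can perfectly well omit some of the finite configurations occurring in $\mathcal M_{d-1,\de}$ (e.g.\ $M$ could carry a multipartite structure), even though $M\notin\mathcal I_2$, i.e.\ $M$ has infinite chromatic and anti-chromatic complexity. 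Second, even when a copy exists, demanding that $M\setminus W$ be a \emph{finite} union of independent sets is a drastic requirement; a back-and-forth that alternately extends $W$ and ``keeps the chromatic complexity of the leftover bounded'' is not described concretely enough to believe it can control the entire co-finite remainder in this way, and indeed it would have to reduce an arbitrary $\mathcal I_2$-positive $M$ to finite chromatic number modulo an induced copy, which is not a property one can expect.

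A more workable way to close the gap, instead of localizing to a fixed witness $M$, is to exploit the freedom in the choice of the embedding itself. Suppose toward a contradiction that $\mathcal I_2\con\fin(\psi)$ for some non-pathological lscsm $\psi$ with $\psi(\mathbb N)=\infty$. Build the embedding $\gamma$ of $\mathcal M_{d-1,\de}$ recursively: at each finite stage, the set $T$ of vertices of $R^d_{n(d)\text{-free}}$ realizing the next required $1$-type over the current finite range is $\mathcal I_2$-positive (it induces a copy of $R^d_{n(d)\text{-free}}$, hence contains both infinite complete and infinite independent sets), so $\psi(T)=\infty$ by the covering hypothesis; one can therefore choose the next images so as to drive $\psi(\gamma(\mathbb N))$ to infinity. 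This realizes $W=\gamma(\mathbb N)\notin\fin(\psi)$ directly, which is precisely the non-triviality you need, and bypasses the problematic containment $W\con M$ altogether. Carrying out the measure bookkeeping in that recursion requires some care (picking a single vertex at a time contributes only bounded mass, so one should add vertices in blocks, using the block structure $\mathfrak X=\bigsqcup_n\mathfrak X_n$ of $\mathcal M_{d-1,\de}$), but it is the right obstacle to attack, whereas your current plan is aimed at a configuration that one cannot guarantee exists.
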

We have observed before that, for example, for $d=3$ $n(3)= 22$ works (see Remark \ref{rem:0-hom<=22}).
\begin{proof}
By Corollary \ref{4ijtiojrejgifkgf3}, for every \(d \geq 3\) there exists an integer \(n(d)\) and a \(d\)-uniform hypergraph \(\mathcal{H}\) on \(\mathbb{N}\) that forbids the complete \(d\)-uniform hypergraph of size \(n(d)\) and generates a super-pathological ideal. We claim that this choice of \(n(d)\) satisfies the desired property.
By the universality of \(\mathcal{R}^d_{n(d)\text{-free}}\), there exists a hypergraph embedding \(\gamma: \mathcal{H} \to \mathcal{R}^d_{n(d)\text{-free}}\). Let \(\mathcal{I}\) denote the ideal generated by \(\mathcal{H}\) and \(\mathcal{R}\) the ideal generated by \(\mathcal{R}^d_{n(d)\text{-free}}\). Then:

\[
\gamma(\mathcal{I}) = \mathcal{R} \restriction \gamma(\mathbb{N}).
\]

Since \(\gamma(\mathbb{N})\) is \(\mathcal{R}\)-positive (by the equality above), it follows that the restriction \(\mathcal{R} \restriction \gamma(\mathbb{N})\) is super-pathological, which in turn implies that the entire ideal \(\mathcal{R}\) is super-pathological as well.
\end{proof}

The next is not covered by our techniques, and, to our knowledge, it remains open. 
\begin{question}
Is the random ideal $\mc R$  pathological?    
\end{question}

In practice, it seems challenging  to find pathological $F_\sigma$ ideals, so the following is a natural question. 

\begin{question}
What is the (Borel or projective) complexity of the collection of non-pathological $F_\sigma$-ideal? More precisely, what is the complexity of the following collection: 
\[
\{A\in K(2^\N): \langle A\rangle \; \mbox{is non-pathological}\}
\]
where $\langle A\rangle$ denotes the ideal generated by $A$ (recall, that every $F_\sigma$ ideal can be generated by a closed collection of subsets of $\N$).
\end{question}

\begin{question}
Determine when a c-coloring ideal is non-pathological. In particular,  determine the 
the (Borel or projective) complexity of the collection 
\[
\{c\in 2^{[\N]^2}: \langle 
hom(c)\rangle \; \mbox{is non-pathological}\}.
\]
\end{question}

We have seen in Proposition \ref{ij6ioj45ytghf} some relations between hypergraph properties and the non-triviality of the corresponding ideal, but this relation is far from being fully understood.

\begin{question}
Are there hypergraph-theoretical properties that determine when the ideal generated by a hypergraph (through its complete and independent sets) is non-pathological?    
\end{question}

The following provides a partial answer.

\subsection{Examples of non-pathological c-coloring ideals}

We do not know the full answer to this question, but we have some partial ones. 
 We will study colorings $c: [\N]^2\to 2$ so that $c(\{m,n\}_{<})=i$ induces a partial ordering on $\N$. To analyze these partial orders we will use {\em Dilworth's theorem.}  
  Recall that the {\em width} 
$\mathrm{width}(\mc P)$ of a partial ordering $\mc P$ is the supremum of the cardinalities of antichains of $\mc P$.

\begin{teo}[Dilworth's theorem]
 If $\mathrm{width}(\mc P)$ is finite, then it coincides with the smallest decomposition of $\mc P$ as a union of chains. \qed
\end{teo}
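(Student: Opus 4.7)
The plan is to establish both inequalities between $w := \mathrm{width}(\mc P)$ and the minimum number $\kappa(\mc P)$ of chains in a chain decomposition of $\mc P$. The easy direction $w \le \kappa(\mc P)$ is immediate from the pigeonhole principle: any chain meets any antichain in at most one element, so a decomposition into $k$ chains forces every antichain to have size at most $k$.

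For the reverse inequality $\kappa(\mc P) \le w$, I would first handle $\mc P$ finite by induction on $|\mc P|$. Given finite $\mc P$ of width $w$, consider two subcases. In the first subcase, there exists a maximum antichain $A$ (of size $w$) which is neither the full set of maximal elements nor the full set of minimal elements of $\mc P$. Set
\[
\mc P^- := \conj{x \in \mc P}{x \le a \text{ for some } a \in A}, \qquad \mc P^+ := \conj{x \in \mc P}{x \ge a \text{ for some } a \in A}.
\]
By maximality of $A$, every $x \in \mc P$ is comparable to some element of $A$, so $\mc P^- \cup \mc P^+ = \mc P$; since $A$ is an antichain, $\mc P^- \cap \mc P^+ = A$; and both $\mc P^\pm$ are proper subsets of $\mc P$ (because there exist maximal and minimal elements outside $A$) still containing $A$ as a $w$-antichain, hence of width exactly $w$. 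By induction, each $\mc P^\pm$ decomposes into $w$ chains, and each such chain meets $A$ in exactly one element. Gluing, for each $a \in A$, the chain of $\mc P^-$ through $a$ with the chain of $\mc P^+$ through $a$ yields $w$ chains whose union is $\mc P$.

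In the second subcase every maximum antichain of $\mc P$ equals either $\max(\mc P)$ or $\min(\mc P)$. Pick any maximal $m$ and any minimal $m^* \le m$. Any $w$-antichain inside $\mc P \setminus \{m, m^*\}$ would also be a $w$-antichain of $\mc P$, hence equal $\max(\mc P)$ or $\min(\mc P)$; but it avoids $m \in \max(\mc P)$ and $m^* \in \min(\mc P)$, a contradiction. Hence $\mc P \setminus \{m, m^*\}$ has width at most $w - 1$, so by induction it decomposes into $w - 1$ chains; adjoining the chain $\{m^*, m\}$ (or the singleton $\{m\}$ if $m = m^*$) yields the required partition of $\mc P$ into $w$ chains.

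Finally, the general case where $\mc P$ is infinite with finite width $w$ follows by a standard compactness argument. Endow $W := \{1,\dots,w\}^{\mc P}$ with the product topology, which is compact. For each finite $F \subseteq \mc P$, let $C_F \subseteq W$ be the set of colorings $f$ such that every color class of $f \rest F$ is a chain in $F$; this set depends only on finitely many coordinates, hence is closed, and it is non-empty by the finite case applied to $F$ (which has width $\le w$). Since $C_F \cap C_{F'} \supseteq C_{F \cup F'}$, the family $\{C_F\}_{F \in [\mc P]^{<\omega}}$ has the finite intersection property, so by compactness any $f \in \bigcap_F C_F$ witnesses a partition of $\mc P$ into $w$ chains. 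The main obstacle is the case analysis in the finite induction, in particular verifying that the two chain decompositions of $\mc P^\pm$ can be glued consistently along $A$; the compactness step for the infinite extension is then routine.
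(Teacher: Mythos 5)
The paper states Dilworth's theorem as a classical fact with no proof of its own (the $\qed$ immediately after the statement signals that it is being invoked, not demonstrated), so there is no ``paper proof'' to compare against. Your proof is correct. The finite case is the well-known Perles-style induction: the two cases (a maximum antichain that is neither $\max(\mc P)$ nor $\min(\mc P)$, versus the degenerate case where all maximum antichains are $\max(\mc P)$ or $\min(\mc P)$) are genuinely exhaustive, the sets $\mc P^\pm$ are proper and of full width, and the gluing step goes through because every element of the chain of $\mc P^-$ through $a\in A$ is $\le a$ (if some $x$ in that chain had $x>a$, then $x\le a'$ for some $a'\in A$ would force $a<a'$, contradicting that $A$ is an antichain), and symmetrically every element of the chain of $\mc P^+$ through $a$ is $\ge a$. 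You flag the gluing as the delicate point, and it is, but it holds. The compactness (König--Rado style) extension to the infinite case is routine and correctly executed: $C_F$ is clopen, nonempty by the finite case, has the finite intersection property, and any $f\in\bigcap_F C_F$ colours $\mc P$ with $w$ colours each of which is a chain (apply $C_{\{x,y\}}$ to see that same-coloured $x,y$ are comparable). Combined with the pigeonhole inequality this gives $\mathrm{width}(\mc P)=\kappa(\mc P)$ exactly as claimed.
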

The dual of Dilworth theorem, called {\em Mirsky's theorem}, is the following:  if $\mc P$ is a partial ordering such that its chains are finite and $\sup\conj{\#C}{C\con \mc P \text{ is a chain}}=c<\infty$, then $\mc P$ is the union of $c$-many antichains. 

\begin{defi}    
\label{def-comparability}
A coloring $\ccal:[\N]^2\to 2$ is called
\begin{enumerate}[a), wide=0pt]
    \item {\em $i$-asymmetric} when  $\hom_i(\ccal)\con \fin$;
    \item a {\em $i$-comparability coloring} when the relation $m\prec_\ccal^i n$   defined by $m<n$ and  $\ccal(\{m,n\})=i$ defines a partial ordering on $\N$; 
\item   a {\em  Sierpinski coloring}   when $\ccal$ is an $i$-comparability coloring for both $i=0,1$, that is, when there is a total ordering $\prec$ on $\N$ such that $\ccal (\{m,n\}_<)=1$ exactly when $m \prec n$. 
 \end{enumerate}
 \end{defi}

\begin{teo}
\label{iojtoijgoijot4ref}
Fix a coloring $\ccal: [\N]^2\to 2$.  
If $\ccal$ is an $i$-comparability then
\begin{align*}
\langle\hom_{1-i}(\ccal)\rangle & = \mc B(\mb p^{\hom_i(\ccal)}),\\ 
\langle\hom_i(\ccal)\rangle & = \mc B( \mb p^{\hom_{1-i}(\ccal)}),\text{ and}\\
\langle\hom(\ccal)\rangle & = \mc B(\mb p^{\hom_0(\ccal)})\sqcup\mc B(\mb p^{\hom_1(\ccal)}). 
\end{align*}
Consequently,

\begin{enumerate}[(i), wide=0pt]
       
\item if $\ccal$ is an $i$-comparability and $i$-asymmetric coloring, then
\[
\langle\hom(\ccal)\rangle = \langle\hom_{1-i}(\ccal)\rangle=\mc B( \mb p^{\hom_i(\ccal)}).
\]
Hence, every $i$-comparability and $i$-asymmetric coloring ideal is non-pathological. 
\item if $\ccal$ is an $i$-comparability $(1-i)$-asymmetric coloring, then 
\[
\langle\hom(\ccal)\rangle= \langle \hom_i(\ccal)\rangle=\mc B(\mb p^{\hom_{1-i}(\ccal)}).
        \]
Hence, every $i$-comparability $(1-i)$-asymmetric coloring ideal is non-pathological.
    \end{enumerate} 
\end{teo}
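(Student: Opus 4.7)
The plan is to recast the $i$-comparability hypothesis in order-theoretic language and then appeal to Dilworth's and Mirsky's theorems. When $\ccal$ is $i$-comparability, the relation $\prec_\ccal^i$ is a partial order on $\N$ whose chains are exactly the $i$-homogeneous sets and whose antichains are exactly the $(1-i)$-homogeneous sets. Combined with the explicit formula \eqref{eq:simpler_expression_p^K}, this gives, for any $M\con \N$ and any $j\in\{0,1\}$,
\[
M\in \mc B(\mb p^{\hom_j(\ccal)}) \iff \sup_{H\in \hom_j(\ccal)} \#(M\cap H)<\infty,
\]
so $M\in \mc B(\mb p^{\hom_i(\ccal)})$ exactly when every $\prec_\ccal^i$-chain contained in $M$ has bounded cardinality, and $M\in \mc B(\mb p^{\hom_{1-i}(\ccal)})$ exactly when every $\prec_\ccal^i$-antichain contained in $M$ has bounded cardinality.

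With this translation, the first identity $\langle \hom_{1-i}(\ccal)\rangle = \mc B(\mb p^{\hom_i(\ccal)})$ is Mirsky's theorem applied to the restricted poset $(\prec_\ccal^i)\!\rest\! M$: $M$ is a finite union of antichains (i.e. $M\in \langle \hom_{1-i}(\ccal)\rangle$) iff its chains are bounded in size. The second identity $\langle \hom_i(\ccal)\rangle = \mc B(\mb p^{\hom_{1-i}(\ccal)})$ is the dual statement obtained by applying Dilworth's theorem to the same restricted poset. The third identity is then purely formal: since $\hom(\ccal)=\hom_0(\ccal)\cup \hom_1(\ccal)$ one has $\langle \hom(\ccal)\rangle = \langle \hom_0(\ccal)\rangle \sqcup \langle \hom_1(\ccal)\rangle$, and substituting the two previous identities gives the claimed $\sqcup$-decomposition.

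For the consequences (i) and (ii), I would observe that $j$-asymmetry forces $\hom_j(\ccal)\con \fin$ and therefore $\langle \hom_j(\ccal)\rangle = \fin$; this component is absorbed in the $\sqcup$-decomposition since every ideal contains $\fin$. The simplified expressions in (i) and (ii) follow immediately, and non-pathology is then automatic from the characterization, recalled in the preliminaries after \cite{MMU2022}, that $\mc B$-ideals coincide with non-pathological $F_\sigma$-ideals.

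The only genuine issue is bookkeeping: although the hypothesis singles out one color $i$, the conclusion features both $\hom_0(\ccal)$ and $\hom_1(\ccal)$, so one must keep straight that Mirsky's conclusion matches the $\hom_{1-i}$ side (finite union of antichains $\leftrightarrow$ chains bounded) while Dilworth's matches the $\hom_i$ side (finite union of chains $\leftrightarrow$ antichains bounded). Once this correspondence is set up, each identity reduces to one invocation of the respective classical theorem, and the theorem follows.
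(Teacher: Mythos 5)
Your proof is correct and follows essentially the same route as the paper: translate $i$-comparability into order-theoretic language (so that $i$-homogeneous sets are the $\prec_\ccal^i$-chains and $(1-i)$-homogeneous sets are the antichains), compute $\mc B(\mb p^{\hom_j(\ccal)})$ via the evaluation formula \eqref{eq:simpler_expression_p^K}, and invoke Mirsky's theorem for the $\hom_{1-i}$ identity and Dilworth's theorem for the $\hom_i$ identity. The paper likewise deduces (i) and (ii) from the general identities by absorbing the trivial $\fin$ component coming from the asymmetry hypothesis, so there is no substantive difference between the two arguments.
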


\begin{proof} 
Notice first that $\langle\hom(\ccal)\rangle  = \langle\hom_0(\ccal)\rangle \sqcup \langle \hom_1(\ccal)\rangle$.  It follows from \eqref{eq:simpler_expression_p^K} that  $\hom_{1-j}(\ccal)\subseteq\mc B(\mb p^{\hom_j(\ccal)})$ for $j = 0,1$. For the converse inclusions note that, for $m < n$, we have $m\not\prec_\ccal^i n$ if and only if $\ccal(\{m,n\}) = 1 - i$, and consequently the antichains of $\prec_\ccal^i$ are exactly the $(1-i)$-homogeneous sets for $\ccal$. Take now $M\in\mc B(\mb p^{\hom_i(\ccal)})$, and let
\[
L := \sup_{F\con M\text{ finite}} \norm{\sum_{n\in F} p_n^{\hom_i(\ccal)}} = \sup_{H\in\hom_i(\ccal)\rest M} \#H < \infty.
\]
This means that the cardinality of a maximal chain of $(M,\prec_\ccal^i)$ is $L$, thus by Mirsky's Theorem $M$ is a union $M = \bigcup_{i=1}^L A_i$ of $\prec_\ccal^i$-antichains $A_1,\ldots,A_L$. Each $A_j$ is therefore $(1-i)$-homogeneous for $\ccal$, so $M\in\langle\hom_{1-i}(\ccal)\rangle$. Analogously, given $M\in \mc B(\mb p^{\hom_{1-i}(\ccal)})$ the width of $(M,\prec_\ccal^i)$ is finite, so by Dilworth's Theorem $M$ is a finite union of $i$-homogeneous sets, hence it belongs to $\langle\hom_i(\ccal)\rangle$. 
    {\it (i)} and {\it (ii)} are trivially deduced.  
\end{proof}

\begin{ex}
$\mr{ED}_{\fin}$, the ideal generated by the transversals to $(s_n)_n$, $s_n=[2^{-1}-1,2^n-1[$, is a coloring ideal representable in $c_0$. To see this, define $k\lessdot l$ exactly when $k<l$ and they belong to different $s_n$'s. This is a partial ordering, whose chains are the transversals and its antichains consist of subsets of some $s_n$, hence finite. Theorem  \ref{iojtoijgoijot4ref} applies to the corresponding coloring $\ccal: \N^{[2]}\to 2$, $\ccal(\{k,l\}_<)=1$ if $k\lessdot l$, and consequently $\mr{ED}_{\fin}=\langle \hom(c)\rangle= \mc B(\mb p^{\hom_0(\ccal)})$ is non-pathological. Observe that the Cantor-Bendixson index of $\hom_0(\ccal)$ is 1, hence $C(\hom_0(\ccal))$ is isomorphic to $c_0$, and consequently $\mr{ED}_{\fin}$ is $\mc B$-representable in $c_0$.     
\end{ex}
We have seen that when $\ccal$ is a comparability color, then the corresponding ideal  $\langle \hom(\ccal)\rangle$ is the square union of two $\mc B$-ideals. We do not know the answer of the following.   

\begin{question}
\label{square-union-B}
Is the square sum of $\mc B$-ideals an $\mc B$-ideal?   
\end{question}

\subsection{\texorpdfstring{Universal $d$-colorings}{Universal d-colorings}}

\begin{defi}[$\Q$-colorings]
\label{Sierp-coloring}

 A {\em $\Q$-coloring} is a Sierpinski coloring $\widehat{\theta}:[\N]^2\to 2$ associated to some enumeration  $\theta: \N\to \Q$, that is,  $\widehat{\theta}(\{m,n\}_<):=1$ exactly when $\theta(m)<\theta(n)$. The Sierpinski ideal associated to $\theta$, $\langle\hom(\widehat{\theta})\rangle$, will be called a {\em $\Q$-coloring ideal}.
\end{defi}

In other words, $\langle\hom(\widehat{\theta})\rangle$ is the collection of finite unions of increasing and decreasing sequences with respect to $\theta$, and it follows from Theorem \ref{iojtoijgoijot4ref}  that $\langle\hom(\widehat{\theta})\rangle= \mc B(\mb p^{\hom_0(\widehat{\theta})})\sqcup \mc B(\mb p^{\hom_1(\widehat{\theta})})$ is the square-union of two non-pathological $F_\sig$-ideals. 

$\Q$-colorings have the following universal property.

\begin{teo}\label{j4itogjoidfg}\label{Sierpinski}
    Suppose that $\theta: \N\to \Q$ is an enumeration. Then for every coloring $\ccal:[\N]^2\to l$ there is some $M\con \N$ such that $\theta(M)$ is order-isomorphic to $\Q$ and such that $\hom(\widehat{\theta})\rest M\con \hom(\ccal)$.     
\end{teo}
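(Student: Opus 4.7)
The plan is to find $M \subseteq \N$ with $\theta(M) \cong \Q$ such that $\ccal$ on $[M]^2$ factors through $\widehat{\theta}$: there exist canonical colors $c_0, c_1 \in l$ with $\ccal(\{m,n\}) = c_{\widehat{\theta}(\{m,n\})}$ for every pair $\{m,n\} \in [M]^2$. This condition suffices because any $\widehat{\theta}$-homogeneous subset $A \subseteq M$ of color $i$ would then satisfy $\ccal \equiv c_i$ on $[A]^2$, so $A \in \hom_{c_i}(\ccal) \subseteq \hom(\ccal)$, yielding $\hom(\widehat{\theta})\rest M \subseteq \hom(\ccal)$.

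We build $M$ by recursion indexed by the binary tree $2^{<\omega}$, identified with $\Q$ via the standard dense linear order $<_*$ on $2^{<\omega}$: $s <_* t$ iff $s^\frown 0 \preceq t$, or $t^\frown 1 \preceq s$, or $s$ and $t$ split with $s$ on the $0$-side. We place $m_s \in \N$ for each $s \in 2^{<\omega}$ by induction on $|s|$, demanding that $\theta(m_s) <_\Q \theta(m_t)$ iff $s <_* t$; the Cantor characterization of $\Q$ then gives $\theta(M) \cong \Q$. During the recursion we maintain, for every not-yet-placed $s$, an infinite reserve $R_s \subseteq \N$ consisting of candidates $k$ whose $\theta$-value lies in the rational interval $I_s$ prescribed by the tree position of $s$ and which satisfy $\ccal(\{m_t, k\}) = c_{\widehat{\theta}(\{m_t, k\})}$ for every already-placed $m_t$.

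The canonical colors $c_0$ and $c_1$ are fixed at the outset via preliminary applications of Ramsey's theorem: one to an infinite $\theta$-increasing sequence in $\N$ (yielding $c_1$) and one to an infinite $\theta$-decreasing sequence (yielding $c_0$), with a product-Ramsey refinement ensuring their compatibility with cross-terms. After placing a new $m_s$, each reserve $R_{s'}$ is refined by pigeonhole on $l$ applied separately inside every rational subinterval of $I_{s'}$: density of $\Q$ provides infinitely many candidates in each such subinterval, and pigeonhole extracts an infinite subset whose $\ccal$-color to $m_s$ matches the prescribed canonical value. The main obstacle is maintaining the $\theta$-density of the reserves through the induction, which would fail under a naive Ramsey application (producing only monotone, hence non-dense, subsequences); the subinterval-wise pigeonhole is precisely what sidesteps this. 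Once the recursion is complete, $M = \{m_s : s \in 2^{<\omega}\}$ satisfies both $\theta(M) \cong \Q$ and the factorization of $\ccal$ through $\widehat{\theta}$, concluding the proof.
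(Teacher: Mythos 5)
Your proposal does not follow the paper's route. The paper applies Galvin's canonical coloring theorem for $[\Q]^2$ (Theorem \ref{GalvinQ2}) as a black box to the product coloring $d(\{\theta(m),\theta(n)\}) := (\ccal(\{m,n\}),\widehat{\theta}(\{m,n\}))$ with values in $l\times 2$, then argues that among the two surviving patterns $a,b$ one must have first coordinate $0$ and the other $1$, and reads off the conclusion. Your proposal instead tries to construct $M$ directly by a fusion over $2^{<\omega}$ — in effect, you are re-proving the $d=2$ case of Galvin's theorem rather than invoking it.

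The problem is that the fusion step as you describe it does not work. You claim to fix the two canonical colors $c_0, c_1$ "at the outset" by applying Ramsey to one $\theta$-increasing and one $\theta$-decreasing sequence, and then, while refining a reserve $R_{s'}$ after placing a new vertex $m_s$, to "pigeonhole" inside each rational subinterval so as to match the \emph{prescribed} canonical value. Pigeonhole only guarantees that \emph{some} color $c'\in l$ occurs infinitely often among $\{\ccal(\{m_s,k\}) : k\in R_{s'}\cap\theta^{-1}(J)\}$; it gives no control over which color that is. Nothing forces $c'$ to equal the $c_0$ or $c_1$ you fixed in advance from an unrelated monotone sequence. This is precisely the difficulty that Galvin's theorem resolves — the stabilisation of the colors across the whole fusion — and the phrase "product-Ramsey refinement ensuring compatibility with cross-terms" names the gap without filling it. If you want to carry out a direct fusion, you cannot fix $c_0,c_1$ a priori; you must let the stable color on each new reserve be whatever it is, record it, and then show that after a suitable further Ramsey-type thinning over the tree the function $s\mapsto(\text{stable color to the left, stable color to the right})$ itself stabilises. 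That is a nontrivial extra argument (essentially the content of the $t_2=2$ case of Devlin/Galvin), and your proposal does not supply it.

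A correct and much shorter alternative, if you prefer not to cite Galvin as a black box for the product coloring, is the one the paper gives: apply Galvin's theorem once to $d$, observe that the two surviving colors cannot share a second coordinate (else $\widehat\theta$ would be constant on $[M]^2$, forcing $\theta$ monotone on $M$, contradicting $\theta(M)\cong\Q$), and conclude. I'd suggest replacing your tree construction with that argument, or else carefully spelling out the stabilisation step in your fusion.
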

The proof uses the following result on canonical colorings.

\begin{teo}[{Galvin; see \cite[Corollary 6.24]{Todorcevic2010}}]\label{GalvinQ2}  For every coloring  $\ccal:[\Q]^2\longrightarrow l$, there is $X\subseteq\Q$ order-isomorphic to $\Q$ such that $|\ccal [X]^2|\leq 2$. \qed
\end{teo}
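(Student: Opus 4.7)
My plan is to prove this Ramsey-type result for $\Q$ by embedding $\Q$ into the binary tree $2^{<\omega}$ and invoking Milliken's theorem on strong subtrees, together with the Halpern-L\"{a}uchli theorem in the background.

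First, I would realize $\Q$ as the dense linear order $(T, <_{\mathrm{lex}})$, where $T := \{s \in 2^{<\omega} : s \text{ ends in } 1\}$ is equipped with the lexicographic order inherited from $2^{<\omega}$. A standard verification shows that $(T, <_{\mathrm{lex}})$ is a countable dense linear order without endpoints, hence isomorphic to $(\Q, <)$; fix such an isomorphism $\iota : \Q \to T$. The given coloring $\ccal:[\Q]^2\to l$ transports to a coloring $\bar{\ccal}:[T]^2\to l$, which we extend arbitrarily to a coloring of all of $[2^{<\omega}]^2$ (the values off $[T]^2$ will be irrelevant).

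Second, each pair $\{s, t\} \in [2^{<\omega}]^2$ falls into exactly one of two \emph{tree-theoretic types}: the \emph{chain} type (one of $s, t$ is a proper tree-prefix of the other) or the \emph{antichain} type (their meet $s \wedge t$ is a proper prefix of both). Crucially, both types are realized by pairs inside $T$. The plan is to apply Milliken's theorem on strong subtrees of $2^{<\omega}$ to canonize the extended coloring: there exists an infinite strong subtree $T' \subseteq 2^{<\omega}$, itself isomorphic to $2^{<\omega}$ as a tree, on which $\bar{\ccal}$ depends only on the tree-theoretic type of the pair. In practice this is carried out by two successive applications of the theorem, first canonizing on chain pairs and then (passing to a further strong subtree) on antichain pairs.

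Third, inside $T'$ I would identify the natural copy $T'' \subseteq T'$ of $\Q$ analogous to the construction of $T$ (namely, those nodes whose ``last branching within $T'$ points right''). Then $(T'', <_{\mathrm{lex}})$ is again a countable dense linear order without endpoints, hence order-isomorphic to $\Q$. Every pair in $[T'']^2$ is either a chain or an antichain in the ambient tree, and since $\bar{\ccal}$ on $T'$ depends only on that type, we obtain $|\bar{\ccal}[T'']^2| \le 2$. Setting $X := \iota^{-1}(T'')$ produces a subset of $\Q$ order-isomorphic to $\Q$ with $|\ccal[X]^2|\le 2$.

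The main obstacle is the correct invocation of Milliken's theorem, which is itself a substantial Ramsey-theoretic result built on Halpern-L\"{a}uchli; one must verify that the canonization really collapses to the two tree-theoretic isomorphism types rather than retaining finer invariants such as differences in relative levels of the nodes. A secondary subtlety is to check that the ``copy of $\Q$'' inside a strong subtree remains dense without endpoints in the induced lexicographic order, so that the final transfer via $\iota^{-1}$ truly yields an order-isomorphic copy of $\Q$ inside $\Q$.
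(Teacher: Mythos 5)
Your overall route (coding $\Q$ into $2^{<\omega}$ and applying Milliken's strong subtree theorem) is indeed the standard way this result is proved — the paper itself gives no proof and simply cites Corollary 6.24 of Todorcevic's book, whose argument lives in exactly this Milliken/Halpern--L\"auchli framework. However, your key canonization step is false as stated, and the concern you flag at the end is precisely where the argument breaks. It is not true that for an arbitrary coloring of pairs of nodes one can find an infinite strong subtree on which the color depends only on whether the pair is a chain or an antichain. Take the coloring that gives an incomparable pair $\{s,t\}$ color $0$ if $|s|=|t|$ and color $1$ otherwise (and anything on chains). In any infinite strong subtree the two successors of a node at the next level of the level set are incomparable and of equal length (color $0$), while a node at level $1$ and an incomparable node at level $2$ have different lengths (color $1$); so no strong subtree makes the coloring constant on antichain pairs. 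The finer invariants you were worried about — the relative heights of $s$, $t$ and of their meet — genuinely survive and cannot be canonized away by Milliken's theorem, which applies to colorings of strongly embedded subtrees of a fixed shape, one strong similarity type at a time.

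This defect propagates to your final step: the copy $T''$ you extract (nodes of $T'$ whose label ends in $1$) contains chain pairs, incomparable pairs at equal levels, and incomparable pairs at distinct levels, so for the coloring above (transported to $\Q$ via $\iota$) your $T''$ realizes at least three colors. The correct argument, following Devlin's analysis, goes the other way around: one chooses the copy of $\Q$ inside the tree as a \emph{strongly diagonal antichain} — all nodes at pairwise distinct levels, all pairwise meets distinct, at distinct levels, and different from the nodes — so that chain pairs and equal-level pairs simply do not occur. For such a set every pair $\{s,t\}$ with $s<_{\mathrm{lex}}t$ has exactly one of two strong similarity types, determined by whether $|s|<|t|$ or $|s|>|t|$ (i.e., whether the lexicographic order agrees with the height order — this is the Sierpi\'nski-type invariant showing that $2$ colors are optimal, and it is why $t_2=2$ in Devlin's theorem). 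Applying Milliken's theorem once for each of these two types then yields a strongly diagonal copy of $\Q$ on which the original coloring takes at most two values. So your skeleton can be repaired, but the two unavoidable types are the two level-versus-lex orientations of diagonal antichain pairs, not chain versus antichain, and the construction of the copy of $\Q$ must be arranged to exclude the other configurations rather than hoping Milliken merges them.
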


\begin{proof}[Proof of Theorem \ref{j4itogjoidfg}] Fix a coloring  $\ccal:[\N]^2\longrightarrow l$ and an enumeration 
$\theta: \N\to \Q$, and define the coloring $d:[\Q]^2\to l\times 2$ by 
\[
d(\{\theta(m),\theta(n)\}):=(\ccal(\{m,n\}),\widehat{\theta}(\{m,n\})).
\]
By Galvin's Theorem \ref{GalvinQ2}, there is $X\subseteq\Q$ order-isomorphic to $\Q$, and $a,b$ two pairs in $l\times 2$ such that $d[X]^2\subseteq\{a,b\}$. Let $M:= \theta^{-1}(X)$. Notice that it is not possible to have $\{a,b\}=\{(i,k),(j,k)\}$ for $i<j<l$ and $k\in \{0,1\}$, since that would imply that $\widehat\theta$ is constant  on $[M]^2$, and thus that $\theta$ is monotone on $M$, contradicting that $\theta(M)$ is order-isomorphic to $\Q$. 
Thus, there are $i\leq j<l$ such that $\{a,b\} = \{(i,0),(j,1)\}$. Then, if $H\in\Hom(\widehat{\theta} \rest M)$ is of color $0$, then $d[\theta(H)]^2=\{(i,0)\}$ and $\ccal[H]^2=\{ i\}$. It follows that $H\in\Hom(\ccal)$. The argument is analogous if $\widehat{\theta}[H]^2$ is of color $1$. 
\end{proof}

We do not know if $\Q$-coloring ideals are non-pathological. However, as we will show below, they exhibit a local non-pathological behavior.

\begin{prop}
Every $\Q$-coloring ideal $\mc G$ is {\em locally  non-pathological}, that is, every  $\mc G$-positive set $A$ has a $\mc G$-positive subset $B$ such that $\mc G\rest B$ is a $\mc B$-ideal. 
\end{prop}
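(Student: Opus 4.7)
The plan is to construct a $\mc G$-positive $B\subseteq A$ admitting a \emph{block decomposition} $B=\bigsqcup_{n\in\N}C_n$, where each $C_n$ is a finite $\theta$-decreasing subsequence with $|C_n|\to\infty$, arranged so that (i) the $C_n$'s are $\N$-consecutive, $\max_\N C_n<\min_\N C_{n+1}$, and (ii) their $\theta$-images lie in disjoint $\Q$-increasing intervals, $\max_\Q\theta(C_n)<\min_\Q\theta(C_{n+1})$. For such a $B$, the combinatorics force $\mc G\rest B=\fin(\varphi)$ with $\varphi(M):=\sup_n|M\cap C_n|$: by (i)-(ii) and intra-block $\theta$-decrease, every $\theta$-decreasing subsequence of $B$ is contained in a single $C_n$, and every $\theta$-increasing subsequence meets each $C_n$ in at most one point; hence if $\sup_n|M\cap C_n|\le k$, one partitions $M$ into $k$ $\theta$-increasing sequences by picking the $i$-th $\N$-element of $M\cap C_n$ for $i=1,\dots,k$, and conversely finite unions of $\theta$-monotone subsequences keep $|M\cap C_n|$ bounded. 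Writing $\varphi=\sup_n\mu_n$ with $\mu_n$ the counting measure on $C_n$ exhibits $\varphi$ as a supremum of measures, so $\mc G\rest B$ will be a non-pathological $F_\sigma$-ideal, i.e., a $\mc B$-ideal. Positivity of $B$ is immediate from $|B\cap C_n|=|C_n|\to\infty$.

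For the construction of $B$, I will first handle the case when $\theta(A)$ is non-scattered, so that it contains some $X\cong\Q$; I set $B_0:=\theta^{-1}(X)\subseteq A$, so that $\theta(B_0)=X\cong\Q$, and $B_0$ is automatically $\mc G$-positive because the $\theta$-image of any finite union of $\theta$-monotone subsequences of $B_0$ is scattered, contradicting $X\cong\Q$. The blocks $C_n$ are then carved recursively inside $B_0$: given $C_1,\dots,C_n$, let $N:=\max_\N C_n$ and $q:=\max_\Q\theta(C_n)$; the tail $B_0\cap(N,\infty)_\N\cap\theta^{-1}((q,\infty)_\Q)$ has $\theta$-image still order-isomorphic to $\Q$ (if $X$ is bounded above by some $s\in\Q$ one uses intervals accumulating at $s$ in place of $\infty$), hence is $\mc G$-positive, and by Dilworth contains a $\theta$-decreasing subsequence of length $n+1$, which I take as $C_{n+1}$.

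The hardest part will be the scattered case, when $\theta(A)$ contains no copy of $\Q$: the tails used in the recursion above may fail to be $\mc G$-positive, so a different strategy is required. I expect to proceed by induction on the Cantor–Bendixson rank of $\theta(A)$, extracting blocks from the discrete layers of the derived sets via the Hausdorff hierarchy for countable scattered linear orders. In the simplest scattered model case $\theta(A)\cong\omega$ — which already captures examples like the block-reversal enumeration — $\mc G$-positivity of $A$ directly yields arbitrarily long $\theta$-decreasing finite subsequences that can be placed in successive disjoint $\Q$-increasing intervals, giving the block decomposition immediately; the general scattered case propagates this through the Hausdorff hierarchy, at each level using the isolated points of the derived set to supply new block candidates.
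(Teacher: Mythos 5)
Your non-scattered case is correct, and it yields a nicer representation than the paper obtains in that regime: the block decomposition $B=\bigsqcup C_n$ with $\theta$-decreasing blocks of growing size, $\N$-consecutive and in $\Q$-increasing intervals, gives $\mc G\rest B=\fin(\sup_n\mu_n)$ for counting measures $\mu_n$ on $C_n$, which is essentially $\mr{ED}_\fin$ (cf.~the paper's Example with $\mr{ED}_\fin$). Your verification that every $\theta$-decreasing sequence lives in one block and every $\theta$-increasing sequence is a partial transversal is sound, as is the recursive construction of the $C_n$'s by taking tails of $\theta^{-1}(X)$ for $X\cong\Q$.

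The gap is the scattered case, which you acknowledge you have not written out, and I do not think the sketch survives scrutiny. The induction ``on Cantor--Bendixson rank via the Hausdorff hierarchy'' has two real obstructions. First, orientation: your recursion only cuts tails from above ($\theta>q$), so if $\theta(A)$ accumulates \emph{downward} (e.g.\ $\theta(A)\cong\omega^*$, or $\omega\cdot\omega^*$) the tail $\theta^{-1}((q,\infty))$ is eventually a $\mc G$-null finite union of monotone pieces and the recursion stalls; you would need the mirror version (increasing blocks in decreasing intervals) and a criterion to choose the correct orientation. Second, and more seriously, positivity of tails is not automatic for scattered $\theta(A)$: if $\theta(A)$ has only finitely many accumulation points (say $\theta(A)\cong\omega+\omega$ with all the ``complexity'' concentrated in the left $\omega$), then restricting to $\theta>q$ with $q$ past the first block can discard precisely the part of $A$ witnessing positivity, leaving a $\mc G$-null set, so you cannot even get started on $C_{n+1}$. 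The paper avoids this by splitting on the number of accumulation points of $\theta(A)$ in $\R\cup\{\pm\infty\}$ rather than on scatteredness. In the finitely-many case it restricts to a half-line around a single accumulation point, observes that one color becomes asymmetric ($\hom_i(\widehat\theta)\con\fin$), and applies Theorem~\ref{iojtoijgoijot4ref} (Dilworth/Mirsky) directly; no block recursion is attempted there. In the infinitely-many case it builds the block family $X_n=X\cap\theta^{-1}(I_n)$ with the $I_n$ chosen increasing \emph{or} decreasing to match the accumulation structure, and proves $\mc G\rest Y=\mc B(\mb p^K)$ for $K$ the hereditary family of $0$-homogeneous transversals---a close cousin of your $\fin(\sup_n\mu_n)$ but without requiring the blocks to be decreasing or of size $n$. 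If you want to keep your block-decomposition approach you should recast your dichotomy as the paper's (finitely vs.\ infinitely many accumulation points), handle both orientations in the infinite case, and fall back on the asymmetry argument for the finite case rather than the CB-rank induction.
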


\begin{proof} Fix a bijection $\theta: \N\to \Q$, set $\mc G:= \langle\hom(\widehat{\theta})\rangle$
Suppose that $X\not\in\mc G$. We distinguish two cases: Suppose first that $\theta(X)$ has finitely many accumulation points $a_1<\dots<a_n$ in $\R\cup \{-\infty,\infty\}$. Choose disjoint intervals $I_1,\dots, I_n$ of $\R$ such that each $a_j\in I_j$ and such that $\theta(X)\con I_1\cup \cdots \cup I_n$. Let $1\le i\le n$ be such that  $Y:=X\cap \theta^{-1}(I_j)\notin \mc G$. Then either $Y\cap ]-\infty, a_j]\notin \mc G$ or $Y\cap [a_j,\infty[\notin \mc G$, so without loss of generality we may assume that $\theta(Y)\con ]-\infty, a_j]\notin \mc G$, and $a_j$ is its unique accumulation point. Hence the coloring $\widehat{\theta}$ on $Y$ is 0-asymmetric and since it is obviously a 1-comparability, it follows from Theorem \ref{iojtoijgoijot4ref} {\it b)} that $\mc G\rest Y$ is a $\mc B$-ideal. 

Suppose now that $\theta(X)$ has infinitely many accumulation points $\mc A$. Observe that if $Y\in \mc G$, then $\theta(Y)$ has only finitely many accumulation points.  Choose now consecutive intervals $(I_n)_n$ ($\max I_n<\min I_{n+1}$ for every $n$, or $\max I_{n+1}<\min I_n$ for every $n$) such that each $I_n\cap \mc A\neq \buit$, and now for each $n$  set $X_n:=  X\cap \theta^{-1}(I_n)$, and $Y:=\bigcup_n X_n$. Since $\theta(Y)$ has infinitely many accumulation points, it follows that $Y\notin \mc G$.  We start by the case when $\max I_n <\min I_{n+1}$ for every $n$.  
 Let
\[
K=\{s\in \hom_0(\widehat\theta\rest Y):\# (s\cap X_n)\leq 1 \;\;\mbox{for all $n\in \N$} \}.
\]
Then $K$ is an hereditary compact subset of  $\fin$. We claim that $\mc G\rest Y=\mc B(\mb p^{K})\rest Y$.  Recall that by \eqref{eq:simpler_expression_p^K}, 
$$
\text{$A\in\mc B(\mb p^K)$ if and only if $\max\conj{\#s}{s\in K\cap [A]^{<\infty}}<\infty$.}
$$
Suppose that  $H\in \hom_1(\widehat\theta\rest Y)$, then clearly  $K\cap [H]^{<\infty}\con [H]^{\le 1}$ as $K\subseteq\hom_0(\widehat\theta\rest Y)$, and consequently, $H\in \mc B(\mathbf p^K)$.  Suppose now that $H\in \hom_0(\widehat\theta\rest Y)$. Since $(I_n)_n$ is upwards consecutive, it follows that   $\{n\in \N\,:\, H\cap X_n\neq \emptyset\}$ is finite, thus, let $m$ be such that $H\cap X_n= \emptyset$ for all $n\geq  m$. Then
$K\cap [H]^{<\infty}\con [H]^{\le m}$, and thus $H\in \mc B(\mb p^K) $. Conversely, let $A\in \mc B(\mb p^K)\rest Y$.  Then $\limsup_{n} \#(A\cap X_n)<\infty$ (since otherwise we can find  an increasing $\{n_j\}_{j\in \N}$   such that $\#(A\cap X_{n_j})\ge j$, and produce then arbitrarily large subsets of $A$ in $K$). Let $m,r$ be such that $\# (A \cap X_n)\le r$ for every $n\ge m$.  Set $B:=A\setminus \bigcup_{n\le m}X_n$. Since 
each $X_i\in \hom_0(\widehat\theta)$, it suffices to show that $B\in \mc G$.   Let $l$ be such that $K\cap [A]^{<\infty}\con [A]^{\le l}$. It follows that if $F\in\hom_0(\widehat{\theta})\rest B$, then $I:=\conj{n<m}{F\cap X_n \neq \buit}$ has at most cardinality $l$, hence using that $F=\bigcup_{n\in I} F\cap X_n$ has cardinality at most $l \cdot r$. We define now the partial ordering on $Y$ by $k\prec l$ when $k<l$ and $\theta(k)<\theta(l)$. Observe that for this partial ordering an antichain is simply  a $0$-homogeneous subset of $Y$. So, width of $\prec$ is at most $l \cdot r$, and consequently by Dilworth's theorem we obtain that $Y$ is a union of at most $l\cdot r$ many chains of $\prec$, or, in other words, a union of at most $l\cdot r$ many 1-homogeneous sets of $\widehat{\theta}$, hence $B\in \mc G$.

If $(I_n)_n$ is downwards consecutive, then we similarly define the compact and hereditary family
\[
K = \{ s \in \Hom_1(\widehat{\theta} \restriction Y) : \#(s \cap X_n) \leq 1 \text{ for all } n \in \mathbb{N} \},
\]
and proceed to prove that $\mathcal{G} \restriction Y = \mathcal{B}(\mathbf{p}^K) \restriction Y$ by changing the roles of color 0 and color 1. 
\end{proof}

Since $\langle\hom(\widehat{\theta})\rangle= \mc B(\mb p^{\hom_0(\widehat{\theta})})\sqcup \mc B(\mb p^{\hom_1(\widehat{\theta})})$, the previous proposition suggests the following weakening of Question \ref{square-union-B}. 

\begin{question}
Let $\ideal$ be the square sum of two $\mc B$-ideals and $Y\not\in \ideal$. Is there $X\subseteq Y$ with $X\not\in \ideal$  such that $\ideal\rest X$ is 
 a $\B$-ideal (equivalently, non-pathological)?
\end{question}

A natural dual to hereditary non-pathology arises naturally.

\begin{question}
Does there exist a hereditarily pathological \( F_\sigma \)-ideal? Specifically, is there an \( F_\sigma \)-ideal such that every restriction to a  positive set is pathological?    
\end{question}

Theorem \ref{j4itogjoidfg} can be extended to any dimension \( d \) using the concept of \emph{Devlin types}.  
\begin{teo}[{\cite{devlin1979}}]
 For every $d\in \N$ there is a number $t_d\in \N$ satisfying:
  \begin{enumerate}[(i),wide=0pt]
     \item If $\ccal:[\Q]^d\to r$ is an arbitrary (finite) coloring, then there is some order-isomorphic copy $X$ of $\Q$ such that $\ccal$ uses at most $t_d$
colors on $[X]^d$.
    \item There exists a coloring $\ccal_d: [\Q]^d\to t_d$ such that if $X\con \Q$ is order-isomorphic to $\Q$, then $\ccal_d$ uses all colors on $[X]^d$.\qed
\end{enumerate}
\end{teo}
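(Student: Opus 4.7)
The plan is to identify $\Q$ with a canonical dense linearly ordered subset of the binary tree $T:=2^{<\omega}$, equipped with the lexicographic order, so that $\Q$ sits inside $T$ as a dense chain-free set whose meet-closure recovers the entire tree structure. For a $d$-element subset $A=\{q_1<\cdots<q_d\}$ of $\Q$, define its \emph{Devlin type} as the combinatorial invariant consisting of: (a) the isomorphism class of the finite subtree $T(A)\con T$ generated by $A$ under binary meets; (b) for each $q_i$, the bit indicating which immediate side of its parent meet it lies on; and (c) the linear order (by $T$-level) of the $d-1$ meet points of $T(A)$. Let $t_d$ be the total number of such types; a direct counting argument shows $t_d$ is finite and in fact matches Devlin's tangent-number formula, but for the purposes of the theorem it suffices that $t_d<\infty$.

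For part \emph{(i)}, I would invoke Milliken's tree theorem: for every finite coloring of the set of $d$-element antichains of $T$ of a fixed strong tree-type, there is a strong subtree $S\con T$ isomorphic to $T$ on which the coloring is constant. Applying Milliken's theorem iteratively, once for each of the finitely many Devlin types, produces a single strong subtree $S\cong T$ such that the restriction of $\ccal$ to any $d$-subset of the branches of $S$ depends only on the Devlin type of that subset. Since the set of branches of $S$ is dense, unbounded above and below in itself, and without endpoints, it is order-isomorphic to $\Q$, giving the desired copy $X$ on which $\ccal$ uses at most $t_d$ colors.

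For part \emph{(ii)}, I would simply define $\ccal_d(A):=\text{(Devlin type of $A$)}$, giving a coloring into $t_d$ colors. Given any $X\con \Q$ order-isomorphic to $\Q$, a density argument shows that every Devlin type is realized in $[X]^d$: one builds the desired tuple by inductively selecting its elements inside carefully chosen open subintervals of $X$ that force the prescribed meet pattern, leaf-side configuration, and meet ordering, using only the fact that $X$ is dense and unbounded in every open interval it meets. Hence $\ccal_d$ uses exactly $t_d$ colors on $[X]^d$, proving optimality.

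The main obstacle is twofold. First, one must set up Milliken's theorem in the exact formulation needed — on antichains of a fixed strong type, not on arbitrary $d$-element subsets — and verify that the finitely many Milliken applications combine to yield a single strong subtree of $T$ whose branches again realize $\Q$; this requires that strong subtrees of $T$ are closed under the iterated refinement inherent in Milliken's theorem. Second, the notion of Devlin type must be shown to be an intrinsic invariant of a $d$-tuple in $\Q$, independent of the particular embedding of $\Q$ into $T$; equivalently, one must check that two $d$-tuples in $\Q$ have the same Devlin type if and only if they lie in the same orbit of the automorphism group of $(\Q,<)$, which is what ensures that $t_d$ is a genuine lower bound (part \emph{(ii)}) and not merely an artifact of the chosen coding.
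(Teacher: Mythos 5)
The paper does not prove this result; it cites Devlin's thesis and refers to \cite[Section 6.2]{Todorcevic2010} for a full account, so there is no ``paper's own proof'' to compare against. That said, your sketch does aim at the standard Milliken-theorem route, and the overall architecture (identify $\Q$ with $2^{<\omega}$ under a lexicographic-type linear order, define a combinatorial type of a $d$-tuple in terms of the $\wedge$-closed subtree it generates, use Milliken's strong-subtree theorem once per type, and take the resulting diagonal strong subtree) is the right one. Two points, however, are genuinely wrong, not merely imprecise.

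First, you repeatedly refer to the ``branches of $S$'' as your copy of $\Q$: ``$\ccal$ restricted to $d$-subsets of the branches of $S$ depends only on the Devlin type'', and ``the set of branches of $S$ is dense \dots\ order-isomorphic to $\Q$.'' This cannot work. A strong subtree $S$ of $2^{<\omega}$ isomorphic to $2^{<\omega}$ has $2^{\aleph_0}$ branches, so its set of branches is never order-isomorphic to $\Q$. In the standard argument, $\Q$ is realized as the set of \emph{nodes} of $2^{<\omega}$ under the linear order $s <_{\Q} t$ iff $s^{\frown}(1,0,0,\dots) <_{\mathrm{lex}} t^{\frown}(1,0,0,\dots)$, and after the Milliken iterations one passes to the nodes of the resulting strong subtree $S$, which again form a countable dense linear order without endpoints. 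Similarly, Milliken's theorem should be applied to colorings of strongly embedded $d$-element sets of nodes (sets whose $\wedge$-closure is ``level-product compatible'' with $S$), not to colorings of $d$-element antichains. Points of $\Q$ realized as nodes are certainly not constrained to be antichains.

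Second, in your final paragraph you claim that to obtain the lower bound one must verify that ``two $d$-tuples in $\Q$ have the same Devlin type if and only if they lie in the same orbit of the automorphism group of $(\Q,<)$.'' This is false and, worse, it is the opposite of the situation the theorem is about. Since $\mathrm{Aut}(\Q,<)$ acts transitively on $d$-tuples with a fixed linear-order type, and all elements of $[\Q]^d$ have the same order type, \emph{all} $d$-sets lie in a single $\mathrm{Aut}(\Q,<)$-orbit. If the Devlin type were an orbit invariant it would be constant and $t_d=1$, collapsing the theorem. The Devlin (strong similarity) type is an invariant of the \emph{ambient tree coding} of $\Q$ inside $2^{<\omega}$ --- it is not preserved by arbitrary order-automorphisms of $\Q$ --- and this is exactly what makes part \emph{(ii)} nontrivial: one must show by a density argument that every strong similarity type is realized inside every subcopy of $\Q$ sitting (in whatever way) inside $(2^{<\omega},<_\Q)$, not that the type is intrinsic to the order. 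Your description of the type itself is also slightly under-specified: besides the $\wedge$-closed subtree shape and the sibling bits, the strong similarity type must record the level ordering of all $2d-1$ nodes of the $\wedge$-closure, and the relevant passing bits, and one must in addition insist on the ``generic position'' normalization (distinct levels, passing number $1$ at the levels where it matters) for the tangent-number count $t_d = T_{2d-1}$ to come out.
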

The number \( t_d \)  represents the count of canonical patterns of arbitrary \( d \) points on the binary tree \( 2^{<\mathbb{N}} \). It is interesting to note that the sequence $(t_d)_d$ is a well known sequence of numbers known as the \emph{odd tangent numbers}, because \( t_d = T_{2d-1} \), where 
\(
\tan(z) = \sum_{n=0}^\infty ({T_n}/{n!}) z^n.
\) For a comprehensive discussion and detailed information, we refer the reader to \cite[Section 6.2]{Todorcevic2010}. From this we get the following generalization of Theorem \ref{j4itogjoidfg}. 
\begin{defi}[$\Q_d$-colorings]   
A {\em $\Q_d$-coloring} is some $\widehat{\theta}^d:=\ccal_d\circ \theta^n:[\N]^d\to t_d$, $\{n_j\}_{j<d}\mapsto \ccal_d(\{\theta(n_j)\}_{j<d})$ for a bijection $\theta:\N\to \Q$.       
\end{defi}

\begin{teo}\label{j4itogjoidfg1}\label{Sierpinski1}
    Suppose that $\theta: \N\to \Q$ is an enumeration. Then for every coloring $\ccal:[\N]^d\to l$ there is some $M\con \N$ such that $\theta(M)$ is order-isomorphic to $\Q$ and such that $\hom(\widehat{\theta}^d)\rest M\con \hom(\ccal)$.     
\end{teo}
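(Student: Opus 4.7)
The plan is to adapt the argument of Theorem \ref{j4itogjoidfg} by replacing Galvin's dichotomy for $[\Q]^2$ with Devlin's theorem for $[\Q]^d$. The key observation is that Devlin's theorem is strong enough in both directions: it gives a ceiling of $t_d$ colors on a suitable $\Q$-copy, and it guarantees that the canonical coloring $\ccal_d$ uses \emph{all} $t_d$ colors on every $\Q$-copy.

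Concretely, fix the coloring $\ccal:[\N]^d\to l$ and the enumeration $\theta:\N\to \Q$, and first form the product coloring $e:[\Q]^d\to l\times t_d$ defined by
\[
e(\{\theta(n_1),\dots,\theta(n_d)\}_<):=\bigl(\ccal(\{n_1,\dots,n_d\}),\ \ccal_d(\{\theta(n_1),\dots,\theta(n_d)\})\bigr).
\]
Applying part (i) of Devlin's theorem to $e$, I would obtain $X\subseteq \Q$ order-isomorphic to $\Q$ such that $|e[X]^d|\le t_d$. Set $M:=\theta^{-1}(X)$.

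The main step is the pigeon-hole argument that forces the first coordinate of $e$ to be a function of the second. Since $X$ is order-isomorphic to $\Q$, part (ii) of Devlin's theorem gives that $\ccal_d$ already achieves all $t_d$ colors on $[X]^d$. Hence the second-coordinate projection of $e\!\upharpoonright\![X]^d$ is surjective onto $t_d$. Combined with $|e[X]^d|\le t_d$, this forces $e[X]^d$ to have exactly $t_d$ elements, one for each value of the second coordinate, i.e.\ there exists a map $f:t_d\to l$ with
\[
\ccal(\{n_1,\dots,n_d\}) = f\!\left(\widehat{\theta}^d(\{n_1,\dots,n_d\})\right) \quad \text{for all } \{n_1,\dots,n_d\}\in [M]^d.
\]

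With this functional dependence in hand, the conclusion is immediate: if $H\in \hom(\widehat{\theta}^d)\!\upharpoonright\! M$ has color $c\in t_d$, then $\widehat{\theta}^d$ is constant with value $c$ on $[H]^d$, so $\ccal$ is constant with value $f(c)$ on $[H]^d$, whence $H\in \hom(\ccal)$. I expect no real obstacle beyond the pigeon-hole step, which is precisely where Devlin's theorem provides strictly more than Galvin's did in dimension $2$ (the sharpness clause (ii) replaces the ad hoc argument that ruled out $\{(i,k),(j,k)\}$ in the $d=2$ case).
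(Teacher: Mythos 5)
Your proof is correct and follows essentially the same route as the paper's: form the product coloring $\ccal \times \ccal_d$, apply Devlin part (i) to bound the number of colors on a $\Q$-copy by $t_d$, use Devlin part (ii) to conclude the bound is sharp and is witnessed entirely by the $\ccal_d$-coordinate, and read off the functional dependence $\ccal = f\circ\widehat{\theta}^d$ on $[M]^d$. The only difference from the paper is the order of coordinates in the product coloring, which is cosmetic.
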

\begin{proof}
Given an enumeration  $\theta: \N\to \Q$  and a coloring  $\ccal:[\N]^d\to l$, we define $\mathscr{d}: [\Q]^d \to t_d\times l$, 
$$\mathscr{d}(\{q_j\}_{j<d}):= (\ccal_d(\{q_j\}_{j<d}), \ccal(\{\theta^{-1}(q_j)\}_{j<d})).$$
By Devlin's Theorem there is an order-isomorphic copy $Q\con \Q$ of $\Q$ such that $\mathscr{d}$ uses at most $t_d$ many colors on $[Q]^d$. In fact, by the definition of $\mathscr{d}$, it has to use exactly $t_d$ many colors, which are exactly determined by the first coordinate of the colors used in $[Q]^d$, that is, there is $(k_j)_{j<d}\in l^d $ such that $\mathscr{d}:[Q]^d\to \{(j, k_j)\}_{j<d}$. Let $N:=\theta^{-1}(Q)$. It follows that if $A\con N$ is $\widehat{\theta}^d$-homogeneous with color  $j<d$, then   it is  $\ccal$-homogeneous with color $k_j$. This means that $\langle \hom(\widehat{\theta}^d)\rangle \rest N\con \langle \ccal \rangle$.  \end{proof}

\begin{question}
Are the \( \mathbb{Q}_d \)-coloring ideals non-pathological? Are they hereditarily non-pathological?    
\end{question}

\section{Tall colorings}

In this section we analyze tall $\mc B$-ideals with the aim of associating to them natural c-colorings ideals. We known that \(c\)-coloring ideals are effectively tall. Our focus now shifts to investigating the converse: when a tall \(\mathcal{B}\)-ideal contains a \(c\)-coloring ideal. 
We will demonstrate that every tall ideal that can be \(\mathcal{B}\)-represented in some space \(C(K)\) with \(K\) countable must contain a \(c\)-coloring ideal. Notably, these colorings arise from classical properties of Schauder bases. 

As a consequence, we will construct an example of a \(\mathcal{B}\)-ideal—specifically a \(c\)-coloring ideal—that is not representable in \(c_0\).

\subsection{\texorpdfstring{The block sequence coloring and $c_0$}{The block sequence coloring and c0}}\label{subsec:block_seq_coloring}

Recall that a {\em basic sequence} in a Banach space is a sequence which is a (Schauder) basis of the closed subspace it generates. Throughout this part we assume that every basic sequence is seminormalized. This implies, in particular, that the sequence of {\em coordinate functionals} $\mb f = (f_n)_n$, which map each $x\in X$ to its $n$-th coefficient in terms of the basis, is uniformly bounded. Given $x\in X$, its {\em support} (with respect to the basis) is the set $\supp x := \{n\in\N : f_n(x)\neq 0\}$. 
Finally, a {\em block subsequence} of $\mb e$ is a finitely supported sequence $\mb x$ satisfying $\max\supp x_n < \min\supp x_{n+1}$ for all $n$. Vectors with finite support are clearly dense in $X$, thus in order to study ideals of the form $\mc B(\mb x)$ we can always assume that every $x_n$ has finite support. Note that in $C(K)$ spaces we can also assume that the $x_n$'s are non-negative functions by Proposition \ref{prop:positive_functions}, but this is not the case in arbitrary function spaces, as shown by Example \ref{ex:ideal_not_positive_coords}.

\begin{defi}[The block sequence coloring]\label{def:block_seq_coloring}
    Let $\mb e$ be a basis for a Banach space $X$ and let $\mb x=(x_n)_n$ be a finitely supported sequence in $X$. We define the {\em block sequence ($\mk{bs}$) coloring} (with respect to $\mb f$) $\mk{bs}= \mk{bs}_{\mb x}: [\N]^2\to \{0,1\}$ for $m<n$ by $\mk{bs}(\{m,n\}_<) = 1$ exactly when
    \begin{equation}\label{cantor5}
    \text{$\max\supp x_m\le \max\supp x_n$ and for all $k \le \max\supp x_m $ one has that $|f_k^*(x_n)|\le \frac{1}{2^{k+1}\, 2^{m+1}}$.}
    \end{equation}
\end{defi}

Clearly $\mk{bs}$ is a $1$-comparability coloring. Fix now a Banach space $X$ with a normalized basis $\mb e$ and a finitely supported sequence $\mb x = (x_n)_n$ in $X$. For each $F\subseteq\N$ define $(x_n^F)_{n\in F}$ by
\begin{equation}
    x_n^F := \sum_{k\in\supp x_n \setminus [0,\max_{m\in F\cap n}\max \supp x_m]} f_k(x_n) e_k.
\end{equation}
Observe that $(x_n^F)_{n\in F, x_n^F\neq 0}$ is a block sequence.

 For 1-homogeneous sets $H$ of $\mathfrak{bs}$ the sequence $(x_n^H)_{n\in H}$ are equivalent:

\begin{prop}\label{iji5gojhtinbgv}
    Suppose that $H\in \hom_1(\mk{bs})$. Then for 
    every sequence of scalars $(a_n)_{n\in H}$ with at most finitely many non-zero we have
    \begin{equation}\label{ojtyohiyterfds}
        \left|\norm{\sum_{n\in H} a_n x_n} - \norm{\sum_{n\in H} a_n x_n^H}\right|\le \frac{\max_{n\in H} |a_n|}{2^{\min H}}.     
    \end{equation}
    Consequently, if $H$ is infinite, $\mc B((x_n)_{n\in H}) = \mc B((x_n^H)_{n\in H})$ and, if in addition $(x_n)_{n\in H}$ is a basic sequence, then $(x_n)_{n\in H}$ and $(x_n^H)_{n\in H}$ are equivalent.
\end{prop}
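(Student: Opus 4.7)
The plan is to prove the key estimate \eqref{ojtyohiyterfds} by controlling $\|x_n - x_n^H\|$ via the block sequence condition, and then derive the two consequences as more or less routine corollaries.

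First I would observe that, by the definition of $x_n^H$, for any $n\in H$ one has $x_n - x_n^H = \sum_{k\le L_n} f_k(x_n)e_k$, where $L_n := \max_{m\in H\cap n}\max\supp x_m$ (with $L_{\min H} = -\infty$, so that $x_{\min H}^H = x_{\min H}$). For $n > \min H$, setting $m_n := \max(H\cap n)$, the monotonicity part of $\mk{bs}(\{m,n\}_<) = 1$ along $H$ forces $L_n = \max\supp x_{m_n}$. Then the small-coefficient part of \eqref{cantor5} applied to the pair $\{m_n,n\}\in [H]^2$ gives $|f_k(x_n)|\le 1/(2^{k+1}\,2^{m_n+1})$ for every $k\le L_n$. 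Summing over $k$ yields
\[
\sum_{k\le L_n} |f_k(x_n)| \le \sum_{k=0}^\infty \frac{1}{2^{k+1}\,2^{m_n+1}} = \frac{1}{2^{m_n+1}}.
\]

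Next, enumerating $H = \{h_0 < h_1 < \cdots\}$ so that $m_{h_j} = h_{j-1}$ for $j\ge 1$, the triangle inequality (and the normalization of the basis vectors, which is what the dyadic bounds in \eqref{cantor5} are designed to absorb) gives
\[
\norm{\sum_{n\in H} a_n(x_n - x_n^H)} \le \sum_{j\ge 1} |a_{h_j}|\sum_{k\le L_{h_j}}|f_k(x_{h_j})| \le \max_{n\in H}|a_n|\sum_{j\ge 1}\frac{1}{2^{h_{j-1}+1}},
\]
and since $h_{j-1}\ge \min H + (j-1)$, the geometric sum on the right is at most $1/2^{\min H}$. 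Applying the reverse triangle inequality to $\sum a_n x_n$ and $\sum a_n x_n^H$ then yields \eqref{ojtyohiyterfds}.

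For the consequences, the equality $\mc B((x_n)_{n\in H}) = \mc B((x_n^H)_{n\in H})$ is immediate: taking coefficients equal to $1$ on finite subsets $F$ of any $M\subseteq H$, the two suprema $\sup_{F\in[M]^{<\omega}}\|\sum_{n\in F} x_n\|$ and $\sup_{F\in[M]^{<\omega}}\|\sum_{n\in F} x_n^H\|$ differ by at most $1/2^{\min H}$ and are hence simultaneously finite. For basic-sequence equivalence, assuming $(x_n)_{n\in H}$ is basic with basic constant $K$ and $\inf_n \|x_n\|=\delta>0$, the standard estimate $|a_n|\le (2K/\delta)\|\sum_{m\in H} a_m x_m\|$ combined with \eqref{ojtyohiyterfds} gives
\[
\abs{\norm{\sum_{n\in H} a_n x_n}-\norm{\sum_{n\in H} a_n x_n^H}}\le \frac{2K/\delta}{2^{\min H}}\norm{\sum_{n\in H} a_n x_n}.
\]
If $\min H$ is large enough that $(2K/\delta)/2^{\min H}<1$, equivalence is direct; otherwise we pass to a cofinite tail $H'\subseteq H$ where this holds (noting that $x_n^H$ and $x_n^{H'}$ agree on all but one index of $H'$, so a standard finite-rank perturbation yields equivalence on all of $H$). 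The only mildly delicate step is this last bookkeeping for small $\min H$; the quantitative estimate \eqref{ojtyohiyterfds} itself is a direct computation.
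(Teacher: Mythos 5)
Your argument for the estimate \eqref{ojtyohiyterfds} matches the paper's proof: both reduce to the pointwise bound $\nrm{x_n - x_n^H}\le 1/2^{m_n+1}$, obtained from the dyadic condition in \eqref{cantor5} together with the normalization of $(e_k)$, and then sum the geometric series $\sum_j 2^{-(h_j+1)}\le 2^{-\min H}$ and apply the reverse triangle inequality. In fact the paper's proof stops at the estimate, treating the two ``Consequently'' clauses as immediate; you spell them out, and the $\mc B$-ideal equality part is clean. The basic-sequence-equivalence step is the one place you cut a corner: ``pass to a cofinite tail and do a finite-rank perturbation'' should be fleshed out by observing that $(x_n^H)_{n\in H}$ is a block sequence of $(e_k)$ (hence basic), applying the small-perturbation principle on a tail $H'$ with $(2K/\delta)2^{-\min H'}<1$, and then using that for basic sequences, altering finitely many terms preserves equivalence. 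This is routine but should be written out; and since only the estimate \eqref{ojtyohiyterfds} is actually invoked later in the paper, the hand-wave is harmless.
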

\begin{proof}
    Enumerate $H = \{n_i\}_i$ in an increasing way. Then $(\max\supp x_{n_i})_i$ is increasing and for every $i$ one has that
    \begin{align*}
        \nrm{x_{n_{i+1}} - x_{n_{i+1}}^H} & = \norm{\sum_{k\in \supp x_{n_{i+1}}\cap [0,\max\supp x_{n_i}]} f_k(x_{n_{i+1}}) e_k} \le \sum_{k\in \supp x_{n_{i+1}}\cap [0,\max\supp x_{n_i}]} |f_k(x_{n_{i+1}})| \\
        & \le \sum_{k\in \supp x_{n_{i+1}}\cap [0,\max\supp x_{n_i}]} \frac{1}{2^{k+1}\, 2^{n_i+1}} \le \frac{1}{2^{n_i+1}},
    \end{align*}
    hence
    \[
    \left|\norm{\sum_{i=0}^\infty a_{n_i}x_{n_i}} - \norm{\sum_{i=0}^\infty a_{n_i} x_{n_i}^H}\right| \le \sum_{i=0}^\infty |a_{n_i}| \norm{x_{n_{i}}-x_{n_i}^H} = \sum_{i=1}^\infty |a_{n_i}| \norm{x_{n_i} - x_{n_{i}}^H} \le \max_i |a_{n_i}| \sum_{i=0}^\infty \frac{1}{2^{n_i+1}} \le \frac{\max_{n\in H}|a_n|}{2^{\min H}}.
    \]
 \end{proof}

\begin{prop}\label{j2iojsdfjnsdfsd}
Suppose that $\mb x$ is weakly null. Then 
 $\hom_0(\mk{bs}_\mb x)\con\fin$, and consequently $\mk{bs}_\mb x$ is a 0-asymmetric  1-comparability coloring and
    \[
    \langle\hom(\mk{bs}_\mb x)\rangle = \mc B(\mb p^{\hom_0(\mk{bs}_\mb x)}).
    \]
\end{prop}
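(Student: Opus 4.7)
The plan is to deduce the identity $\langle\hom(\mk{bs}_\mb x)\rangle = \mc B(\mb p^{\hom_0(\mk{bs}_\mb x)})$ directly from Theorem \ref{iojtoijgoijot4ref}(ii), so the real work is to verify that $\mk{bs}_\mb x$ is a 1-comparability coloring and that $\hom_0(\mk{bs}_\mb x) \con \fin$.

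First I would verify that $\mk{bs}_\mb x$ is a $1$-comparability coloring, which amounts to checking transitivity of the relation. Suppose $m < n < p$ with $\mk{bs}(\{m,n\}) = \mk{bs}(\{n,p\}) = 1$. Then $\max\supp x_m \le \max\supp x_n \le \max\supp x_p$ gives the first clause of \eqref{cantor5} for the pair $\{m,p\}$, while for every $k \le \max\supp x_m \,(\le \max\supp x_n)$ the relation $\mk{bs}(\{n,p\}) = 1$ yields $|f_k^*(x_p)| \le 2^{-(k+1)}\,2^{-(n+1)} \le 2^{-(k+1)}\,2^{-(m+1)}$ since $n > m$. Here the geometric weight $2^{-(m+1)}$ built into the definition of $\mk{bs}$ is what makes transitivity work.

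The main step is the 0-asymmetry $\hom_0(\mk{bs}_\mb x)\con \fin$, which I would prove by contradiction. Suppose $H \in \hom_0(\mk{bs}_\mb x)$ is infinite. Since every infinite sequence in $\N$ admits an infinite non-decreasing subsequence, I can pass to an infinite $H' \subseteq H$ (still $0$-homogeneous, as the property is hereditary) such that $(\max\supp x_n)_{n \in H'}$ is non-decreasing. Fix $m_0 := \min H'$. For every $n \in H' \setminus \{m_0\}$, the value $\mk{bs}(\{m_0,n\}) = 0$ combined with the inequality $\max\supp x_{m_0} \le \max\supp x_n$ forces the failure of the second clause of \eqref{cantor5}: there exists some $k(n) \le \max\supp x_{m_0}$ with
\[
|f_{k(n)}^*(x_n)| > 2^{-(k(n)+1)}\,2^{-(m_0+1)}.
\]
Since $k(n)$ ranges over the finite set $\{0,1,\dots,\max\supp x_{m_0}\}$, by the pigeonhole principle there exists $k^*$ in this set such that $k(n) = k^*$ for infinitely many $n \in H'$. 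Along this infinite sub-index set we have $|f_{k^*}^*(x_n)| > 2^{-(k^*+1)}\,2^{-(m_0+1)} > 0$, contradicting the fact that $f_{k^*}^*(x_n) \to 0$, which holds because $\mb x$ is weakly null.

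With both properties established, the final identity is immediate: $\mk{bs}_\mb x$ is $1$-comparability and $0$-asymmetric, so Theorem \ref{iojtoijgoijot4ref}(ii) applied with $i = 1$ yields $\langle\hom(\mk{bs}_\mb x)\rangle = \langle\hom_1(\mk{bs}_\mb x)\rangle = \mc B(\mb p^{\hom_0(\mk{bs}_\mb x)})$. The only non-routine step is really the pigeonhole argument above: the trick is to pass first to the non-decreasing subsequence (which rules out the first clause of \eqref{cantor5} as a source of $0$-homogeneity) and then use weak nullity against a single coordinate functional forced by finiteness of the index range.
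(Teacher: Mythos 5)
Your proof is correct and follows essentially the same strategy as the paper's: pass to a subsequence along which $\max\supp x_n$ is non-decreasing, observe that $0$-homogeneity with the minimum element then forces the second clause of \eqref{cantor5} to fail, pigeonhole on the bounded coordinate $k(n)$, and contradict weak nullity. The only addition is your explicit transitivity check for the $1$-comparability claim, which the paper leaves implicit with "clearly."
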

\begin{proof}
Set $\mk{bs}:=\mk{bs}_\mb{x}$. Suppose otherwise that $H\in \hom_0(\mk{bs})$  is infinite, and let $H=\{n_j\}_j$ be its increasing enumeration. Passing to an infinite subset if necessary we assume that $\max \supp x_{n_j}\le \max\supp x_{n_{j+1}}$ for every $j$. Then for every $j>0$ 
there is some $k(j)\in [0,\max\supp x_{n_0}]$ such that $f_{k(j)}^*(x_{n_j})\ge 1/2^{k(j)+n_0+2}$, so for infinitely many $j'$s there is some $k_\infty\in [0,\max\supp x_{n_0}]$ such that $f_{k_\infty}^*(x_{n_j})\ge 1/2^{k_\infty+n_0+2}$, which is impossible because we are assuming that $\mb x$ is weakly null.
\end{proof}

Tall $\mathcal{B}$-ideals represented in $c_0$ exhibit a particular structural property. This property will be used in the next subsection to provide an example of a $\mathcal{B}$-ideal that cannot be $\mathcal{B}$-represented in $c_0$.

\begin{prop}
\label{fin-antichain}
Suppose that a tall $\mc B$-ideal $\mc I$ can be $\B$-represented in $c_0$. Then there is a $0$-asymmetric $1$-comparability coloring $\ccal: [\N]^2\to 2$ such that $\hom(\ccal) \con \mc I$. 
\end{prop}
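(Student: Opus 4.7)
The plan is to start from a $\mc B$-representation $\mc I = \mc B(\mb x)$ of $\mc I$ in $c_0$ and build $\ccal$ as the block sequence coloring of Definition \ref{def:block_seq_coloring}. Since $\mc I$ is tall, Proposition \ref{wuc} forces $\mb x$ to be weakly null.

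The first step is to pass to a finitely supported representation. For every $n$, since $x_n \in c_0$, truncate $x_n$ to its first $k_n$ coordinates to obtain $y_n$ with $\nrm{x_n - y_n}_\infty \le 1/2^{n+1}$. Then for any finite $F \con \N$,
\[
\Bigl|\,\nrm{{\textstyle\sum_{n\in F}} y_n}_\infty - \nrm{{\textstyle\sum_{n\in F}} x_n}_\infty\Bigr| \le \sum_{n\in F} \nrm{y_n - x_n}_\infty \le 1,
\]
so $\mc B(\mb y) = \mc B(\mb x) = \mc I$. As $\nrm{y_n - x_n}_\infty \to 0$ and $\mb x$ is weakly null, so is $\mb y$, and $\mb y$ stays norm-bounded.

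Now define $\ccal := \mk{bs}_{\mb y}$. By Proposition \ref{j2iojsdfjnsdfsd}, $\ccal$ is $1$-comparability (by construction) and $0$-asymmetric (from weak nullity of $\mb y$). Since $\hom_0(\ccal) \con \fin \con \mc I$, the whole task reduces to showing $\hom_1(\ccal) \con \mc I$. Fix an infinite $H \in \hom_1(\ccal)$. By the very definition of $y_n^H$ together with $1$-homogeneity, the vectors $(y_n^H)_{n\in H}$ are disjointly supported in $c_0$; hence for every finite $F\con H$,
\[
\nrm{{\textstyle\sum_{n\in F}} y_n^H}_\infty \;=\; \max_{n\in F} \nrm{y_n^H}_\infty \;\le\; \sup_n \nrm{y_n}_\infty \;<\; \infty.
\]
Applying Proposition \ref{iji5gojhtinbgv} with coefficients $a_n = 1$ for $n\in F$ and $a_n = 0$ otherwise transfers this uniform bound to $\nrm{\sum_{n\in F} y_n}_\infty$ with an additional additive error of at most $1/2^{\min H}$, so $\sup_{F\in [H]^{<\omega}} \nrm{\sum_{n\in F} y_n}_\infty < \infty$ and $H \in \mc B(\mb y) = \mc I$.

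The essential ingredient is the $c_0$-specific fact that disjointly supported bounded sequences have uniformly bounded partial sums; this is what turns $1$-homogeneity of $\mk{bs}_{\mb y}$ into membership in $\mc I$, and it is exactly the feature expected to fail for the tall $\mc B$-ideals constructed later (Theorem \ref{exG}) as non-representable in $c_0$. The only slightly delicate point is the perturbation from $\mb x$ to $\mb y$, which must simultaneously preserve $\mc B$-representability, weak nullity, and boundedness; this is routine because $c_{00}\cap c_0$ is dense and a summable norm perturbation does not change the $\mc B$-ideal.
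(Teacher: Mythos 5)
Your proof is correct and follows the paper's approach: both pass to a finitely supported weakly-null representative, build $\mk{bs}$, and show $\hom_1(\mk{bs}) \con \mc B(\mb x)$; the paper does the coordinate-wise estimate directly (the first contribution to coordinate $k$ plus a geometric tail coming from the decay condition in Definition~\ref{def:block_seq_coloring}), whereas you route it through Proposition~\ref{iji5gojhtinbgv} and the disjointness of $(y_n^H)_{n\in H}$ in $c_0$, which is the same bound packaged differently. One small inaccuracy worth flagging: the disjoint support of $(y_n^H)_{n\in H}$ follows from the definition of $y_n^H$ alone (the paper notes this immediately after introducing $x_n^F$) and does not require $1$-homogeneity; what $1$-homogeneity actually buys you is Proposition~\ref{iji5gojhtinbgv}, i.e.\ the closeness of $y_n$ to $y_n^H$.
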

\begin{proof} 
    For suppose now that $\mc I = \mc B(\mb x)$, for some sequence in $\mb x=(x_n)_n$, that we may assume that it is finitely supported and positive, and since we are assuming that $\mc I$ is tall, this means that $\mb x$ is weakly null.  Set $\mk{bs}:=\mk{bs}_\mb x$.  It suffices to show that $\hom_1(\mk{bs})\subseteq \B({\bf x})$. Suppose that $H\in \hom_1(\mk{bs})$ and fix  $F\con H$ and $k\in\bigcup_{n\in F}\supp x_n$. Let $I := \conj{n\in F}{k\in\supp x_n} = \{n_l\}_{l=0}^p$ in increasing order. Then we have that for every $l < k$ one has that $x_{n_{l+1}}(k)\le 1/(2^{k+1} 2^{n_l})$, so
    \[
    \left|\sum_{n\in F} x_n(k)\right|\le \sum_{l=0}^p |x_{n_l}(k)|\le |x_{n_0}(k)|+\frac{1}{2^{k+1}}\sum_{l<p} \frac{1}{2^{n_l}}\le \sup_n \nrm{x_n}+1.
    \]
    Since $k$ and $F$ are arbitrary, we obtain that $H\in \mc B(\mb x)$.
\end{proof}

\begin{prop}
Suppose that $\mb x$ is a sequence in $c_0$ with $\sup_n \|x_n\|\leq 1$. Then $\mc B(\mb x)$ is tall if and only if $\mc B_C(\mb x)$ is tall for every $C>1$.
 
\end{prop}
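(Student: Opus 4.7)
The implication that $\mc B_C(\mb x)$ tall for every $C>1$ forces $\mc B(\mb x)$ to be tall is immediate, since $\mc B_C(\mb x)\con\mc B(\mb x)$. For the converse, assume $\mc B(\mb x)$ is tall. By Proposition \ref{wuc} the sequence $\mb x$ is then weakly null, so in particular $x_n(k)\to 0$ as $n\to\infty$ for each coordinate $k$. Fix an infinite $M\con\N$ and a constant $C>1$, and pick positive scalars $(\vep_j)_j$ with $\sum_j \vep_j\leq C-1$. The goal is to extract an infinite $N\con M$ such that $\sup_{F\in [N]^{<\omega}}\nrm{\sum_{n\in F}x_n}_\infty\le C$.

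I would carry this out with a standard gliding hump. Construct recursively a strictly increasing sequence $(n_j)_j$ in $M$ and integers $0=k_{-1}<k_0<k_1<\cdots$ such that, for every $j$:
\begin{enumerate}[(a),wide=0pt]
    \item $|x_{n_j}(k)|<\vep_j$ for all $k<k_{j-1}$, which is possible since for each of the finitely many $k<k_{j-1}$ one has $x_n(k)\to 0$, so we can select $n_j\in M$ large enough;
    \item $|x_{n_j}(k)|<\vep_j$ for all $k\ge k_j$, which is possible since $x_{n_j}\in c_0$.
\end{enumerate}
Set $y_j:=x_{n_j}\mathbbm 1_{[k_{j-1},k_j)}$. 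By construction the $y_j$'s are disjointly supported, $\nrm{y_j}_\infty\le \nrm{x_{n_j}}_\infty\le 1$, and $\nrm{x_{n_j}-y_j}_\infty<\vep_j$.

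For any finite $F\con N:=\{n_j\}_j$, the triangle inequality together with the disjointness of the supports of the $y_j$'s gives
\[
\norm{\sum_{n\in F}x_n}_\infty \le \norm{\sum_{n_j\in F}y_j}_\infty+\sum_{n_j\in F}\nrm{x_{n_j}-y_j}_\infty \le \max_{n_j\in F}\nrm{y_j}_\infty+\sum_j \vep_j \le 1+(C-1)=C,
\]
so $N\in\mc B_C(\mb x)$, as required. There is no substantive obstacle here: the argument reduces to a routine gliding hump once one combines the weak-nullity provided by Proposition \ref{wuc} with the fact that the ambient space is $c_0$. The only point demanding a modicum of care is the inductive choice of $n_j$ in step (a), which uses that only finitely many early coordinates must be simultaneously controlled while infinitely many candidates remain in $M$ thanks to coordinate-wise convergence.
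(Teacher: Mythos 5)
Your proof is correct. The recursion is well-founded (pick $n_j$ by weak-nullity applied to the finitely many coordinates below $k_{j-1}$, then pick $k_j$ since $x_{n_j}\in c_0$), the truncations $y_j$ are disjointly supported, and the final estimate $\nrm{\sum_{n\in F}x_n}_\infty\le\max_j\nrm{y_j}_\infty+\sum_j\vep_j\le 1+(C-1)=C$ is exactly right, using $\sup_n\nrm{x_n}\le 1$. The underlying idea — pass to a subsequence close enough to a block sequence that the $c_0$-norm of finite sums is essentially the max — is the same as the paper's, but you implement it as a self-contained gliding hump, whereas the paper routes it through the block-sequence coloring $\mk{bs}$: it invokes Proposition \ref{j2iojsdfjnsdfsd} to get that $\hom_0(\mk{bs})\con\fin$ (so every infinite $M$ has an infinite $1$-homogeneous $H$), and then Proposition \ref{iji5gojhtinbgv} to bound $\nrm{\sum_{n\in F}x_n}\le\max_{n\in F}\nrm{x_n^F}+2^{-\min F}$, choosing $H$ with $\min H$ large. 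That version hard-codes the choice $\vep_j\sim 2^{-n_j}$ into the definition of $\mk{bs}$ and reuses machinery already set up for the coloring-ideal results later in the section; your version is more elementary and needs nothing beyond Proposition \ref{wuc}, at the cost of not producing the $\mk{bs}$-homogeneous sets that the surrounding text cares about.
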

\begin{proof}
    Suppose that $\mc B(\mb x)$ is tall. We know that this implies that $\mb x$ is weakly-null and that $\hom_0(\mk{bs})\con \fin$. Fix $C > 1$ and suppose that $H$ is $1$-homogeneous. Then for every $F\con H$ we know by Proposition \ref{iji5gojhtinbgv} that 
    \[
    \norm{\sum_{n\in F} x_n}\le \norm{\sum_{n\in F} x_n^F} +\frac{1}{2^{\min F}} = \max_{n\in F} \nrm{x_n^F}+\frac{1}{2^{\min F}}\le 1 + \frac{1}{2^{\min H}}.
    \]
     If we choose $H$ such that $1 + 1/2^{\min H} \le C$, then $H\in \mc B_C(\mb x)$, proving that $\mc B_C(\mb x)$ is tall.
\end{proof}

\subsection{\texorpdfstring{A tall $\mc B$-ideal not representable in $c_0$}{A tall B-ideal not representable in c0}}
\label{nonc0}

We fix an enumeration $\theta$ of $\Q\cap ]0,1[$.
Let $\mk{conv}_\theta=\mk{conv}: [\N]^3\to 2$ be the coloring that declares 
$\mk{conv}\{k,l,m\}_<\,=1$ exactly when $|\theta(l)-\theta(m)|<1/({k+1})$.

\begin{teo}
\label{exG}  Neither of $\hom(\mk{conv})$ or $\mc B(\mathbf p^{\hom_0(\mk{conv})})$   contain any $\hom(\ccal)$ for  $\ccal:[\N]^2\to 2$, and   $\mc B(\mathbf p^{\hom_0(\mk{conv})})$ is  $\mc B$-representable in $C(\mc S)$, and not in  $c_0$, where $\mc S$ is the Schreier family.
\end{teo}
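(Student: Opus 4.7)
The plan is to decompose the theorem into four claims and verify each: $\mc B(\mb p^{\hom_0(\mk{conv})})$ is tall, is $\mc B$-representable in $C(\mc S)$, contains no $\hom(\ccal)$ for any $\ccal:[\N]^2\to 2$ (and likewise for $\hom(\mk{conv})$), and hence, via Proposition~\ref{fin-antichain}, is not $\mc B$-representable in $c_0$. Two direct structural consequences of the definition of $\mk{conv}$ underpin everything: any $H\in \hom_0(\mk{conv})$ is finite with $|H|\le \min H +O(1)$, because $\{\theta(n):n\in H\setminus\{\min H\}\}\subseteq\,]0,1[$ must be pairwise $1/(\min H+1)$-separated in $]0,1[$; and an infinite set $A$ is $1$-homogeneous for $\mk{conv}$ precisely when $(\theta(n))_{n\in A}$ is strongly Cauchy.

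From the first fact, $\hom_0(\mk{conv})$ is a compact hereditary subfamily of a Schreier-type family of Cantor--Bendixson rank $\omega$, so $C(\hom_0(\mk{conv}))$ embeds isomorphically into $C(\mc S)$ by the Bessaga--Pe\l czy\'nski isomorphic classification of $C(K)$ for countable compact $K$; pushing $\mb p^{\hom_0(\mk{conv})}$ through such an embedding yields the $\mc B$-representation in $C(\mc S)$, since $\mc B$-ideals are invariants of the sequence up to equivalence. Tallness is immediate: for any infinite $M\subseteq \N$, Ramsey's theorem applied to $\mk{conv}\rest [M]^3$ produces an infinite $\mk{conv}$-$1$-homogeneous $M'\subseteq M$ (0-homogeneous infinite sets being impossible), inside which every 0-homogeneous subset has at most two elements, placing $M'$ in $\mc B(\mb p^{\hom_0(\mk{conv})})$.

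The main step is the non-containment of any $\hom(\ccal)$. Fix $\ccal:[\N]^2\to 2$ and apply Theorem~\ref{j4itogjoidfg} to produce $M\subseteq \N$ with $\theta(M)$ order-isomorphic to $\Q$ and every $\theta$-monotone subset of $M$ belonging to $\hom(\ccal)$; after a further refinement one may assume $\theta(M)$ is dense in some $[a,b]\subseteq\,]0,1[$. I would build recursively a single $\theta$-increasing sequence $X_\uparrow=\{n_0<n_1<\cdots\}\subseteq M$ with two coupled constraints: $\theta(n_j)$ is a $(b-a)2^{-j-4}$-approximation of $a+(1-2^{-j-1})(b-a)$ (possible by the density of $\theta(M)$ around this target), and $n_j\ge 2^{(j+C)^2}$ for a constant $C=C(b-a)$ chosen large enough (possible because each target neighborhood has infinitely many $\theta$-preimages, among which arbitrarily large ones are available since $\theta$ is a bijection onto $\Q\cap\,]0,1[$). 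For $C$ large the triple $\{n_0,n_1,n_2\}$ already satisfies $|\theta(n_1)-\theta(n_2)|\approx (b-a)/8\ge 1/(n_0+1)$, violating the $1$-homogeneity condition of $\mk{conv}$, so $X_\uparrow\notin \hom(\mk{conv})$. Given any $k$, choosing $j_0$ with $(j_0+C)^2\ge j_0+k+\log_2(1/(b-a))+O(1)$ and taking the consecutive block $n_{j_0},n_{j_0+1},\dots,n_{j_0+k}$, the successive $\theta$-gaps remain $\gtrsim (b-a)\,2^{-(j_0+k+3)}\ge 1/(n_{j_0}+1)$, so this block is a $\mk{conv}$-$0$-homogeneous subset of $X_\uparrow$ of size $k+1$; hence $X_\uparrow\notin \mc B(\mb p^{\hom_0(\mk{conv})})$. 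Since $X_\uparrow\in \hom(\ccal)$, both non-containments follow.

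Non-$c_0$-representability is then immediate from Proposition~\ref{fin-antichain}: a tall $\mc B$-ideal that is $\mc B$-representable in $c_0$ must contain some $\hom(\ccal)$ with $\ccal:[\N]^2\to 2$, which has just been excluded. The main obstacle is the quantitative calibration in the construction of $X_\uparrow$: the geometric decay of the $\theta$-gaps forces a doubly-exponential lower bound on $n_j$ to preserve a $1/(n_{j_0}+1)$-separation across consecutive blocks of arbitrary length, a constraint that can only be met thanks to the abundance of arbitrarily large preimages provided by the density of $\theta(M)$ and the bijectivity of $\theta$.
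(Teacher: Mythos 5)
Your overall strategy is the same as the paper's: reduce to the bound $\#s\le\min s+O(1)$ on $\hom_0(\mk{conv})$, get representability in $C(\mc S)$ from the Cantor--Bendixson rank, establish tallness, deduce non-$c_0$-representability from non-containment of any $\hom(\ccal)$ via Proposition~\ref{fin-antichain}, and obtain the non-containment by passing through the universal $\Q$-coloring (Theorem~\ref{Sierpinski}) to a set $M$ with $\theta(M)$ order-isomorphic to $\Q$ and then building a $\widehat\theta$-monotone subset containing arbitrarily large $\mk{conv}$-$0$-homogeneous sets. However, your explicit construction of that subset has a genuine gap. The step ``after a further refinement one may assume $\theta(M)$ is dense in some $[a,b]\subseteq\,]0,1[$'' is false in general: being order-isomorphic to $\Q$ does not make $\theta(M)$ dense in any interval of $\R$. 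For instance, the set $S$ of points of the Cantor set $C$ whose ternary $\{0,2\}$-expansion is eventually the repeating block $\overline{02}$ is a countable subset of $\Q\cap\,]0,1[$, is dense-in-itself and has no endpoints, hence is order-isomorphic to $\Q$, yet $\overline{S}=C$ is nowhere dense. Passing to a subset of $M$ only shrinks $\theta(M)$ and cannot create density. Your subsequent targeting of the fixed real positions $a+(1-2^{-j-1})(b-a)$ then has no guaranteed approximants in $\theta(M)$, so the joint calibration of geometric $\theta$-gaps against the lower bounds $n_j\ge 2^{(j+C)^2}$ cannot be set up.

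The flaw is localized, and your strategy recovers as soon as the construction is carried out \emph{relative to the order of $\theta(M)$} rather than relative to $\R$. That is exactly what the paper's Claim~\ref{lioj5tji5rjgrdf} does: for each $r$ it picks $2r+1$ anchors $a_0<\dots<a_{2r}$ in $M$ with increasing $\theta$-values (possible since $\theta(M)$ is order-dense in itself), then a single large $a\in M$ with $1/(a+1)$ below the minimal anchor gap, and finally $d_1<\dots<d_r$ in $M$, all greater than $a$, with $\theta(d_k)$ in the alternating slots $(\theta(a_{2k}),\theta(a_{2k+1}))$; this gives a $\theta$-increasing $0$-homogeneous set of size $r$ inside any prescribed $\theta(M)$-interval, without ever appealing to $\R$-density. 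Stacking such blocks $D_1<D_2<\cdots$ (simultaneously in $\N$ and in $\theta$-value, over consecutive $\theta(M)$-intervals) then produces the desired $\widehat\theta$-increasing $H\subseteq M$ outside both $\hom(\mk{conv})$ and $\mc B(\mb p^{\hom_0(\mk{conv})})$. This block-by-block scheme replaces your single super-exponentially indexed sequence and its global calibration with independent finite constructions, which is both more robust and slightly simpler. The remaining parts of your argument --- the Schreier-type bound on $\hom_0(\mk{conv})$, the rank estimate and representability in $C(\mc S)$, tallness via Ramsey on $[M]^3$, and the appeal to Proposition~\ref{fin-antichain} --- are correct and match the paper.
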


\begin{proof}

First we show  that every  infinite $\mk{conv}$-homogeneous set is of color 1. In fact, given an infinite set $A$, we can assume that $(\theta(n))_{n\in A}$ is convergent. Thus for every $k\in A$ there are $l<m$ in $A$ such that 
$d\{k,l,m\} =1$ and thus $[A]^3$ is  $\mk{conv}$-color 1. Moreover,
\begin{claim}\label{jjijtrejteritoe}
$\hom_0(\mk{conv})\con \conj{s\in \fin}{ \# s \le \min s+2}$:    
\end{claim}
In fact, suppose that $H\in \hom_0(\mk{conv})$,  set $k:=\min H$ and consider the partition of $[0,1]$ into $k+1$ intervals of length $1/(k+1)$. Let $I$ be any interval in that partition, then $I\cap \{\theta(n): n\in H\setminus\{k\}\}$ has at most one element. 

Next, we see that there is no $\ccal:[\N]^2\longrightarrow 2$ with $\hom(\ccal)\subseteq \hom(\mk{conv})$. It suffices to show, by the universality of $\Q$-colorings (see  Theorem \ref{Sierpinski}), that for any given $A\in[\N]^\omega$ with $\theta(A)$ order isomorphic to $\Q$, there is an infinite $\widehat\theta$-homogeneous set $H\subseteq A$ such that $H\notin\h(\mk{conv})$. Which in turn is achieved when  $H$ contains $\mk{conv}$-homogeneous sets of color $0$  of  any finite cardinality.

\begin{claim}\label{lioj5tji5rjgrdf}
If   $A\in[\N]^\omega$  is such that  $\theta(A)$  is order isomorphic to $\Q$, then $A$ contains $\mk{conv}$-homogeneous sets of color 0 of any finite cardinality.
\end{claim}

We first show that in any interval $I$ of $\theta(A)$ there are $\mk{conv}$-homogeneous sets  with color $0$ of any finite cardinality. In fact,  for a fixed $r\in \N$ we find   $D\in \hom_0(\mk{conv})$ such that   $D\subseteq A$, $|D|=r$  and  $\theta(D)\subseteq I$ is an increasing sequence.   
Let $i<j\in A$ with $\theta(i)< \theta(j)$ and $I=]\theta(i), \theta(j)[$. For $k=0,\ldots , 2r$,  pick $a_k\in A$ increasing such that $a_0=i$ and $\theta({a_0})\leq \theta({a_k})<\theta(a_{k+1})\leq \theta(j)$ for $k<2r$. Let $a\in A$ be such that 
\[
\frac1{a+1}<\min\{\theta(a_{k+1})-\theta({a_{k})}:\; 0\leq k< 2r\}.
\]
Next, pick $d_k\in A$ for $1\leq k\leq r$ such that $a<d_k<d_{k+1}$ and  $\theta(a_{2k})<\theta(d_k)<\theta(a_{2k+1})$.
Let $D=\conj{d_{k}}{1\leq k\leq r}$. 
Then $\mk{conv}\{a,d_k,d_l\}=0$ for all $ 1\leq k<l\leq r$. From this it follows that $D\in \hom_0(\mk{conv})$ and $\theta(D)\subseteq I$ is an increasing sequence.

Now, using the previous fact, define a sequence of intervals $I_n=]x_n,y_n[$ of $\theta(A)$ and sets $D_n\subseteq A$ such that: 
\begin{enumerate}[(a), wide=0pt]
    \item $y_n<x_{n+1}$,
    \item $|D_n|=n$,
\item $\theta(D_n)\subseteq I_n$ is an increasing sequence and   
\item $D_n\in\hom_0(\mk{conv})$. 

\end{enumerate}

Let $H$ be the union of all $D_n$. Then $H$ is as promised, that is, $H$ is $\widehat\theta$-homogeneous and $H\not\in \langle \hom(\mk{conv})\rangle$. This finishes the proof of the claim. 

Now we construct a non-pathological $F_\sigma$  ideal not representable in $c_0$.  Let $K=\hom_0(\mk{conv})$ and consider the sequence of evaluations $\mb p=\mb p^K=(p_n)_n$ in $C(K)$. 

\begin{claim}
The ideal $\mc I:=\mc B(\mb p)$  is tall and $\mc B$-representable in $C(\mc S)$ but not in $c_0$.
\end{claim}

To see this, it follows from the Claim \ref{jjijtrejteritoe} that the compact and hereditary family $\hom_0(\mk{conv})$ has Cantor-Bendixson index at most $\omega + 1$. Hence,  $C(\hom_0(\mk{conv}))$ is either isomorphic to $c_0$ or to $C(\mc S)$. We are going to see that $\mc I$ does not contain any $\hom(c)$ with $c:[\N]^2\to 2$, hence $\mc I$ cannot be $\mc B$-representable in $c_0$, and in addition $\mc I$ is $\mc B$-representable  in $C(\mc S)$.  By \eqref{eq:simpler_expression_p^K},  $M\in \mc B(\mb p)$ exactly when the size of the $\mk{conv}$-homogeneous subsets  of $M$ with color $0$ is  uniformly bounded, hence $\hom(\mk{conv})\subseteq \mc I$, and by Claim \ref{lioj5tji5rjgrdf} it follows that $\hom(\widehat{\theta})\rest A\not\con \mc I$ if $\theta(A)$ is order isomorphic to $\Q$, so by the universality of those restrictions, $\mc I$ and the Proposition \ref{fin-antichain} we obtain that $\mc I$ cannot be $\mc B$-represented in $c_0$.
    \end{proof}

\begin{rem}
The ideal $\mathcal{B}(\mathbf{p}^{\hom_0(\mathfrak{conv})})$ is not a $P$-ideal. To see this, note that by Claim \ref{lioj5tji5rjgrdf}, for every $n \in \mathbb{N}$, we can find infinite subsets $A_n$ whose tails contain $0$-homogeneous subsets of cardinality $n$. This implies that every pseudo-union $A$ of $\{A_n\}_n$ will contain arbitrarily large $0$-homogeneous subsets. Consequently, $\sup_{F \in [A]^{<\infty}} \left\|\sum_{n \in F} p_n \right\| = \infty$, and hence $A \notin \mathcal{B}(\mathbf{p}^{\hom_0(\mathfrak{conv})})$.

\end{rem}

\begin{question}
Is $\hom(\mk{conv})$ a $\mc B$-ideal?    
\end{question}

\begin{rem}
As far as we know, the ideals $\hom(\mk{conv})$ and $\mc B(\mb p^{\hom_0(\mk{conv})})$  are the first concrete examples of tall Borel ideals not Katetov above $\mc R$ but admitting a Borel selector; a phenomenon that was known to occur  in an abstract sense without explicit examples (see \cite{GrebikUzca2018}). 
    \end{rem}

\begin{rem} 
In contrast with what we have seen,  there is a important difference between the global and local version of the problem of finding colorings for tall ideals.  Indeed, it is shown in  \cite{HMTU2017} that every tall $F_\sigma$ ideal contains some restriction $\hom(c)\rest A$ of a coloring $c:[\N]^2\to 2$. To see this, recall that an ideal $\mathcal{I}$ is $p^+$, if for every decreasing sequence $(A_n)_n$ of sets in $\mathcal{I}^+$, there is $A\in \mathcal{I}^+$ such that $A\setminus A_n$ is finite for all
$n\in\N$. It is well known that   every $F_\sigma$ ideal is $p^+$ (see \cite[Lemma 3.3]{HMTU2017} for a proof).
An ideal $\mathcal{I}$ is $q^+$, if for every $A\in \mathcal{I}^+$ and every
partition $(F_n)_n$ of $A$ into finite sets, there is $S\in\mathcal{I}^+$
such that $S\subseteq A$ and $S\cap F_n$  has at most one element for each $n$.  Equivalently, if  $c$ is the coloring on $A$ associated to the partition, that is, $c\{i,j\}=0$ if $i,j$ are in the same piece of the partition, then  $\hom(c)\not\subseteq \ideal$. An ideal is {\em selective} if it is $p^+$ and $q^+$ (this is an equivalent formulation of this notion, see \cite[Lemma 7.4]{Todorcevic2010}).  
Analytic selective ideal have the following  Ramsey property: For any partition of $[\N]^\omega$ into two sets one of which is analytic, there is $A\in\ideal^+$ such that $[A]^\omega$ is a subset of one of the pieces of the partition (for a proof see \cite[Lemma 7.23]{Todorcevic2010}). Now suppose  $\ideal$ is an analytic selective ideal. By using $\ideal$ as one of the pieces of the partition, we conclude that there is $A\in\ideal^+$ such that $[A]^\omega\subseteq \ideal^+$. In particular, $\ideal$ is not tall. Thus, if $\ideal$ is a tall $F_\sigma$ ideal, then it is not selective and necessarely is not $q^+$ and from above we have a coloring $c$ and an infinite set $A$ such that  $\hom(c)\rest A\subseteq \ideal$.
\end{rem}

\subsection{\texorpdfstring{Ideals $\mc B$-representable in $C(K)$, $K$ countable}{Ideals B-representable in C(K), K countable}}
\label{lmklfmklsdfs}

In $c_0$, we have that $\mathcal{B}(\mathbf{x}) = \mathcal{P}(\N)$ for block sequences $\mathbf{x}$ of the unit basis of $c_0$. Thus, $\mathcal{B}$-ideals represented by weakly null sequences in $c_0$ are always tall (see Theorem \ref{c0saturacion} for more details). However, this is not generally true when $\mathbf{x}$ is a block sequence of a basis of some Banach space. Nevertheless, the spaces $C(K)$, where $K$ is compact and countable, can be treated in a similar but more complex manner than $c_0$. We will show that tall ideals that can be $\mathcal{B}$-represented in $C(K)$, where $K$ is countable, contain some non-pathological c-coloring ideal, and consequently have a Borel selector. Every such space of functions is isometric to $C(\mathcal{F})$, where $\mathcal{F} \subseteq \fin$ is a compact and hereditary family of finite sets. This representation is more convenient for our work.
In particular, the spaces $C(\mathcal{F})$ have natural Schauder bases: for each $t \in \mathcal{F}$, we let
\[
[t] := \{u \in \mathcal{F} : u \sqsubseteq t\}.
\]
Let $(t_k)_k$ be an enumeration of $\mathcal{F}$ such that $t_0 = \emptyset$ and if $t_m \sqsubset t_n$, then $m < n$, and let $\theta : \mathcal{F} \to \mathbb{N}$ be the inverse of the enumeration. Define
\begin{equation}
\label{Schau-S}
f_k := \mathbbm{1}_{[t_k]}.
\end{equation}
Then $(f_k)_k$ is a normalized, monotone, shrinking Schauder basis for $C(\mathcal{F})$, called the \textit{node basis} of $C(\mathcal{F})$ (see for instance, \cite[Lemma 5.6. and above]{AJO}). We will also denote $f_t := f_{\theta(t)}$. Observe that if $x = \sum_{k=0}^\infty f_k^*(x) f_k \in C(\mathcal{F})$, then
\begin{equation}\label{eq:node_basis_evaluation}
    x(t) = \sum_{t_k \sqsubseteq t} f_k^*(x) = \sum_{u \sqsubseteq t} f_u^*(x)
\end{equation}
for all $t \in \mathcal{F}$.

Take a sequence $\mb x = (x_n)_n$ in $C(\F)$ which is finitely supported (with respect to the basis). By Proposition \ref{prop:positive_functions}, to study the ideal $\mc B(\mb x)$ we can assume that every $x_n$ is a non-negative function on $\F$ (note that, by \eqref{eq:node_basis_evaluation}, we have $\supp |\mb x| \subseteq \supp \mb x$). For each $s\subseteq\N$ define, as in Subsection \ref{subsec:block_seq_coloring}, $(x_n^s)_{n\in s}$ through
\[
x_n^s := \sum_{k \in S(n,s)} f_k^*(x_n) f_k,
\]
where
\(
S(n,s) := \supp(x_n) \setminus [0, \max_{m \in s \cap n} \max(\supp(x_m))].
\)
We define now $\mc G$ as the family of $s\con\N$ for which there is some $t\in\F$ such that for all $n\in s$ there is $u\sqsubseteq t$ with $f_u^*(x_n)\geq 2^{-\theta(u)}$ and $f_u^*(x_m) = 0$ for all $m\in s\cap n$. 


\begin{lema}
    $\mc G$ is a compact and hereditary family of finite sets and, if $\mb x$ is weakly null, then its Cantor-Bendixson rank is at most that of $\F$.
\end{lema}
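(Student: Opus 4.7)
The plan is to prove the three claims in order: hereditariness and finiteness directly, compactness by a diagonal extraction, and the Cantor--Bendixson rank bound via transfinite induction.

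Hereditariness is immediate: any witness pair $(t, (u_n)_{n \in s})$ for $s \in \mc G$ restricts to one for each $s' \subseteq s$. To see $s \in \mc G$ is finite, note that for $m < n$ in $s$ the requirements $f_{u_n}^*(x_m) = 0$ and $f_{u_m}^*(x_m) \ge 2^{-\theta(u_m)} > 0$ force the witnesses $(u_n)_{n \in s}$ to be pairwise distinct, so they inject into the $(|t|+1)$-element set $\{u \in \F : u \sqsubseteq t\}$, giving $|s| \le |t|+1 < \infty$.

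For compactness, I would show $\mc G$ is closed in $2^\N$. Suppose for contradiction that $A = \{n_0 < n_1 < \cdots\}$ is infinite with $[A]^{<\omega} \subseteq \mc G$. The finite-support hypothesis makes $D_n := \{u \in \F : f_u^*(x_n) \ge 2^{-\theta(u)}\}$ a finite subset of $\supp x_n$. For each $k$ fix $t_k \in \F$ and $(u_i^k)_{i \le k} \in \prod_{i \le k} D_{n_i}$ witnessing $\{n_0, \ldots, n_k\}$. A standard diagonal extraction yields $v_i \in D_{n_i}$ with $u_i^k = v_i$ eventually along a subsequence. The $v_i$'s are pairwise distinct, and every initial collection $\{v_0, \ldots, v_K\}$ is contained in the finite $\sqsubseteq$-chain $\{u \sqsubseteq t_k\}$ for large $k$, making the $v_i$'s pairwise $\sqsubseteq$-comparable. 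Since any $\sqsubseteq$-decreasing chain of initial segments of a fixed finite set is finite, an infinite totally $\sqsubseteq$-ordered set of distinct finite subsets of $\N$ must be $\sqsubseteq$-increasing, so after reindexing $v_0 \sqsubsetneq v_1 \sqsubsetneq \cdots$; then $V := \bigcup_i v_i$ is an infinite subset of $\N$ with every $V \cap [0, N] \in \F$ by heredity, contradicting compactness of $\F$.

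For the rank bound, weak nullity in $C(\F)$ implies $f_u^*(x_n) \to 0$ for each $u \in \F$ (Rainwater's theorem applied to $\{\delta_t\}$, the identity $x_n(t) = \sum_{u \sqsubseteq t} f_u^*(x_n)$, and induction on $|t|$); equivalently, $\{n : u \in D_n\}$ is finite for every $u$. Define $\Phi \colon \mc G \to \F$ by sending $s$ to the $\sqsubseteq$-top of a canonical (say $\theta$-lex smallest) chain of witnesses for $s$, chosen so that the witnesses are $\sqsubseteq$-monotone in the order of $s$. The plan is to prove by transfinite induction that $\Phi(\mc G^{(\alpha)}) \subseteq \F^{(\alpha)}$ for every $\alpha$, which yields $\rk(\mc G) \le \rk(\F)$. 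A parallel transfinite induction establishes the auxiliary fact that each derivative $\F^{(\alpha)}$ is subset-hereditary: given a witnessing sequence $u_k \to u$ in $\F^{(\alpha)}$ with $u_k = u \cup T_k$ and $\min T_k \to \infty$, for each $v \subseteq u$ the sets $v_k := v \cup T_k \subseteq u_k$ lie in $\F^{(\alpha)}$ by the inductive hypothesis and converge to $v$. In the successor step of the main induction, given $s \in \mc G^{(\alpha+1)}$ a limit of distinct $s_k \in \mc G^{(\alpha)}$, the top of the canonical chain of $s_k$ is realised in $D_m$ for some $m \in s_k \setminus s$ tending to infinity; since weak nullity forces each $u$ to serve as such a top witness only finitely often, the $\Phi(s_k) \in \F^{(\alpha)}$ are pairwise distinct. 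By compactness of $\F$ a subsequence converges to some $u^* \in \F$ with $\Phi(s) \sqsubseteq u^*$, so $u^* \in \F^{(\alpha+1)}$, and subset heredity of $\F^{(\alpha+1)}$ gives $\Phi(s) \in \F^{(\alpha+1)}$. The main obstacle will be verifying that the canonical choice is compatible enough under $2^\N$-limits to yield $\Phi(s) \sqsubseteq u^*$; this is handled by exploiting that for $s_k$ with $\min(s_k \setminus s)$ large, the greedy canonical chain of $s_k$ agrees with that of $s$ on $s$'s positions (since the constraints on the first $|s|$ choices coincide), producing $\Phi(s_k) \sqsupseteq \Phi(s)$ and hence the needed $\sqsubseteq$-compatibility of the limit.
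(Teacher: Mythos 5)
Your proofs of hereditariness and of the bound $\#s\le\#t+1$ are essentially the paper's.

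Your compactness argument is correct but takes a different route. The paper shows directly that the limit $A$ of a sequence in $\mc G$ is witnessed by the root of a $\Delta$-subsequence of the big witnesses $t_k$, from which finiteness of $A$ follows as a byproduct. You instead argue by contradiction: the stabilised coordinate witnesses $(v_i)_i$ form an infinite $\sqsubsetneq$-chain whose union has every initial segment in the hereditary family $\F$, contradicting its compactness. This works, though you should also note that the finite-limit case is handled immediately by heredity (if $A$ is finite then $A\subseteq s_k$ for large $k$), so that excluding the infinite case really does give closedness.

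The rank-bound argument has a genuine gap. You build $\Phi$ by taking the $\sqsubseteq$-top of a chain of witnesses ``chosen so that the witnesses are $\sqsubseteq$-monotone in the order of $s$,'' and the later distinctness of the $\Phi(s_k)$ relies on the top witness of $s_k$ being attached to some $m\in s_k\setminus s$. But $\sqsubseteq$-monotone witness chains need not exist: for $n_i<n_j$ in $s$ the constraints $f_{u_j}^*(x_{n_i})=0$ and $f_{u_j}^*(x_{n_j})\ge 2^{-\theta(u_j)}$ only force $u_i\ne u_j$; they do not force $u_i\sqsubsetneq u_j$. If, say, $\supp x_{n_j}$ meets the initial segments of $t$ only at very short nodes while $\supp x_{n_i}$ forces a long node, one is compelled to take $u_j\sqsubsetneq u_i$. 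Consequently the $\sqsubseteq$-top of the chain for $s_k$ can be attached to a fixed $n\in s$ for every $k$, the $\Phi(s_k)$ need not be pairwise distinct, and the step ``$u^*\in\F^{(\alpha+1)}$'' collapses. The paper's proof sidesteps this entirely: it never singles out a coordinate witness as the ``top,'' but instead passes to a $\Delta$-subsequence of the full witnesses $u_k\in\F^{(\alpha_k)}$, takes $t$ to be the root of that $\Delta$-system, and applies weak nullity only to the first new index $n_k=\min(s_k\setminus s)$ to rule out the degenerate case where $t$ equals $u_k$ for all large $k$. If you want a map-like formulation, the object to control is the root of the $\Delta$-system of witnesses, not the $\sqsubseteq$-top of a coordinate chain.
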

\begin{proof}
    We first show that if $s\in\mc G$ and $t\in\F$ witnesses it, then $\#s\leq\#t + 1$, so in particular $\mc G$ does not contain infinite sets. To see this write $s = \{n_j\}_{j=0}^k$ in increasing order. For each $j$ there exists $u_j\sqsubseteq t$ such that $f_{u_j}^*(x_{n_j})\geq 2^{-\theta(u_j)}$ but $f_{u_j}^*(x_{n_k}) = 0$ for all $k < j$. In particular $u_j\neq u_k$ for $j\neq k$ and they are all initial segments of $t$, thus $\#s\leq\#t + 1$.

    Now fix a sequence $(s_k)_k$ in $\mc G$ converging to some $A\subseteq\N$, and let us show that $A\in\mc G$. For each $k$ let $u_k\in\F$ be a witness that $s_k\in\mc G$. Passing to a subsequence, we can assume that $(u_k)_k$ is a $\Delta$-sequence with root $t\in\F$, i.e. $t\sqsubseteq u_k$ and $u_k\setminus t < u_{k+1}\setminus t$ for all $k$. Fix $n\in A$ and $k_0$ such that $s_k\cap [0,n] = A\cap [0,n]$ for all $k\geq k_0$. For each such $k$, since $n\in s_k$, we can find $v_{n,k}\sqsubseteq u_k$ such that $f_{v_{n,k}}^*(x_n)\geq 2^{-\theta(v_{n,k})}$ but $f_{v_{n,k}}^*(x_m) = 0$ for $m\in s_k\cap n = A\cap n$. For each $v_{n,k}$ we can have either $v_{n,k}\sqsubseteq t$ or $t\sqsubsetneq v_{n,k}$. If all $v_{n,k}$'s satisfy the latter then they are in particular all different, but they all belong to the support of $x_n$ with respect to the basis $\mb f$, thus contradicting the fact that $x_n$ has finite support. Hence at least one $v_{n,k}$ is an initial segment of $t$, so $t$ witnesses $A\in\mc G$, and in particular $A$ is finite.

    Finally, for each ordinal $\alpha < \omega_1$ denote by $\F^{(\alpha)}$ and $\mc G^{(\alpha)}$ the $\alpha$-th Cantor-Bendixson derivatives of $\F$ and $\mc G$ respectively. We claim that, if $\mb x$ is weakly null, for each $s\in\mc G^{(\alpha)}$ there is $t\in\F^{(\alpha)}$ witnessing it. Fix a countable ordinal $\alpha$ and assume that the result holds for all smaller ones. Let $(\alpha_k)_k$ be a sequence satisfying $\alpha_k + 1 = \alpha$ for all $k$ if $\alpha$ is a successor, or increasing to $\alpha$ if it is limit. Pick $s\in\mc G^{(\alpha + 1)}$ and a non-trivial $\Delta$-sequence $(s_k)_k$ with $s_k\in\mc G^{(\alpha_k)}$ converging to it. For each $k$ pick $u_k\in\F^{(\alpha_k)}$ witnessing it and, passing to a subsequence, assume that $(u_k)_k$ is a $\Delta$-sequence with root $t\in\F$. The argument in the previous paragraph shows that $t$ is a witness for $s\in\mc G$, so it remains to show that $t\in\F^{(\alpha + 1)}$. If this was not the case then $t$ would be a witness of $s_k\in\mc G$ for all but finitely many $k$. Since the sequence $(s_k)_k$ is non-trivial, set $n_k := \min s_k\setminus s$ for each $k$, and find an initial segment $v_k\sqsubseteq t$ such that, in particular, $f_{v_k}^*(x_{n_k})\geq 2^{-\theta(v_k)}$. 
    Passing again to a subsequence we can assume that $v_k = v\sqsubseteq t$ for all $k$, but then the condition $f_v^*(x_{n_k})\geq 2^{-\theta(v)}$ for all $k$ contradicts the fact that $\mb x$ is weakly null. 
\end{proof}


We want to use the {\em Ramsey property} of the family $\mc G^{\max}$ of $\sqsubset$-maximal elements of $\mc G$, but there might be an infinite set $M$ such that $\mc G^{\max}\rest M = \emptyset$. This can be solved with the help of the following notion, introduced by Pudlák and Rödl \cite{PudlakRodl82}.

\begin{defi}\label{o4jtiojrejtre}
    Given a countable ordinal $\alpha < \omega_1$ and $M\in [\N]^\omega$, we say that a family $\mc A\subseteq\fin$ is an {\em $\alpha$-uniform front on $M$} if $\alpha = 0$ and $\mc A = \{\emptyset\}$, or $\alpha > 0$, $\emptyset\notin\mc A$, and for each $n\in M$ the family
    \[
    \mc A_{\{n\}} := \{s\in\fin : \text{$n < s$ and $\{n\}\cup s\in\mc A$}\}
    \]
    is an $\alpha_n$-uniform front on $M/n$, where $\alpha_n + 1 = \alpha$ for all $n$ if $\alpha$ is a successor ordinal, or $(\alpha_n)_{n\in M}$ is a sequence increasing to $\alpha$, if it is limit. We say that $\mc A$ is {\em uniform on $M$} if it is $\alpha$-uniform on $M$ for some $\alpha < \omega_1$, in which case $\alpha$ is called its {\em uniform rank}.
\end{defi}

Uniform fronts on $M$ are indeed fronts on $M$, thus they possess the Ramsey property. Conversely, every Ramsey family has a restriction that forms a uniform front. Moreover, uniformity is preserved under restrictions, and the topological closure of a uniform front coincides with its $\sqsubseteq$-closure, $\overline{\mathcal{A}}^{\sqsubseteq}$. An easy inductive argument shows that the Cantor-Bendixson rank of an $\alpha$-uniform front is $\alpha + 1$. Therefore, closures of uniform fronts exhaust, up to homeomorphism, all countable compact Hausdorff spaces. 

A classical example of an $\omega$-uniform front is the Schreier barrier $\mathfrak{S} := \{s \in \mathrm{Fin} : \#s = \min s + 1\}$, which can be used to construct uniform fronts of arbitrarily high rank. For example, if $\mathcal{A}$ and $\mathcal{B}$ are $\alpha$- and $\beta$-uniform fronts on $M$, respectively, then
\[
\mathcal{A} \oplus \mathcal{B} := \{s \cup t : \text{$s \in \mathcal{B}$, $t \in \mathcal{A}$, and $s < t$}\}
\]
is $(\alpha + \beta)$-uniform. One can also define a product 
$\mathcal{A} \otimes \mathcal{B} := \{s_1\cup\cdots \cup s_n: \text{$\{s_i\}_i\con \mc A$ block, and  $\{\min s_i\}_i\in\mathcal{B}$}\}$.
Note also that if $\alpha < \beta$ and we are given an $\alpha$-uniform front $\mathcal{A}$, then we can always find a $\beta$-uniform front $\mathcal{B}$ such that $\mathcal{A} \subseteq \overline{\mathcal{B}}$: indeed, just take $\gamma$ such that $\alpha + \gamma = \beta$ and let $\mathcal{B} := \mathcal{A} \oplus \mathcal{A}'$, where $\mathcal{A}'$ is $\gamma$-uniform. For each non-empty $s \subseteq \mathbb{N}$, we denote ${}_*s := s \setminus \{\min s\}$.


\begin{prop}
    Every compact family $\mc G$ on $M$ is included in the closure of a uniform front $\mc A$ on $M$.
\end{prop}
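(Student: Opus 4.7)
The plan is to argue by induction on the Cantor--Bendixson rank $\alpha$ of $\mc G$. First I would reduce to the case where $\mc G$ is $\sqsubseteq$-closed: replacing $\mc G$ by its $\sqsubseteq$-closure preserves compactness inside $\fin$ and any uniform front whose closure contains this enlargement will \emph{a fortiori} contain $\mc G$. With this reduction, for any $s\in\mc G\setminus\{\emptyset\}$ we may write $s=\{n\}\cup s'$ with $n=\min s$ and $s'$ in the \emph{shift}
\[
\mc G_{\{n\}}:=\{t\in\fin:\ n<t,\ \{n\}\cup t\in\mc G\},
\]
which is a compact family on $M/n$ whose Cantor--Bendixson rank is strictly less than $\alpha$ (the tree at the node $\{n\}$ is obtained by removing the root-edge from $\mc G$).

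The base case $\alpha=0$ gives $\mc G=\{\emptyset\}$, and the $0$-uniform front $\mc A:=\{\emptyset\}$ works. For the inductive step I would apply the hypothesis to each $\mc G_{\{n\}}$ and use the rank-bumping remark recalled just before the proposition: if $\alpha=\beta+1$, take for each $n\in M$ a $\beta$-uniform front $\mc A_n$ on $M/n$ with $\mc G_{\{n\}}\subseteq\overline{\mc A_n}^{\sqsubseteq}$; if $\alpha$ is limit, first fix a sequence $(\alpha_n)_{n\in M}$ strictly increasing to $\alpha$ with $\alpha_n\ge \rk(\mc G_{\{n\}})$ (possible because each $\rk(\mc G_{\{n\}})<\alpha$: interlace any cofinal sequence in $\alpha$ with the values $\rk(\mc G_{\{n\}})$), and choose an $\alpha_n$-uniform front $\mc A_n$ on $M/n$ containing $\mc G_{\{n\}}$ in its $\sqsubseteq$-closure. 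Finally, amalgamate:
\[
\mc A:=\{\{n\}\cup t\ :\ n\in M,\ t\in\mc A_n\}.
\]

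It is immediate from Definition \ref{o4jtiojrejtre} that $\mc A_{\{n\}}=\mc A_n$ for every $n\in M$, and hence $\mc A$ is $\alpha$-uniform on $M$ in both the successor and limit cases. To verify the containment, fix $s\in\mc G$: if $s=\emptyset$ it is trivially an initial segment of any element of $\mc A$; otherwise, with $n=\min s$ and $s'=s\setminus\{n\}\in\mc G_{\{n\}}$, the inductive hypothesis gives some $t\in\mc A_n$ with $s'\sqsubseteq t$, hence $s=\{n\}\cup s'\sqsubseteq\{n\}\cup t\in\mc A$.

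The main obstacle is the bookkeeping in the limit step: we need $(\alpha_n)_{n\in M}$ both to increase to $\alpha$ (as the uniformity definition demands) and to majorize the ranks of the shifts $\mc G_{\{n\}}$. This is manageable because those ranks are all strictly below a limit ordinal, so we can interlace them with a fixed cofinal sequence. A secondary minor point is checking that taking the $\sqsubseteq$-closure of $\mc G$ does not destroy compactness inside $\fin$; this is routine since the $\sqsubseteq$-closure only adds initial segments of already-present finite sets, keeping cardinalities bounded along convergent sequences.
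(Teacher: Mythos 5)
The structure of your argument mirrors the paper's proof closely: you pass to the shifted families $\mc G_{\{n\}}$, apply the inductive hypothesis to each, bump the uniform ranks so that they increase appropriately, and amalgamate by prepending $\{n\}$. The containment check and the observation $\mc A_{\{n\}} = \mc A_n$ are correct.

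However, there is a genuine gap in the choice of induction quantity. You induct on the \emph{Cantor--Bendixson rank} of $\mc G$ and assert that the CB rank of $\mc G_{\{n\}}$ is strictly smaller. This is false in general. Take $M = \N$ and
\[
\mc G := \{\emptyset, \{0\}\}\cup\{\{0,n\}: n\ge 1\},
\]
which is compact and $\sqsubseteq$-closed. Its derived set is $\{\{0\}\}$ (the empty set is isolated, since every other member of $\mc G$ contains $0$), so $\mc G$ has CB rank $2$. But $\mc G_{\{0\}} = \{\emptyset\}\cup\{\{n\}: n\ge 1\}$, whose derived set is $\{\emptyset\}$, so $\mc G_{\{0\}}$ also has CB rank $2$. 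The induction does not close. Note that your parenthetical justification -- ``the tree at the node $\{n\}$ is obtained by removing the root-edge from $\mc G$'' -- is really an observation about the foundation rank of the well-founded tree $(\mc G, \sqsubset)$, not about the topological CB rank; the two ordinals can differ drastically (for finite $\sqsubseteq$-closed $\mc G$ the CB rank is always $1$, while the tree rank can be any finite ordinal).

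The fix is exactly what the paper does: induct on the rank $\rho$ of the well-founded tree $(\mc G, \sqsubset)$. Since the tree of $\mc G_{\{n\}}$ is (isomorphic to) the subtree of $\mc G$ rooted at the node $\{n\}$, its rank is strictly less than $\rho$, and the rest of your argument -- the successor/limit split, the interlacing of the witnessing sequence with the ranks of the shifts, the rank-bumping via $\mc A\oplus\mc A'$, and the amalgamation -- carries over verbatim. With that single substitution your proof coincides with the paper's.
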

\begin{proof}
    We further claim that the uniform rank of $\mc A$ can be taken equal to the rank $\rho$ of the well-founded tree $(\mc G, \sqsubset)$. The proof goes by induction on $\rho$. Define $\eta_n + 1 = \rho$ for $n\in M$ if $\rho$ is a successor ordinal, or let $(\eta_n)_{n\in M}$ be a sequence increasing to $\rho$ if it is limit. For each $n\in M$ the rank $\rho_n$ of the tree $(\mc G_{\{n\}}, \sqsubset)$ is strictly smaller than $\rho$, so by the induction hypothesis there is a $\rho_n$-uniform front $\mc A_n$ on $M/n$ such that $\mc G_{\{n\}}\subseteq\overline{\mc A_n}$. By the remark made above, replacing $\mc A_n$ if necessary we can assume that $\mc A_n$ is $\max\{\rho_n, \eta_n\}$-uniform. Let
    \[
    \mc A := \{s\in [M]^{<\omega}\setminus\{\emptyset\} : {}_*s\in\mc A_{\min s}\}.
    \]
    It easily follows that $\mc A_{\{n\}} = \mc A_n$ for all $n$, thus $\mc A$ is a $\rho$-uniform front on $M$. Moreover, given $s\in\mc G$ non-empty, for $n := \min s$ we have ${}_*s\in\mc G_{\{n\}}\subseteq\overline{\mc A_n} = \overline{\mc A_{\{n\}}}^{\sqsubseteq}$, thus there is $t\in\mc A$ such that $s\sqsubseteq t$, and hence $s\in\overline{\mc A}^{\sqsubseteq} = \overline{\mc A}$.
\end{proof}

\begin{defi}[The bounded coloring]
Let $\mc A$ be a uniform front of rank at least $1$ such that $\mc G\con \overline{\mc A}$, and let $\mc V := \mc A\oplus \mc A$. Each element $s\in \mc V$ has a unique decomposition $s = s[0]\cup s[1]$ with 
$s[0],s[1]\in \mc A$ and $s[0] < s[1]$.
We define the {\em bounded coloring} $\mk{b} = \mk{b}_{\mb x}: \mc V\to \{0,1\}$ by declaring that $\mk{b}(s) = 1$ exactly when
\[
\norm{\sum_{n\in s[0]} x_n^{s[0]}}\ge \frac12\norm{\sum_{n\in s[1]} x_n^{s[1]}}.
\]
\end{defi}



Observe that $\mk{b}$ is a 0-comparability coloring.


\begin{prop}\label{lkmkgmndsfgsd}
    If $\mc B(\mb x)$ is tall, then $\hom_1(\mk{bs})\cap \hom_0(\mk{b})\con \fin$.
\end{prop}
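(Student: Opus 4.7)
The plan is to argue by contradiction: suppose $H$ is infinite and lies in $\hom_1(\mk{bs})\cap \hom_0(\mk{b})$. Tallness of $\mc B(\mb x)$ gives an infinite $N\subseteq H$ with $N\in\mc B(\mb x)$, so $K:=\sup_{F\in [N]^{<\omega}}\|\sum_{n\in F} x_n\|<\infty$. Since 1-homogeneity is hereditary, $N$ remains $1$-homogeneous for $\mk{bs}$, and applying Proposition \ref{iji5gojhtinbgv} to each finite $F\subseteq N$ (viewing $F$ itself as a $1$-homogeneous set and taking all scalars equal to $1$) yields
\[
\sup_{F\in [N]^{<\omega}}\norm{\sum_{n\in F} x_n^F} \le K + 1.
\]
This is the uniform bound we will contradict.

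Next I would exploit $N\in\hom_0(\mk{b})$ to produce a block sequence $t_1<t_2<\cdots$ in $\mc A\rest N$ whose associated norms $\|\sum_{n\in t_k} x_n^{t_k}\|$ grow geometrically. The key observation is that for any two such $t_k < t_{k+1}$ in $\mc A\rest N$, the union $t_k\cup t_{k+1}$ lies in $\mc V = \mc A\oplus \mc A$ with $(t_k\cup t_{k+1})[0]=t_k$ and $(t_k\cup t_{k+1})[1]=t_{k+1}$, so the $\mk{b}$-color-$0$ condition forces
\[
\norm{\sum_{n\in t_{k+1}} x_n^{t_{k+1}}} > 2\norm{\sum_{n\in t_k} x_n^{t_k}}.
\]
A recursive choice of $t_{k+1}$ inside $\mc A\rest(N\setminus [0,\max t_k])$ is always possible since $\mc A$ restricted to any infinite subset remains a uniform front and hence contains arbitrarily late elements.

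To start the recursion we need some $t_1\in\mc A\rest N$ with $\|\sum_{n\in t_1} x_n^{t_1}\|=c>0$. If no such $t_1$ existed, then for every $s\in \mc V\rest N$ both quantities $\|\sum_{n\in s[0]} x_n^{s[0]}\|$ and $\|\sum_{n\in s[1]} x_n^{s[1]}\|$ would vanish, forcing $\mk{b}(s)=1$ everywhere and contradicting $N\in\hom_0(\mk{b})$. With such a $t_1$ in hand, induction yields $\|\sum_{n\in t_k} x_n^{t_k}\|>2^{k-1}c\to\infty$, contradicting the bound $K+1$ from the first step.

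I do not anticipate any serious obstacle: the argument only combines the almost-isometry between $(x_n)_{n\in H}$ and $(x_n^H)_{n\in H}$ furnished by Proposition \ref{iji5gojhtinbgv} with the doubling built into the definition of $\mk{b}$, and the only technical point is the remark that $\mc A\rest N$ is rich enough (being a uniform front on an infinite set) to supply the block sequence $(t_k)_k$.
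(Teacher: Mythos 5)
Your proof is correct and follows essentially the same strategy as the paper: exploit the strict doubling built into $\mathfrak{b}$ along a block sequence in $\mathcal{A}$ inside a $(1,0)$-homogeneous infinite set, and transfer the resulting unboundedness from the truncated vectors $x_n^{t}$ to the $x_n$ via Proposition \ref{iji5gojhtinbgv}. The only cosmetic differences are that the paper starts from the canonical partition of $H$ into $<$-consecutive elements of $\mathcal{A}$ (which makes $C>0$ immediate, sparing your contradiction argument for $t_1$) and invokes tallness only at the very end through heredity of $\hom_1(\mathfrak{bs})\cap\hom_0(\mathfrak{b})$, whereas you invoke it first to extract $N$; these are interchangeable presentations of the same argument.
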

\begin{proof}
    For suppose otherwise that $H\in\hom_1(\mk{bs})\cap \hom_0(\mk{b})$ is infinite. Then there is a unique partition $(s_n)_n$ of $H$ consisting of $<$-consecutive elements of $\mc A$. Since $\mk{b}(s_n\cup s_{n+1})=0$ for every $n$, it follows that $\nrm{\sum_{k\in s_{n+1}} x_k^{s_{n+1}}}>2\nrm{\sum_{k\in s_n} x_k^{s_{n}}}$. In particular $\nrm{\sum_{k\in s_{1}} x_k^{s_{1}}}>2\nrm{\sum_{k\in s_0} x_k^{s_{0}}}\ge 0$, so $C := \nrm{\sum_{k\in s_{1}} x_k^{s_{1}}}>0$, and by induction
    \[
    \norm{\sum_{k\in s_{n+1}} x_k^{s_{n+1}}}\ge 2^n C\text{ for every $n$.}
    \]
    Since $H\in \hom_1(\mk{bs})$, it follows from this and \eqref{ojtyohiyterfds} that
    \[
    \norm{\sum_{k\in s_{n+1}} x_k}\ge 2^{n}C -1 \text{ for every $n$,}
    \]
    and consequently $H\notin \mc B(\mr x)$. Since $\hom_1(\mk{bs})\cap \hom_0(\mk{b})$ is hereditary, this implies that $[H]^\omega\cap \mc B(\mb x)=\buit$, a contradiction. 
\end{proof}

\begin{defi}[The tall coloring]
By combining the tall coloring with the block sequence coloring of Definition \ref{def:block_seq_coloring}, we define the {\em tall coloring} $\mk{t} = \mk{t}_{\mb x}:\mc V\to 2^2$ for $s \in \mc V$ by 
\[
\mk t(s) := (\mk{bs}(\{\min s, \min{}_*s\}), \mk{b}(s)).
\]
\end{defi} 

Observe that $\hom_{(i,j)}(\mk t)\cap [\N]^\omega = (\hom_i(\mk{bs})\cap \hom_j(\mk{b}))\cap [\N]^\omega$ for every $i,j$. 

\begin{prop}
\label{tall-coloring2}
If $\mc B(\mb x)$ is tall, then $\hom(\mk t)\con \mc B(\mb x)$.
\end{prop}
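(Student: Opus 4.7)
The plan is to reduce to the $(1,1)$-homogeneous case and then leverage both colorings to produce a uniform bound on the partial sums $\|\sum_{k\in F}x_k\|$ for finite $F\con H$.

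Since finite subsets of $\N$ lie in $\mc B(\mb x)$, we may assume $H \in \hom(\mk t)$ is infinite. By the observation preceding the statement, $H \in \hom_{(i,j)}(\mk t) \cap [\N]^\omega$ iff $H$ is simultaneously $i$-homogeneous for $\mk{bs}$ and $j$-homogeneous for $\mk b$. Tallness of $\mc B(\mb x)$ implies $\mb x$ is weakly null (Proposition \ref{wuc}), hence $\hom_0(\mk{bs}) \con \fin$ (Proposition \ref{j2iojsdfjnsdfsd}). Combined with Proposition \ref{lkmkgmndsfgsd}, which gives $\hom_1(\mk{bs}) \cap \hom_0(\mk b) \con \fin$, the only case yielding infinite homogeneous sets is $(i,j) = (1,1)$. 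So I would fix $H$ infinite which is $1$-homogeneous for both $\mk{bs}$ and $\mk b$.

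Next I would partition $H = \bigsqcup_n s_n$ into consecutive elements of the uniform front $\mc A$ (which exists since $\mc A$ restricts to a uniform front on $H$). For each $i < j$ we have $s_i \cup s_j \in \mc V$ and $\mk b(s_i \cup s_j) = 1$, which yields $\|\sum_{k\in s_j} x_k^{s_j}\| \le 2 \|\sum_{k \in s_i} x_k^{s_i}\|$; specialising to $i = 0$ with $C := \|\sum_{k\in s_0}x_k^{s_0}\|$ gives the uniform block bound $\|\sum_{k \in s_j} x_k^{s_j}\| \le 2C$ for all $j \ge 1$. For arbitrary finite $F \con H$, applying Proposition \ref{iji5gojhtinbgv} to $F$ (which is $1$-homogeneous for $\mk{bs}$ by heredity) with all coefficients equal to $1$ yields $\|\sum_{k \in F} x_k\| \le \|\sum_{k \in F} x_k^F\| + 1$, reducing the task to bounding $\|\sum_{k\in F}x_k^F\|$ uniformly in $F$.

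To achieve this, I would decompose $F = \bigsqcup_i F_i$ with $F_i := F \cap s_i$ and, for each $i$, compare the different truncations $x_k^F$, $x_k^{F_i}$ and $x_k^{s_i}$ for $k \in F_i$. The $\mk{bs}$-homogeneity of $H$ controls the pairwise $\ell_1$-errors between these truncations by a geometric series of the form \eqref{ojtyohiyterfds}, so each partial block sum $\sum_{k\in F_i} x_k^F$ stays close in norm to $\sum_{k\in s_i} x_k^{s_i}$, itself bounded by $2C$. The main obstacle is then the combination step: in $C(\mc F)$ a sum of individually bounded block vectors need not have bounded norm, so one must exploit the compatibility between $\mc F$ and the front $\mc A$ (encoded in the containment $\mc G \con \overline{\mc A}$ fixed earlier in the construction of $\mk b$) to control how many truncated blocks can contribute simultaneously at any single evaluation point $t \in \mc F$. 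Once this combinatorial step is settled, we obtain $\|\sum_{k\in F}x_k\| \le 2C + O(1)$ independently of $F$, yielding $H \in \mc B(\mb x)$ and completing the proof.
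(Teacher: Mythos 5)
Your reduction to the $(1,1)$-homogeneous case is correct, and your use of $\mk b$-homogeneity to get the uniform block bound $\nrm{\sum_{k\in s_j}x_k^{s_j}}\le 2C$ for all $j$ is a valid observation (as is the step via Proposition \ref{iji5gojhtinbgv} reducing to $\nrm{\sum_{k\in F}x_k^F}$). But the argument has a genuine gap at exactly the point you flag as ``the main obstacle,'' and the fix you gesture at will not close it. Bounding each block $\nrm{\sum_{k\in F\cap s_i}x_k^F}$ individually is useless, since $F$ can meet arbitrarily many $s_i$'s; and the proposed resolution---controlling ``how many truncated blocks contribute simultaneously at any evaluation point $t\in\F$''---is false as stated: at a fixed $t$, every block can contribute something at each node $u\sqsubseteq t$, and there is no a priori bound on the number of blocks with a nonzero evaluation. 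The containment $\mc G\con\overline{\mc A}$ does not provide this control either; in the paper it is used only to lift an element of $\mc G$ to a front element $s_1\in\mc A\rest H$, not to limit multiplicities of blocks.

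What actually closes the gap is the Claim in the paper's proof, which is a decoupling step you are missing. It says that for \emph{any} finite $R\con H$,
\[
\norm{\sum_{n\in R}x_n}\le 1+\sup_{s\in\mc G\rest R}\norm{\sum_{n\in s}x_n},
\]
proved by a per-node argument using only the $\mk{bs}$-homogeneity of $H$: for each $t\in\F$ and each $u\sqsubseteq t$, the sequence $(f_u^*(x_n))_{n\in R}$ has at most one ``large'' entry (which goes into a set $s\in\mc G$) and the rest form a geometric series summing to at most $2^{-\theta(u)+1}$; summing over $u\sqsubseteq t$ yields the extra $1$. Only \emph{after} this reduction is the $\mk b$-coloring invoked, and only once: for the distinguished $s\in\mc G\rest F_1$ realizing the supremum, extend it to $s_1\in\mc A\rest H$ and compare with a single fixed $s_0\sqsubseteq H$ via $\mk b(s_0\cup s_1)=1$. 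So the two colorings enter sequentially (first $\mk{bs}$ to reduce the sum over $F$ to a sum over a single element of $\mc G$, then $\mk b$ to bound that element), not block by block as your proposal attempts. Your draft essentially re-derives the $\mk b$-bound you already have but never produces the $\mk{bs}$-reduction; without that reduction, the block decomposition does not terminate.
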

\begin{proof}
    We have $\hom_{(i,j)}(\mk t)\subseteq\fin\subseteq\mc B(\mb x)$ for all $(i,j)\neq (1,1)$ by Propositions \ref{j2iojsdfjnsdfsd} and \ref{lkmkgmndsfgsd}, so it rests to show that $\hom_{(1,1)}(\mk t)\con \mc B(\mb x)$. For suppose that $H$ is an infinite $(1,1)$-homogeneous set for $\mk t$. Fix $s_0\in\mc A$ such that $s_0\sqsubseteq H$ and $F\subseteq H$ finite. Set $F_0 := F\cap s_0$ and $F_1 := F\setminus s_0$. Observe that $\nrm{\sum_{n\in F_0} x_n} \leq \#s_0\cdot \max_{n\in s_0} \|x_n\|$, so to show $H\in\mc B(\mb x)$ it is enough to find an upper bound for $\nrm{\sum_{n\in F_1} x_n}$ which is independent of $F$.
    
    \begin{claim}
        For every $R\subseteq H$ finite we have
        \[
        \norm{\sum_{n\in R} x_n} \leq 1 + \sup_{s\in\mc G\rest R} \norm{\sum_{n\in s} x_n}.
        \]
    \end{claim}
    \begin{proof}[Proof of claim:]
        Fix $t\in\F$ and set
        \[
        s := \{n\in R : \text{$\exists u\sqsubseteq t$ with $f_u^*(x_n)\geq 2^{-\theta(u)}$}\}.
        \]
        Given $n\in s$, the corresponding $u\sqsubseteq t$ and some $m\in s\cap n$, we must have $f_u^*(x_m) = 0$, because otherwise $\theta(u)\in\supp x_m$ and, from the fact that $\mk{bs}(\{m,n\}) = 1$, we would conclude that $|f_u^*(x_n)|\leq 2^{-(n+1)} 2^{-\theta(u)}$, a contradiction. Thus $s\in\mc G$, so using \eqref{eq:node_basis_evaluation}
        \[
        \sum_{n\in R} x_n(t) = \sum_{n\in s} x_n(t) + \sum_{n\in R\setminus s} x_n(t) \leq \sup_{s'\in\mc G\rest R} \norm{\sum_{n\in s'} x_n} + \sum_{n\in R\setminus s} \sum_{u\sqsubseteq t} f_u^*(x_n).
        \]

        To bound the second term enumerate $R\setminus s = \{n_j\}_{j=0}^k$ in increasing order and fix $u\sqsubseteq t$. Let $j(u)$ be the minimum $j$ such that $\theta(u)\leq\max\supp x_{n_j}$. Then for $j < j(u)$ we have $f_u^*(x_{n_j}) = 0$, and for $j > j(u)$ we have $\mk{bs}(\{j(u),j\}) = 1$, so $|f_u^*(x_{n_j})| \leq 2^{-(n_j+1)} 2^{-\theta(u)}$. Finally, $f_u^*(x_{n_{j(u)}}) < 2^{-\theta(u)}$ because $n_{j(u)}\notin s$, so all together
        \[
        \sum_{n\in R\setminus s} f_u^*(x_n) = \sum_{j=0}^k f_u^*(x_{n_j}) < 2^{-\theta(u)} + 2^{-\theta(u)} \sum_{j=j(u)+1}^k 2^{-(n_j+1)} < 2^{-\theta(u)+1}. 
        \]
        Hence
        \[
        \sum_{n\in R} x_n(t) \leq \sup_{s'\in\mc G\rest R} \norm{\sum_{n\in s'} x_n} + \sum_{n\in R\setminus s} \sum_{u\sqsubseteq t} f_u^*(x_n) \leq \sup_{s'\in\mc G\rest R} \norm{\sum_{n\in s'} x_n} + \sum_{u\sqsubseteq t} 2^{-\theta(u) + 1} \leq \sup_{s'\in\mc G\rest R} \norm{\sum_{n\in s'} x_n} + 1.
        \]
        Since $t\in\F$ was arbitrary, the claim holds.
        %
        %
        %
    \end{proof}
    
    Choose then $s\in\mc G\rest F_1$ such that $\nrm{\sum_{n\in F_1} x_n} \leq 1 + \norm{\sum_{n\in s} x_n}$. Let $s_1\in\mc A\rest H$ be such that $s\sqsubseteq s_1$. Since $s_0\cup s_1\in \mc V\rest H$ with $s_0 < s_1$, it follows that $\mk{b}(s_0\cup s_1) = 1$, i.e.
    \[
    \norm{\sum_{n\in s_1} x_n^{s_1}}\le 2 \norm{\sum_{n\in s_0} x_n^{s_0}}.
    \]
    Consequently, using Proposition \ref{iji5gojhtinbgv} and the fact that $x_n\geq 0$ for all $n$,
    \begin{align*}
        \norm{\sum_{n\in F_1} x_n} & \le 1 + \norm{\sum_{n\in s} x_n} \le 1 + \norm{\sum_{n\in s_1} x_n} \leq 1 + \norm{\sum_{n\in s_1} x_n^{s_1}} + \frac{1}{2^{\min s_1}} \leq 2\norm{\sum_{n\in s_0} x_n^{s_0}} + 2.
    \end{align*}
    The last expression does not depend on $F$, so $H\in\mc B(\mb x)$.
\end{proof}

\begin{prop}
If $\mc B(\mb x)$ is tall then $\hom(\mk t)= \langle \hom_{(1,1)}(\mk t)\rangle= \mc B(\mb p^{\hom_{(1,0)}(\mk t)\cup \hom_0(\mk{bs})})$.
\end{prop}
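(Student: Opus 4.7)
The proof proceeds in two steps, corresponding to the two equalities in the chain, read throughout at the level of ideals.

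For the first equality $\langle\hom(\mk t)\rangle = \langle\hom_{(1,1)}(\mk t)\rangle$, the inclusion $\supseteq$ is trivial, while for $\subseteq$ it suffices to show $\hom_{(i,j)}(\mk t)\subseteq \fin$ for every $(i,j)\neq (1,1)$, since every ideal contains $\fin$ by convention. The cases $(0,0)$ and $(0,1)$ follow from $\hom_0(\mk{bs})\subseteq\fin$, given by Proposition \ref{j2iojsdfjnsdfsd} together with the fact that $\mc B(\mb x)$ tall forces $\mb x$ to be weakly null (Proposition \ref{wuc}). For $(1,0)$, any infinite $(1,0)$-homogeneous $H$ would be $\mk b$-$0$-homogeneous and $\mk{bs}$-$1$-homogeneous on the tall pairs $\{\min s,\min{}_*s\}$ with $s\in\mc V\rest H$; since $\mc V=\mc A\oplus\mc A$ realizes every pair of $H$ as a tall pair (after refining $\mc A$ to have rank $\geq 2$ if necessary), this upgrades to $\mk{bs}$-$1$-homogeneity on all pairs of $H$, contradicting Proposition \ref{lkmkgmndsfgsd}.

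For the second equality $\langle\hom_{(1,1)}(\mk t)\rangle = \mc B(\mb p^K)$ with $K=\hom_{(1,0)}(\mk t)\cup\hom_0(\mk{bs})$, I will use the elementary identity $\mc B(\mb p^{K_1\cup K_2}) = \mc B(\mb p^{K_1})\cap \mc B(\mb p^{K_2})$, combined with Theorem \ref{iojtoijgoijot4ref} applied to the $1$-comparability coloring $\mk{bs}$ on $[\N]^2$, which gives $\mc B(\mb p^{\hom_0(\mk{bs})}) = \langle\hom_1(\mk{bs})\rangle$. The task thus reduces to proving $\langle\hom_{(1,1)}(\mk t)\rangle = \mc B(\mb p^{\hom_{(1,0)}(\mk t)})\cap\langle\hom_1(\mk{bs})\rangle$.

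For the nontrivial inclusion $(\supseteq)$, given $M$ in the right-hand side, I apply Dilworth's theorem to $\prec^1_{\mk{bs}}$ on $M$: the antichains are precisely the $\hom_0(\mk{bs})\rest M$-subsets, bounded by hypothesis, so $M = H_1\cup\cdots\cup H_w$ with each $H_j$ a chain, i.e., $\mk{bs}$-$1$-homogeneous on all pairs. Within each $H_j$, $\mk b$-$0$-homogeneous subsets coincide with $(1,0)$-homogeneous subsets for $\mk t$ and hence remain uniformly bounded; a Mirsky-style decomposition of $H_j$ using the norm-doubling relation underlying $\mk b$ then partitions $H_j$ into finitely many $\mk b$-$1$-homogeneous pieces, each of which is automatically $(1,1)$-homogeneous for $\mk t$. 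The reverse inclusion $(\subseteq)$ is a verification: any $H\in\hom_{(1,1)}(\mk t)$ is $\mk{bs}$-$1$-homogeneous on all pairs (again using $\mc A$ of rank $\geq 2$), placing $\langle\hom_{(1,1)}(\mk t)\rangle$ inside $\langle\hom_1(\mk{bs})\rangle$; and for $A\in\hom_{(1,0)}(\mk t)$ with $A\subseteq H_1\cup\cdots\cup H_k$, each $A\cap H_i$ is both $(1,0)$- and $(1,1)$-$\mk t$-homogeneous, forcing $\mc V\rest(A\cap H_i)=\buit$, which combined with the $\mk{bs}$-$1$-homogeneity of $H_i$ and Proposition \ref{lkmkgmndsfgsd} yields the required uniform bound on $|A|$.

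The main obstacle is the Mirsky-style decomposition for $\mk b$ on the higher-rank front $\mc V=\mc A\oplus\mc A$, since $\mk b$ is not a coloring of pairs. The strategy will be to exploit the concrete semantics of $\mk b$: declare $t\prec u$ for $t,u\in\mc A\rest H_j$ with $t<u$ whenever $\nrm{\sum_{n\in t} x_n^t} < \tfrac12 \nrm{\sum_{n\in u} x_n^u}$. This defines a strict partial order on the $\mc A$-blocks inside $H_j$ whose chains translate to $\mk b$-$0$-homogeneous subsets of $H_j$; applying Mirsky's theorem at the block level yields a finite antichain partition of $\mc A\rest H_j$, which can then be lifted to a partition of $H_j$ into $\mk b$-$1$-homogeneous subsets, completing the decomposition.
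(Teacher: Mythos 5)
Your proposal follows essentially the same skeleton as the paper's proof: split $\mc B(\mb p^K)$ as $\mc B(\mb p^{\hom_{(1,0)}(\mk t)})\cap\mc B(\mb p^{\hom_0(\mk{bs})})$ using \eqref{eq:simpler_expression_p^K}, replace $\mc B(\mb p^{\hom_0(\mk{bs})})$ by $\langle\hom_1(\mk{bs})\rangle$ via Theorem \ref{iojtoijgoijot4ref}, decompose $M$ into $\mk{bs}$-$1$-homogeneous chains by Dilworth, and finish inside each chain using the comparability structure of $\mk b$. Proving the two outer equalities separately rather than closing a cycle of inclusions, as the paper does, is a cosmetic reorganization.

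The obstacle you flag is real and is exactly the point at which the paper is itself terse: it writes ``$\mk b$ is a $0$-comparability coloring, hence $\mc B(\mb p^{\hom_0(\mk b)})=\langle\hom_1(\mk b)\rangle$'', but $\mk b$ colors the front $\mc V=\mc A\oplus\mc A$ rather than $[\N]^2$, so Theorem \ref{iojtoijgoijot4ref} is not literally applicable and a block-level Dilworth/Mirsky argument is needed. Your proposed order $t\prec u$ on $\mc A\rest H_j$ is the right object, but the ``translation'' you assert is not automatic: if $t_1\prec\cdots\prec t_k$ is a $\prec$-chain and $A:=\bigcup_i t_i$, the family $\mc A\rest A$ in general contains blocks other than $t_1,\dots,t_k$ (already for $\mc A=[\N]^2$ one has $\mc A\rest(t_1\cup t_2\cup t_3)$ equal to all of $[A]^2$), so $\mc V\rest A$ contains sets $s[0]\cup s[1]$ for which $\mk b$ is uncontrolled and $A$ need not be $\mk b$-$0$-homogeneous. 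One has to work with the \emph{consecutive} $\mc A$-blocks of $H_j$, whose unions have exactly those blocks, and be correspondingly careful when lifting the antichain decomposition back to a decomposition of $H_j$ into $\mk b$-$1$-homogeneous sets. This is fixable but not a one-liner, and so the gap remains open in your write-up just as it is in the paper's.

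Separately, in your $(\subseteq)$ direction you invoke Proposition \ref{lkmkgmndsfgsd} to get a \emph{uniform} bound on $\#(A\cap H_i)$ over all $A\in\hom_{(1,0)}(\mk t)$. That proposition only gives $\hom_1(\mk{bs})\cap\hom_0(\mk b)\subseteq\fin$, i.e.\ each such set is finite; it does not yield a bound independent of $A$, and a compact hereditary subfamily of $\fin$ can perfectly well have sets of unbounded cardinality. The uniform bound is precisely the content of the paper's unproved assertion ``if $(i,j)=(1,1)$ then $H\in\mc B(\mb p^{\hom_{(1,0)}(\mk t)})$'', so you are reproducing a terse step of the paper, but the citation you give does not close it and the step still needs a real argument.
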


\begin{proof}
We set $K := \hom_{(1,0)}(\mk t)\cup \hom_0(\mk{bs})$, and it easily follows from \eqref{eq:simpler_expression_p^K} that
\[
    \mc B(\mb p^K)=\mc B(\mb p^{\hom_{(1,0)}(\mk t)})\cap \mc B(\mb p^{\hom_0(\mk{bs})}).
\]
The first inclusion follows again from Propositions \ref{j2iojsdfjnsdfsd} and \ref{lkmkgmndsfgsd}: if $H$ is $(i,j)$-homogeneous for $\mk t$ then $\hom_{(i,j)}(\mk t)\subseteq\fin$ for $(i,j)\neq (1,1)$, whereas if $(i,j) = (1,1)$ then $H\in \mc B(\mb p^{\hom_{(1,0)}(\mk t)})\cap \mc B(\mb p^{\hom_0(\mk{bs})})$.
    
    
    For the reverse inclusion, suppose that $M\in \mc B(\mb p^K)$. In particular, $M\in \mc B(\mb p^{\hom_0(\mk{bs})})=\langle \hom_1(\mk{bs})\rangle$, so $M$ is a finite union $M=M_1\cup \cdots\cup M_k $ of $1$-homogeneous sets for $\mk{bs}$. Fix $j=1,\dots,k$. Then, $M_j\in \mc B(\mb p^{\hom_0(\mk{b})})$ for if $H\con M_j$ is $0$-homogeneous for $\mk{b}$, then it is also $(1,0)$-homogeneous for $\mk t$, and since $M_j\in \mc B(\mb p^K) $, we have that $\#H\le \max_{F\con M_j\text{ finite}}\nrm{\sum_{n\in F} p_n^K}<\infty$ and consequently, $\sup_{F\con M_j\text{ finite}}\nrm{\sum_{n\in F} p_n^{\hom_0(\mk{b})}}\le \max_{F\con M_j\text{ finite}}\nrm{\sum_{n\in F} p_n^K} $ must be finite.  We have observed that $\mk{b}$ is a 0-comparability coloring, hence 
    \[
    M_j\in \mc B(\mb p^{\hom_{0}(\mk{b})}) = \langle \hom_{1}(\mk{b})\rangle,
    \]
    and this means that $M\in \langle\hom_{(1,1)}(\mk t)\rangle$.
    %
    %
\end{proof}

We have just proved the following.

\begin{teo}\label{lkmkdlmfmskl4we4ret}
If a tall ideal can be $\mc B$-represented in some $C(K)$, $K$ countable, then it contains some non-pathological front ideal (see Definition \ref{lj4witejigerjrgf}), and consequently has a Borel selector.  \qed 
\end{teo}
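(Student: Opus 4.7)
The plan is to combine the constructions developed throughout this subsection. Suppose $\mc I$ is a tall ideal with $\mc I = \mc B(\mb x)$ for some sequence $\mb x$ in $C(K)$, $K$ countable. Since every countable compact Hausdorff space is homeomorphic to the closure of some uniform front, identify $C(K) \cong C(\mc F)$ for a compact hereditary $\mc F \subseteq \fin$. By density of finitely supported vectors in $C(\mc F)$ with respect to the node basis, together with Proposition \ref{prop:positive_functions}, one can reduce to the case where $\mb x = (x_n)_n$ consists of finitely supported non-negative functions, without altering the associated $\mc B$-ideal.

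Now apply the machinery already built. Attach to $\mb x$ the compact hereditary family $\mc G$, pick a uniform front $\mc A$ with $\mc G \subseteq \overline{\mc A}$ (so that $\mc A$ has rank at least $1$), form $\mc V := \mc A \oplus \mc A$, and construct the tall coloring $\mk t = \mk t_{\mb x}\colon \mc V \to 2^2$. Proposition \ref{tall-coloring2} then yields $\hom(\mk t) \subseteq \mc B(\mb x) = \mc I$. Since $\mk t$ is a coloring of a front, the ideal $\langle \hom(\mk t) \rangle$ is a front (c-coloring) ideal in the sense of Definition \ref{lj4witejigerjrgf}, and it is contained in $\mc I$.

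To verify non-pathology, invoke the proposition immediately preceding the theorem: $\langle \hom(\mk t) \rangle = \mc B(\mb p^K)$, where $K := \hom_{(1,0)}(\mk t) \cup \hom_0(\mk{bs})$ is a hereditary compact subset of $2^{\N}$. Being expressed as a $\mc B$-ideal via the evaluation sequence $\mb p^K$ in $C(K)$, it is in particular a non-pathological $F_\sig$ ideal by the characterization of $\mc B$-ideals recalled in Section \ref{klkl4tmerlkgrfg}.

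For the Borel selector claim, observe that $\mk t$ is a continuous coloring on the uniform front $\mc V$, so the family of $\mk t$-homogeneous sets admits a Borel selector by the extension of Theorem \ref{selector} from $[\N]^2$-colorings to continuous colorings of arbitrary fronts established in \cite{GrebikUzca2018}. Since $\hom(\mk t) \subseteq \mc I$, the same selector witnesses that $\mc I$ has a Borel selector. In this sense the theorem is a packaging of the preceding technical results; the real work was already done in building $\mk t$ and proving Proposition \ref{tall-coloring2}. The only subtle point remaining is the identification $C(K) \cong C(\mc F)$ together with the reduction to a finitely supported non-negative representing sequence, both of which are routine given the earlier propositions.
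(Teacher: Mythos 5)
Your reconstruction is correct and follows essentially the same route the paper takes: the theorem is stated with a \qed precisely because it is the conclusion of the preceding chain of results in this subsection (identifying $C(K)$ with $C(\mathcal F)$, reducing to a finitely supported non-negative representing sequence via Proposition \ref{prop:positive_functions}, constructing $\mathfrak t$ on $\mathcal V = \mathcal A\oplus\mathcal A$, invoking Proposition \ref{tall-coloring2} for $\hom(\mathfrak t)\subseteq\mathcal B(\mathbf x)$, and reading off non-pathology from the identification $\langle\hom(\mathfrak t)\rangle = \mathcal B(\mathbf p^{\hom_{(1,0)}(\mathfrak t)\cup\hom_0(\mathfrak{bs})})$ in the immediately preceding proposition). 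Your handling of the Borel selector via the general result for front colorings from \cite{GrebikUzca2018} also matches how the paper treats effective tallness of c-coloring ideals.
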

This result gives a necessary condition that aids in comprehending the following fundamental question.

\begin{question}
Which $\mc B$-ideals can be represented in $C(K)$ for  a countable compactum $K$? 
\end{question}

A question similar to the previous one could yield a negative answer when no simple characterization is possible. This might be the case here, as the Cantor-Bendixson rank of a countable compactum can take any countable ordinal value. It is a classical result that the collection of codes for countable ordinals is a co-analytic, non-Borel set.

Since all $\B$-ideals are represented in the Polish space $C(2^{\N})$, we formulate the problem  in a way that is well-suited for applying tools from descriptive set theory.

\begin{question}
Find the (Borel or projective) complexity of the following set:
\[
\{{\bf g}\in C(2^{\N})^{\N}: \B({\bf g})\mbox{ is representable in $C(K)$ for some countable compactum $K$}\}.
\]
An instance of the previous question, which is  of particular interest, is to determine the complexity of
\[
\{{\bf g}\in C(2^{\N})^{\N}: \B({\bf g})\mbox{ is representable in $c_0$}\}.
\]
\end{question}


\subsection{\texorpdfstring{An ideal characterization of $c_0$}{An ideal characterization of c0}}
Recall that the isomorphic classification of the spaces $C(K)$ with $K$ countable compact is completely determined: the  Mazurkiewicz-Sierpinski theorem states that every such compact space is homeomorphic to  the ordinal $\omega^\alpha m+1$, where $\alpha+1$ is the Cantor-Bendixson rank of $K$ and $m$ is the cardinality of the $\alpha$-derivated set of $K$. And Bessaga and Pelczynski proved that given two ordinal numbers $\alpha\le \beta$, the corresponding Banach spaces $C(\alpha+1)$ and $C(\beta+1)$ are isomorphic exactly when $\beta<\alpha^\omega$. For more information, see \cite[subsections 8.6 and 21.5]{Semadeni}. 

It follows from here that when the Cantor-Bendixson rank of $K$ is finite, being $C(K)$ isomorphic to $c_0$, the ideal $\mathcal B(\mb g)$ is tall exactly when  $\mb g$ is a weakly-null sequence. This result was shown in \cite[Thm 5.14]{MMU2022} for $c_0$ using an argument similar to that presented in Section \ref{sect:colorings} (see Proposition \ref{fin-antichain}). However we can give a direct proof as follows. Every  weakly-null sequence $\mb x$ in $c_0$  has a subsequence that is equivalent to a  block sequence of the unit basis of $c_0$, and consequently, either norm-null or equivalent to the unit basis of $c_0$. Theorem \ref{c0saturacion} guarantees that $\mathcal B({\mb g})$ is tall.  We have the following characterization.

\begin{teo}
\label{978y9834yhtghfgdb}  
The following are equivalent for a metrizable compactum $K$.

\begin{enumerate}[(i), wide=0pt]
\item $\mathcal B(\mb g)$ is tall for every weakly-null sequence $\mb g$ in $C(K)$.
\item $\mathrm{Fin}\subsetneq  \mathcal B(\mb g)$ for every weakly-null sequence $\mb g$ in $C(K)$.

\item 
$\mathcal R \leq_K\mathcal B(\mb g)$  for every weakly-null sequence $\mb g$ in $C(K)$. 


\item $K$ has finite Cantor-Bendixson rank, or, equivalently, $C(K)$ is isomorphic to $c_0$. 
\end{enumerate}
\end{teo}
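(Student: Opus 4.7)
The plan is to prove the cycle $(iv) \Rightarrow (iii) \Rightarrow (i) \Rightarrow (ii) \Rightarrow (iv)$. The implication $(iv) \Rightarrow (i)$ is essentially the content of the paragraph preceding the statement: when $C(K) \cong c_0$, every weakly null sequence has subsequences equivalent to the unit basis of $c_0$ (after discarding norm-null pieces), so by Theorem \ref{c0saturacion} the ideal $\mc B(\mb g)$ is tall. To strengthen this to $(iv) \Rightarrow (iii)$ I would invoke Proposition \ref{fin-antichain}: since $\mc B(\mb g)$ is a tall $\mc B$-ideal represented in $c_0$, there is a $0$-asymmetric $1$-comparability coloring $\ccal:[\N]^2\to 2$ with $\hom(\ccal)\subseteq \mc B(\mb g)$, which is exactly the statement $\mc R\leq_K\mc B(\mb g)$. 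The implications $(iii)\Rightarrow(i)\Rightarrow(ii)$ are then immediate, since being Katětov-above $\mc R$ forces tallness and any tall ideal properly contains $\mr{Fin}$.

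The heart of the argument is the contrapositive $\neg(iv)\Rightarrow\neg(ii)$: given a countable compactum $K$ of infinite Cantor-Bendixson rank I must exhibit a weakly null sequence $\mb g$ in $C(K)$ with $\mc B(\mb g)=\mr{Fin}$. Let $\overline{\mc S}\subseteq 2^\N$ denote the topological closure of the Schreier barrier, a countable compactum of Cantor-Bendixson rank $\omega+1$ that, by the Mazurkiewicz-Sierpinski classification, is homeomorphic to $\omega^\omega+1$. In $C(\overline{\mc S})$ the evaluation sequence $\mb p=\mb p^{\overline{\mc S}}$ is weakly null by Proposition \ref{SpaceXmu}, since every element of $\overline{\mc S}$ is a finite subset of $\N$, i.e.\ $\overline{\mc S}\subseteq B_{c_0}$. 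On the other hand, using \eqref{eq:simpler_expression_p^K}, every infinite $M=\{m_0<m_1<\cdots\}\subseteq\N$ contains the Schreier set $\{m_0,\dots,m_{m_0}\}\in \overline{\mc S}$ of cardinality $m_0+1$, so $\sup_{A\in\overline{\mc S}}\#(A\cap M)=\infty$ and hence $\mc B(\mb p)=\mr{Fin}$.

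To transfer this construction from $\overline{\mc S}$ to $K$: since $K$ has infinite Cantor-Bendixson rank, $K^{(n)}\neq\emptyset$ for every $n<\omega$, and compactness forces $K^{(\omega)}\neq\emptyset$. A standard recursive construction around any point of $K^{(\omega)}$ produces a closed subspace $L\subseteq K$ with Cantor-Bendixson rank $\omega+1$, which is therefore homeomorphic to $\overline{\mc S}$. By the Borsuk-Dugundji extension theorem for closed subsets of compact metric spaces, there is a bounded linear operator $E:C(L)\to C(K)$ with $(Ef)\rest L=f$ for every $f\in C(L)$; since restriction is its left inverse, $E$ is an isomorphic embedding. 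Setting $\mb g:=E\mb p$ (transported through the homeomorphism $L\cong\overline{\mc S}$) yields a weakly null sequence in $C(K)$, because bounded linear operators preserve weak convergence, and the two-sided estimate
\[
\norm{\sum_{n\in F}p_n}_{C(L)} \le \norm{\sum_{n\in F}Ep_n}_{C(K)} \le \nrm{E}\cdot\norm{\sum_{n\in F}p_n}_{C(L)}
\]
gives $\mc B(\mb g)=\mc B(\mb p)=\mr{Fin}$, contradicting $(ii)$.

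The main obstacle is the construction of the closed copy $L\cong\omega^\omega+1$ inside $K$ and the verification that the Dugundji extension preserves both weak nullity and the $\mc B$-ideal; both become routine once one has the Mazurkiewicz-Sierpinski classification and the standard norm-one linear extension for closed subsets of a compact metric space.
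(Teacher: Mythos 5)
Your cycle $(iv)\Rightarrow(iii)\Rightarrow(i)\Rightarrow(ii)\Rightarrow(iv)$ is sound in structure, and the implications through $(iii)$ are correct. For $(ii)\Rightarrow(iv)$ you take a genuinely different route from the paper: where the paper embeds $C(\mathcal S)$ isomorphically into $C(K)$ via the Bessaga--Pe\l czy\'nski classification (for countable $K$ of infinite rank) and Miljutin's theorem (for uncountable $K$), you instead find a \emph{topological} closed copy of $\omega^\omega+1$ inside $K$ and promote it to a Banach-space embedding with a Borsuk--Dugundji extension operator. Both approaches ultimately transport $\mathbf p^{\mathcal S}$ into $C(K)$, and your version trades the heavy isomorphic classification of $C(\alpha)$-spaces for a linear extension theorem.

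There is a genuine gap, however: you begin the contrapositive with ``given a \emph{countable} compactum $K$ of infinite Cantor--Bendixson rank,'' but the theorem is stated for arbitrary metrizable compacta, and $\neg(iv)$ also includes every uncountable $K$. The paper disposes of that case with Miljutin's theorem; your write-up simply does not address it. The fix in your framework is short --- an uncountable metric compactum contains a copy of $2^{\mathbb N}$, hence a closed homeomorph of $\omega^\omega+1$, after which your Dugundji argument applies verbatim --- but as written the uncountable case is missing. A second, minor slip: the single Schreier set $\{m_0,\dots,m_{m_0}\}$ only yields $\sup_{A\in\overline{\mathcal S}}\#(A\cap M)\ge m_0+1$, a finite bound. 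To conclude $\mathcal B(\mathbf p)=\mathrm{Fin}$ you must let the base point grow, e.g.\ use $\{m_k,\dots,m_{k+m_k}\}$ for each $k$, or observe as the paper does that every finite $F\subseteq\mathbb N$ already contains a Schreier set of size at least $\#F/2$.
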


\begin{proof}
We are going to use here the well-known fact (see e.g. \cite{Semadeni}) that a $C(K)$ space contains isomorphic copies of $C(L)$ provided that the corresponding Cantor-Bendixson ranks $\rho_\mr{CB}(K)$ and $\rho_\mr{CB}(L)$ satisfy that $\rho_\mr{CB}(L) \cdot \omega < \rho_\mr{CB}(K)$. Consequently, if $K$ is a countable compactum with infinite Cantor-Bendixson rank, then $C(K)$ contains an isomorphic copy of $C(\mathcal S)$.

Let us start now proving the equivalence between those statements.   {\it (i)} trivially implies  {\it (ii)}. {\it (ii)} implies {\it (iv)}: If $K$ is countable and has infinite Cantor-Bendixson rank, then, by the comment above, $C(K)$ has an isomorphic copy of $C(\mc S)$, while if  $K$ is uncountable, then by Miljutin's Theorem we know that $C(K)$ is isomorphic to $C[0,1]$, hence it also contains an isomorphic copy of $C(\mc S)$. But the evaluation sequence $\mb p^\mc S$ (see Definition \ref{j4iotorejgidf}) corresponding to the Schreier family $\mc S:=\conj{s\con \N}{\# s\le \min s+1}$  is a weakly-null sequence  satisfying that 
$$
\norm{\sum_{n\in F} p_n^\mc S}= \max\conj{\# s}{s\in \mc S\upharpoonright F}\ge \frac{\#F}2
$$
for every finite set $F\con \N$ (see \eqref{eq:simpler_expression_p^K}). Consequently there is a weakly null sequence in $C(K)$ whose $\mc B$-ideal is the trivial one $\fin$. 

To see that {\it (iv)} implies {\it (i)}, let $K$ be  a finite set. Then $C(K)$ is isomorphic to $\ell_\infty^{\# K}$, and we know that $\mc B$ ideals from weakly-null sequences in a finite dimensional space are tall summable ideals. Now, if $K$ is an infinite compactum with finite Cantor-Bendixson rank, then $C(K)$ is isomorphic to $c_0$, and we have already mentioned previous to the statement of the theorem that $\mc B$-ideals from weakly null sequences in $c_0$ are always tall. 


Finally we see the equivalence between {\it (iii)} and {\it (iv)}. If $K$ has finite Cantor-Bendixson rank, then either $C(K)$ is finite dimensional or isomorphic to $c_0$; in the first case, since every $\mc B$-ideal from $C(K)$ is summable, hence it contains a coloring ideal (see \S \ref{klkl4tmerlkgrfg}); in  the second case, $C(K)$ is isomorphic to $c_0$, so, by   Theorem \ref{c0saturacion} {\it (iv)}, $\mc B(\mb g)$ is tall, if $\mb g$ is weakly-null,  and now we can apply Proposition \ref{fin-antichain} to conclude that $\mc B(\mb g)$ contains a coloring ideal.  Finally, if $K$ is uncountable or countable with infinite Cantor-Bendixson rank, then in either case $C(K)$ contains $C(\mc S)$, and consequently, by Theorem \ref{exG} there is a weakly null sequence $\mb g$ in $C(K)$ such that $\mc R \not\le_K\mc B(\mb g)$. 
\end{proof}

\end{document}